\newcommand*{\Scale}[2][4]{\scalebox{#1}{$#2$}}%
\newtheorem{theorem}{Theorem}[section]
\newtheorem{lemma}[theorem]{Lemma}
\newtheorem{corollary}[theorem]{Corollary}
\newtheorem{proposition}[theorem]{Proposition}
\theoremstyle{remark}
\newtheorem{remark}[theorem]{Remark}
\newtheorem{hypothesis}[theorem]{Hypothesis}
\newtheorem{example}[theorem]{Example}
\theoremstyle{definition}
\newtheorem{definition}[theorem]{Definition}
\newcommand\bR{{\mathbb{R}}}
\newcommand\bC{{\mathbb C}}
\newcommand\bZ{{\mathbb Z}}
\newcommand\bU{{\mathbb{U}}}
\newcommand\dev{{\bf dev}}
\newcommand\SI{{\mathbb{S}}}
\newcommand\Bd{{\rm bd}}
\newcommand\clo{{\rm Cl}}
\newcommand\bdd{{\mathbf{d}}}
\newcommand\ra{\rightarrow}
\newcommand\emp{\emptyset}
\newcommand\eps{\epsilon}
\newcommand\Aff{{\mathbf{Aff}}}
\newcommand\ovl{\overline}
\newcommand\Aut{{\mathbf{Aut}}}
\newcommand\Idd{{\rm I}}
\newcommand\bv{{\mathbf{v}}}
\newcommand\CN{{\mathcal{N}}}
\newcommand\Pgl{{\mathrm{PGL}}(n+1, \bR)}
\newcommand\Ag{{\mathrm{Ag}}}
\newcommand\rpn{\mathbb{RP}^n}
\newcommand\SL{{\mathsf{SL}}}
\newcommand\SO{{\mathsf{SO}}}
\newcommand\PGL{{\mathsf{PGL}}}
\newcommand\SLnp{{\mathsf{SL}}_\pm(n+1, \bR)}
\newcommand\GL{{\mathsf{GL}}}
\newcommand\PGLnp{{\mathsf{PGL}}(n+1, \bR)}
\newcommand\orb{\mathcal{O}} 
\newcommand\torb{\tilde{\mathcal{O}}}
\newcommand\bGamma{{\boldsymbol \Gamma}}
\newcommand\leng{{\mathrm{length}}}
\newcommand{\ranK}{{\mathrm{rank}}}
\newcommand\mx{{\mathrm{max}}}
\begin{document}

%\tracingall

\title[Ends of real projective orbifolds III]
{A classification of radial and totally geodesic ends of properly convex real projective orbifolds III: 
the convex but nonproperly convex and non-complete-affine radial ends}

%\author{Yves Carri\`ere}
%\address{UFR de Math\'ematiques \\
%Institut Fourier 100 rue des maths, \\ 
%BP 74, 38402 St Martin d'H\`eres cedex, France}
%\email{Yves.Carriere@ujf-grenoble.fr}

\author{Suhyoung Choi}
\address{ Department of Mathematics \\ KAIST \\
Daejeon 305-701, South Korea 
%\texttt{schoi@math.kaist.ac.kr}
}
\email{schoi@math.kaist.ac.kr}
%\author{David Fried}
%\address{Department of Mathematics \& Statistics\\
%Boston University \\
%111 Cummington Street \\
%Boston, MA 02215 USA}
%\email{df@math.bu.edu}

\date{\today}

%\subclass{57M50 (primary), 53A20, 53C10, 20C99 (secondary) }
% Please refer to {\tt http://www.ams.org/msc/} for a list of codes}

%\extraline{This work was supported by the National Research Foundation
%of Korea (NRF) grant funded by the Korea government (MEST) (No.2010-0027001).} %(No. 2009-0057445).} 

%\date{\today}

%\begin{document}

%\maketitle

%\begin{spacing}{1.5}

\begin{abstract} 
Real projective structures on $n$-orbifolds are useful in understanding the space of 
representations of discrete groups into $\SL(n+1, \bR)$ or $\PGL(n+1, \bR)$. A recent work shows that many hyperbolic manifolds 
deform to manifolds with such structures not projectively equivalent to the original ones. 
The purpose of this paper is to understand 
the structures of ends of real projective $n$-dimensional orbifolds. 
In particular, these have the radial or totally geodesic ends. 
%Hyperbolic manifolds with cusps and hyper-ideal ends are examples.
%Radial ends are ends foliated by lines lifting to lines ending at a common point. 
%They have transverse real projective structure. 
%They are either complete affine ends, properly convex ends, or cnonproperly convex and not complete affine convex ends. 
In previous papers, we classified 
properly convex or complete radial ends under suitable conditions. 
In this paper, we will study radial ends that are convex but not properly convex nor complete affine. 
The main techniques are the theory of Fried and Goldman on affine manifolds, and a generalization of 
%the work of Goldman, Labourie, and Margulis on flat Lorentzian $3$-manifolds and 
the work on Riemannian foliations by Molino, Carri\`ere, and so on. 
We will show that these are quasi-joins of horospheres and totally geodesic radial ends.  
These are deformations of joins of horospheres and totally geodesic radial ends. 
%We will show that only the radial or totally geodesic ends of lens type or horospherical ends exist for strongly irreducible properly convex 
%real projective orbifolds under the suitable conditions. 
%We also discuss the ends completed by totally geodesic boundary orbifolds
%and their duality to radial ends. 
%The study will later aid us in understanding properly convex real projective $n$-orbifolds with ends of 
%general type. 
%The other types of ends are out of the focus of the present study. 

\end{abstract}

\subjclass{Primary 57M50; Secondary 53A20, 53C15}
\keywords{geometric structures, real projective structures, $\SL(n, \bR)$, representation of groups}
\thanks{This work was supported by the National Research Foundation
of Korea (NRF) grant funded by the Korea government (MEST) (No.2010-0027001).} %(No. 2009-0057445).} 

\maketitle

\tableofcontents

%\section{}
%\subsection{}

%\keywords{geometric structures, real projective structures, $\SL(n, \bR)$, representation of groups}

% Definitions 

%% orbifolds, tameness, 
\section{Introduction}

\subsection{Preliminary definitions.} 
We will briefly review the definitions already found in previous papers 
\cite{EDC1} and \cite{EDC2}. 

\subsubsection{Topology of orbifolds and their ends.}  %\marginpar{shorten this section} 
An {\em orbifold} $\orb$ is a topological space with charts modeling open sets by quotients of Euclidean open sets or half-open sets 
by finite group actions and compatibly patched with one another. 
The boundary $\partial \orb$ of an orbifold is defined as the set of points with only half-open sets as models. 
Orbifolds are stratified by manifolds. 
Let $\orb$ denote an $n$-dimensional orbifold with finitely many ends 
where end-neighborhoods are homeomorphic to closed  $(n-1)$-dimensional orbifolds times an open interval. 
We will require that $\orb$ is {\em strongly tame}; that is, $\orb$ has a compact suborbifold $K$ 
so that $\orb - K$ is a disjoint union of end-neighborhoods homeomorphic to 
closed $(n-1)$-dimensional orbifolds multiplied by open intervals.
Hence $\partial \orb$ is a compact suborbifold. 
%This is a strong assumption; however, we note that the mathematicians have 
%great difficulty understanding the topology of the ends of manifolds presently. 
%(We apologize for going through definitions for a few pages. 
(See \cite{Cbook} for an introduction to the geometric orbifold theory.)

\subsubsection{Real projective structures and ends} 
We will consider an orbifold $\orb$ with a real projective structure: 
This can be expressed as 
\begin{itemize}
\item having a pair $(\dev, h)$ where 
$\dev:\torb \ra \rpn$ is an immersion equivariant with respect to 
\item the homomorphism $h: \pi_1(\orb) \ra \PGLnp$ where 
$\torb$ is the universal cover and $\pi_1(\orb)$ is the group of deck transformations acting on $\torb$. 
\end{itemize}
$(\dev, h)$ is only determined up to an action of $\PGLnp$ 
given by 
\[ g \circ (\dev, h(\cdot)) = (g \circ \dev, g h(\cdot) g^{-1}) \hbox{ for } g \in \PGLnp. \]
We will use only one pair where $\dev$ is an embedding for this paper and hence 
identify $\torb$ with its image. 
A {\em holonomy} is an image of an element under $h$. 
The {\em holonomy group} is the image group $h(\pi_1(\orb))$.  

We will assume that our real projective orbifold
$\orb$ is a strongly tame orbifold and some of the ends 
are {\em radial}.  Each radial end has a neighborhood $U$, and each 
component $\tilde U$ of the inverse image $p_\orb^{-1}(U)$
has a foliation by properly embedded projective geodesics ending at a common point $\bv_{\tilde U} \in \rpn$. 
We call such a point a {\em pseudo-end vertex}. 
Given an end $E$ of $\orb$, we can define a pseudo-end $\tilde E$ corresponding to it.
$\pi_{1}(\orb)$ acts on the set of pseudo-ends corresponding to $E$ transitively. 
The subgroup fixing a pseudo-end $\tilde E$ is denoted by $\pi_{1}(\tilde E)$. 
See \cite{EDC1} for detail. Heuristically, a pseudo-end is a class of ``equivalent'' system of 
connected open sets covering end neighborhoods of $E$. 

\begin{itemize} 
\item The {\em space of directions} of oriented projective geodesics through $\bv_{\tilde E}$ forms
an $(n-1)$-dimensional real projective space. 
We denote it by $\SI^{n-1}_{\bv_{\tilde E}}$, called a {\em linking sphere}. 
\item Two lines in $\torb$ from $\bv_{\tilde E}$ are regarded equivalent if they are identical near $\bv_{\tilde E}$. 
Let $\tilde \Sigma_{\tilde E}$ denote the space of equivalence classes of lines from $\bv_{\tilde E}$ in $\tilde U$.
$\tilde \Sigma_{\tilde E}$ projects to a convex open domain in an affine space in  $\SI^{n-1}_{\bv_E}$
by the convexity of $\torb$. Then by Proposition \ref{I-prop-projconv} of \cite{EDC1}, 
$\tilde \Sigma_{\tilde E}$ is projectively diffeomorphic to
\begin{itemize}
\item either a complete affine space $A^{n-1}$, 
\item a properly convex domain, 
\item or a convex but not properly convex 
and not complete affine domain in $A^{n-1}$. 
\end{itemize} 
%\item The subgroup $\bGamma_{\tilde E}$, a pseudo-end fundamental group, of $\bGamma$ 
%fixes $\bv_{\tilde E}$ and  acts on 
%as a projective automorphism group on $\SI^n _{\bv_E}$. 
%Thus, the quotient $\tilde \Sigma_{\tilde E}/\bGamma_{\tilde E}$ admits a real projective 
%structure of one-dimension lower. %We call $E_i$ the end real projective orbifold by abusing notations.
\item We denote by $\Sigma_{\tilde E} $ the real projective $(n-1)$-orbifold $\tilde \Sigma_{E}/\bGamma_{E}$. 
Since we can find a transversal orbifold $\Sigma_{\tilde E}$ to the radial foliation in 
a pseudo-end-neighborhood for each pseudo-end $\tilde E$ of $\mathcal{O}$,
it lifts to a transversal surface $\tilde \Sigma_{\tilde E}$ in $\tilde U$. 
We can also simply denote it by $\Sigma_{E}$. 
\item We say that a radial pseudo-end $\tilde E$ is  {\em convex} (resp. {\em properly convex}, and {\em complete affine}) 
if $\tilde \Sigma_{\tilde E}$ is convex  (resp. properly convex, and complete affine). 
\end{itemize}

Thus, a radial end is either
\begin{description}
\item[CA] complete affine, 
\item[PC] properly convex, or 
\item[NPCC] convex but not properly convex and not complete affine. 
%(We call the radial end an NPCC end) 
\end{description}

In \cite{EDC1}, we described an NPCC-end $E$ as 
a R-end $E$ with $\Sigma_{E}$ foliated by complete affine spaces of dimension $i_{0}$ for $0 < i_{0} < n-1$. 
For a p-R-end $\tilde E$ corresponding to $E$, $\tilde \Sigma_{\tilde E} \subset \SI^{n-1}_{\tilde E}$ is a convex 
but not properly convex and not complete affine. Then it is foliated by complete affine spaces of dimension $i_{0}$
with common boundary great sphere $\SI^{i_{0}-1}_{\infty}$ of dimension $i_{0}-1$. 
The space of such leaves can be identified with a properly convex open domain of dimension $n-i_{0}-1$. 
Here, we will call $i_{0}$ the {\em fiber-dimension} of the NPCC-end $E$. 

From now on, instead of the term ``pseudo-end'', we will use the term ``p-end''. 

\subsection{Main results.}  
 Recall from \cite{EDC1} that the universal cover $\tilde \Sigma_{\tilde E}$ of 
 the end orbifold $\Sigma_{\tilde E}$ is foliated by $i_{0}$-dimensional totally geodesic leaves 
 for $i_{0} > 1$. 
 The end fundamental group $\pi_{1}(\tilde E)$ acts on a properly convex domain $K$
 that is the space of $i_{0}$-dimensional totally geodesic leaves foliating $\tilde \Sigma_{\tilde E}$.
 
%An {\em $g$ attracting subspace} $S$ of $\SI^{n}$ or $\bR P^{n}$ is a subspace where $g$ 
%sends a closed neighborhood of $S$ into its interior. 
%Lemma \ref{I-lem-weak} \cite{EDC1} shows that the condition is equivalent to  \marginpar{ not yet written}
%$\bv_{\tilde E}$ not being in an $g$-attracting subspace of fixed points
%for any $g \in \bGamma_{\tilde E}$.  
 %The group $\Aut(K)$ of projective automorphisms has a virtual center 
 %isomorphic to $\bR^{k}$ for $k \leq n$ 
 Given a properly convex domain $K$, 
 $\Aut(K)$ is virtually isomorphic to 
 \[\bR^{l-1}\times \Gamma_{1} \times \dots \times \Gamma_{l}\]
 for strongly irreducible semisimple groups $\Gamma_{i}, i=1, \dots, l$
 if and only if $K$ is a strict join 
 \[K_{1}\ast \cdots \ast K_{l}\] where 
 $K_{i}$ is a properly convex domain of dimension $j_{i}$ where 
 $j_{1}+\cdots + j_{l} + l-1 = n$. (Of course, it can be $l = 1$.)
 The virtual center of $\Aut(K)$ is the diagonalizable group corresponding to $\bR^{l-1}$. 
 (See Section \ref{subsec-conv} of \cite{EDC1}. Here there is no condition on strict convexity of $K_{i}$).
 
The main result of this paper is: 

\begin{theorem}\label{thm-thirdmain} 
Let $\mathcal{O}$ be a strongly tame properly convex real projective orbifold with radial or totally geodesic ends.
Assume that the holonomy group of $\mathcal{O}$ is strongly irreducible.
%Each end fundamental group is virtually isomorphic to a direct product of hyperbolic groups.
\begin{itemize}
\item Let $\tilde E$ be an NPCC p-R-end. 
\item Let $K$ be the convex $n-i_{0}-1$-dimensional domain that is the space of 
$i_{0}$-dimensional totally geodesic affine spaces foliating 
the universal cover $\tilde \Sigma_{\tilde E}$ of the end orbifold $\Sigma_{\tilde E}$. 
\end{itemize} 
We assume that 
\begin{itemize}
\item a virtual center of $\bGamma_{\tilde E}$ goes to a Zariski dense subgroup of  
%\marginpar{06/17 Maybe we don't need this assumption. Check out.} 
the virtual center of the group $\Aut(K)$ of projective automorphisms of $K$ and 
\item the p-end fundamental group $\pi_{1}(\tilde E)$ satisfies the weak middle-eigenvalue condition
for NPCC-ends. 
\end{itemize}
%but not the uniform middle-eigenvalue condition. 
Then $\tilde E$ is of  quasi-joined type p-R-end. %  and does not satisfy the uniform middle-eigenvalue condition. 
\end{theorem}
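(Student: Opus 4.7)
The plan is to analyze the fibration of $\tilde\Sigma_{\tilde E}$ by its complete affine leaves over the properly convex base $K$, and then use the Zariski density of the virtual center together with the weak middle-eigenvalue condition to force a horospherical holonomy on each fiber compatible with a projective action on $K$ that respects its strict join decomposition. The conclusion will be that the end is a deformation of a join of a horospherical piece (the fibers) and totally geodesic radial ends (one per join factor of $K$).

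First, I would set up projective coordinates adapted to the end. The p-end vertex $\bv_{\tilde E}$, the common boundary great sphere $\SI^{i_{0}-1}_{\infty}$ of the affine leaves, and the domain $K$ of dimension $n-i_{0}-1$ together determine a flag that every element of $\bGamma_{\tilde E}$ must preserve. This puts each holonomy matrix into block upper-triangular form with three principal diagonal blocks: a scalar on the line spanned by $\bv_{\tilde E}$, an $i_{0}\times i_{0}$ block acting along the affine fiber, and a block of size $n-i_{0}$ giving the projective action on $K$. The off-diagonal blocks are exactly the place where ``horospherical'' and ``quasi-join'' deformation data live, and the analysis reduces to controlling them.

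Next, I would exploit Zariski density. Writing $K=K_{1}\ast\cdots\ast K_{l}$, the virtual center of $\Aut(K)$ is the $(l-1)$-dimensional diagonal torus rescaling the factors, so Zariski density provides a positive-dimensional real torus of semisimple elements in $\bGamma_{\tilde E}$ whose simultaneous eigenspaces are adapted to the join factors. The weak middle-eigenvalue condition then calibrates the eigenvalue along $\bv_{\tilde E}$ relative to the eigenvalues on the factor blocks. This is the linear-algebraic backbone: it fixes the ``weights'' of the torus on the flag and rules out unwanted eigenvalue coincidences between the vertex direction and transverse directions.

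The heart of the argument is a Molino--Carri\`ere style analysis of the foliation of $\tilde\Sigma_{\tilde E}$ by complete affine leaves. Since the transverse model is the Hilbert geometry of $K$ and the deck group acts by transverse projective automorphisms, the leaf foliation is transversely modeled on a locally homogeneous convex domain, and its structural Lie algebra commutes with the transverse dynamics. Combined with the semisimple torus produced above, this forces the fiberwise part of the holonomy to lie in a unipotent subgroup preserving a horosphere based at $\bv_{\tilde E}$ inside each leaf; here I would invoke Fried--Goldman on the complete affine leaves to classify the possible unipotent actions and identify the horospherical subgroup explicitly. Assembling this with the torus action on $K$ yields the quasi-join structure: the off-diagonal blocks realize a controlled deformation of the strict join of horospherical and totally geodesic radial ends. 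The main obstacle I anticipate is precisely the Molino closure step --- namely, excluding the possibility that the closure of the leaf foliation contains hyperbolic directions transverse to the fibers that would allow non-unipotent fiber holonomy and destroy the quasi-join picture. This is where the weak middle-eigenvalue condition must do its heaviest work, by forbidding the eigenvalue alignment that such extra directions would require.
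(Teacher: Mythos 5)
Your outline assembles several of the right ingredients (the adapted flag and block-triangular form, the Zariski-dense virtual center, Carri\`ere--Molino for leaf closures, Fried--Goldman for the unipotent fiber group), but there are two genuine gaps. The most serious: you never explain why the end is \emph{quasi}-joined rather than joined, and you never use the hypothesis that the holonomy group of $\orb$ (not merely of $\tilde E$) is strongly irreducible. Once the fiber group is normalized to a standard cusp group $\CN\cong\bR^{i_0}$ and the representation is split, the remaining freedom is measured by a translation invariant $\alpha_7\colon\bGamma_{\tilde E}\to\bR$ recording how far $g$ translates toward $\bv_{\tilde E}$ inside each leaf hemisphere; $\alpha_7\equiv 0$ yields a genuine join, while a uniform positivity condition yields the quasi-join, and the join case is excluded only by showing it would force $\clo(\torb)$ to be a strict join of two invariant compact convex sets, contradicting strong irreducibility. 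This dichotomy and its resolution are the actual content of the conclusion, and your plan has no mechanism for it.

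Second, the passage from ``block-triangular with a torus adapted to the join factors'' to ``the fiber holonomy is a standard cusp group and the representation splits'' is where the real work lies, and your outline treats it as automatic. One must show: (a) conjugation by $g\in\bGamma_{\tilde E}$ acts on $\CN\cong\bR^{i_0}$ as a scalar $\mu_g$ times an orthogonal map; (b) the coupling blocks between the $K$-directions and the fiber directions span a subspace of the dual of $K$ on which the semisimple part acts through a compact group, whence by Benoist's decomposition theory that subspace is a point, $K$ is a cone $\{k\}\ast K''$, and $K''$ is realized invariantly inside $\Bd\torb$ so that the coupling blocks can be conjugated to zero; and (c) $\mu_g=1$ for all $g$, which in the indiscrete case again requires the weak middle-eigenvalue condition played against the expanding central sequence. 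Relatedly, the Carri\`ere--Molino step is needed only when the semisimple quotient $N_K$ is indiscrete (when $N_K$ is discrete the fiber group is already a lattice in a unipotent group by the classification of horospherical ends), and to invoke Carri\`ere one must first prove the leaves have polynomial growth; this is itself an eigenvalue estimate derived from the weak middle-eigenvalue condition, not a formal consequence of the transverse Hilbert-geometric homogeneity as your proposal suggests.
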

See Definition \ref{defn:weakmec} for the weak middle-eigenvalue condition
for NPCC-ends. Without this condition, we doubt we can obtain this type of results. 
However, it is open to investigations. 
In this case, $\tilde E$ does not satisfy the uniform middle-eigenvalue condition
as stated in \cite{EDC1} for properly convex ends. 

%Joined-ends and quasi-joined ends do not satisfy the uniform middle-eigenvalue condition by construction. 
%The above theorem imply:  
%\begin{corollary} \label{cor-main} 
%Let $\mathcal{O}$ be a strongly tame properly convex real projective orbifold with radial or totally geodesic ends.
%Assume that the holonomy group of $\mathcal{O}$ is strongly irreducible.
%Each end fundamental group is virtually isomorphic to a direct product of hyperbolic groups.
%Suppose that the each end holonomy group satisfies the uniform middle-eigenvalue condition.
%Then each end is a lens-type R-end, an R-end of cusp type, or a lens-type T-end.
%\end{corollary}

We will explain the quasi-joined type in Section \ref{subsec-qjoin}.  (See Definition \ref{defn-qjoin}.)
%and prove these in Section \ref{sec-secondmain}. 

%Our work is a ``classification'' since 
%we will show how to construct lens-type R-ends (Theorem \ref{II-thm-equ} in \cite{EDC2}), quasi-lens-type R-ends 
%(Propositions \ref{II-prop-quasilens1},  \ref{II-prop-quasilens2} in \cite{EDC2}), 
%lens-type T-ends (Theorem \ref{II-thm-equ2} in \cite{EDC2}), and quasi-joined NPCC R-ends
%(Example \ref{exmp-joined},  Theorems \ref{thm-NPCCcase} and \ref{thm-NPCCcase2}) 
%in a reasonable sense. (Of course, provided we know how to 
%compute certain cohomology groups.)
%%MARK1220

We remark that Cooper and Leitner has classified the properly convex ends when 
the end fundamental group is amenable. (See Leitner \cite{Leitner1} and \cite{Leitner2}.) 
Also, Ballas \cite{Ballas2012} and \cite{Ballas2014} has found some examples of joined ends 
when the semisimple part is a trivial group.

%\marginpar{need to fill...}

%%% Nov. 14, 10:31 pm 2012
%%Motivation: 
%%% The surfaces and 2-orbifolds 
%\begin{remark}
%A summary of the deformation spaces of real projective structures on closed orbifolds and surfaces is given 
%in \cite{Cbook} and \cite{Choi2004}. See also Marquis \cite{Marquis} for the end theory of $2$-orbifolds. 
%The deformation space of real projective structures on an orbifold 
%loosely speaking is the space of isotopy equivalent real projective structures on 
%a given orbifold. (See \cite{conv} also.) 
%\end{remark}

Recall the dual orbifold $\orb^{\ast}$ given a properly convex real projective orbifold $\orb$. 
(See \cite{EDC1} and Section \ref{II-sub-dualend} in \cite{EDC2}.) 
The set of ends of $\orb$ is in one-to-one correspondence with the set of ends of $\orb^{\ast}$. 
We show that a dual of a quasi-joined NPCC p-R-end is a quasi-joined NPCC p-R-end. 

\begin{corollary}\label{cor-dualNPCC} 
Let $\mathcal{O}$ be a strongly tame properly convex real projective orbifold with radial or totally geodesic ends.
%Assume that the holonomy group of $\mathcal{O}$ is strongly irreducible.
%Each end fundamental group is virtually isomorphic to a direct product of hyperbolic groups.
Let $\tilde E$ be a quasi-joined NPCC p-R-end for an end $E$ of $\orb$ satisfying 
the weak middle-eigenvalue condition. 
%Suppose that the p-each end holonomy group satisfies the weakly uniform middle-eigenvalue condition.
%but not the uniform middle-eigenvalue condition. 
Let $\orb^{\ast}$ denote the dual real projective orbifold of $\orb$. 
Let $\tilde E^{\ast}$ be a p-end corresponding to a dual end of $E$. 
Then $\tilde E^{\ast}$ has a p-end neighborhood of a quasi-joined type p-R-end. %  
\end{corollary}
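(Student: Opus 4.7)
The plan is to transfer the quasi-joined structure from $\tilde E$ to $\tilde E^{\ast}$ via the duality correspondence developed in Section \ref{II-sub-dualend} of \cite{EDC2}, and then either directly verify the data fitting Definition \ref{defn-qjoin} or re-invoke Theorem \ref{thm-thirdmain} on the dual orbifold $\orb^{\ast}$ after checking that its hypotheses pass to $\tilde E^{\ast}$.

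First I would set up the duality dictionary at the level of p-ends. The holonomy of $\orb^{\ast}$ is the contragredient of that of $\orb$; the canonical bijection between p-ends of $\orb$ and p-ends of $\orb^{\ast}$ identifies p-end fundamental groups via the identity on $\pi_{1}(\orb)$; and the p-end vertex $\bv_{\tilde E}$ dualizes to a hyperplane in $\rpn$, while $\bv_{\tilde E^{\ast}}$ arises as the dual of a hyperplane associated with $\tilde E$. I would then show that the common boundary great sphere $\SI^{i_{0}-1}_{\infty}$ of the complete affine leaves foliating $\tilde \Sigma_{\tilde E}$ corresponds dually to a complementary great sphere in the dual linking sphere, and that this sphere determines a foliation of $\tilde \Sigma_{\tilde E^{\ast}}$ by complete affine leaves of a dimension complementary to $i_{0}$, with leaf space canonically identified with the dual convex domain $K^{\ast}$. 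This establishes that $\tilde E^{\ast}$ is itself NPCC with the expected structural invariants.

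Next, I would verify that the hypotheses of Theorem \ref{thm-thirdmain} transfer to $\tilde E^{\ast}$. Strong irreducibility of the holonomy passes to the contragredient representation. The virtual centers of $\Aut(K)$ and $\Aut(K^{\ast})$ are canonically identified under the contragredient action on the $\bR^{l-1}$-factor in the strict join decomposition, so the image of the virtual center of $\bGamma_{\tilde E^{\ast}}$ remains Zariski dense in the virtual center of $\Aut(K^{\ast})$. Finally, the weak middle-eigenvalue condition of Definition \ref{defn:weakmec} is a condition on the relative sizes of eigenvalues of holonomies on invariant subspaces associated to the fiber and base of the NPCC fibration; this condition is compatible with inversion of eigenvalues under contragredience, so it holds for $\tilde E^{\ast}$. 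Invoking Theorem \ref{thm-thirdmain} for $\orb^{\ast}$ then yields that $\tilde E^{\ast}$ is of quasi-joined type, producing the required p-end neighborhood.

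The main obstacle, I expect, is the geometric identification in the second step: establishing rigorously that the dual great sphere and the dual affine foliation really arise in $\tilde \Sigma_{\tilde E^{\ast}}$, with leaf space precisely $K^{\ast}$ rather than a different closely related convex domain. A related but subtler issue is verifying that the weak middle-eigenvalue condition—which compares eigenvalues on the fiber subspace with those on the base subspace—is symmetric under the eigenvalue-inverting contragredient, so that no additional quantitative hypothesis is needed on the dual side. Provided these transfers are justified, the duality reduces the corollary to an application of the main theorem.
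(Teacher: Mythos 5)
Your overall strategy---transfer the structure through duality---is the right one, but as written the proposal has both a structural error and two unresolved steps that are exactly where the content of the proof lies. First, the claim that $\tilde \Sigma_{\tilde E^{\ast}}$ is foliated by complete affine leaves of ``dimension complementary to $i_{0}$'' is wrong, and is inconsistent with your own assertion that the leaf space is $K^{\ast}$ (which has dimension $n-i_{0}-1$, forcing the leaves to be $i_{0}$-dimensional). The paper shows the dual end is NPCC with fiber dimension again equal to $i_{0}$: the same unipotent group $\CN$ acts on the dual side via $\CN(\vec v)^{-1T}$ with $i_{0}$-dimensional orbits, and these orbits are the leaves. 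Second, your plan to re-invoke Theorem \ref{thm-thirdmain} on $\orb^{\ast}$ is not how the paper proceeds and would require verifying hypotheses (Zariski density of the image of a virtual center in the virtual center of $\Aut(K^{\ast})$, strong irreducibility) that are not part of the corollary's assumptions. The paper instead starts from the explicit normal form of a quasi-joined holonomy group (Corollary \ref{cor-NPCChol}), writes down $g^{-1T}$ and $\CN(\vec v)^{-1T}$ explicitly, and then checks directly that the hypotheses and the uniform positive translation condition of Proposition \ref{prop-qjoin} hold for the dual group; this is what makes the eigenvalue bookkeeping under contragredience---which you correctly flag as the subtle point but leave unresolved---a matter of inspection of the matrices rather than an abstract symmetry argument.

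The other genuine gap is that the conclusion asserts the existence of a \emph{p-end neighborhood} of quasi-joined type, which requires actually producing a p-end vertex and a radial foliation on the dual side (the remark after the corollary emphasizes that this structure is introduced artificially). The paper does this by identifying the vertex $k_{1}^{\ast}$ as the dual of the supporting hyperplane containing $K$ and $\SI^{i_{0}}_{\infty}$, locating the totally geodesic copy of $K^{\ast}\ast k^{\ast}$ in $\Bd\torb^{\ast}$ via the duality map on supporting hyperplanes, and constructing a properly convex invariant neighborhood by enlarging $\torb$ to $U_{1}$ along radial lines and dualizing to get $U_{1}^{\ast}\subset \torb^{\ast}$. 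None of this appears in your outline; without it you have identified what the dual structure \emph{should} be but not exhibited the quasi-joined p-end neighborhood that the statement requires.
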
 
In short, we are saying that $\tilde E^{\ast}$ can be considered a quasi-joined type p-R-end 
by choosing its p-end vertex well. However, this does involve artificially introducing a radial foliation structure
in an end neighborhood.

\subsection{Outline.} 

In Section \ref{sec-prelim}, we will briefly review the real projective geometry and convex sets. 

%Outline
In Section \ref{sec-notprop}, we discuss the R-ends that are NPCC. 
First, we show that the end holonomy group for an end $E$ will have an exact sequence 
\[ 1 \ra N \ra h(\pi_1(\tilde E)) \longrightarrow N_K \ra 1\] 
where $N_K$ is in the projective automorphism group $\Aut(K)$ of a properly convex compact set $K$ 
and $N$ is the normal subgroup mapped to the trivial automorphism of $K$
and $K^o/N_K$ is compact. 
We show that $\Sigma_{\tilde E}$ is foliated by complete affine spaces of dimension $\geq 1$. 
We will explain the main eigenvalue estimates following from the weak middle eigenvalue condition for 
NPCC-ends. Then we will explain our plan to prove Theorem \ref{thm-thirdmain}.

In Section \ref{sec-gendiscrete}, 
%we discuss the case when $N_K$ is a discrete. Here, $N$ is virtually abelian 
%and is conjugate to a discrete cocompact subgroup of a cusp group. 
we introduce the example of joining of horospherical and totally geodesic R-ends. 
We will now study a bit more general situation introducing a Hypothesis \ref{h-norm}. 
By computations involving the normalization conditions, 
we show that the above exact sequence is virtually split and we can surprisingly show that 
the p-R-ends are of joined or quasi-joined types. 
%We prove that the joined end does not exist, 
Then we show using the irreducibility of the holonomy group
of $\pi_{1}(\orb)$ that they can only be of quasi-joined type 
using the irreducibility. 
As a final part of this section, 
we discuss the case when $N_K$ is a discrete. We prove Theorem \ref{thm-thirdmain} for this case.

In Section \ref{sec-indiscrete}, we discuss when $N_K$ is not discrete. There is a foliation 
by complete affine spaces as above. 
We use some estimates on eigenvalues to show that each leaf 
is of polynomial growth.
The leaf closures are suborbifolds $V_l$ 
by the theory of Carri\`ere \cite{Car} and Molino \cite{Molbook} on Riemannian foliations. 
They form the fibration with compact fibers. 
% This shows that the identity component of the closure of
%$N_K$ is abelian and 
$\pi_1(V_l)$ is solvable using the work of Carri\`ere \cite{Car}. 
One can then take the syndetic closure to 
obtain a bigger group that act transitively on each leaf following Fried and Goldman \cite{FG}. 
We find a standard nilpotent group acting on each leaf transitively normalized by $\bGamma_{\tilde E}$. 
Then we show that 
the end also splits virtually using the theory of Section \ref{sec-gendiscrete}. This proves Theorem \ref{thm-thirdmain}. 
%We prove that the joined end does not exist, proving Theorem \ref{I-thm-secondmain}
%in \cite{EDC2}.

In Section \ref{sec-dualNPCC}, we prove Corollary \ref{cor-dualNPCC}.  

\begin{remark}
Note that the results are stated in the space $\SI^n$ or $\bR P^n$. Often the result for $\SI^n$ implies 
the result for $\bR P^n$. In this case, we only prove for $\SI^n$. In other cases, we can easily modify 
the $\SI^n$-version proof to one for the $\bR P^n$-version proof.  %We will say this in the proofs. 
\end{remark}

\subsection{Acknowledgements} 
We thank %David Fried for helping me understand the issues with distanced nature of the tubular actions and duality
%and 
Yves Carri\`ere with the general approach to study the indiscrete cases for nonproperly convex ends
using his and Molino's work considering the Riemannian foliation with leaves of polynomial growth.
I thank Sam Ballas and Daryl Cooper for explain their theory as related to our. 
%The basic Lie group approach of Riemannian foliations was a key idea here as well as the theory of 
%Fried on distal groups. 
%We thank Yves Benoist with some initial discussions on this topic and thank Bill Goldman and Francois Labourie. 
%We thank Daryl Cooper and Stephan Tillmann 
%for explaining their work and help and we also thank Micka\"el Crampon and Ludovic Marquis also. 
%Their works obviously were influential 
%here. The study was begun with a conversation with Tillmann at ``Manifolds at Melbourne 2006" 
%and I began to work on this seriously from my sabbatical at Univ. Melbourne from 2008. 
%We also thank Craig Hodgson and Gye-Seon Lee for working with me with many examples and their 
%insights. The idea of R-ends comes from the cooperation with them.

%\marginpar{add KRF...here}

%% Jan 26, 2:06pm 2013

% Nov 21 10:47

%\part{Preliminaries and the characterization of complete ends} 

\section{Preliminaries} \label{sec-prelim}

In this paper, we will be using the smooth category: that is, we will be using smooth maps and smooth charts and so on. 
We explain the material in the introduction again.  
We will establish that the universal cover $\torb$ of our orbifold $\orb$ is a domain 
in $\SI^n$ with a projective automorphism group $\Gamma \subset \SLnp$ acting on it. 
In this case, $\orb$ is projectively diffeomorphic to $\torb/\Gamma$. 

%\subsection{Distances used} 

%\begin{definition}\label{defn-Haus} 
%Let $\bdd$ denote the standard spherical metric on $\SI^n$ {\rm (}resp. $\bR P^n${\rm )}.
%Given two compact subsets $K_1$ and $K_2$ of $\SI^n$ {\rm (}resp. $\bR P^n${\rm ),}
%we define the spherical distance $\bdd_H(K_1, K_2)$ between $K_1$ and $K_2$ to be 
%\[ \inf\{\eps > 0| K_2 \subset N_\eps(K_1), K_1 \subset N_\eps(K_2)  \}.\] 
%The simple distance $\bdd(K_1, K_2)$ is defined as 
%\[ \inf\{ \bdd(x, y)| x \in K_1, K_2 \}.\]
%\end{definition}

%Recall that every sequence of compact sets $\{K_i\}$ in $\SI^n$ (resp. $\bR P^n$) has
%a convergent subsequence. 
%Also, given a sequence $\{K_i\}$ of compact sets, 
%$\{K_i\} \ra K$ for a compact set $K$ if and only if every sequence of points $x_i \in K_i$ has limit points in $K$ only
%and every point of $K$ has a sequence of points $x_i \in K_i$ converging to it. 
%(These facts can be found in some topology textbooks.) 

\subsection{Real projective structures}

Let $\bdd$ denote the standard spherical metric on $\SI^n$ {\rm (}resp. $\bR P^n${\rm )}.
%$\SI^n$, $\SL_\pm(n+1, \bR)$.
Let $O$ denote the origin of any vector space here.
Given a vector space $V$, we denote by ${\mathcal P}(V)$ the projective space 
$(V -\{O\})/\sim$ where $\vec{v} \sim \vec{w}$ iff $\vec{v} = s \vec{w}$ for $s \in \bR -\{0\}$ 
and we denote by ${\mathcal S}(V)$ the sphere $(V-\{O\})/\sim$ where $\vec{v} \sim \vec{w}$ for $s \in \bR_+$. 
We denote $\rpn = {\mathcal P}(\bR^{n+1})$ and $\SI^n = {\mathcal S}(\bR^{n+1})$. 
A {\em subspace} of ${\mathcal P}(V)$ or ${\mathcal S}(V)$ is the image of a subspace in $V$ with $O$ removed.
Given any linear isomorphism $f: V \ra W$, we denote by ${\mathcal P}(f)$ the induced 
projective isomorphism ${\mathcal P}(V) \ra {\mathcal P}(W)$ and ${\mathcal S}(f)$ the induced map 
${\mathcal S}(V) \ra {\mathcal S}(W)$. These maps are called {\em projective maps}. 

%% affine space
%An affine space $A^n$ is a vector space $\bR^n$ with affine translations allowed as an
%Here affine geodesics are defined to be the straight lines. 
The complement of a codimension-one subspace $W$ in $\rpn$ can be considered an affine 
space $A^n$ by correspondence 
\[[1, x_1, \dots, x_n] \ra (x_1, \dots, x_n)\] for a coordinate system where $W$ is given by $x_0=0$. 
The group $\Aff(A^n)$ of projective automorphisms acting on $A^n$ is identical with 
the group of affine transformations of form 
\[ \vec{x} \mapsto A \vec{x} + \vec{b} \] 
for a linear map $A: \bR^n \ra \bR^n$ and $\vec{b} \in \bR^n$. 
The projective geodesics and the affine geodesics agree up to parametrizations.

A cone $C$ in $\bR^{n+1} -\{O\}$ is a subset so that given a vector $x \in C$, 
$s x \in C$ for every $s \in \bR_+$. 
A {\em convex cone} is a cone that is a convex subset of $\bR^{n+1}$ in the usual sense. 
A {\em proper convex cone} is a convex cone not containing a complete affine line. 

%A convex real projective orbifold 

%% projective atuomorphism group
Note that we can double-cover $\rpn$ by $\SI^n$ the unit sphere in $\bR^{n+1}$ 
and this induces a real projective structure on $\SI^n$. 

We can think of $\SI^n$ as ${\mathcal S}(\bR^{n+1})$.
We call this the real projective sphere. The antipodal map
\[\mathcal{A}: \SI^n \ra \SI^n \hbox{ given by } [\vec{v}] \ra [-\vec{v}] \hbox{ for } \vec{v} \in \bR^{n+1} -\{O\}\]
which generates the covering automorphism group of $\SI^n \ra \rpn$.
The group $\Aut(\SI^n)$ of projective automorphisms of $\SI^n$ is isomorphic to $\SLnp$.

A {\em great segment} is a geodesic segment with antipodal end vertices, which is convex but not properly convex. 
A segment has $\bdd$-length $=\pi$ if and only if it is a great segment. 

%% identification
Given a projective structure where $\dev: \torb \ra \rpn$ is an embedding to a properly convex 
open subset as in this paper, 
$\dev$ lifts to an embedding $\dev': \torb \ra \SI^n$ to an open domain $D$ without any pair of antipodal 
points. $D$ is determined up to $\mathcal{A}$.

Let $\bGamma$ denote the group of deck transformations of $\torb$. 

\subsection{Convexity and convex domains}\label{subsec-conv}

%An affine orbifold is an orbifold

%The results here are originally due to Kuiper, Koszul, and so on.
%An affine manifold is {\em convex} if every path can be homotoped to an affine geodesic with endpoints fixed. 
%A complete real line in $\rpn$ is a $1$-dimensional subspace of an affine space $A^n$.
%A convex affine manifold is {\em properly convex} if there is no affine map from $\bR$ into it, i.e., there is no
%complete affine line in its universal cover. 

%A {\em parallel end} is an end with a p-end-neighborhood $U$ in the universal cover 
%so that there exists a vector $\vec{v}$ so that $U + t\vec{v} \subset V$ for all $t > 0$. 

%\begin{proposition}[Vey]\label{prop-affconv} 
%An affine manifold is convex if and only if the developing map sends 
%the universal cover to a convex open domain in $\bR^n$. 
%An affine manifold  is properly convex if and only if the developing map sends 
%the universal cover to a properly convex open domain in $\bR^n$. 
%\end{proposition}
%\begin{proof} 
%The first part is Theorem 8.1 of Shima \cite{Shbook}. 
%The second part is Theorem 8.3 of \cite{Shbook} since the hyperbolicity there 
%is equivalent to proper convexity. (See Kobayashi \cite{Kobpaper}.)

%\end{proof} 

%\begin{lemma}\label{lem-affpc}
%A properly convex subset of an affine subspace is a convex subset of a compact 
%subset of an affine subspace. 
%\end{lemma}
%\begin{proof} 
%The proof is similar to the $2$-dimensional situation in Section 1.3 of \cite{cdcr1}.
%\end{proof}

%% Jan 31 12:56am 

A complete real line in $\rpn$ is a $1$-dimensional subspace of $\rpn$ with one point removed. 
That is, it is the intersection of a $1$-dimensional subspace by an affine space. 
An {\em affine $i$-dimensional subspace} is a submanifold of $\SI^n$ or $\rpn$ projectively diffeomorphic 
to an $i$-dimensional affine subspace of a complete affine space.
A {\em convex} projective geodesic is a projective geodesic in a real projective orbifold which lifts to 
a projective geodesic, the image of whose composition with a developing map does not contain a complete real line. 
A real projective orbifold is {\em convex} if every path can be homotopied to a convex projective geodesic with endpoints fixed. 
%It is {\em properly convex} if it contains no great open segment in the orbifold.

In the double cover $\SI^n$ of $\rpn$, an affine space $A^n$ is the interior of a hemisphere. 
A domain in $\rpn$ or $\SI^n$ is {\em convex} if it lies in some affine subspace and satisfies the convexity 
property above. Note that a convex domain in $\rpn$ lifts to ones in $\SI^n$ up to the antipodal map 
$\mathcal{A}$. A convex domain in $\SI^n$ not containing an antipodal pair maps to one in $\rpn$ homeomorphically. 
(Actually from now on, we will only be interested in convex domains in $\SI^n$.)

\section{The weak middle eigenvalue conditions for NPCC ends} \label{sec-notprop}

We will now study the ends where the transverse real projective structures are not properly convex 
but not projectively diffeomorphic to a complete affine subspace.
%Of course, $\bGamma_U$ acts on a properly convex open domain, namely $\torb$. 
Let $\tilde E$ be a p-R-end of $\orb$ and let $U$ the corresponding p-end-neighborhood in $\torb$
with the p-end vertex $\bv_{\tilde E}$. 
Let $\tilde \Sigma_{\tilde E}$ denote the universal cover of the p-end orbifold $\Sigma_{\tilde E}$ as 
a domain in $\SI^{n-1}_{\bv_{\tilde E}}$. 

In Section \ref{sub-general}, we will discuss the general setting that the NPCC-ends satisfy. 
In Section \ref{sub-plan}, we will give a plan to prove Theorem \ref{thm-thirdmain}. 
This will be accomplished in Sections \ref{sec-gendiscrete} and \ref{sec-indiscrete}. 

\subsection{General setting} \label{sub-general} 

%Suppose that $\tilde \Sigma_{\tilde E}$ is not properly convex. 
The closure $\clo(\tilde \Sigma_{\tilde E})$ contains 
a great $(i_0-1)$-dimensional sphere and the convex open domain $\tilde \Sigma_{\tilde E}$ is foliated by $i_0$-dimensional hemispheres 
with this boundary. These follow from Section 1.4 of \cite{ChCh}. (See also \cite{GV}.) %\marginpar{Recheck...}
Let $\SI^{i_0-1}_\infty$ denote the great $(i_0-1)$-dimensional sphere in $\SI^{n-1}_{\bv_{\tilde E}}$ of $\tilde \Sigma_{\tilde E}$. 
The space of $i_0$-dimensional hemispheres in $\SI^{n-1}_{\bv_{\tilde E}}$ with boundary $\SI^{i_0-1}_\infty$ form 
a projective sphere $\SI^{n-i_0-1}$. 
The projection 
\begin{alignat}{2} \label{eqn-pik}
\Pi_{K}:\SI^{n-1}_{\bv_{\tilde E}} - \SI^{i_0-1}_\infty &  \, \longrightarrow  \,\, && \SI^{n-i_0-1} \quad  \quad\\
 \quad \uparrow \quad  &                     && \uparrow \quad\quad \nonumber \\
\tilde \Sigma_{\tilde E}    & \, \longrightarrow  \,\, &&  K^{o}  \quad \quad\nonumber
\end{alignat}
gives us an image of $\tilde \Sigma_{\tilde E}$ 
that is the interior $K^{o}$ of a  properly convex compact set $K$. %(See \cite{ChCh} for details. See also \cite{GV}.)

Let $\SI^{i_0}_\infty$ be a great $i_0$-dimensional sphere in $\SI^{n}$ containing $\bv_{\tilde E}$ corresponding to the directions
of $\SI^{i_0-1}_\infty$ from $\bv_{\tilde E}$. 
The space of  $(i_0+1)$-dimensional hemispheres 
with boundary $\SI^{i_0}_\infty$ again has the structure of the projective sphere $\SI^{n-i_0-1}$, 
identifiable with the above one. 
%Denote by $\Aut_{\SI^{i_0}_\infty} (\SI^n)$ the group of projective automorphisms of $\SI^n$
%acting on $\SI^{i_0}_\infty$ and fixing $\bv_{\tilde E}$.
We have the projection $\Pi_K$ 
giving us the image $K^o$ of a p-end-neighborhood $U$. 
%We denote by $\Pi_K: \Aut_{\SI^{i}_\infty} (\SI^n) \ra \Aut(\SI^{n-i})$ the surjective homomorphism. 

%Recall that the tube ${\mathcal T}_{\bv_{\tilde E}}(\tilde \Sigma_{\tilde E})$ fibers over a properly convex domain $K^o \subset \SI^{n-i_0-1}$ 
%with fibers open hemispheres of dimension $i_0+1$. 
%Denote by $\Pi_K$ the projection to $K$.

Each $i_0$-dimensional 
hemisphere $H^{i_0}$ in $\SI^{n-1}_{\bv_{\tilde E}}$ with 
$\Bd H^{i_0} = \SI^{i_0-1}_\infty$ corresponds to an $(i_0+1)$-dimensional hemisphere 
$H^{i_0+1}$ in $\SI^n$ with common boundary $\SI^{i_0}_\infty$ that contains $\bv_{\tilde E}$. 

Let $\SL_\pm(n+1, \bR)_{\SI^{i_0}_\infty, \bv_{\tilde E}}$ 
denote the subgroup of $\Aut(\SI^n)$ acting on 
$\SI^{i_0}_\infty$ and $\bv_{\infty}$.  
The projection $\Pi_K$ induces a homomorphism 
\[\Pi_K^*: \SL_\pm(n+1, \bR)_{\SI^{i_0}_\infty, \bv_{\tilde E}} 
\ra \SL_\pm( n-i_{0}-1, \bR).\]

Suppose that $\SI^{i_0}_\infty$ is $h(\pi_1(\tilde E))$-invariant. 
%The space of $i_0+1$-dimensional hemispheres with the boundary 
%in $\SI^{i_0}_\infty$ is identical with $\SI^{n-i_0-1}$. 
We let $N$ be the subgroup of $h(\pi_1(\tilde E))$ of elements inducing trivial actions on $\SI^{n-i_0-1}$. 
The above exact sequence 
\begin{equation}\label{eqn-exact}
1 \ra N \ra h(\pi_1(\tilde E)) \stackrel{\Pi^*_K}{\longrightarrow} N_K \ra 1
\end{equation}
is so that the kernel normal subgroup $N$ acts trivially on $\SI^{n-i_0-1}$ but acts on each hemisphere with 
boundary equal to $\SI^{i_0}_\infty$
and $N_K$ acts faithfully by the action induced from $\Pi^*_K$.

Since $K$ is a properly convex domain, $K^{o}$ admits a Hilbert metric $d_{K}$ and 
$\Aut(K)$ is a subgroup of isometries of $K^{o}$. 
Here $N_K$ is a subgroup of the group $\Aut(K)$ of the group of projective automorphisms of $K$, and 
$N_{K}$ is called the {\em semisimple quotient } of $h(\pi_1(\tilde E))$ or $\bGamma_{\tilde E}$. 

\begin{theorem}\label{thm-folaff}
%Le $\orb$ be a strongly tame properly convex real projective orbifold with radial or totally geod
Let $\Sigma_{\tilde E}$ be the end orbifold of an NPCC p-R-end $\tilde E$ of a strongly tame  
properly convex $n$-orbifold $\orb$ with radial or totally geodesic ends. Let $\torb$ be the universal cover in $\SI^n$. 
%Let $\hat h_{\tilde E}: \pi_1(\tilde E) \ra \Aut(\SI^{n-1}_{\bv_{\tilde E}})$ be the associated holonomy homomorphism
%induced by restriction $h_{\tilde E}$ of the holonomy homomorphism $h:\pi_1({\mathcal O}) \ra \Aut(\SI^n)$
%to $\pi_1(\tilde E)$ and 
We consider the induced action of $h(\pi_1(\tilde E))$ 
on $\Aut(\SI^{n-1}_{\bv_{\tilde E}})$ for the corresponding end vertex $\bv_{\tilde E}$. 
Then 
\begin{itemize} 
\item $\Sigma_{\tilde E}$ is foliated by complete affine subspaces of dimension $i_0$, $i_0 > 0$.
\item $h(\pi_1(\tilde E))$ fixes the great sphere $\SI^{i_0-1}_\infty$ of dimension $i_0-1$ in $\SI^{n-1}_{\bv_{\tilde E}}$. 
\item There exists an exact sequence 
\[ 1 \ra N \ra \pi_1(\tilde E) \stackrel{\Pi^*_K}{\longrightarrow} N_K \ra 1 \] 
where $N$ acts trivially on quotient great sphere $\SI^{n-i_0-1}$ and 
$N_K$ acts faithfully on a properly convex domain $K^o$ in $\SI^{n-i_0-1}$ isometrically 
with respect to the Hilbert metric $d_K$. 
\end{itemize} 
\end{theorem}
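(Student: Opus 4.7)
The plan is to consolidate the convex-geometric facts already outlined at the beginning of Section~\ref{sub-general} and upgrade them to an equivariant statement. First, since $\tilde \Sigma_{\tilde E}$ is convex but neither properly convex nor complete affine, the structure theory recalled from Section~1.4 of \cite{ChCh} (cf.~\cite{GV}) produces a great $(i_0-1)$-sphere $\SI^{i_0-1}_\infty \subset \clo(\tilde \Sigma_{\tilde E})$ together with a foliation of $\tilde \Sigma_{\tilde E}$ by the interiors of $i_0$-dimensional hemispheres sharing $\SI^{i_0-1}_\infty$ as their common boundary. The ``not properly convex'' hypothesis forces $i_0\ge 1$, and the ``not complete affine'' hypothesis rules out the degenerate case $i_0 = n-1$; convexity of $\tilde \Sigma_{\tilde E}$ then forces each leaf to exhaust its ambient open hemisphere, so each leaf is a complete affine space of dimension $i_0$.

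Next I would argue invariance. The sphere $\SI^{i_0-1}_\infty$ is canonically attached to $\tilde \Sigma_{\tilde E}$ --- it is the unique maximal great subsphere lying in $\clo(\tilde \Sigma_{\tilde E})$ --- so any projective automorphism preserving $\tilde \Sigma_{\tilde E}$ must preserve $\SI^{i_0-1}_\infty$ and permute the $i_0$-dimensional leaves. Since $h(\pi_1(\tilde E))$ acts on $\tilde \Sigma_{\tilde E}$, this yields the second bullet; descending to the orbifold gives the first.

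With $\SI^{i_0-1}_\infty$ invariant, the projection $\Pi_K$ of \eqref{eqn-pik} is equivariant for the homomorphism $\Pi_K^\ast$ from $\SL_\pm(n+1,\bR)_{\SI^{i_0}_\infty, \bv_{\tilde E}}$ to $\Aut(\SI^{n-i_0-1})$, and I would set $N := \ker\bigl(h(\pi_1(\tilde E)) \to \Aut(\SI^{n-i_0-1})\bigr)$ and $N_K := \Pi^\ast_K(h(\pi_1(\tilde E)))$, making the exact sequence tautological. The image $\Pi_K(\tilde \Sigma_{\tilde E}) = K^o$ is convex because $\tilde \Sigma_{\tilde E}$ is. Properness of $K := \clo(K^o)$ is argued by contradiction: were $K^o$ to contain an antipodal pair, one could either refine the foliation (contradicting the maximality of $\SI^{i_0-1}_\infty$) or produce an open $(i_0+1)$-hemisphere with boundary $\SI^{i_0}_\infty$ inside $\tilde \Sigma_{\tilde E}$, contradicting ``not complete affine''. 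Finally, $N_K$ preserves $K^o$ and hence lies in $\Aut(K)$, which acts on $(K^o,d_K)$ by Hilbert isometries by a standard fact about properly convex domains.

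The main obstacle I anticipate is the properness of $K$: verifying that $K^o$ contains no full great segment requires using ``not complete affine'' in a nontrivial way via the foliation --- one has to show that any antipodal pair in $K^o$ would force an inadmissible enlargement of either the foliation or the maximal sphere. Every other step is either an invocation of \cite{ChCh} or a formal consequence of equivariance under the projection $\Pi_K$.
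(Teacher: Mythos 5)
Your proposal is correct and follows essentially the same route as the paper: the paper likewise derives the foliation and the properly convex quotient $K^o$ from the structure theory of Section 1.4 of \cite{ChCh} (cf.\ \cite{GV}), obtains invariance of $\SI^{i_0-1}_\infty$ from its canonical (unique maximal great subsphere) nature, and defines $N$ and $N_K$ as the kernel and image of $\Pi^*_K$, with the Hilbert-isometry statement being the standard fact about $\Aut(K)$. Your explicit uniqueness argument for the invariance and your sketch of why $K$ is properly convex merely spell out steps the paper delegates to the cited references.
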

%\begin{proof} 
%These follow from Section 1.4 of \cite{ChCh}. (See also \cite{GV}.) %\marginpar{Recheck...}
%\end{proof} 

We denote by $\mathcal{F}_{\tilde E}$ the foliation on $\tilde \Sigma_{\tilde E}$ or the corresponding one in $\Sigma_{\tilde E}$.
%Note that $\SI^{n-i_0-1}$ the quotient space of $\SI^{n-1}_{\bv_{\tilde E}}$ correspond to 

%% Feb 19 11:08pm
\subsubsection{The main eigenvalue estimations}

We denote by $\bGamma_{\tilde E}$ the p-end fundamental group acting on $U$ fixing $\bv_{\tilde E}$. 
Denote the induced foliations on $\Sigma_{\tilde E}$ and $\tilde \Sigma_{\tilde E}$ by ${\mathcal F}_{\tilde E}$.
For each element $g \in \bGamma_{\tilde E} $, we define $\leng_K(g)$ to 
be $\inf \{ d_K(x, g(x))| x \in K^o \}$.

\begin{definition}\label{defn-jordan}
Given an eigenvalue $\lambda$ of an element $g \in \SLnp$, 
a $\bC$-eigenvector $\vec v$ is a nonzero vector in 
\[\bR E_{\lambda}(g) := \bR^{n+1} \cap (\ker (g - \lambda I) + \ker (g - \bar \lambda I)), \lambda \ne 0, {\mathrm{Im}} \lambda \geq 0\]
%A $\bC$-fixed point is a direction of $\bC$-eigenvector. 

Any element of $g$ has a Jordan decomposition. An irreducible Jordan-block corresponds to a unique subspace
in $\bC^{n+1}$, called an {\em elementary Jordan subspace}. We denote by $J_{\mu, i} \subset \bC^{n+1}$ 
for an eigenvalue $\mu \in \bC$
for $i$ in an index set.
\begin{itemize}
\item A {\em real elementary Jordan subspace} is defined as 
\[ R_{\mu, i} := \bR^{n+1} \cap (J_{\mu, i} + J_{\bar \mu, i}),
\mu \ne 0, {\mathrm{Im}} \mu \geq 0\]
of Jordan subspaces with 
 $\ovl{J_{\mu, i}} = J_{\bar \mu, i}$ in $\bC^{n+1}.$
\item We define the  {\em real sum} of elementary 
Jordan-block subspaces is defined to be 
\[\bigoplus_{i \in I}R_{\mu, i}\]
for a finite collection $I$.
%(These spaces are not uniquely determined but we choose one for each $g$ arbitrarily.)
%with conjugate eigenvalues and invariant under conjugation actions in $\bC^{n+1}$. 

\item A point $[\vec v], \vec v \in \bR^{n+1},$ is {\em affiliated} with a norm $\mu$ of an eigenvalue if 
$\vec v \in \bigoplus_{|\lambda| = \mu, i \in I_{\lambda}} R_{\lambda, i}$ 
for a sum of all real elementary Jordan subspaces $R_{\lambda, i}$, $\mu = |\lambda|$. 
\end{itemize} 
\end{definition}

Let $V^{i+1}_\infty$ denote the subspace of $\bR^{n+1}$ corresponding 
to $\SI^i_\infty$. 
By invariance of $\SI^i_\infty$, 
if \[\oplus_{(\mu, i) \in J} R_{\mu, i} \cap V^{i+1}_\infty \ne \emp\] for some finite collection $J$, 
then $\oplus_{(\mu, i) \in J} R_{\mu, i} \cap V^{i+1}_\infty$ always contains a $\bC$-eigenvector. 

\begin{definition}\label{defn-eig} 
Let $\Sigma_{\tilde E}$ be the end orbifold of a nonproperly convex and p-R-end $\tilde E$ of a 
strongly tame properly convex $n$-orbifold $\orb$ with radial or totally geodesic ends. Let $\bGamma_{\tilde E}$ 
be the p-end fundamental group. 
We fix a choice of a Jordan decomposition of $g$ for each $g \in \bGamma_{\tilde E}$. 
\begin{itemize}
\item Let $\lambda_1(g)$ denote the largest norm of the eigenvalue of $g \in \bGamma_{\tilde E}$ affiliated
with $\vec{v} \ne 0$, $[\vec{v}] \in \SI^n - \SI^{i_0}_\infty$,
% i.e., \[ \vec v \in \bigoplus_{|\mu(g)|=\lambda_1(g)}  \bR E_{\mu}(g) - V^{i_0+1}.\]
i.e., \[\vec v \in \bigoplus_{(\mu_1(g), i) \in J} R_{\mu_1(g), i} -  V^{i_0+1}_\infty, |\mu_1| = \lambda_1(g)\]
where $J$ indexes all elementary Jordan subspaces of $\lambda_1(g)$.
\item Also, let $\lambda_{n+1}(g)$ denote the smallest one affiliated with a nonzero vector $\vec v$, $[\vec{v}] \in \SI^n - \SI^{i_0}_\infty$,
 %i.e., \[ \vec v \in \bigoplus_{|\mu(g)|=\lambda_{n+1}(g)}  \bR E_{\mu}(g) - V^{i_0+1}.\]
i.e., \[\vec v \in \bigoplus_{(\mu_1(g), i) \in  J'} R_{\mu_{n+1}(g), i} - V^{i_0+1}_\infty, |\mu_{n+1}|=\lambda_{n+1}(g)\]
where $J'$ indexes all real elementary Jordan subspaces of $\lambda_{n+1}(g)$.
%for some choice of Jordan decomposition of $g$.
\item Let $\lambda(g)$ be the largest of the norm of the eigenvalue of $g$ with a $\bC$-eigenvector $\vec v$, $[\vec v] \in \SI^{i_0}_\infty$
and $\lambda'(g)$ the smallest such one. 
\end{itemize} 
\end{definition}
(The sums of the Jordan subspaces here are of course well-defined.) 

%We assume that $\bGamma_{\tilde E}$ is admissible. 
%For some $l' \geq 1$, $\bGamma_{\tilde E}$ is isomorphic to a cocompact subgroup of 
%$\bZ^{l'-1} \times \bGamma_1 \times \cdots \times \bGamma_{l'} $ up to finite index
%where each $\bGamma_i$ acts irreducibly on $K_i$ for each $i = 1, \dots, r$. 
%We assume that $\bGamma_i$ is torsion-free hyperbolic for $i=1, \dots, s$ and 
%$\bGamma_i = \{\Idd\}$ for $s+1 \leq  i \leq l'$.  
%We will use the notation $\bGamma_i$ the corresponding subgroup in $\bGamma_{\tilde E}$. 
%\marginpar{Need to prove this 2nd paper... }

Suppose that $K$ has a decomposition into $K_1 * \cdots * K_{l_0}$ for properly convex domains 
$K_i$, $i =1, \dots, l_0$. 
Let $K_i, i=1, \dots, s$, be the ones with dimension $\geq 2$.
% and 
%the corresponding $\bGamma_i$ is hyperbolic and irreducible for $s \leq l_0$. 
$N_K$ is virtually isomorphic to
a cocompact subgroup of 
 the product \[ \bZ^{l_0-1} \times \Gamma_1 \times \dots \times \Gamma_{s}\]
where $\Gamma_i$ is obtained from $N_K$ by restricting to $K_i$ and 
$A$ is a free abelian group of finite rank. 
(Note that $\Gamma_{i}$ are not necessarily discrete.) 
%In the beginning of Section \ref{sub-discrete}, 
%we show that $N$ is virtually nilpotent when $N_K$ is discrete. 
%In Section \ref{sub-uniN}, we show that $N$ is virtually nilpotent when $N_K$ is indiscrete. 

%$\bGamma_i \cap N$ is a normal subgroup of $\bGamma_i$. 
%If $\bGamma_i$ is hyperbolic, then this group has to be finite. 
%Therefore, we obtain that each $\bGamma_i$ for $i = 1, \dots, s$, is mapped virtually isomorphically to some $\Gamma_i$ 
%or to a dense subgroup of $\Aut(K_{i})$ in $N_K$ provided $\bGamma_i$ is hyperbolic. 

The {\em virtual center} of $\bGamma_{\tilde E}$ is the elements that corresponds to $\bZ^{l_{0}}$. 

As in Section \ref{I-sec-notprop} of \cite{EDC1}, each $K_i$ is a properly convex domain or a point
by Theorem 1.1 of  \cite{Ben2}.  %\marginpar{strict convexity no longer needed}

%\begin{definition}
%Let $\bar \lambda(g)$ denote the largest norm of the eigenvalues 
%of $g \in \Gamma_{\tilde E}$. 
%Let $\bar \lambda_{K_{i}}(g)$ denote the largest norm of 
%the eigenvalues of $g\in \pi_{1}(\tilde E)$ restricted to the subspace of $\bR^{n+1}$ corresponding to $K_{i}$
%if $g$ acts on $K_{i}$ and $g$ is represented as a unimodular matrix. 
%Let $\lambda_{\bv_{\tilde E}}(g)$ denote the eigenvalue of $g$ at $\bv_{\tilde E}$. 
%The weak middle eigenvalue condition  is that for each element $g$ of $\pi_{1}(\tilde E)$, 
%\begin{equation} \label{eqn:mec}
%\bar \lambda(g) \geq \lambda_{1}(g) \geq \lambda_{\bv_{\tilde E}}(g)
%\end{equation} 
%\end{definition} 

We will assume that the p-end fundamental group $\pi_{1}(\tilde E)$ satisfies 
the weak middle eigenvalue condition for NPCC-ends: 
\begin{definition}\label{defn:weakmec}
Let $\bar \lambda(g)$ denote the largest norm of the eigenvalues 
of $g \in \Gamma_{\tilde E}$. 
%Let $\bar \lambda_{K_{i}}(g)$ denote the largest norm of 
%the eigenvalues of $g\in \pi_{1}(\tilde E)$ restricted to the subspace of $\bR^{n+1}$ corresponding to $K_{i}$
%if $g$ acts on $K_{i}$ and $g$ is represented as a unimodular matrix. 
Let $\lambda_{\bv_{\tilde E}}(g)$ denote the eigenvalue of $g$ at $\bv_{\tilde E}$. 
The weak middle eigenvalue condition  is that for each element $g$ of $\pi_{1}(\tilde E)$, 
\begin{equation} \label{eqn:mec}
\bar \lambda(g) \geq \lambda_{1}(g) \geq \lambda_{\bv_{\tilde E}}(g)
\end{equation} 
\end{definition} 
Hueristically, this can be considered ``transversal middle eigenvalue condition''.

The following proposition is very important in this article and shows that 
$\lambda_1(g)$ and $\lambda_{n+1}(g)$ are true largest and smallest norms of the eigenvalues of $g$. 
We will sharpen the following to inequality in the discrete and indiscrete cases.  
\begin{proposition}\label{prop-eigSI}  
Let $\Sigma_{\tilde E}$ be the end orbifold of a nonproperly convex p-R-end $\tilde E$ of a strongly tame 
properly convex $n$-orbifold $\orb$ with radial or totally geodesic ends. 
Suppose that $\torb$ in $\SI^n$ (resp. $\bR P^n$) 
covers $\orb$ as a universal cover. 
Let $\bGamma_{\tilde E}$ be the p-end fundamental group satisfying the weak middle-eigenvalue condition.
Let $g \in \bGamma_{\tilde E}$. 
Then
%\begin{itemize} 
%\item we have
\[\lambda_1(g) \geq \lambda(g) \geq  \lambda'(g) \geq \lambda_{n+1}(g)\]
holds. 
%and 
%\item an infinite order element of $\bGamma_{\tilde E}$ acts on $K$ so that 
%it always has a fixed point in $\Bd K$ for
%largest norm eigenvalue $\lambda_1(g)$ and a fixed point in $\Bd K$ of %smallest norm eigenvalue $\lambda_{n+1}(g)$
%and they are both positive. 
%\end{itemize} 
\end{proposition}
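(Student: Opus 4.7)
The middle inequality $\lambda(g) \geq \lambda'(g)$ is tautological from the definitions: both are extremal values among the norms of eigenvalues of $g$ with $\bC$-eigenvector in $\SI^{i_0}_\infty$. For the outer two inequalities I exploit the $g \leftrightarrow g^{-1}$ symmetry: reading off the Jordan form of $g^{-1}$ from that of $g$ gives $\lambda_1(g^{-1}) = \lambda_{n+1}(g)^{-1}$ and $\lambda(g^{-1}) = \lambda'(g)^{-1}$, and $g^{-1} \in \bGamma_{\tilde E}$ also satisfies the weak middle-eigenvalue condition. Hence $\lambda'(g) \geq \lambda_{n+1}(g)$ is equivalent to $\lambda_1(g^{-1}) \geq \lambda(g^{-1})$, so it suffices to prove the single inequality $\lambda_1(h) \geq \lambda(h)$ for every $h \in \bGamma_{\tilde E}$.

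For this main inequality, the vector $\vec v_{\tilde E}$ representing the p-end vertex lies in $V^{i_0+1}_\infty$ and is a $g$-eigenvector with eigenvalue $\lambda_{\bv_{\tilde E}}(g)$, so this eigenvalue lies in $[\lambda'(g),\lambda(g)]$, and the weak middle-eigenvalue condition supplies $\lambda_1(g) \geq \lambda_{\bv_{\tilde E}}(g)$. To bridge the remaining gap up to $\lambda(g)$, I apply a Perron--Frobenius argument to the $g$-invariant subset $\clo(\torb) \cap \SI^{i_0}_\infty$ of $\SI^{i_0}_\infty$: proper convexity of $\torb$ prevents its closure from containing an antipodal pair, so this intersection is a properly convex set in $\SI^{i_0}_\infty$ containing $\bv_{\tilde E}$, and the top eigenvalue of $g|_{V^{i_0+1}_\infty}$ is realized by a $\bC$-eigenvector whose projective class lies there. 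The distinguished role of $\bv_{\tilde E}$ as the apex of the radial foliation at the p-end, whose fibers are $(i_0+1)$-hemispheres through $\bv_{\tilde E}$ with common boundary $\SI^{i_0}_\infty$, should force this top eigenvector to be a positive multiple of $\vec v_{\tilde E}$, yielding $\lambda(g) = \lambda_{\bv_{\tilde E}}(g)$ and hence $\lambda_1(g) \geq \lambda(g)$.

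The principal obstacle is exactly this final geometric identification: ruling out that some eigenvector of $g$ lying in $V^{i_0}_\infty \setminus \langle \vec v_{\tilde E} \rangle$, corresponding to the common fiber boundary $\SI^{i_0-1}_\infty$, carries an eigenvalue whose norm strictly exceeds $\lambda_{\bv_{\tilde E}}(g)$. Handling this cleanly requires carefully combining the proper convexity of the global domain $\torb$, the hemisphere foliation structure at $\bv_{\tilde E}$, and the weak middle-eigenvalue condition for every $g \in \bGamma_{\tilde E}$ (not merely a single element); this is the structural heart of the argument, and the paper will later sharpen the present non-strict inequality to a strict one separately in the discrete and indiscrete cases for $N_K$.
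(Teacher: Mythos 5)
Your reduction is sound as far as it goes: the middle inequality is indeed definitional, and passing to $g^{-1}$ (which also lies in $\bGamma_{\tilde E}$ and hence satisfies the weak middle-eigenvalue condition) correctly reduces everything to the single inequality $\lambda_1(h)\geq\lambda(h)$; the paper makes the same reduction. But the proof of that remaining inequality — which you yourself flag as ``the structural heart of the argument'' and the ``principal obstacle'' — is missing, so the proposal has a genuine gap. Worse, the route you sketch for it aims at the identity $\lambda(g)=\lambda_{\bv_{\tilde E}}(g)$, which is strictly stronger than what is needed and is not available at this stage of the paper: a priori the restriction of $g$ to $V^{i_0+1}_\infty$ can have eigenvalue norms exceeding $\lambda_{\bv_{\tilde E}}(g)$ (in the paper's later notation this is exactly the possibility $\mu_g\neq 1$, which is only ruled out much later, in Proposition \ref{prop-mug}, under additional hypotheses). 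Your Perron--Frobenius step also does not deliver what you claim: $\clo(\torb)\cap\SI^{i_0}_\infty$ need not span $V^{i_0+1}_\infty$ (it could be as small as $\{\bv_{\tilde E}\}$), so an invariant properly convex set there only controls the restriction of $g$ to the span of that set, not the top eigenvalue of $g|_{V^{i_0+1}_\infty}$.

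The paper's actual argument is different and avoids any claim about $\lambda_{\bv_{\tilde E}}(g)$ versus $\lambda(g)$. One assumes for contradiction that $\bar\lambda(g)>\lambda_1(g)$, i.e.\ that the largest eigenvalue norm of $g$ is attained only on $V^{i_0+1}_\infty$. The weak middle-eigenvalue condition then forces $\bar\lambda(g)>\lambda_{\bv_{\tilde E}}(g)$, so the real sum $R_{\bar\lambda(g)}$ of top-norm Jordan subspaces gives a great sphere $\SI^j\subset\SI^{i_0}_\infty$ disjoint from $\{\bv_{\tilde E},\bv_{\tilde E-}\}$, with a complementary invariant sphere $C_1\ni\bv_{\tilde E}$ of lower eigenvalue norms. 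A dimension count shows $C_1$ meets a small ball $B'\subset U$; writing points of $B'$ as $[w\pm\eps v]$ with $[w]\in C_1\cap B'$ and $[v]\in\SI^j$ and iterating $g^k$, the $\SI^j$-components dominate and a subsequence of the two orbits converges to an antipodal pair in $\clo(U)$, contradicting proper convexity of $U$. This yields $\lambda_1(g)=\bar\lambda(g)\geq\lambda(g)$, and then your $g^{-1}$ symmetry finishes. If you want to complete your proposal, you should replace the attempted identification of the top eigenvector on $\SI^{i_0}_\infty$ with $\bv_{\tilde E}$ by this kind of contradiction argument using the proper convexity of a p-end neighborhood.
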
 
\begin{proof}
We may assume that $g$ is of infinite order. 
Suppose that $\bar \lambda(g) > \lambda_1(g)$. 
We have $\bar \lambda(g) \geq  \lambda_{\bv_{\tilde E}}(g)$ by the weak uniform middle eigenvalue condition. 
%and $\bar \lambda(g)$ is the largest norm of the eigenvalues of $g$. 
If $\bar \lambda(g) = \lambda_{\bv_{\tilde E}}(g)$, then $\lambda_{\bv_{\tilde E}}(g) > \lambda_1(g)$ contradicts 
the weak uniform middle eigenvalue condition. Thus, $\bar \lambda(g) > \lambda_{\bv_{\tilde E}}(g) .$

%Assume first that  $\bar \lambda(g) > \lambda_{\bv_{\tilde E}}(g) $.
Now, $\lambda_1(g) < \bar \lambda(g)$ implies that
\[R_{\bar \lambda(g)}:= \bigoplus_{(\mu_1(g), i) \in J} R_{\mu, i}(g),  |\mu|=\bar \lambda(g)\]
is a subspace of $V^{i_0+1}_\infty$ and corresponds to a great sphere $\SI^j$. 
%Let $\SI^j$, $j \geq 0$, denote the subspace of $\SI^i_\infty$ corresponding to it. 
Hence, a great sphere $\SI^j$, $j \geq 0$, in $\SI^{i_0}_\infty$ is disjoint from 
$\{\bv_{\tilde E}, \bv_{\tilde E-}\}.$ 
%\marginpar{disjoint?}
%corresponds to the real sum 
%of all real elementary Jordan-block subspaces $R_{\mu, i}$ of $g$ corresponding to  norm $\lambda(g)$.
Since $\bv_{\tilde E} \in \SI^{i_0}_\infty$ is not contained in $\SI^j$, we obtain $j+1 \leq i_0$.

%% Dec 21 12:05 pm 
A vector space $V_1$ corresponds the real sum of Jordan-block subspaces where 
$g$ has strictly smaller norm eigenvalues and is complementary to $R_{\bar \lambda(g)}$. 
Let $C_{1} =\mathcal{S}(V_{1})$. 
The great sphere $C_1$ is disjoint from $\SI^j$ but $C_1$ contains $\bv_{\tilde E}$. 
%Since a vector in $\bR^{n+1} - V^{i_0+1}_\infty$ is not in the real Jordan block 
%subspace of eigenvalue norm $\lambda(g)$, it follows that 
Moreover, $C_1$ is of complementary dimension to $S^j$, i.e., $\dim C_1 = n - j-1$. 

%\marginpar{Need to check... books here.} 

Since $C_1$ is complementary to $\SI^j \subset \SI^{i_0}_\infty$, a complementary subspace 
$C'_1$ to $\SI^{i_0}_\infty$ of dimension $n-i_0-1$ is in $C_1$. 
Considering the sphere $\SI^{n-1}_{\bv_{\tilde E}}$ at $\bv_{\tilde E}$, 
it follows that $C'_1$ goes to an $n-i_0-1$-dimensional subspace $C''_1$ in 
$\SI^{n-1}_{\bv_{\tilde E}}$ disjoint from $\partial l$ for any complete affine leaf $l$. 
% and $S^j$ goes to the subspace $S^{j \prime}$ of dimension $j \leq i_0 -1$.
Each complete affine leaf $l$ of $\tilde \Sigma_{\tilde E}$ has the dimension $i_0$ 
and meets $C''_1$ in $\SI^{n-1}_{\bv_{\tilde E}}$ by the dimension consideration. 

%Since \[\dim l + \dim C'_1 = i_0 + n-j-2 \geq n-1 = \dim \tilde \Sigma_{\tilde E},\] 
%the smooth submanifold $C'_1 - \SI^{i_0-1}_\infty$ meets each leaf $l$ of $\mathcal F$. 
%a vector space consideration shows 
%that each leaf meets $C'_1$. Thus, we have now $C_1 \cap U \ne \emp$. 
%$\dim C_1 \cap \SI^{i_0}_\infty \leq i_0 - j - 1$
%Under the projection $\SI^n - \SI^{i_0}_\infty \ra \SI^{n-i_0-1}$, 
%it goes to a compact manifold of dimension $n-j-1

\begin{figure}
\centering
\includegraphics[trim = 15mm 30mm 10mm 10mm, clip,  height=5cm]{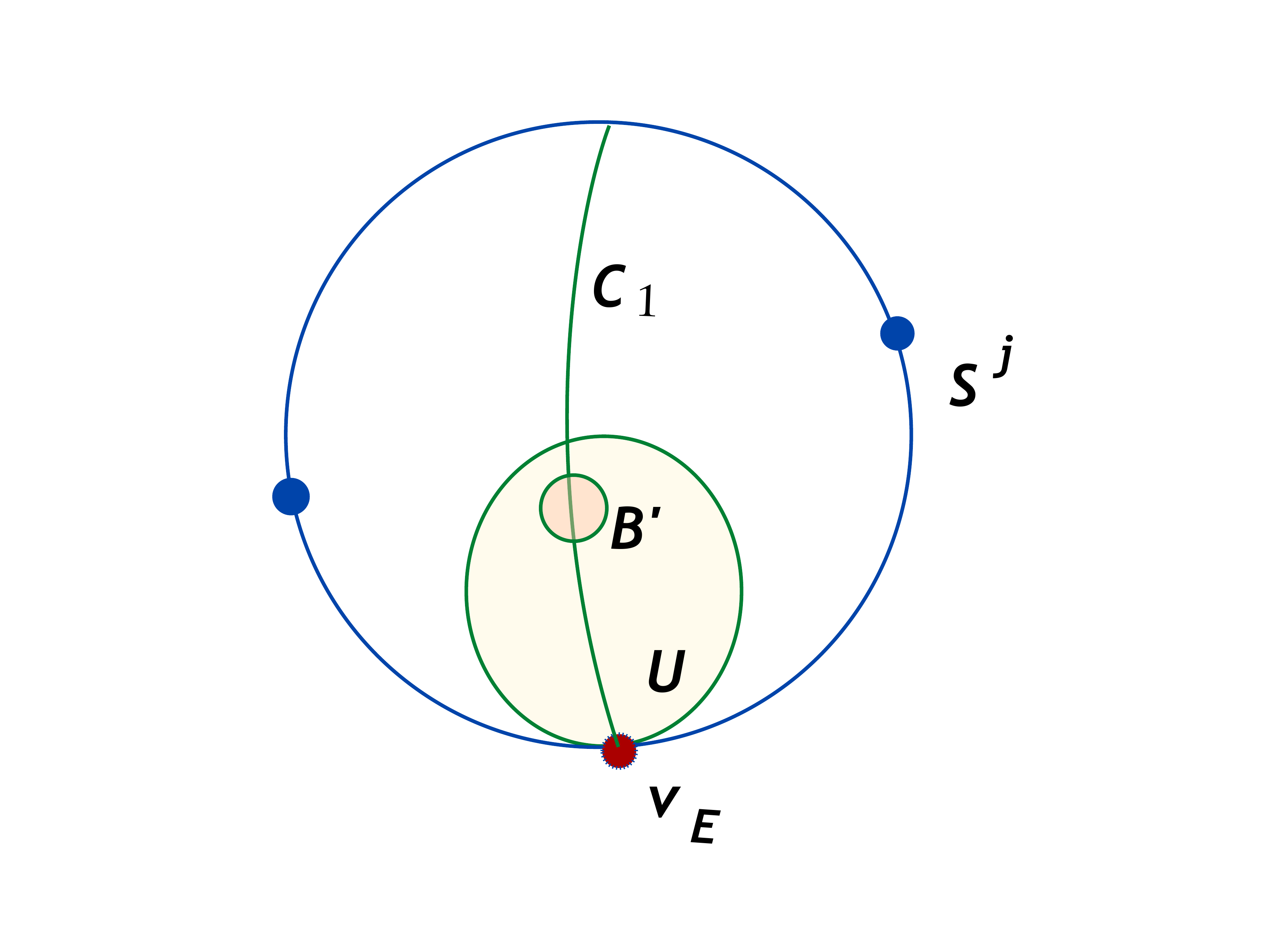}
\caption{The figure for the proof of Proposition \ref{prop-eigSI}. }
\label{fig:compl}
\end{figure}

%Let us define $U_1 := C_1 \cap U$, a nonempty convex set open in $C_1$. 
Hence, a small ball $B'$ in $U$ meets $C_1$ in its interior.
%We show that $\{g^k(B')\}$ converges to a $(j+1)$-dimensional hemisphere with boundary $\SI^j$: 
%Since we can write any vector as a sum of vectors in the directions in $C_1$ or $\SI^j$. 
\begin{align}
& \hbox{For any } [v] \in B', v \in \bR^{n+1}, v = v_1 + v_2 \hbox{ where } [v_1] \in C_1 \hbox{ and } [v_2] \in \SI^j. \nonumber \\
%The action of $g^k$ as $k \ra \infty$ makes the former vectors very small compared to the latter ones. 
& \hbox{We obtain } g^k([v]) = [ g^k(v_1) + g^k(v_2)],
\end{align}
where we used $g$ to represent the linear transformation of determinant $\pm 1$ as well. 
By the real Jordan decomposition consideration, 
the action of $g^k$ as $k \ra \infty$ makes the former vectors very small compared to the latter ones, i.e., 
\[ ||g^k(v_1)||/||g^k(v_2)|| \ra 0 \hbox{ as } k \ra \infty.\]
Hence, $g^k([v])$ converges to the limit of $g^k([v_2])$ if it exists. 

Now choose $[w]$ in $C_1 \cap B'$ and $v, [v]\in \SI^j$. 
We let $w_1 = [w + \eps v]$ and $w_2 =[w -\eps v]$ in $B'$ for small $\eps> 0$. 
Choose a subsequence $\{k_i\}$ so that $g^{k_i}(w_1)$ converges to a point of $\SI^n$. 
The above estimation shows that $\{g^{k_i}(w_1)\}$ and $\{g^{k_i}(w_2)\}$ converge to 
an antipodal pair of points in $\clo(U)$ respectively. 
This contradicts the proper convexity of $U$ 
as $g^k(B'') \subset U$ and the geometric limit is in $\clo(U)$. 

Also the consideration of $g^{-1}$ completes the inequality.

%The last part follows since $\lambda_1(g) > \lambda_{n+1}(g)$ for an infinite order element. 
%The existence of fixed points in $\Bd K$ follows since the action of $g$ on $K$ is semi-simple. 
\end{proof}

%In otherword, the largest norm of the eigenvalue has to occur outside $\SI^{i_0}$. Hence, 
%it follows in fact that there exists an eigenvector associated with it in $\SI^n - \SI^{i_0}$. 

%%% Jan 3 2014 9:32pm
% Jan 29 10:50pm 2015

\subsection{The plan of the proof of Theorem \ref{thm-thirdmain}} \label{sub-plan}
We will show that our NPCC-ends are quasi-joined type ones; i.e., we prove
Theorem \ref{thm-thirdmain} by proving 
Theorems \ref{thm-NPCCcase} in Section \ref{sec-gendiscrete} and Theorem \ref{thm-NPCCcase2}
in Section \ref{sec-indiscrete}. 
%As a corollary of this process, we will also show:
We divide into two case: we study first the case when $N_{K}$ is discrete and 
when $N_{K}$ is indiscrete. 
%We give some outline of Sections \ref{sec-prelim}, \ref{sec-gendiscrete}, \ref{subsec-qjoin}, 
%and \ref{sub-splitting}. 
%These sections prove Theorem \ref{thm-NPCCcase}.
 
We will discuss some general results. 
For results in Section \ref{sec-gendiscrete} except for Section \ref{sub-discretecase} 
we do not use a discreteness assumption on the semisimple quotient group $N_K$. 
We will use Hypotheses \ref{h-norm} and \ref{h-qjoin} generalizing our situation. 
  
\begin{itemize} 
%\item First, we show that $\clo(U)$ meets with an $i_{0}+1$-dimensional hemisphere with boundary 
%$\SI^{i_{0}}_{\infty}$ in an open $i_{0}$-dimensional domain with the horospherical boundary. See Lemma \ref{lem-eigSI2}. %This was dropped. 
\item We show that $\bGamma_{\tilde E}$ acts as scalar times orthogonal group 
on $\CN$ as realized as an real abelian group $\bR^{i_{0}}$. See Lemmas \ref{lem-similarity} and 
\ref{lem-conedecomp1}.  This is done by computations and coordinate change arguments and 
the distal group theory of Fried \cite{Fried86}. 
\item We refine the matrix forms in Lemma \ref{lem-similarity} when $\mu_{g}=1$. 
Here the matrices are in almost desired forms. 
\item Proposition \ref{prop-decomposition} shows the splitting of the representation of $\bGamma_{\tilde E}$. 
Basically, one uses the weak middle eigenvalue condition to realize the compact $(n-i_{0}-1)$-dimensional set 
where $\bGamma_{\tilde E}$ acts on. 
\item In Section \ref{subsec-qjoin}, we discuss joins and quasi-joins. The idea is to show that 
the join cannot occur by Propositions  \ref{II-prop-joinred}  and \ref{II-prop-decjoin} of \cite{EDC2}. 
This will settle for the cases of discrete $N_{K}$ by Theorem \ref{thm-NPCCcase} in Section \ref{sub-discretecase}. 
\end{itemize} 
In Section \ref{sec-indiscrete}, we will settle for the cases of indiscrete $N_{K}$
where we will use these methods.

\section{The general theory and the discrete case } \label{sec-gendiscrete}

Now, we will be working on projective sphere $\SI^n$ only for while. 
Suppose that the semisimple quotient group $N_K$ is a discrete subgroup of $\Aut(\SI^{n-i_0-1})$, 
which is a much simpler case to start. 
$N_K$ virtually equals the cocompact subgroup of the group 
\[\bZ^{l_0-1} \times \Gamma_1 \times \cdots \times \Gamma_{l_0}\] 
since each factor $\Gamma_i$ commutes with the other factors 
and acts trivially on $K_j$ for $j \ne i$ as was shown in the proof of Theorem \ref{II-thm-redtot} of \cite{EDC2}
and $N_K$ acts cocompactly on $K$.
To begin with we do not assume $N_{K}$ is discrete until the last subsection. 

\subsection{Examples} 
First, we give some examples. 

\subsubsection{The standard quadric in $\bR^{i_0+1}$ and the group acting on it.} \label{subsub-quadric}

Let us consider an affine subspace $A^{i_0+1}$ of $\SI^{i_0+1}$ with 
coordinates $x_0, x_1, \dots, x_{i_0+1}$ given by $x_0 > 0$.
The standard quadric in $A^{i_0+1}$ is given by 
\[ x_{i_0+1} = x_1^2 + \cdots + x_{i_0}^2. \] 
Clearly the group of the orthogonal maps $O(i_0)$ acting on the planes given by $x_{i_0+1} = const$ acts on
the quadric also. Also, the matrices of the form 
\[
\left(
\begin{array}{ccc}
 1                                 & 0                    & 0  \\
 \vec{v}^T                    & \Idd_{i_0}        & 0   \\
\frac{||\vec{v}||^2}{2}  & \vec{v}   & 1  
\end{array}
\right)
\]
induce and preserve the quadric. 
%Let $\CN_0$ denote the group of such transformations. 
They are called the standard cusp group. 

The group of affine transformations that acts on the quadric is exactly the Lie group
generated by the cusp group and $O(i_0)$. The action is transitive and each of the stabilizer is a conjugate of
$O(i_0)$ by elements of the cusp group. 

The proof of this fact is simply that such an affine transformation is conjugate to an element a parabolic group in
the $i_0+1$-dimensional complete hyperbolic space $H$ where the ideal fixed point is identified with 
$[0, \dots, 0, 1] \in \SI^{i_0+1}$ and with $\Bd H$ tangent to $\Bd A^{i_0}$.

A  {\em cusp group} is a group of projective automorphisms of form
\begin{equation} \label{eqn-generator}
\CN'(\vec{v}):= \left( \begin{array}{ccc}                
                                                         1 &  \vec{0}  & 0 \\
                                                        \vec{v}^T  & \Idd_{i_0-1} & \vec{0}^T \\ 
                                                \frac{||\vec{v}||^2}{2}  &  \vec{v}       & 1 
                                    \end{array} \right) \hbox{ for } \vec{v} \in \bR^{i_0}. 
 \end{equation}
 (see \cite{CM2} for details.)
 We can make each translation direction of generators of $\CN$ in $\tilde \Sigma_{\tilde E}$ to be one of the standard vector. 
Therefore, we can find 
a coordinate system of $V^{i_0+2}$ so that the generators are of $(i_0+2) \times (i_0+2)$-matrix forms 
\begin{equation} \label{eqn-generator2}
\CN'_j :=  \left( \begin{array}{cccc}                1 &   \vec{0} & 0 \\ 
                                                        \vec{e}^T_j  & \Idd_{i_0}    & 0 \\
                                                       \frac{1}{2} &  \vec{e}_j     & 1 
                                    \end{array} \right)
 \end{equation}
 where $(\vec{e}_{j})_{k} = \delta_{jk}$ a row $i$-vector for $j=1, \dots, i_{0}$.  
 That is, 
 \[\CN'(\vec{v}) = \CN'(v_{1})\cdots \CN'(v_{i_{0}}).\]

\subsubsection{Example of joined ends}

We first begin with examples. 
In the following, we will explain the joined type end. 

\begin{example} \label{exmp-joined} 
Let us consider two ends $E_1$, a totally geodesic R-end, 
with the p-end-neighborhood $U_1$ in the universal cover of a real projective orbifold $\mathcal{O}_1$ in $\SI^{n-i_0-1}$
of dimension $n-i_0-1$ with the p-end vertex $\bv_1$, 
and $E_2$  the p-end-neighborhood $U_2$ , a horospherical type one, in the universal cover
of a real projective orbifold $\mathcal{O}_2$ of dimension $i_0+1$
with the p-end vertex $\bv_2$. 
%We assume that the great sphere of dimension $i_{0}+1$ containing 
%$\mathcal{O}_{2}$ is transversal to $\SI^{n-i_{0}-1}$ at $\bv_{1}$. 
\begin{itemize} 
\item Let $\bGamma_1$ denote the projective automorphism group in $\Aut(\SI^{n-i_0-1})$ acting on $U_1$ 
corresponding to $E_1$. 
We assume that $\bGamma_1$ acts on a great sphere $\SI^{n-i_0-2} \subset \SI^{n-i_0-1}$ disjoint from $\bv_1$.
There exists a properly convex open domain $K'$ in $\SI^{n-i_0-2}$ where $\bGamma_1$ acts cocompactly
but not necessarily freely. 
We change $U_1$ to be the interior of the join of $K'$ and $\bv_1$. 
%so that $(\SI^{n-i_0-1} - \{ \bv_{\tilde E}\} )/\bGamma_1$ is projectively equivalent to $\Sigma_{E_1}$. 
\item Let $\bGamma_2$ denote the one in $\Aut(\SI^{i_0+1})$ acting on $U_2$
unipotently and hence it is a cusp action. 
\item We embed $\SI^{n-i_0-1}$ and $\SI^{i_0+1}$ in $\SI^n$ meeting transversally at $\bv = \bv_1 = \bv_2$. 
\item We embed $U_2$ in $\SI^{i_0+1}$ and $\bGamma_2$ in $\Aut(\SI^n)$ fixing each point of 
$\SI^{n-i_0-1}$. % so that $\bGamma_2$ act as a unipotent group on 
%$\SI^{i_0+1}$. 
\item We can embed $U_1$ in $\SI^{n-i_0-1}$ and $\bGamma_1$ in $\Aut(\SI^n)$ acting 
on the embedded $U_1$ so that 
$\bGamma_1$ acts on $\SI^{i_0-1}$ normalizing $\bGamma_2$ and acting on $U_1$. 
One can find some such embeddings by finding an arbitrary 
homomorphism $\rho: \bGamma_1 \ra N(\bGamma_2)$ for a normalizer $N(\bGamma_2)$ of 
$\bGamma_2$ in $\Aut(\SI^n)$. %This construction does depend on $\rho$. 
%For now, we assume that $\rho(\bGamma_1)$ acts on $U_2$. 
\end{itemize}

We find an element $\zeta \in \Aut(\SI^n)$ fixing each point of $\SI^{n-i_0-2}$ and 
acting on $\SI^{i_0+1}$ as a unipotent element normalizing $\bGamma_2$ 
so that the corresponding matrix has only two norms of eigenvalues. 
Then $\zeta$ centralizes $\bGamma_1| \SI^{n-i_0-2}$ and normalizes $\bGamma_2$. 
Let $U$ be the strict join of $U_1$ and $U_2$, a properly convex domain. 
$U/\langle \bGamma_1, \bGamma_2, \zeta \rangle$ gives us a p-R-end of dimension $n$
diffeomorphic to $\Sigma_{E_1} \times \Sigma_{E_2} \times \SI^1 \times \bR$
and the transversal real projective manifold is diffeomorphic to $\Sigma_{E_1} \times \Sigma_{E_2} \times \SI^1$. 
We call the results the {\em joined} end and the joined end-neighborhoods. 
Those ends with end-neighborhoods finitely covered by these are also called 
a {\em joined} end. The generated group $\langle \bGamma_1, \bGamma_2, \zeta \rangle$ is 
called a {\em joined group}. 
%(The order of $E_1$ and $E_2$ matter here. If both have totally geodesic hypersurface,
%then we can reverse the order and obtain the same joins.)

Now we generalize this construction slightly: 
%As above, we required that $\bGamma_1$ and $\bGamma_2$ are discrete with compact stabilizers. 
Suppose that $\bGamma_1$ and $\bGamma_2$ are Lie groups and they have compact stabilizers at points of $U_1$ and $U_2$ respectively, 
and we have a parameter of $\zeta^t$ for $ t\in \bR$ centralizing $\bGamma_1| \SI^{n-i_0-2}$ and 
normalizing $\bGamma_2$ and restricting to a unipotent action on $\SI^{i_0}$ acting on $U_2$. 
The other conditions remain the same. We obtain 
a {\em joined homogeneous action} of the semisimple and cusp actions. 
Let $U$ be the properly convex open subset obtained as above as a join of 
$U_1$ and $U_2$.  Let $G$ denote the constructed Lie group by taking the embeddings of 
$\bGamma_1$ and $\bGamma_2$ as above. $G$ also has a compact stabilizer on $U$. 
Given a discrete cocompact subgroup of $G$, we obtained a p-end-neighborhood of a {\em joined p-end}
by taking the quotient of $U$. An end with an end-neighborhood finitely covered by such a one 
are also called a {\em joined end}. 

%Mark4
%These can occur. 
\end{example} 

\begin{remark}
Later we will show this case cannot occur. 
We will modify this construction to a construction of quasi-joined ends to be defined in Definition \ref{defn-qjoin}. 
Here, $\Gamma_2$ is not required to act on $U_2$. 
\end{remark}

We continue the above example to a more specific situation. 

\begin{example}\label{exmp:nonexmp}
Let $N$ be as in equation \eqref{eqn-nilmat}.
In fact, we let $C_1 =0$ to simplify arguments and let $N$ be a nilpotent group in 
conjugate to $\SO(i_0+1, 1)$ acting on an $i_0$-dimensional ellipsoid in $\SI^{i_0+1}$. 

We find a closed properly convex real projective 
orbifold $\Sigma$ of dimension $n-i_0-2$ and find a homomorphism from $\pi_1(\Sigma)$
to a subgroup of $\Aut(\SI^{i_0+1})$ normalizing $N$ or even $N$ itself. 
(We will use  a trivial one to begin with. )
Using this, we obtain a group $\Gamma$ so that 
\[ 1 \ra N \ra \Gamma \ra \pi_1(\Sigma) \ra 1. \] 
Actually, we assume that this is ``split'', 
i.e., $\pi_1(\Sigma)$ acts trivially on $N$.

We now consider an example where $i_0 = 1$. 
Let $N$ be $1$-dimensional and be generated by $N_1$ as in Equation \eqref{eqn-nilmat3}. 
\renewcommand{\arraystretch}{1.5}
\begin{equation} \label{eqn-nilmat3}
\newcommand*{\temp}{\multicolumn{1}{r|}{}}
N_1:= \left( \begin{array}{ccccc}         \Idd_{n-i_0-1} & 0 & \temp &  0 & 0 \\ 
                                                       \vec{0}           & 1  & \temp &  0  & 0\\
                                                       \cline{1-5}
                                                       \vec{0}         & 1 & \temp & 1   & 0 \\
                                                       \vec{0}       &\frac{1}{2} & \temp & 1  & 1 
                                    \end{array} \right)
 \end{equation}
 where $i_0 = 1$ and we set $C_1=0$.

%%% March 29 7:00pm 2015
We take a discrete faithful proximal 
representation 
\[\tilde h: \pi_1(\Sigma) \ra \GL(n-i_0, \bR)\]
acting on a convex cone $C_\Sigma$ in $\bR^{n-i_0}$.
We define 
\[h: \pi_1(\Sigma) \ra \GL(n+1, \bR)\] by matrices
\begin{equation} \label{eqn-gammaJp}
h(g):= \left( \begin{array}{ccc}  \tilde h(g)         & 0                          & 0\\ 
                                                 \vec{d}_1(g)    & a_1(g)           & 0 \\
                                                 \vec{d}_2(g)    & c(g)  & \lambda_{\bv_{\tilde E}}(g)    
                                                               \end{array} \right)
\end{equation}
where $\vec{d}_1(g)$ and $\vec{d}_2(g)$ are $n-i_0$-vectors
and $g \mapsto \lambda_{\bv_{\tilde E}}(g)$ is a homomorphism as defined above
for the p-end vertex and $\det \tilde h(g) a_1(g) \lambda_{\bv_{\tilde E}}(g) = 1$.
%hese are the most general form possible:
 \begin{equation} \label{eqn-gammaJpi}
h(g^{-1}):= \left( \begin{array}{cc}  \tilde h(g)^{-1}         &  \begin{array}{cc} 0  & 0 \end{array}  \\ 
                             -   \left(   \begin{array}{cc} \frac{\vec{d}_1(g)}{a_1(g)} \\ \frac{-c(g)\vec{d}_1(g)}{a_1(g) \lambda_{\bv_{\tilde E}}(g)} 
                                + \frac{\vec{d}_2(g)}{\lambda_{\bv_{\tilde E}}(g)}
                                                                 \end{array}   \right) \tilde h(g)^{-1} 

                                          & \begin{array}{cc} \frac{1}{a_1(g)} & 0 \\ 
                                          \frac{-c(g)}{a_1(g) \lambda_{\bv_{\tilde E}}(g)} &  \frac{1}{\lambda_{\bv_{\tilde E}}(g)} \end{array} \\ 
                         
                                                               \end{array} \right).
\end{equation}
 
Then the conjugation of $N_1$ by $h(g)$ gives us 
 \begin{equation} \label{eqn-nilmat4}
 \left( \begin{array}{cc}         \Idd_{n-i_0}  & \begin{array}{cc} 0  & 0 \end{array} \\ 
                                                      \left(\begin{array}{cc} \vec{0} & a_1(g) \\ \vec{\ast} & \ast \end{array} \right) \tilde h(g)^{-1}&
                                                      \begin{array}{cc} 1   & 0 \\ \frac{\lambda_{\bv_{\tilde E}}(g)}{a_1(g)}  & 1 \end{array} 
                                                      \end{array}
                                                      \right).
 \end{equation}
 Our condition on the form of $N_1$ shows that 
 $(0,0,\dots,0,1)$ has to be a common eigenvector by $\tilde h(\pi_1(\tilde E))$
 and we also assume that $a_1(g) = \lambda_{\bv_{\tilde E}}(g)$. 
 (Actually, we will study the case when $a_{1}(g) >  \lambda_{\bv_{\tilde E}}(g)$.)
 The last row of $\tilde h(g)$ equals 
 $(\vec{0}, \lambda_{\bv_{\tilde E}}(g))$. Thus, the semisimple part of $h(\pi_1(\tilde E))$ is reducible. 

Some further computations show that 
we can take any 
\[h: \pi_1(\tilde E) \ra \SL(n-i_0, \bR)\] with matrices of form 
\renewcommand{\arraystretch}{1.2}
\begin{equation} \label{eqn-nilmat5}
\newcommand*{\temp}{\multicolumn{1}{r|}{}}
h(g):= \left( \begin{array}{ccccc}   S_{n-i_0-1}(g) & 0 & \temp &  0 & 0 \\ 
                                                       \vec{0}           & \lambda_{\bv_{\tilde E}}(g)  &   \temp &  0  & 0\\
                                                        \cline{1-5}
                                                       \vec{0}         & 0 & \temp & \lambda_{\bv_{\tilde E}}(g)   & 0 \\
                                                       \vec{0}          & 0 & \temp & 0  & \lambda_{\bv_{\tilde E}}(g) 
                                    \end{array} \right)
 \end{equation}
 for $g \in \pi_1(\tilde E) - N$ by a choice of coordinates by the semisimple property of the $(n-i_0) \times (n-i_0)$-upper left part of $h(g)$. 
 (Of course, these are not all example we wish to consider but we will modify later to quasi-joined ends.)
% and elements of $N$ as above. 
 
 Since $\tilde h(\pi_1(\tilde E))$ has a common eigenvector, 
 Theorem 1.1 of Benoist \cite{Ben2} shows that the open convex domain $K$ 
 that is the image of $\Pi_K$ in this case is decomposable and 
 $N_K = N'_K \times \bZ$ for another subgroup $N'_1$ 
and the image of the homomorphism $g \in N'_K \ra S_{n-i_0-1}(g)$ can be assumed to give 
a discrete projective automorphism group acting properly discontinuously on a properly convex subset 
$K'$ in $\SI^{n-i_0-2}$ with a compact quotient. 
%Furthermore the generator of $\bZ$ is central. % and thus fixes each point of $K'$. 

Let $\mathcal{E}$ be the one-dimensional ellipsoid where lower right $3\times 3$-matrix of $N_K$ acts on. 
From this, the end is of the join form 
$K^{\prime o}/N'_K \times \SI^1 \times {\mathcal{E}}/\bZ$ by taking a double cover if necessary 
and $\pi_1(\tilde E)$ is isomorphic to $N'_K \times \bZ \times \bZ$
up to taking an index two subgroups. 
(In this case, $N_K$ centralizes $\bZ \subset N'_K$
and the second $\bZ$ is in the centralizer of $\Gamma$. )

We can think of this as the join of $K^{\prime o}/N'_K$ with $\mathcal{E}/\bZ$ as
$K'$ and $\mathcal{E}$ are on disjoint complementary projective spaces
of respective dimensions $n-3$ and $2$ to be denoted $S(K')$ and $S(\mathcal{E})$ respectively. 
%Obviously, we can sometimes generalize this example by taking higher dimension for $i$ and 
% letting $C_g$ and $U_g$ be more general. 

% For higher dimensional examples $i > 1$, 
% if $\pi_1(\Sigma) \ra Aut(N)$ is trivial, we see that there can be no example 
% producible in this way as we can decompose $N$ into direct sum of 
 %$1$-dimensional unipoitent Lie groups and discuss for each one. 
% ($N$ decompose into one-dimensional group acting on two-dimensional projective subspaces.)

\end{example}

\subsection{Hypotheses to derive the splitting result}  %\label{sub-matrixform} 
These hypotheses will help us to obtain the splitting. 
Afterwards, we will show the NPCC-ends with weak middle eigenvalue conditions will satisfy these. 

In Section \ref{subsub-matrixform},  we will introduce a standard coordinate system to work on, where we introduce the 
standard nilpotent group $\CN \cong \bR^{i_{0}}$ to work with. 
$\bGamma_{\tilde E}$ normalizes $\CN$ by the hypothesis. 
{\em Similarity} Lemma \ref{lem-similarity} shows that 
the conjugation in $\CN$ by an element of $\bGamma_{\tilde E}$ acts as a similarity, 
a simple consequence of the normalization property. 
We use this similarity and the Benoist theory \cite{Ben2} to prove
{\em $K$-is-a-cone} Lemma \ref{lem-conedecomp1} that $K$ decomposes into a cone $\{k\} \ast K'' $ where 
$\CN$ has a nice expression for the adopted coordinates.
(If an orthogonal group acts cocompactly on an open manifold, then the manifold is zero-dimensional.) 
%In Section \ref{subsub-split}, we assume that $\mu\equiv 1$. 
In Section \ref{subsub-split}, 
{\em Splitting} Proposition \ref{prop-decomposition} shows that 
the end fundamental group splits. To do that we find a sequence of elements of the virtual center 
expanding neighborhoods of a copy  of $K''$.   
Here, we explicitly find a part corresponding 
to $K'' \subset \Bd \torb$ explicitly and $k$ is realized by an $(i_{0}+1)$-dimensional hemisphere 
where $\CN$ acts on. 

%In Section \ref{subsub-split}, we assume that $\mu\equiv 1$. We show that joined end does not 
%occur provided the holonomy group of $\pi_{1}(\orb)$ is strongly irreducible.

\subsubsection{The matrix form of $\bGamma_{\tilde E}$.} \label{subsub-matrixform}

Let $\bGamma_{\tilde E}$ be a p-R-end fundamental group. 
Let $V^{i_0+1}$ denote the subspace corresponding to $\SI^{i_0}_\infty$ containing $\bv_{\tilde E}$
and $V^{i_0+2}$ the subspace corresponding to $\SI^{i_0+1}_l$.
%We can write each element  $n \in N'$ as 
We choose the coordinate system so that 
\[\bv_{\tilde E} = \underbrace{[0, \cdots, 0, 1]}_{n+1}\]
and points of $V^{i_0+1}$ and those of $V^{i_0+2}$ respectively correspond to 
\[ \overbrace{[0, \dots, 0}^{n-i_0}, \ast, \cdots, \ast], \quad \overbrace{[0, \dots, 0}^{n-i_0-1}, \ast, \cdots, \ast].\]

Since $\SI^{i_0}_\infty$ is invariant, 
$g$, $g\in \bGamma_{\tilde E}$, is of {\em standard} form 
\renewcommand{\arraystretch}{1.2}
\begin{equation}\label{eqn-matstd}
\newcommand*{\temp}{\multicolumn{1}{r|}{}}
\left( \begin{array}{ccccccc} 
S(g) & \temp & s_1(g) & \temp & 0 & \temp & 0 \\ 
 \cline{1-7}
s_2(g) &\temp & a_1(g) &\temp & 0 &\temp & 0 \\ 
 \cline{1-7}
C_1(g) &\temp & a_4(g) &\temp & A_5(g) &\temp & 0 \\ 
 \cline{1-7}
c_2(g) &\temp & a_7(g) &\temp & a_8(g) &\temp & a_9(g) 
\end{array} 
\right)
\end{equation}
where $S(g)$ is an $(n-i_0-1)\times (n-i_0-1)$-matrix
and $s_1(g)$ is an $(n-i_0-1)$-column vector, 
$s_2(g)$ and $c_2(g)$ are $(n-i_0-1)$-row vectors, 
$C_1(g)$ is an $i_0\times (n-i_0-1)$-matrix, 
$a_4(g)$ is an  $i_0$-column vectors, 
$A_5(g)$ is an $i_0\times i_0$-matrix, 
$a_8(g)$ is an $i_0$-row vector, 
and $a_1(g), a_7(g)$, and $ a_9(g)$ are scalars. 
%(We show $a_6(g) = 0$ for any standard form of $g$ soon.)

Denote 
\[ \hat S(g) =
\left( \begin{array}{cc} 
S(g) & s_1(g)\\ 
s_2(g) & a_1(g)
\end{array} \right),\]
and is called a semisimple part of $g$.  

Let $\CN$ be a unipotent group acting on $\SI^{i_{0}}_{\infty}$ and 
inducing $\Idd$ on $\SI^{n-i_{0}-1}$ also restricting to a cusp group for at least one great  $(i_{0}+1)$-dimensional sphere
$\SI^{i_{0} + 1}$ containing $\SI^{i_{0}}_{\infty}$.

We can write each element  $g \in \CN$ as an $(n+1)\times (n+1)$-matrix
\begin{equation} \label{eqn-nilmat}
 \left( \begin{array}{ccc}                \Idd_{n-i_0-1} &  0 &   0 \\ 
                                                       \vec{0}      & 1  &  0 \\
                                                     C_g          & *   & U_g 
                                    \end{array} \right)
 \end{equation}
where $C_g > 0$ is an $(i_0+1)\times (n-i_0-1)$-matrix, 
$U_g$ is a unipotent $(i_0+1) \times (i_0+1)$-matrix, 
$0$ indicates various zero row or column vectors, 
$\vec{0}$ denotes the zero row-vector of dimension $n-i_0-1$, and 
$\Idd_{n-i_0-1}$ is  the $(n-i_0-1) \times (n-i_0-1)$ identity-matrix. 
This follows since $g$ acts trivially on $\bR^{n+1}/V^{i_0+1}$ 
and $g$ acts as a unipotent matrix on the subspace $V^{i_0+2}$.  
%where $V^{i_0+2}$ corresponds to $\SI^{i_0+1}_l$ corresponding 
%to a leaf $l$ and the $n-i_0$-th column correspond to the end vertex $v_{\tilde E}$. 
%(We let $w$ be a basis element corresponding to the $(n-i_0-1)$th column
%and $\bv_{\tilde E}$ corresponds to the last column of form $(0, \dots, 0, 1)$. )

%We choose an arbitrary point $w \in \SI^{i_0+1}_l \cap U$
%and choose the coordinates so that  
%\[ w= [\underbrace{\overbrace{0, \dots, 1}^{n-i_0}, \dots, 0]}_{n+1},\]
%and one can think of $w$ as the origin of the affine space $A$ so that 
%$U \subset A$ and $\SI^{i_0}_\infty \subset \Bd A$. 

%Since the argument will be carried out for a virtual subgroup of $\pi_1(\tilde E)$ and
%$\CN$ is virtually abelian, we assume that $\CN$ is abelian from now on. 

%Using coordinates as in the previous subsection, 
%recall $\CN_j$ from Equation \eqref{eqn-nilmat2}.
For $\vec{v} \in \bR^{i_0}$, we define 
\renewcommand{\arraystretch}{1.5}
\begin{equation} \label{eqn-nilmatstd}
\newcommand*{\temp}{\multicolumn{1}{r|}{}}
\CN(\vec{v}):= \left( \begin{array}{ccccccc}         \Idd_{n-i_0-1} & 0 &\temp &  0 & 0& \dots & 0 \\ 
                                                       \vec{0}           & 1  &\temp &  0  & 0&  \dots & 0\\
                                                       \cline{1-7}
                                                       \vec{c}_1(\vec{v})    & {v}_1 &\temp & 1   & 0 &  \dots & 0 \\
                                                       \vec{c}_2(\vec{v})   & {v}_2 &\temp & 0   & 1 & \dots & 0\\
                                                       \vdots   & \vdots &\temp & \vdots & \vdots & \ddots & \vdots \\
                                                       \vec{c}_{i_0+1}(\vec{v})  & \frac{1}{2}||\vec{v}||^2& \temp & {v}_1 & v_2 & \dots  & 1
                                                        
                                    \end{array} \right)
 \end{equation}
where $||v||$ is the norm of $\vec{v} = (v_1, \cdots, v_i) \in \bR^{i_0}$. 
We assume that \[\CN := \{ \CN(\vec{v})| \vec v \in \bR^{i_{0}}\}\] is a group, which must be nilpotent. 
The elements of our nilpotent group 
$\CN$ are of this form since $\CN(\vec{v})$ is the product  
$\prod_{j=1}^{i_0} \CN(e_j)^{v_j}$.  
By the way we defined this, 
for each $k$, $k=1, \dots, i_0$, $\vec{c}_k:\bR^{i_0} \ra \bR^{n-i_0-1}$ are  linear functions of $\vec{v}$ 
defined as 
\[\vec{c}_k(\vec{v}) = \sum_{j=1}^{i_0} \vec c_{kj} v_j \hbox{ for } \vec{v} = (v_1, v_2, \dots, v_{i_0})\]
so that we form a group. 
(We do not need the property of $\vec{c}_{i_0+1}$ at the moment.)

We denote by $C_1(\vec{v})$ the $(n-i_0-1) \times i_0$-matrix given by 
the matrix with rows $\vec{c}_j(\vec{v})$ for $j= 1, \dots, i_0$
and by $c_2(\vec{v})$ the row $(n-i_0-1)$-vector $\vec{c}_{i_0+1}(\vec{v})$. 
The lower-right $(i_0+2)\times (i_0+2)$-matrix is form is called the {\em standard cusp matrix form}.

The assumptions for this subsection are as follows: 
\begin{hypothesis} \label{h-norm}
\begin{itemize} 
\item  Let $K$ be defined as above for a p-R-end $\tilde E$.
Assume that $K^{o}/N_K$ is a compact set.
\item $\bGamma_{\tilde E}$ satisfies the weak middle eigenvalue condition.
And elements are in the matrix form of equation \eqref{eqn-matstd} under a common coordinate system. 
\item A group $\CN$ of form \eqref{eqn-nilmatstd}  %acts on an NPCC
%p-R-end-neighborhood of $\bv_{\tilde E}$ and 
acts on each hemisphere with boundary $\SI^i_{\infty}$, and fixes
$\bv_{\tilde E} \in \SI^i_\infty$. 
\item The p-end fundamental group $\bGamma_{\tilde E}$ normalizes 
$\CN$ also in the above coordinate system. 
%\item $\CN$ acts on these two subspaces fixing every points 
%of $\SI^{i_0}_{\infty}$.
\item $\CN$ acts on a p-end neighborhood $U$ of $\tilde E$.
%and acts as a cusp group acting on $U \cap \SI^{i_{0}+1}$
% for a great sphere $\SI^{i_{0}+1}$ containing $\SI^{i_{0}}_{\infty}$. 
\item $\CN$ acts on the space of $i_{0}$-dimensional leaves of $\tilde \Sigma_{\tilde E}$
by an induced action. 
%\item $\bigcap_{g\in N} g(U)$ is in a properly convex subset. 
\end{itemize}
\end{hypothesis}

Let $U$ be a p-end neighborhood of $\tilde E$. 
Let $l'$ be an $i_{0}$-dimensional leaf of $\tilde  \Sigma_{\tilde E}$. 
%Let $l'$ in $\Bd K$ be a fixed point $l'$ of $\hat S(g)$ for an infinite order $g$, $g \in \bGamma_{\tilde E}$. 
%A consideration of the projection 
%$\SI^{i_0+1}_{l'} - \SI^{i_0}_\infty$ meets $\clo(U)$ since otherwise, 
%$K$ contains a pair of antipodal points.
The consideration of the projection $\Pi_{K}$ shows us that 
the leaf $l'$ corresponds to a hemisphere $H^{i_0+1}_{l'}$ where
\[(H^{i_0+1}_{l'} - \SI^{i_0}_\infty) \cap U \ne \emp\] holds. 

\begin{lemma}[Cusp]\label{lem-bdhoro} 
Assume Hypothesis \ref{h-norm}. 
Let $l'$ be an $i_{0}$-dimensional leaf of $\tilde  \Sigma_{\tilde E}$. 
%Let $l'$ in $\Bd K$ be a fixed point $l'$ of $\hat S(g)$ for an infinite order $g$, $g \in \bGamma_{\tilde E}$. 
%Suppose that the leaf $l'$ corresponds to a hemisphere $H^{i_0+1}_{l'}$ where we assume
%\[(H^{i_0+1}_{l'} - \SI^{i_0}_\infty) \cap U \ne \emp.\] 
Let $H^{i_{0}+1}_{l'}$ denote the $i_{0}+1$-dimensional hemisphere with boundary $\SI^{i_{0}}_{\infty}$ 
corresponding to $l'$.  
Then $\CN$ acts on the open ball $U_{l'}$ in $U$ bounded by an ellipsoid in 
a component of $H^{i_0+1}_{l'} - \SI^{i_0}_\infty$. % for each $l'$ corresponding 
%to a fixed point $l'$ of $\hat S(g)$ for each infinite order $g$. 
% The component is unique. 
\end{lemma}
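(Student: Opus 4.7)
The plan is to restrict the action of $\CN$ to the hemisphere $H^{i_0+1}_{l'}$, identify it as a standard cusp group, and then combine convexity of $U$ with proper convexity of $\torb$ to pin down $U \cap \inte(H^{i_0+1}_{l'})$ as a horoball bounded by one of the orbit ellipsoids. First, by Hypothesis \ref{h-norm}, $\CN$ preserves $H^{i_0+1}_{l'}$. Writing elements of $\CN$ in the form \eqref{eqn-nilmatstd} and restricting to the subspace $V^{i_0+2}_{l'}$ underlying the ambient great $(i_0+1)$-sphere of $H^{i_0+1}_{l'}$, one reads off precisely the lower-right $(i_0+2)\times(i_0+2)$ block, which is the standard cusp matrix form of Section \ref{subsub-quadric}. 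Hence $\CN|_{H^{i_0+1}_{l'}}$ is a standard cusp group fixing $\bv_{\tilde E}$ uniquely, and by the classical picture in Section \ref{subsub-quadric} its orbits in $\inte(H^{i_0+1}_{l'})$ are exactly the $i_0$-dimensional ellipsoids tangent to $\SI^{i_0}_\infty$ at $\bv_{\tilde E}$; each such ellipsoid bounds a unique convex (open) $\CN$-invariant horoball in the hemisphere interior.

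Next, I would locate $U$ in the hemisphere. Since $U$ is the convex cone from $\bv_{\tilde E}$ over the cross section $\tilde\Sigma_{\tilde E}$, and $l'$ is a leaf of this cross section corresponding to $H^{i_0+1}_{l'}$, the set $U \cap \inte(H^{i_0+1}_{l'})$ is a nonempty open convex set (namely the cone from $\bv_{\tilde E}$ over $l'$). Pick a point $x$ in it. Its orbit $\CN \cdot x$ is an ellipsoid $E_x$ in $\inte(H^{i_0+1}_{l'})$, and by the $\CN$-invariance of $U$ from Hypothesis \ref{h-norm} we have $E_x \subset U$. The convex hull of $E_x$ in the hemisphere is the closed horoball $B_x$ bounded by $E_x$, so convexity of $U$ gives $\inte(B_x) \subset U$.

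It remains to show $U \cap \inte(H^{i_0+1}_{l'}) = \inte(B_x)$. The horospherical ellipsoids foliate $\inte(H^{i_0+1}_{l'})$ by nested closed convex hypersurfaces; consequently, the only convex $\CN$-invariant open subsets are the empty set, the open horoballs, and the full hemisphere interior. The last case is excluded by proper convexity of $\torb$: were $U$ to contain the entire open hemisphere, then $\clo \torb$ would contain the closed hemisphere and in particular the great sphere $\SI^{i_0}_\infty$, which contains the antipode of $\bv_{\tilde E} \in \clo \torb$, violating proper convexity. Therefore $U \cap \inte(H^{i_0+1}_{l'})$ must be exactly an open horoball $U_{l'}$ bounded by an ellipsoid, and $\CN$ acts on it. The principal obstacle is this last step: one must translate proper convexity of $\torb$ into a sharp constraint preventing the nested $\CN$-orbits from sweeping out all of $\inte(H^{i_0+1}_{l'})$, which is what forces $U_{l'}$ to be a genuine horoball rather than a larger $\CN$-invariant convex region.
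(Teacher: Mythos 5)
There is a genuine gap at the first step, and it is precisely where the content of the lemma lives. You claim that restricting $\CN$ to the invariant subspace underlying $H^{i_0+1}_{l'}$ ``reads off precisely the lower-right $(i_0+2)\times(i_0+2)$ block'' of \eqref{eqn-nilmatstd}, i.e.\ the standard cusp form. That is true only for the one distinguished hemisphere built into the coordinates of \eqref{eqn-nilmatstd}, namely the one whose transverse direction is the $(n-i_0)$-th coordinate vector. For a general leaf $l'$ the transverse direction is a vector $w$ with nonzero first $n-i_0-1$ coordinates, so $\CN(\vec v)\cdot w$ picks up contributions from the columns $\vec c_1(\vec v),\dots,\vec c_{i_0+1}(\vec v)$; in the basis $(w, e_{n-i_0+1},\dots,e_{n+1})$ the restricted matrix is
\begin{equation*}
\left(\begin{array}{ccc} 1 & 0 & 0\\ L(\vec v^T) & \Idd_{i_0} & 0\\ \kappa(\vec v) & \vec v & 1\end{array}\right)
\end{equation*}
with an a priori arbitrary linear map $L$ and function $\kappa$ --- not the standard cusp matrix. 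The actual work of the lemma is to show this group is conjugate to a standard cusp group: one must prove $L$ is nonsingular (if $\vec v$ were in the kernel of $L$, the orbit of a point of $U\cap H^{i_0+1}_{l'}$ under $\CN(t\vec v)$, $t\to\infty$, would have closure containing a great segment, contradicting proper convexity), that $L$ is symmetric (from commutativity of $\CN$, via $\vec v L\vec w^T=\vec w L\vec v^T$), and that after conjugating by $A$ with $A^{-1}A^{-1\,T}=L$ the residual term $\kappa(\vec v)-\|\vec v\|^2/2$ is linear in $\vec v$ and can be conjugated away. Your proposal assumes exactly the conclusion of this computation.

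Once that is repaired, the remainder of your argument is more than is needed: the lemma only asserts the existence of an open ball in $U$ bounded by an ellipsoid in $H^{i_0+1}_{l'}-\SI^{i_0}_\infty$ on which $\CN$ acts, and this follows because the orbit $\CN(x_0)$ of a single point $x_0\in U\cap H^{i_0+1}_{l'}$ is an ellipsoid (minus $\bv_{\tilde E}$) contained in the $\CN$-invariant set $\clo(U)$, whence the open ball it bounds lies in $U$ by convexity. Your sharper claim that $U\cap \inte(H^{i_0+1}_{l'})$ is \emph{exactly} a horoball is not part of the statement; your classification of convex $\CN$-invariant open sets and the antipodal-point argument excluding the full hemisphere are reasonable, but they are superfluous here and again presuppose the standard-form identification that is missing.
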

\begin{proof} 
%The existence of the hemisphere is clear since $\SI^{n-i_0-1}$ is considered 
%the space of $(i_0+1)$-dimensional hemispheres. 

%For a fixed point $l'$ of $g$, 
%Only one component
%$\SI^{i_0+1}_{l'} - \SI^{i_0}_\infty$ meets $\clo(U)$ since otherwise, 
%$K$ contains a pair of antipodal points. 
%Each infinite order $g \in \bGamma_{\tilde E}$ has one attracting fixed point or a repelling fixed point, 
%with norms of the eigenvalue distinct from those in $\SI^{i_0}_\infty$.
%Since $g$ is semisimple in $\SI^{n-i_0-1}$ and the fixed point has a distinct eigenvalue 
%from those of $\SI^{i_0+1}$, we have a corresponding fixed point in
%a component above.

%Let $A_{l'}$ denote this component. 
Since $l'$ is an $i_{0}+1$-dimensional leaf of $\tilde \Sigma_{\tilde E}$, we obtain
$H^{i_{0}}_{l'} \cap U \ne \emp$. 
Let $J_{l'}:= H^{i_{0}+1}_{l'} \cap U \ne \emp$. 

$l'$ corresponds to an interior point of $K$. 
We need to change coordinates of $\SI^{n-i_{0}-1}$ so that $l'$ goes to $[0, 0, \dots, 1]$ under $\Pi_{K}$. 
This involves the coordinate changes of the first $n-i_{0}$ coordinates. 
Now, we can restrict $g$ to $H^{i_{0}+1}_{l'}$ so that the matrix form is truly what acts on $A_{l'}$. 

Using equation \eqref{eqn-nilmatstd} and the fact that $\vec{c}_{i}, i=1, \dots, i_{0}$ are linear on $\vec{v}$, 
we obtain that 
each $g \in \CN$ then has the form in $H_{l'}^{i_0+1}$ as 
\[
\left(
\begin{array}{ccc}
1              & 0         &  0 \\
L(\vec{v}^T)  & \Idd_{i_0}  & 0  \\
\kappa(\vec{v})  & \vec{v}  & 1  
\end{array}
\right)
\]
since the $\SI^{i_0}_\infty$-part, i.e., the last $i_0+1$ coordinates, is not changed from one for 
equation \eqref{eqn-nilmatstd}
where $L: \bR^{i_0} \ra \bR^{i_0}$ is a linear map. The linearity of $L$ is the consequence of the group property. 
$\kappa: \bR^{i_0} \ra \bR$ is some function. We consider $L$ as an $i_0\times i_0$-matrix. 

If there exists a kernel $K_1$ of $L$, then we use $t\vec{v} \in K_1-\{O\}$ and 
as $t \ra \infty$, we can show that $\CN(J_{l'})$ cannot be properly convex. 

Also, since $\CN$ is abelian, the computations of 
\[\CN(v)\CN(w) = \CN(w)\CN(v)\] 
shows that $\vec v L \vec w^T = \vec w L \vec v^T$
for every pair of vectors $\vec v$ and $\vec w$ in $\bR^{i_0}$. 
Thus, $L$ is a symmetric matrix. 

We may obtain new coordinates $x_{n-i_0+1}, \dots, x_n$ by taking 
linear combinations of these. 
Since $L$ hence is nonsingular, we can find 
new coordinates $x_{n-i_0+1}, \dots, x_n$ so that $\CN$ is now of standard form: 
We conjugate $\CN$ by 
\[
\left(
\begin{array}{ccc}
1              & 0         &  0 \\
0 & A  & 0  \\
0 &  0 & 1  
\end{array}
\right)
\]
for nonsingular $A$. 
We obtain
\[
\left(
\begin{array}{ccc}
1              & 0         &  0 \\
AL\vec{v}^T  & \Idd_{i_0}  & 0  \\
\kappa(\vec{v})  & \vec{v} A^{-1}  & 1  
\end{array}
\right).
\]
We thus need to solve for $A^{-1} A^{-1 T} = L$, which can be done. 

%We conjugate so that  the first entry of the last row equals $||\vec{v}||^2/2$. 
%Suppose not. 
We can factorize each element of $\CN$ into forms 
\[
\left(
\begin{array}{ccc}
1              & 0         &  0 \\
0  & \Idd_{i_0}  & 0  \\
\kappa(\vec{v}) - \frac{||\vec{v}||^2}{2}  & 0  & 1  
\end{array}
\right)
\left(
\begin{array}{ccc}
1              & 0         &  0 \\
\vec{v}^T  & \Idd_{i_0}  & 0  \\
 \frac{||\vec{v}||^2}{2}   & \vec{v}  & 1  
\end{array}
\right).
\]
Again, by the group property, $\alpha_7(\vec{v}) : = \kappa(\vec{v}) - \frac{||\vec{v}||^2}{2}$ 
gives us a linear function $\alpha_7: \bR^{i_0} \ra \bR$. 
Hence $\alpha_7(\vec{v}) = \kappa_\alpha \cdot \vec{v}$
for $\kappa_\alpha \in \bR^{i_0}$. 
Now, we conjugate $\CN$ by 
the matrix 
\[
\left(
\begin{array}{ccc}
 1 & 0  & 0  \\
 0 & \Idd_{i_0}  & 0  \\
0  & -\kappa_\alpha  & 1  
\end{array}
\right)
\]
and this will put $\CN$ into the standard form. 

Now it is clear that the orbit of $\CN(x_0)$ for a point $x_0$ of $J_{l'}$ is an ellipsoid with a point removed. 
as we can conjugate so that the first column entries from the second one to the $(i_0+1)$-th one equals those of the last row.
Since $\clo(U)$ is $\CN$-invariant, we obtain that $\CN(x_0) \subset J_{l'}$.

\end{proof}

Let $a_5(g)$ denote $\left| \det(A^5_g)\right|^{\frac{1}{i_0}}$.
Define $\mu_g:= \frac{a_5(g)}{a_1(g)} = \frac{a_9(g)}{a_5(g)}$ for $g \in \bGamma_{\tilde E}$
from Lemma \ref{lem-similarity}. 

\begin{lemma}[Similarity] \label{lem-similarity}
Assume Hypothesis \ref{h-norm}. 
%Let $\Gamma_{\tilde E}$ be so that $N_K$ acts on $K$ so that $K^{o}/N_K$ is a compact set.
% $K^o/N_K$ compact. 
Then 
any element $g \in \bGamma_{\tilde E}$ 
%of form of 
%the equation \eqref{eqn-matstd} normalizing $\CN$ 
%and acting on $\SI^{i_0}_\infty$ 
induces an $(i_0\times i_0)$-matrix $M_g$ given by
\[g \CN(\vec{v}) g^{-1} = \CN(\vec{v}M_g) \hbox{ where } \] 
\[M_g = \frac{1}{a_1(g)} (A_5(g))^{-1} = \mu_g O_5(g)^{-1} \]
for $O_5(g)$ in a compact Lie group $G_{\tilde E}$, and 
the following hold. 
%$M_g$ is a scalar multiplied by the inverse of an element $O_5(g)$ of a %compact Lie group $G_{\tilde E}$
%and %Actually, we have 
\begin{itemize} 
\item $(a_5(g))^2 = a_1(g) a_9(g)$ or equivalently $\frac{a_5(g)}{a_1(g)}= \frac{a_9(g)}{a_5(g)}$.
%where $a_5(g)$ is defined to be $| \det(A^5_g)|^{\frac{1}{i_0}}$.
\item Finally, $a_1(g), a_5(g),$ and $a_9(g)$ are all nonzero. 
\end{itemize}  
%This is true as long as $g$ is in the standard form. 
\end{lemma}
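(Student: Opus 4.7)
The plan is to establish the three parts of the lemma---the explicit form $M_g = (1/a_1(g))\,A_5(g)^{-1}$, the compatibility $a_5(g)^2 = a_1(g)\,a_9(g)$, and the compactness of the rotational factor $O_5(g)$---by a direct block-matrix conjugation combined with the standard cusp picture provided by Lemma~\ref{lem-bdhoro} and Fried's distal group theory \cite{Fried86}.

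First I would simply compute $g\,\CN(\vec v)\,g^{-1}$ in block form using \eqref{eqn-matstd} and \eqref{eqn-nilmatstd}. Because $g$ is lower block-triangular with diagonal blocks $\hat S(g),\,A_5(g),\,a_9(g)$, one has $\det g = \det \hat S(g) \cdot \det A_5(g) \cdot a_9(g)$, and the invertibility of $g$ immediately yields $a_5(g)\neq 0$ and $a_9(g)\neq 0$. The nontrivial $\vec v$-linear entries of $\CN(\vec v) - \Idd$ live in two separated places: as a column vector in rows $n-i_0+1,\ldots,n$ at column $n-i_0$, and as a row vector in the last row at columns $n-i_0+1,\ldots,n$. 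Conjugating by $g$ transports the first to $(1/a_1(g))\,A_5(g)\,\vec v^T$ and the second to $a_9(g)\,\vec v\,A_5(g)^{-1}$, modulo the zero entries forced by the block shape of $g$ and $g^{-1}$. For the conjugate to lie in the one-parameter family $\CN(\vec w)$, both expressions must encode the same $\vec w \in \bR^{i_0}$; this simultaneous fit identifies $M_g = (1/a_1(g))\,A_5(g)^{-1}$, forces the compatibility $a_5(g)/a_1(g) = a_9(g)/a_5(g)$, i.e.\ $a_5(g)^2 = a_1(g) a_9(g)$, and in particular gives $a_1(g)\neq 0$. Writing $\mu_g$ for this common ratio yields the decomposition $M_g = \mu_g\, O_5(g)^{-1}$ with $|\det O_5(g)| = 1$.

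To show that $O_5(g)$ lies in a compact Lie group, I would invoke the cusp geometry of Lemma~\ref{lem-bdhoro}: on each leaf hemisphere $H^{i_0+1}_{l'}$, the group $\CN$ acts as the standard cusp group with orbits being $i_0$-dimensional ellipsoids endowed with the Euclidean structure encoded by the $\frac12\|\vec v\|^2$-entry of \eqref{eqn-nilmatstd}. Since $\bGamma_{\tilde E}$ normalizes $\CN$ and permutes the leaf hemispheres, conjugation by $g$ induces an automorphism of $\CN \cong \bR^{i_0}$ that, by the rigidity of the cusp form, must preserve this quadratic form up to scale; such automorphisms are exactly the similarities, so $O_5(g)\in\Ort(i_0)$. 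The image $\{O_5(g):g\in\bGamma_{\tilde E}\}$ then has closure $G_{\tilde E}$ inside the compact Lie group $\Ort(i_0)$, and Fried's analysis of distal groups~\cite{Fried86} legitimizes working with this closure as a genuine compact Lie group compatible with the ambient dynamics of $\bGamma_{\tilde E}$.

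The main obstacle is the similarity statement in the previous paragraph: a priori the conjugation induces an arbitrary element of $\GL(i_0,\bR)$, and only the standard cusp normalization carried out in the proof of Lemma~\ref{lem-bdhoro}---where the symmetric bilinear form $L$ was diagonalized and the quadratic entry brought to the canonical form $\tfrac12\|\vec v\|^2$---pins down a distinguished rigid feature of $\CN$. Once that normalization is in place, the similarity structure and the subsequent compactness are essentially bookkeeping between the block computation and the cusp geometry.
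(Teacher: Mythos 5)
Your overall route is the same as the paper's: write $g\,\CN(\vec v) = \CN(\vec v\,')\,g$ in block form, read off $\vec v\,'$ from two different blocks, and force the two answers to agree. But two steps are asserted where they actually require work. First, the $(3,2)$-block of $\CN(\vec v\,')\,g$ is $C_1(\vec v\,')\,s_1(g) + a_1(g)\,\vec v^{\,\prime T} + a_4(g)$, not just $a_1(g)\,\vec v^{\,\prime T} + a_4(g)$: the cross-term $C_1(\vec v\,')\,s_1(g)$ is \emph{not} ``forced to be zero by the block shape'' of \eqref{eqn-matstd} and \eqref{eqn-nilmatstd}. Without killing it you get $A_5(g)\vec v^T = C_1(\vec v\,')s_1(g) + a_1(g)\vec v^{\,\prime T}$, which does not identify $\vec v\,'$ as $\tfrac{1}{a_1(g)}\vec v\,A_5(g)^T$. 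The paper removes this term by choosing, for each $g$ separately, coordinates in which $s_1(g)=0$ (possible because $\hat S(g)$ is semisimple) and then checking that the subsequent cusp normalization of Lemma \ref{lem-bdhoro} only touches the last $i_0+1$ coordinates and so preserves $s_1(g)=0$. Your argument needs this step or a substitute for it.

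Second, because that normalization is $g$-dependent, the block comparison only shows that each $A_5(g)$ is \emph{conjugate} to a positive scalar times an orthogonal matrix, with the conjugating change of basis varying with $g$. You cannot immediately conclude that all the $O_5(g)$ sit inside a single copy of $\Ort(i_0)$, which is what your closing sentence assumes. The paper bridges this with Lemma \ref{lem-cpt}: the normalized matrices $\tfrac{1}{|\det A_5(g)|^{1/i_0}}A_5(g)$ form a \emph{group} each of whose elements has all eigenvalue norms equal to $1$, and the orthopotent/distal group theory (Conze--Guivar'ch, Moore) then yields a single coordinate system in which the whole group is orthogonal, hence relatively compact. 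Your appeal to Fried gestures at the right circle of ideas but does not isolate this as the point being resolved. On the positive side, your determinant observation $\det g = \det\hat S(g)\cdot\det A_5(g)\cdot a_9(g) = \pm 1$ is a clean and valid way to get $a_5(g), a_9(g) \neq 0$, slightly more direct than the paper's appeal to the bijectivity of $\vec v \mapsto \vec v\,'$.
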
 
\begin{proof}
Since the conjugation by $g$ sends elements of $\CN$ to itself in a one-to-one manner, 
the correspondence between the set of $\vec{v}$ for $\CN$ and $\vec{v'}$ is one-to-one.

Since we have $g \CN({\vec{v}}) = \CN({\vec{v}'}) g$ for vectors $\vec{v}$ and $\vec{v'}$ 
in $\bR^{i_0}$ by Hypothesis \ref{h-norm},
we consider
\begin{equation} \label{eqn-firstm}
\newcommand*{\temp}{\multicolumn{1}{r|}{}}
\left( \begin{array}{ccccccc} 
S(g) & \temp & s_1(g) & \temp & 0 & \temp & 0 \\ 
 \cline{1-7}
s_2(g) &\temp & a_1(g) &\temp & 0 &\temp & 0 \\ 
 \cline{1-7}
C_1(g) &\temp & a_4(g) &\temp & A_5(g) &\temp & 0 \\ 
 \cline{1-7}
c_2(g) &\temp & a_7(g) &\temp & a_8(g) &\temp & a_9(g) 
\end{array} 
\right)
\left( \begin{array}{ccccccc} 
\Idd_{n-i_0-1} & \temp & 0 & \temp & 0 & \temp & 0 \\ 
 \cline{1-7}
0 &\temp & 1 &\temp & 0 &\temp & 0 \\ 
 \cline{1-7}
C_1({\vec{v}}) &\temp & \vec{v}^T &\temp & \Idd_{i_0} &\temp & 0 \\ 
 \cline{1-7}
c_2({\vec{v}}) &\temp & \frac{||\vec{v}||^2}{2} &\temp & \vec{v} &\temp & 1 
\end{array} 
\right)
\end{equation}
where $C_1({\vec{v}})$ is an $(n-i_0-1)\times i_0$-matrix where
each row is a linear function of $\vec{v}$, 
$c_2({\vec{v}})$ is a $(n-i_0-1)$-row vector, and
$\vec{v}$ is an $i_0$-row vector. % and $s$ is a scalar. 
This must equal the following matrix
for some $\vec{v'}\in \bR$
\begin{equation} \label{eqn-secondm}
\newcommand*{\temp}{\multicolumn{1}{r|}{}}
\left( \begin{array}{ccccccc} 
\Idd_{n-i_0-1} & \temp & 0 & \temp & 0 & \temp & 0 \\ 
 \cline{1-7}
0 &\temp & 1 &\temp & 0 &\temp & 0 \\ 
 \cline{1-7}
C_1({\vec{v'}}) &\temp & \vec{v'}^T &\temp & \Idd_{i_0} &\temp & 0 \\ 
 \cline{1-7}
c_2({\vec{v'}}) &\temp & \frac{||\vec{v'}||^2 }{2} &\temp & \vec{v'} &\temp & 1 
\end{array} 
\right)
\left( \begin{array}{ccccccc} 
S(g) & \temp & s_1(g) & \temp & 0 & \temp & 0 \\ 
 \cline{1-7}
s_2(g) &\temp & a_1(g) &\temp & 0 &\temp & 0 \\ 
 \cline{1-7}
C_1(g) &\temp & a_4(g) &\temp & A_5(g) &\temp & 0 \\ 
 \cline{1-7}
c_2(g) &\temp & a_7(g) &\temp & a_8(g) &\temp & a_9(g) 
\end{array} 
\right).
\end{equation}
From equation \eqref{eqn-firstm}, 
we compute the $(4, 3)$-block of the result 
to be $a_8(g) + a_9(g) \vec{v}$. 
From Equation \eqref{eqn-secondm}, 
the $(4, 3)$-block is
$\vec{v'} A_5(g) + a_8(g)$. We obtain the relation
$a_9(g) \vec{v} = \vec{v'} A_5(g)$ for every $\vec{v}$. 
Since the correspondence between $\vec{v}$ and $\vec{v'}$ is one-to-one, 
we obtain 
\begin{equation}\label{eqn-vp1}
\vec{v'} = a_9(g) \vec{v} (A_5(g))^{-1}
\end{equation}
for the $i_0\times i_0$-matrix $A_5(g)$ and we also infer $a_9(g) \ne 0$
and $\det(A_5(g)) \ne 0$. 
The $(3, 2)$-block of the result of Equation \eqref{eqn-firstm} 
equals 
\[a_4(g) + A_5(g) \vec{v}^T \]  %+ \frac{1}{2} ||\vec{v}||^2 a_6(g).\]  
The $(3, 2)$-block of the result of equation \eqref{eqn-secondm} 
equals
\begin{equation}
C_1({\vec{v}'}) s_1(g) + a_1(g) \vec{v}^{\prime T} + a_4(g). 
\end{equation}
%Since the $||v||^2$-term is only term that is quadratic, we obtain $a_6(g)=0$ 
%and 
Thus,
\begin{equation} \label{eqn-sim0}
A_5(g) \vec{v}^T =  C_1({\vec{v}'}) s_1(g)  + a_1(g) \vec{v}^{\prime T}.
\end{equation} 
%We obtain 
%\begin{equation} \label{eqn-a6}
%a_6(g) = 0
%\end{equation} 
%since $||\vec{v}||^2$ is the only quadratic term. 
%We will assume this from now on. 

%Let us fix $l$ for the following discussions. 
%Let $\CN_1(\vec{v})$ denote 
%the 

%\marginpar{Show or say... $U$  satisfies $x_{n-i_0} > 0$. }
For each $g$, we can choose a coordinate system so that $s_1(g) = 0$ %$a_6(g)=0$ 
as $\hat S(g)$ is semisimple, 
which involves the coordinate changes of the first $n-i_0$ coordinate functions only. 
%We have $a_1(g) \ne 0$ because elements of $U$ are moved into ones in $U$. 
%The coordinate change corresponds to 
%the row operations for the first $(n-i_0)$-rows of the matrices by multiplying the $(n-i_0)$-th row by scalars and 
%subtracting from the lower index rows 
%and do the column operations for the first $(n-i_0)$-columns by multiplying the first $(n-i_0-1)$ columns by scalars 
%and adding to the $(n-i_0)$-th column.  
%(Hence, up to the second columns in the above matrices). 
%In this coordinate system, the matrix entries of the lower-right $(i_0+2)\times(i_0+2)$-matrices might change
%but it will affect the first columns and the first rows. 
%But the matrix form itself will not change. 
%We may also assume that $U$ satisfies $x_{n-i_0} > 0$ since $U$ is convex.

%%% March 8, 11:03 pm 2013
%We see that $g$ acts on a fixed point of $g|\SI^{i_0+1}_{l'}$ corresponding to the largest norm eigenvector of $g$ 
%corresponding to the last column of new $\hat S_g$. 

%Furthermore, we may assume that 
%the corresponding eigenvalue is different from those of $\SI^{i_0}_\infty$ by choosing 
%the largest norm eigenvalue or the smallest one of $g$ by the third condition of 
%the weak uniform middle-eigenvalue condition. 
%Hence, $(\SI^{i_0+1}_{l'} - \SI^{i_0}_\infty) \cap \clo(U) \ne \emp$ by the existence of the fixed point. 

Since $\CN$ acts on $\SI^{i_0+1}_{l'}$ for some leaf $l'$ as a cusp group by Lemma \ref{lem-bdhoro},
there exists a coordinate change involving the last $(i_0+1)$-coordinates 
\[x_{n-i_0+1}, \dots, x_n, x_{n+1}\]
so that the matrix form of the lower-right $(i_0+2)\times(i_0+2)$-matrix of each element $\CN$ is of the standard cusp form.
This will not affect $s_1(g) = 0$ as we can check from the proof of Lemma \ref{lem-bdhoro}
as the change involves the above coordinates only. 
Denote this coordinate system by $\Phi_{g, l'}$. 
%In fact, this is the affine coordinate change making the ellipsoid to be given by the standard quadric. 
%Then each element of $\CN$ is of the standard cusp form by Section \ref{subsub-quadric}.
%So again the change is given by a conjugation by a matrix that differs from $\Idd$ by 
%the lower-right $(i_0+2)\times(i_0+2)$-matrix that also has zero's at the last row and the last column 
%except the last diagonal entry. 
%This is an affine coordinate change when we consider the interior of $H^{i_0+1}_{l'}$ as an affine space. 
%Also, such a coordinate system is unique up to the orthogonal transformation fixing the $x_{n+1}$-direction multiplied by an element of $\CN$. 
%This does not change the fact that $s_2(g) = 0$. Let us called $\Phi_{g, l'}$ the adopted
%coordinate system of the interior $A^{i_0+1}_{l'}$, which depends on $g$ and $l'$.
%%(Note that the coordinate change does not change the fact that all $g$ are in the standard form with $a_6(g)=0$.)

%The matrix forms do not change under all the processes. 
Let us use $\Phi_{g, l'}$ for a while
using primes for new set of coordinates functions. 
Now $A'_5(g)$ is conjugate to $A_5(g)$ as we can check in the proof of Lemma \ref{lem-bdhoro}. 
Under this coordinate system for given $g$, 
we obtain $a'_1(g) \ne 0$ and we can recompute to show that 
$a'_9(g) \vec{v} = \vec{v'} A'_5(g)$ for every $\vec{v}$ as in equation \eqref{eqn-vp1}. 
By equation \eqref{eqn-sim0} recomputed for this case, we obtain 
\begin{equation}\label{eqn-vp2}
\vec{v'} = \frac{1}{a'_1(g)} \vec{v} (A'_5(g))^T
 \end{equation}
 as $s'_1(g) = 0$ here since we are using the coordinate system $\Phi_{g, l'}$.
 Since this holds for every $\vec{v} \in \bR^{i_0}$, 
 we obtain 
 \[a'_9(g) (A'_5(g))^{-1} = \frac{1}{a'_1(g)} (A'_5(g))^T.\] 
 Hence $\frac{1}{ |\det(A'_5(g))|^{1/i_0}} A'_5(g) \in O(i_0)$. 
 Also, \[\frac{a'_9(g)}{a'_5(g)} = \frac{a'_5(g)}{a'_1(g)}.\] 
 Here, $A'_5(g)$ is a conjugate of the original matrix $A_5(g)$ by linear coordinate changes 
 as we can see from the above processes to obtain the new coordinate system.
 %(Check the proof of Lemma \ref{lem-bdhoro} and that the coordinate changes involved for 
 %getting $s_1(g)=0$ do not change $A_5(g)$.)  
% We say that the matrix of $g$ is in the orthogonal form if $A_5(g)$ is a scalar multiplied by an orthogonal matrix and has the form of 
 %the same matrix form as before. 
 
 This implies that the original matrix $A_5(g)$ is conjugate to an orthogonal matrix multiplied by a positive scalar for every $g$. 
The set of matrices $ \{ A_5(g)| g \in \bGamma_{\tilde E}\}$ forms a group since every $g$ is of a standard matrix form 
(see equation \eqref{eqn-matstd}). %where $a_6(g) = 0$ for every $g$. 
  Given such a group of matrices normalized to have determinant $\pm 1$, we obtain 
 a compact group 
 \[G_{\tilde E}:= \Bigg\{ \frac{1}{|\det A_5(g)|^{\frac{1}{i_0}}} A_5(g) \Bigg| g \in \Gamma_{\tilde E} \Bigg\}\]
  by Lemma \ref{lem-cpt}. 
This group has a coordinate system where every element is orthogonal by a coordinate change
of coordinates $x_{n-i_0+1}, \dots, x_n$. 

%Since $\CN$ acts on the ellipsoid that is in the standard quadratic form, $\CN$ is in the cusp form. 
%Finally, we do the above steps again for this coordinate system to obtain the conclusions of the lemma.
 
\end{proof} 

%(Note that it may be $O_5(g)^T \ne O_5(g)^{-1}$ since we only have orthogonality up to conjugation.)

\begin{lemma}\label{lem-cpt} 
Suppose that $G$ is a subgroup of a linear group $\GL(i_0, \bR)$ where each 
element is conjugate to an orthogonal element. Then $G$ is a compact group. 
\end{lemma}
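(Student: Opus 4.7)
The plan is to show that $G$ is bounded as a subset of $\GL(i_0, \bR)$; once we have this, the closure $\overline{G}$ is a compact subgroup of $\GL(i_0,\bR)$, and in the application where $G$ arises closed by construction this gives compactness of $G$ itself.

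The first observation is that, since each $g\in G$ is conjugate to an orthogonal matrix, each $g$ is semisimple over $\bC$ with all eigenvalues on the unit circle, so $|\tr(g)|\leq i_0$; because $G$ is closed under multiplication, the same inequality applies to products: $|\tr(gh)|\leq i_0$ for every $g,h\in G$. Next I would reduce to the case of an irreducible action: because every element of $G$ is semisimple, the $\bR$-subalgebra of $M_{i_0}(\bR)$ generated by $G$ is semisimple (a standard consequence of elementwise semisimplicity in characteristic zero), so the tautological representation of $G$ on $\bR^{i_0}$ is completely reducible, and it suffices to bound the image of $G$ in $\GL(V)$ on each irreducible summand $V$.

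Assume then that $G$ acts irreducibly on $V=\bR^m$. By the Jacobson density theorem the commutant $D=\mathrm{End}_{\bR[G]}(V)$ is a real division algebra, hence $\bR$, $\bC$, or $\bH$, and the $\bR$-linear span $B$ of $G$ in $\mathrm{End}_\bR(V)$ equals $\mathrm{End}_D(V)$, a simple $\bR$-algebra on which the trace pairing $(X,Y)\mapsto \tr(XY)$ is non-degenerate. I would fix a basis $g_1,\dots,g_d \in G$ of $B$, take the trace-dual basis $g_1^*,\dots,g_d^*\in B$, and expand each dual element as $g_i^* = \sum_j \lambda_{ij} g_j$. Then any $g\in G$ has coordinates in the basis $\{g_i\}$ given by
\[ c_i(g) = \tr(g g_i^*) = \sum_j \lambda_{ij}\,\tr(g g_j), \]
and since $g g_j \in G$ forces $|\tr(g g_j)|\leq i_0$, each $|c_i(g)|$ is bounded by a constant depending only on the fixed $\lambda_{ij}$ and on $i_0$. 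This proves $G$ is bounded in $\mathrm{End}_\bR(V)$, hence in $\GL(i_0,\bR)$.

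The main obstacle I expect is the complete reducibility reduction in the second paragraph: that elementwise semisimplicity forces the $\bR$-algebra generated by $G$ to be semisimple in characteristic zero is classical but not one-line, and without it one would have to rule out by hand a $G$-invariant flag $0\subset W\subset \bR^{i_0}$ along which the off-diagonal blocks of $G$ could a priori be unbounded even when the induced actions on $W$ and $\bR^{i_0}/W$ are bounded.
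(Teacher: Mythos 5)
Your paragraph on the irreducible case is the classical Burnside--Schur trace argument and is correct, and you are right to single out the complete-reducibility reduction as the crux. The problem is that this reduction is not a classical fact: elementwise semisimplicity of a subgroup of $\GL(V)$ in characteristic zero does not imply that $V$ is a semisimple module, and in fact the lemma as literally stated fails, precisely through such a failure. Take $i_{0}=5$. Let $\rho: F_{2}\ra \SUTw \subset \SO(4)$ be an injective homomorphism onto a free subgroup of rank two (these exist in abundance). Every element of $\SUTw$ other than the identity has eigenvalues $e^{\pm i\theta}$ with $e^{i\theta}\neq 1$, so $I-\rho(w)$ is invertible on $\bR^{4}$ for every reduced word $w\neq e$. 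Let $G\subset \GL(5,\bR)$ be the image of
\[ w \mapsto \left( \begin{array}{cc} \rho(w) & b(w) \\ 0 & 1 \end{array}\right), \]
where $b: F_{2}\ra \bR^{4}$ is the cocycle determined by $b(g_{1})=0$ and $b(g_{2})=b_{2}\neq 0$. Each nontrivial element acts on the affine chart $x_{5}=1$ with the unique fixed point $(I-\rho(w))^{-1}b(w)$, hence is conjugate in $\GL(5,\bR)$ to $\mathrm{diag}(\rho(w),1)\in \SO(5)$; so every element of $G$ is conjugate to an orthogonal matrix. Yet $G$ is unbounded: a relatively compact group of affine transformations of $\bR^{4}$ has a common fixed point (average an orbit against the Haar measure of the closure), and that would force the distinct fixed points $0$ of $g_{1}$ and $(I-\rho(g_{2}))^{-1}b_{2}$ of $g_{2}$ to coincide. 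Here $\bR^{4}$ is a $G$-invariant subspace of $\bR^{5}$ with no invariant complement, so the algebra generated by $G$ is not semisimple even though every element of $G$ is a semisimple matrix; this is exactly the invariant flag with unbounded off-diagonal blocks that you were worried about.

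Consequently the gap in your second paragraph cannot be closed from the stated hypotheses alone; some additional input from the context of Lemma \ref{lem-similarity} (irreducibility or complete reducibility of the relevant representation, a simultaneous conjugation, or the constraints coming from the normalized abelian group $\CN$) is needed before your third paragraph, which is sound, can be applied. For what it is worth, the paper's own proof takes a different route (orthopotence of distal linear groups after Conze--Guivarc'h and Moore, plus boundedness of each cyclic group $\{g^{n}\}$), but its final step asserts exactly the implication that the example above violates, so the defect lies in the statement of the lemma rather than in your choice of method.
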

\begin{proof} 
Clearly, the norms of eigenvalues of $g \in G$ are all $1$. 
$G$ is virtually an orthopotent group by \cite{CG2} or \cite{Moore}. 
Since the group is linear and for each element $g$, 
$\{g^n| n \in \bZ\}$ is a bounded collection of matrices, 
$G$ is a subgroup of an orthogonal group under a coordinate system. 
\end{proof}

We denote by $(C_1({\vec{v}}), \vec{v}^T)$ the matrix obtained 
from $C_1(\vec{v})$ by adding a column vector $\vec{v}^T$.

%%% June 20 9:53pm 2013

%We continue to use the notations in the proof of Lemma \ref{lem-similarity}.
\begin{lemma}[$K$ is a cone] \label{lem-conedecomp1}
Assume Hypothesis \ref{h-norm}. 
%Let $\Gamma$ be so that $N_K$ acts on $K$ so that $K^{o}/N_K$ is a compact set.
Then the following hold{\rm :} 
%Assume that $A_g$ equals $O_5(g) \in \SO(i_0)$ for every $g \in \Gamma$. 
\begin{itemize} 
\item $K$ is a cone over a totally geodesic 
$(n-i_0-2)$-dimensional domain $K''$. 
%We can find a coordinate system where $s_1(g)=0, s_2(g) =0$. 
\item The rows of $(C_1({\vec{v}}), \vec{v}^T)$ are proportional to a single vector and 
we can find a coordinate system where $C_1({\vec v}) = 0$
not changing any entries of the lower-right $(i_0+2)\times (i_0+2)$-submatrices for all $\vec v \in \bR^{i_0}$. 
\item We can find a common coordinate system where 
\begin{equation}\label{eqn-O5coor}
O_5(g)^{-1} = O_5(g)^T, O_5(g) \in O(i_0), s_1(g) = s_2(g) = 0 \hbox{ for all } g \in \bGamma_{\tilde E}.
\end{equation} 
%$O_5(g)$ is now in the orthogonal group $O(i)$ and $O_5(g)^{-1} = O_5(g)^T$. 
\item In this coordinate system, we have
\begin{equation} \label{eqn-conedecomp1}
 a_9(g) c_2({\vec{v}})  %+ \frac{||\vec{v}||^2}{2} s_2(g)  
 = c_2({\mu_g\vec{v}O_5(g)^{-1}}) S(g)  + \mu_g \vec{v} O_5(g)^{-1} C_1(g).
\end{equation} 
%\[ c_2(g) + a_9(g) c_2({\vec{v}}) = c_2({\vec{v'}}) S(g) + \frac{||\vec{v'}||^2}{2} s_2(g) + \vec{v'} C_1(g) + c_2(g).\]

%\[ c_g^2 + a_9(g) c^2_{\vec{v}} = c^2_{\vec{v'}} S_g + \vec{v'} C^1_g + c_g^2.\]

%and
%the $i$-th row $C^1_{g, i}$ of $C^1_g$ equals  
%and $c^2_{\vec{e}_i} (a_9(g) \Idd - S_g)$ if $A_g = \Idd$.
\end{itemize}
%The last two items hold with the new coordinate system obtained from the third item. 
\end{lemma}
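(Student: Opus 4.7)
The plan is to first establish the cone structure of $K$, then use it to bring the matrices into a common coordinate system with the stated forms, and finally read off equation \eqref{eqn-conedecomp1} from the block equality of \eqref{eqn-firstm} versus \eqref{eqn-secondm}. The heart of the argument is producing a common $N_K$-fixed direction in $\clo(K)$ that serves as a cone vertex.

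First, I would obtain the cone decomposition of $K$. From the Similarity Lemma, $g \mapsto \mu_g = a_9(g)/a_5(g)$ is a real scalar character of $\bGamma_{\tilde E}$ that factors through $N_K$. Together with the identity $a_5(g)^2 = a_1(g) a_9(g)$ and the weak middle eigenvalue condition $\bar\lambda(g) \geq \lambda_1(g) \geq a_9(g)$, this character records a genuine asymmetry in the spectrum of the $\bGamma_{\tilde E}$-action on $\SI^{n-i_0-1}$. Applying Benoist's decomposition (Theorem 1.1 of \cite{Ben2}) to $N_K$ acting cocompactly on $K^o$, and tracking the character $\mu$ through the virtual-center $\bZ^{l-1}$ factor in the decomposition $K=K_1 \ast \cdots \ast K_l$, one factor is forced to reduce to a single $N_K$-fixed point $v_0$, producing $K = \{v_0\} \ast K''$ with $K''$ a properly convex $(n-i_0-2)$-dimensional domain. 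The degenerate case $\mu_g\equiv 1$ is handled separately, using $a_1(g)=a_9(g)$ and a projective-dynamics argument to place a common fixed direction in $\clo(K)$.

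Next, the cone decomposition pins down a common coordinate system. Choosing the $(n-i_0)$-th coordinate axis to represent the vertex $v_0$, the $N_K$-invariance of $v_0$ forces $s_1(g) = 0$ simultaneously for all $g \in \bGamma_{\tilde E}$, while the complementary $N_K$-invariant subspace supporting $K''$ forces $s_2(g) = 0$. The compact group $G_{\tilde E}$ of the Similarity Lemma is then conjugated into $O(i_0)$ by a linear change touching only $x_{n-i_0+1}, \dots, x_n$, which preserves $s_1 = s_2 = 0$ and yields $O_5(g)^{-1} = O_5(g)^T$. With these in place, the $(3,1)$-block equality of \eqref{eqn-firstm} versus \eqref{eqn-secondm}, with $s_2(g) = 0$ and $\vec{v}' = \mu_g \vec{v}\, O_5(g)^{-1}$, becomes the cocycle
\[A_5(g)\, C_1(\vec{v}) = C_1\bigl(\mu_g \vec{v}\, O_5(g)^{-1}\bigr)\, S(g).\]
Using the linearity of $\vec{v} \mapsto C_1(\vec{v})$ inherited from the group structure of $\CN$, together with the strong irreducibility of the $N_K$-action on $K''$, the $i_0$ rows of $C_1(\vec{v})$ are forced to lie on a single line; hence all rows of $(C_1(\vec{v}), \vec{v}^T)$ are proportional to one $(n-i_0)$-vector. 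A conjugation by an elementary matrix mixing the first $n-i_0-1$ coordinates with the last $i_0+1$ coordinates then absorbs this common row and achieves $C_1(\vec{v}) \equiv 0$ without disturbing the lower-right $(i_0+2) \times (i_0+2)$ block nor $s_1, s_2$. In this final coordinate system, the $(4,1)$-block equality reads exactly \eqref{eqn-conedecomp1}.

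The main obstacle is the first step: ensuring that Benoist's decomposition produces a point factor in $\clo(K)$ aligned with the character $\mu_g$. The Similarity Lemma yields only the abstract scalar $\mu_g$; linking it to a concrete cone vertex of the properly convex $K$ requires a careful orbit analysis that uses the weak middle eigenvalue condition essentially, and must address the degenerate case $\mu_g\equiv 1$ separately. Without this link, no common $N_K$-fixed direction in $\clo(K)$ would be available to align the coordinates to, and the subsequent coordinate reductions would collapse.
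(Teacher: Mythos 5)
Your overall architecture inverts the paper's, and the inversion creates a genuine gap at the very step you flag as the ``main obstacle.'' You propose to first extract a cone vertex $v_0 \in \clo(K)$ from the character $g \mapsto \mu_g$ together with Benoist's decomposition of $N_K$, and only then to align coordinates and deduce the proportionality of the rows of $(C_1(\vec{v}), \vec{v}^T)$. But a nontrivial real character on $N_K$ does not force any factor of the Benoist join $K_1 \ast \cdots \ast K_l$ to degenerate to a point, and in the case that actually matters for the rest of the paper --- $\mu_g \equiv 1$, which is eventually shown to always hold (Proposition \ref{prop-mug}) --- your character carries no information at all; you defer that case to an unspecified ``projective-dynamics argument.'' So the existence of the vertex, on which your entire coordinate normalization hangs, is not established.

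The paper's proof runs in the opposite direction and obtains the vertex as a byproduct of the analysis of $C_1$. Comparing the top rows of the two sides of $g\CN(\vec{v}) = \CN(\mu_g \vec{v}\, O_5(g)^{-1})g$ yields equation \eqref{eqn-C2}, which says that the normalized $\hat S(g)$ preserve the subspace $V_C$ spanned by the rows of $(C_1(\vec{v}), \vec{v}^T)$ and act on it through a bounded, hence compact, group, because $O_5(g)$ lies in the compact group $G_{\tilde E}$ of Lemma \ref{lem-similarity}. Benoist's splitting is then applied to the \emph{dual} group $N_K^*$ acting on $K^*$: a join factor on which the action is simultaneously orthogonal and properly, cocompactly convex must be zero-dimensional, so $\dim \SI_C^{\ast} = 0$. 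This single fact gives both the proportionality of the rows of $(C_1(\vec{v}), \vec{v}^T)$ and, by duality, the common fixed direction $k$ with $K = \{k\} \ast K''$; the cone structure is an output of the $C_1$ analysis, not an input to it. To salvage your outline you would need to replace your first step by this compactness-plus-duality argument (or an equivalent one); as written, the subsequent coordinate reductions have nothing to anchor to.
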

\begin{proof} 

The assumption implies that $M_g =  \mu_g O_5(g)^{-1}$ by Lemma \ref{lem-similarity}.
We consider the equation 
\begin{equation} \label{eqn-conj0}
g \CN(\vec{v}) g^{-1} = \CN(\mu_g \vec{v} O_5(g)^{-1}).
\end{equation}
%for each $j =1, \dots, i$. 

%By the group property of $\CN$, we obtain that $C_1({\vec{v}}), c_2({\vec{v}})$ are linear in $\vec{v}^T$, 
%and hence 
%\begin{equation} \label{eqn-c1lin}
%C_1({\vec{v}M_g}) =  M_g^T C_1({\vec{v}}) \hbox{ and } 
%C_1({\mu_g\vec{v}O_5(g)^{-1}}) =  \mu_g O_5(g) C_1({\vec{v}}) 
%\end{equation}
%\CNeed to prove...

%% April 7th 11:21pm 2015

We change to 
\begin{equation}\label{eqn-gN}
g \CN(\vec v) = \CN(\mu_g \vec v O_5(g)^{-1}) g.
\end{equation}
Considering the lower left $(n-i_0) \times (i_0+1)$-matrix of the left side of equation \eqref{eqn-gN},  
we obtain 
\begin{equation} \label{eqn-mm1}
\left(\begin{array}{cc} 
C_1(g) & a_4(g) \\ c_2(g) & a_7(g) \end{array} \right) 
+ 
\left( \begin{array}{cc} 
a_5(g) O_5(g) C_1({\vec{v}}) & a_5(g) O_5(g) \vec{v}\\ 
a_8(g) C_1({\vec{v}}) + a_9 c_2({\vec{v}})  % \frac{a_9}{\lambda_{\bv_{\tilde E}}(g)} e_i C^1_g
& a_8(g) {\cdot} \vec{v}^T + a_9(g) \vec{v}\cdot\vec{v}/2
\end{array} \right) \end{equation}
where the entry sizes are clear. 
From the right side of equation \eqref{eqn-gN}, we obtain
\begin{equation} \label{eqn-mm2} 
\begin{split} 
\left( \begin{array}{cc} 
C_1({\mu_g \vec{v} O_5(g)^{-1}})& \mu_g O_5(g)^{-1, T}\vec{v}^T \\ 
c_2({\mu_g\vec{v}}O_5(g)^{-1})     & \vec{v}\cdot\vec{v}/2
\end{array} \right) \hat S(g) + \\
 \left(\begin{array}{cc} 
C_1(g) & a_4(g) \\ \vec{v} \cdot C_1(g) + c_2(g) & a_7(g) + \vec{v} \cdot a_4(g) 
\end{array} \right).
\end{split}
\end{equation}
From the top rows of equations \eqref{eqn-mm1} and \eqref{eqn-mm2},  we obtain that 
\begin{equation}\label{eqn-C1} 
\begin{split}
 \bigg(a_5(g) O_5(g) C_1({\vec{v}}) , \, & a_5(g) O_5(g) \vec{v}^T \,\bigg) = \\
& \bigg( \mu_g C_1\left({\vec{v}}  O_5(g)^{-1}\right),  \mu_g O_5(g)^{-1, T} \vec{v}^T  \bigg) \hat S(g). 
 \end{split}
 \end{equation} 
 We multiplied the both sides by $O_5(g)^{-1}$ from the right and by $\hat S(g)^{-1}$ from the left to obtain
 \begin{equation}\label{eqn-C2} 
 \begin{split} 
\bigg(a_5(g) C_1({\vec{v}}) , \, & a_5(g) \vec{v}^T\bigg) \hat S(g^{-1})
=   \\
& \bigg(\mu_g O_5(g)^{-1} C_1({\vec{v}}  O_5(g)^{-1} ),  \mu_g O_5(g)^{-1}  O_5(g)^{-1, T} \vec{v}^T\,\bigg).
\end{split}
\end{equation}
Let us form the subspace $V_C$ in the dual sphere $\bR^{n-i_0 \ast}$ spanned by 
row vectors of $(C_1({\vec{v}}), \vec{v}^T)$. Let $\SI_C^\ast$ denote the corresponding subspace 
in $\SI^{n-i_0-1 \ast}$. Then 
\[\left\{\frac{1}{\det \hat S(g)^{\frac{1}{n-i_0-1}}}\hat S(g) \vert g \in \bGamma_{\tilde E}\right\}\] 
acts on $V_C$ as a group of bounded 
linear automorphisms since $O_5(g) \in G$ for a compact group $G$.
Therefore, $\{\hat S(g)| g\in \bGamma_{\tilde E}\}$ on $\SI_C^\ast$ is in a compact group of 
projective automorphisms by equation \eqref{eqn-C2}.
%By conjugating, the action can be considered orthogonal ones. 

We recall that the dual group $N_K^*$ of $N_K$ acts on the properly convex dual domain $K^*$ of $K$ by Theorem \ref{thm-dualdiff}.
Then $g$ acts as an element of a compact group on $\SI_C^\ast$. Thus, $N_K^*$ is reducible. 
%Mark5

We claim that $\dim(\SI^{\ast}_C) = 0$. 
Let $\SI^{\ast}_M$ be the maximal invariant subspace containing $\SI^{\ast}_C$  where each $g\in N_K^*$ acts orthogonally.  
Now, we apply the theory of Benoist \cite{Ben3}.
Since $N_K^*$ is semisimple, 
$N_K^*$  acts on 
a complementary subspace of $\SI^{\ast}_N$. 
%$N_K^*$ acts on the interior of the dual $K^*$ domain of $K$ properly and cocompactly. 
$K^*$ has an invariant subspace $K^*_1$ and $K^*_2$ 
so that we have strict join 
 \[K^* = K^*_1 \ast K^*_2 \hbox{ where }  \dim K^*_1 = \dim \SI_M^{\ast}, \dim K^*_2 = \dim \SI_N^{\ast}\] 
where
\[K^*_1 = K^* \cap \SI_M^{\ast}, K^*_2 = K^{\ast}\cap \SI_N^{\ast}.\] 
Also, $N_K^*$ is isomorphic to a cocompact subgroup of 
\[N_{K, 1}\times N_{K, 2} \times A, A \subset \bR\] 
and $N_{K, i}$ acts on a properly convex domain that is
the interior of $K^*_i$ properly and cocompactly for $i=1,2$.
But since $N_{K, 1}$ acts orthogonally on $\SI_M$, 
the only possibility is that $\dim \SI_M = 0$.  Hence, $\dim \SI_{C} =0$. 

Rows of $(C_{1}(\vec{v}), \vec{v}^{T})$ are elements of the $1$-dimensional subspace in $\bR^{n-i_{0}-1\ast}$ 
corresponding to $\SI_{C}^{\ast}$. 
Therefore this shows that the rows of $(C_1({\vec{v}}), \vec{v}^T)$ are proportional to a single row vector.

%June 17 11:16pm 2015
Since $(C_1({\vec{e}_j}), \vec{e}_j^T)$ has $0$ as the last column element except 
for the $j$th one, only the $j$th row of $C_{1}(\vec{e}_{j})$ is nonzero. 
Let $C_1({1, \vec{e}_1})$ be the first row of $C_1({\vec{e}_1})$. 
Thus, each row of $(C_{1}(\vec{e}_{j}), \vec{e}_{j}^{T})$ equals to a scalar multiple of 
$(C_1({1, \vec{e}_1}), 1)$ for every $j$. % where $C_1({1, \vec{e}_1})$ is the first 
%row of $C_1({\vec{e}_1})$. 
Now we can choose coordinates of $\bR^{n-i_0 \ast}$ so that 
this row vector now has a coordinate $(0, \dots, 0, 1)$. 
We can also choose so that $K^*_1$ is given by setting the last coordinate be zero.  
With this change, we need to do conjugation by matrices 
with the top left $(n-i_0-1)\times (n-i_0-1)$-submatrix being different from $\Idd$ and 
the rest of the entries staying the same. 
This will not affect the expressions of matrices of 
 lower right $(i_0+2)\times (i_0+2)$-matrices involved here. 
%We will of course use the other coordinate expressions for matrices elsewhere. 
Thus, $C_1({\vec{v}}) =0$ in this coordinate for all $\vec{v} \in \bR^{i_0}$ and $g \in \bGamma_{\tilde E} - N$. 
Also, $\overbrace{ [0, \dots, 0, 1]}^{n-i_0}$ is an eigenvector of every elements of $N_{K}^{\ast}$. 

The hyperspace containing $K^{\ast}_{1}$ is also $N_{K}^{\ast}$-invariant. 
Thus,  $\overbrace{ [0, \dots, 0, 1]}^{n-i_0}$ corresponds to an eigenvector of every elements of $N_{K}$. 

And in this coordinate system, $K$ is a strict join of a point 
\[k =\overbrace{ [0, \dots, 0, 1]}^{n-i_0}\] 
and a domain $K''$ given by
setting $x_{n-i_0} = 0$ in a totally geodesic 
sphere of dimension $n-i_0-2$ by duality. 
We also obtain
\[s_1(g) =0, s_2(g) =0.\]

For the final item we have under our coordinate system. 
\begin{equation} \label{eqn-form1}
g = \left( \begin{array}{cccc} 
 S(g) & 0 & 0 & 0 \\ 
 0 & a_1(g) & 0 & 0 \\ 
C_1(g) & a_4(g) & a_5(g) O_5(g) & 0  \\
c_2(g) & a_7(g) & a_8(g) & a_9(g) 
\end{array} \right), 
\end{equation} 
\begin{equation} \label{eqn-form2}
 \CN(\vec{v}) = \left( \begin{array}{cccc} 
 \Idd & 0 & 0 & 0 \\ 
 0 &    1 & 0 & 0 \\ 
0 & \vec{v}^T & \Idd & 0 \\ 
 c_2({\vec{v}}) & \frac{1}{2} ||\vec{v}||^2 & \vec{v} & 1 
 \end{array} \right). 
 \end{equation} 
 Here we might need to change the last $i_{0}$ coordinates as done in the last part of the proof of Lemma \ref{lem-bdhoro}.  
  
 The normalization of $\CN$ shows as in the proof of Lemma \ref{lem-similarity} that $O_5(g)$ is orthogonal now.
 (See equations  \eqref{eqn-vp1} and  \eqref{eqn-sim0}.)
By equation \eqref{eqn-conj0}, we have 
 \[g\CN(\vec{v}) = \CN(\vec{v'})g, v'= \mu_g \vec{v} O_5(g)^{-1}.\]
 We consider the lower-right $(i_0+1)\times (n-i_0)$-submatrices of 
 $g\CN(\vec{v})$ and $\CN(\vec{v'})g$. 
 For the first one, we obtain 
 \[
 \left( \begin{array}{cc} C_1(g) & a_4(g) \\ c_2(g) & a_7(g) \end{array} \right ) 
 + \left( \begin{array}{cc} a_5(g) O_5(g) & 0 \\ a_8(g) & a_9(g) \end{array} \right) 
 \left( \begin{array}{cc} 0 & \vec{v}^T \\ c_2({\vec{v}}) &  \frac{1}{2} ||\vec{v}||^2 \end{array} \right) 
 \]
 For $\CN(\vec{v'})g$, we obtain 
  \[
  \left( \begin{array}{cc} 0 & \vec{v'}^T \\ c_2({\vec{v'}}) & \frac{1}{2} ||\vec{v}'||^2 \end{array} \right) 
 \left( \begin{array}{cc} S(g) & 0 \\ 0 & a_1(g) \end{array} \right ) 
 + \left( \begin{array}{cc}  \Idd & 0 \\ \vec{v'} & 1 \end{array} \right) 
 \left( \begin{array}{cc} C_1(g) & a_4(g) \\ c_2(g) & a_9(g) \end{array} \right).
 \]
Considering $(2, 1)$-blocks, we obtain 
\[ c_2(g) + a_9(g) c_2({\vec{v}}) = c_2({\vec{v'}}) S(g) 
%\frac{||\vec{v'}||^2}{2} s_2(g) 
+ \vec{v'} C_1(g) 
+ c_2(g).\]

\end{proof}

% Feb 11 5:54pm 2015

\begin{lemma} \label{lem-matrix}
Assume Hypothesis \ref{h-norm}. 
%Let $\Gamma$ be so that $N_K$ acts on $K$ so that $K/N_K$ is a compact set.
%Suppose additionally that every $g \in \Gamma \ra M_g$ is so that $M_g$ is in a fixed compact group $O(i_0)$
%or equivalently $\mu_g = 1$. 
Then we can find coordinates so that the following holds for all $g$\, {\rm :} 
\begin{align} 
\frac{a_{9}(g)}{a_{5}(g)} O_5(g)^{-1} a_4(g) &= a_8(g)^{T} \hbox{ or }\frac{a_{9}(g)}{a_{5}(g)}   a_4(g)^T O_5(g) = a_8(g),\\
\hbox{If } \mu_{g} =1, \hbox{ then } &
a_1(g) = a_9(g)  = \lambda_{\bv_{\tilde E}}(g) \hbox{ and } 
A_5(g) = \lambda_{\bv_{\tilde E}}(g) O_5(g). 
\end{align}
\end{lemma}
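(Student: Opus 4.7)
The plan is to work entirely in the standardized coordinate system produced by Lemma \ref{lem-conedecomp1}, so that $g$ and $\CN(\vec{v})$ take the block forms \eqref{eqn-form1} and \eqref{eqn-form2}, and to read off both identities from the normalization relation
\[
g\,\CN(\vec{v}) \;=\; \CN(\vec{v}')\,g, \qquad \vec{v}' \;=\; \mu_g\,\vec{v}\,O_5(g)^{-1},
\]
which was established there. Everything will then reduce to comparing two specific blocks of this matrix equation, exactly in the spirit of the block-matching arguments already used in Lemmas \ref{lem-similarity} and \ref{lem-conedecomp1}.

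For the first relation, I would expand the $(4,2)$-block on each side. On the left, the $(4,2)$-block of $g\,\CN(\vec{v})$ evaluates to $a_7(g) + a_8(g)\vec{v}^T + \tfrac{a_9(g)}{2}\|\vec{v}\|^2$. On the right, the $(4,2)$-block of $\CN(\vec{v}')\,g$ gives $a_7(g) + \vec{v}'a_4(g) + \tfrac{a_1(g)}{2}\|\vec{v}'\|^2$. Because $O_5(g)$ is orthogonal in these coordinates, $\|\vec{v}'\|^2 = \mu_g^2\|\vec{v}\|^2$, and Lemma \ref{lem-similarity} gives $a_5(g)^2 = a_1(g)a_9(g)$, so $a_1(g)\mu_g^2 = a_9(g)$ and the quadratic terms cancel. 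What remains is the scalar identity $a_8(g)\vec{v}^T = \mu_g\,\vec{v}\,O_5(g)^{-1}a_4(g)$ valid for every $\vec{v}\in\bR^{i_0}$. Reading off coefficients yields $a_8(g)^T = \frac{a_9(g)}{a_5(g)}O_5(g)^{-1}a_4(g)$, which is the first form; the equivalent form $\frac{a_9(g)}{a_5(g)}a_4(g)^T O_5(g) = a_8(g)$ follows by transposing and using $O_5(g)^{-1} = O_5(g)^T$.

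For the case $\mu_g = 1$, the argument is purely algebraic. Since $\bv_{\tilde E} = [0,\dots,0,1]$ and $g$ is in the standard form \eqref{eqn-matstd}, the eigenvalue of $g$ at $\bv_{\tilde E}$ is simply the bottom-right entry $a_9(g)$. Combining $\mu_g = 1$ with the two identities $\mu_g = a_5(g)/a_1(g) = a_9(g)/a_5(g)$ from Lemma \ref{lem-similarity} forces $a_1(g) = a_5(g) = a_9(g) = \lambda_{\bv_{\tilde E}}(g)$, and therefore $A_5(g) = a_5(g)O_5(g) = \lambda_{\bv_{\tilde E}}(g)O_5(g)$.

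The main obstacle is really the observation that the quadratic terms in the $(4,2)$-block cancel exactly; once that cancellation is noticed, the linear-in-$\vec{v}$ term yields the desired relation immediately. No further coordinate change is needed beyond what Lemmas \ref{lem-similarity} and \ref{lem-conedecomp1} already provide, and the second identity is a one-line consequence of the constraints on $\mu_g$.
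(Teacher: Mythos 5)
Your proposal is correct and follows essentially the same route as the paper: both read off the identity by comparing blocks of the commutation relation $g\,\CN(\vec{v}) = \CN(\vec{v}')\,g$ with $\vec{v}' = \mu_g\vec{v}\,O_5(g)^{-1}$, separating the resulting polynomial identity in $\vec{v}$ into its quadratic part (which cancels via $a_5(g)^2 = a_1(g)a_9(g)$) and its linear part (which gives the stated relation), and then deriving the $\mu_g=1$ case from $\mu_g = a_5(g)/a_1(g) = a_9(g)/a_5(g)$ together with $\lambda_{\bv_{\tilde E}}(g) = a_9(g)$. The only cosmetic difference is that you cite the already-established formula for $\vec{v}'$ where the paper re-derives it from the adjacent block, and you make explicit the quadratic-term cancellation that the paper states as ``the quadratic forms have to equal each other.''
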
 
\begin{proof} 

Again, we use equations \eqref{eqn-firstm} and \eqref{eqn-secondm}. 
We need to only consider lower right $(i_0+2)\times (i_0+2)$-matrices. 
\begin{align} 
&\left( \begin{array}{ccc} 
a_1(g) & 0 & 0 \\ a_4(g) & a_5(g) O_5(g)  & 0 \\ a_7(g) & a_8(g) & a_9(g) \end{array} \right) 
\left(\begin{array}{ccc} 
1 & 0 & 0 \\ \vec{v}^T & \Idd & 0 \\ \frac{1}{2}||\vec{v}||^2 & \vec{v} & 1 \end{array} \right) \\
& = \left( \begin{array}{ccc} 
a_1(g) & 0 & 0 \\ a_4(g) + a_5(g)  O_5(g) \vec{v}^T & a_5(g) O_5(g) & 0 
\\ a_7(g) + a_8(g) \vec{v}^T + \frac{a_9(g)}{2} ||\vec{v}||^2 & a_8(g) + a_9(g) \vec{v} & a_9(g) 
\end{array} \right). 
\end{align}
This equals 
\begin{align}
& \left( \begin{array}{ccc} 
1 & 0 & 0 \\ \vec{v'}^T & \Idd & 0 \\ \frac{1}{2}||\vec{v'}||^2 & \vec{v'} & 1 \end{array} \right) 
\left( \begin{array}{ccc} 
a_1(g) & 0 & 0 \\ a_4(g) & a_5(g) O^5_g & 0 \\ a_7(g) & a_8(g) & a_9(g) \end{array} \right) \\
& = \left(\begin{array}{ccc} 
a_1(g) & 0 & 0 \\ a_1(g) \vec{v'}^T + a_4(g) & a_5(g) O_5(g) & 0 \\ 
\frac{a_1(g)}{2} ||\vec{v'}||^2 + \vec{v'} a_4(g) + a_7(g) & 
a_5(g) \vec{v'} O_5(g) + a_8(g) & a_9(g) \end{array} \right).
\end{align}
Then by comparing the $(3, 2)$-blocks, 
we obtain 
\[a_8(g) + a_9(g) \vec{v} = a_8(g) + a_5(g) \vec{v'} O_5(g) .\]
Thus, $\vec{v} =  \frac{a_{5}(g)}{a_{9}(g)}\vec{v'} O_5(g).$
%= \frac{a_9(g)}{a_5(g)} \vec{v}$. 
%Since $O_5(g) = A_g = \frac{1}{a_1(g)} A_5(g)$, we have 
%$\vec{v'} = \frac{a_5(g)}{a_1(g)} \vec{v}$. 
%Thus, we have $a_5(g) = a_1(g) = a_9(g)$.
%Since $a_9(g) = \lambda_{\bv_{\tilde E}}(g)$, we have the result. 
%\marginpar{Can shorten this a lot by the above proof...}

From the $(3, 1)$-blocks, we obtain 
\[ a_1(g) \vec{v'}\cdot\vec{v'}/2 + \vec{v'}a_4(g) = a_8(g)\vec{v}^T + a_9(g) \vec{v}\cdot\vec{v}/2. \]
Since the quadratic forms have to equal each other, we obtain 
\[\frac{a_{9}(g)}{a_{5}(g)}\vec{v} O_5(g)^{-1} \cdot a_4(g) = \vec{v} \cdot a_8(g) \hbox{ for all } \vec{v} \in \bR^{i_{0}}.\] 
Thus, $\frac{a_{9}(g)}{a_{5}(g)}(O_5(g)^T a_4(g))^T = a_8(g)^T$.

Since we have $\mu_g = 1$, we obtain $a_1(g) = a_9(g) = a_5(g) = \lambda_{\bv_{\tilde E}}(g)$ and
$A_5(g) = \lambda_{\bv_{\tilde E}}(g) O_5(g)$ by Lemma \ref{lem-similarity}. 
Also, $a_1(g) = a_9(g) = a_5(g) = \lambda_{\bv_{\tilde E}}(g)$. 
\end{proof}

Thus, 
we conclude that each $g \in \bGamma_{\tilde E} $ has the form 
\begin{equation} \label{eqn-formgi}
\newcommand*{\temp}{\multicolumn{1}{r|}{}}
\left( \begin{array}{ccccccc} 
S(g) & \temp & 0 & \temp & 0 & \temp & 0 \\ 
 \cline{1-7}
0 &\temp & a_{1}(g) &\temp & 0 &\temp & 0 \\ 
 \cline{1-7}
C_1(g) &\temp & a_{1}(g) \vec{v}^T_g &\temp & a_{5}(g) O_5(g) &\temp & 0 \\ 
 \cline{1-7}
c_2(g) &\temp & a_7(g) &\temp &  a_{5}(g) \vec{v}_g O_5(g) &\temp & a_{9}(g)
\end{array} 
\right).
\end{equation}

Thus,  when $\mu_{g}=1$ for all $g \in \bGamma_{\tilde E}$, by 
taking a finite index subgroup of $\bGamma_{\tilde E}$, 
we conclude that each $g \in \bGamma_{\tilde E} $ has the form 
\begin{equation} \label{eqn-formgii}
\newcommand*{\temp}{\multicolumn{1}{r|}{}}
\left( \begin{array}{ccccccc} 
S(g) & \temp & 0 & \temp & 0 & \temp & 0 \\ 
 \cline{1-7}
0 &\temp & \lambda_{\bv_{\tilde E}}(g) &\temp & 0 &\temp & 0 \\ 
 \cline{1-7}
C_1(g) &\temp & \lambda_{\bv_{\tilde E}}(g)\vec{v}^T_g &\temp & \lambda_{\bv_{\tilde E}}(g) O_5(g) &\temp & 0 \\ 
 \cline{1-7}
c_2(g) &\temp & a_7(g) &\temp & \lambda_{\bv_{\tilde E}}(g) \vec{v}_g O_5(g) &\temp & \lambda_{\bv_{\tilde E}}(g)
\end{array} 
\right).
\end{equation}

%Choose a coordinate so that 
%$k = (0, \dots, 1)$ and $K''$ is given by $x_{n-d} = 0$. 
%\marginpar{$\lambda_{\bv_{\tilde E}}(g)$ need not be $1$ here... $N_{\tilde E}$ defined...} 

%Define $\CN := \{ \CN(\vec v)| \vec v \in \bR \} = US_{l, 0}$. 

\begin{corollary}\label{cor-formg2} 
If $g$ of form of equation \eqref{eqn-formgi} centralizes a Zariski dense subset $A'$ of $\CN$, 
then $\mu_{g}=1$ and $O_5(g) = \Idd_{i_0}$. 
\end{corollary}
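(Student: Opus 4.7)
The plan is to combine the conjugation formula from Lemma \ref{lem-similarity} with the Zariski density hypothesis. By equation \eqref{eqn-conj0}, for every $\vec v \in \bR^{i_0}$,
\[
g\,\CN(\vec v)\,g^{-1} \;=\; \CN\bigl(\mu_g\,\vec v\,O_5(g)^{-1}\bigr).
\]
If $g$ centralizes each $\CN(\vec v) \in A'$, this forces $\CN(\vec v) = \CN(\mu_g \vec v O_5(g)^{-1})$, and since the parametrization $\vec v \mapsto \CN(\vec v)$ of $\CN \cong \bR^{i_0}$ is injective, one obtains the linear identity
\[
\vec v\,\bigl(\mu_g O_5(g)^{-1} - \Idd_{i_0}\bigr) \;=\; 0 \qquad\text{for all } \CN(\vec v) \in A'.
\]

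Next I would use Zariski density. The left-hand side above is a polynomial (indeed, linear) function of the coordinates of $\vec v$. Its vanishing on the Zariski dense subset $A' \subset \CN = \bR^{i_0}$ extends to vanishing on all of $\bR^{i_0}$, which forces the matrix identity $\mu_g O_5(g)^{-1} = \Idd_{i_0}$, equivalently $O_5(g) = \mu_g\,\Idd_{i_0}$.

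Finally I would deduce $\mu_g = 1$ from the orthogonality of $O_5(g)$. By the normalization achieved in equation \eqref{eqn-O5coor}, $O_5(g) \in O(i_0)$, so the scalar matrix $\mu_g \Idd_{i_0}$ can be orthogonal only when $\mu_g^2 = 1$. To eliminate the sign $\mu_g = -1$, I would invoke the definition $\mu_g = a_5(g)/a_1(g) = a_9(g)/a_5(g)$: since $a_5(g) = |\det A_5(g)|^{1/i_0} > 0$ and $(a_5(g))^2 = a_1(g) a_9(g)$ from Lemma \ref{lem-similarity}, the scalars $a_1(g)$ and $a_9(g)$ share a sign, which is positive once we fix the standard orientation convention for $\SL_\pm(n+1,\bR)$-lifts (or, if one prefers, after passing to the finite index subgroup where $\mu_g > 0$). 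Hence $\mu_g = 1$ and $O_5(g) = \Idd_{i_0}$. The only delicate point in the proof is this last sign disambiguation; everything else is a direct algebraic consequence of the normalization formula \eqref{eqn-conj0}.
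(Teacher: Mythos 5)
Your proof is correct and takes essentially the same route as the paper's own (very terse) argument: apply the conjugation formula \eqref{eqn-conj0}, use injectivity of $\vec v \mapsto \CN(\vec v)$ and Zariski density of $A''\subset\bR^{i_0}$ to force $\mu_g O_5(g)^{-1}=\Idd_{i_0}$, and then use $|\det O_5(g)|=1$ (orthogonality) to pin down $\mu_g$. Your sign disambiguation is the only point the paper leaves implicit, and it can be closed cleanly by noting $a_9(g)=\lambda_{\bv_{\tilde E}}(g)>0$ because $g$ fixes the point $\bv_{\tilde E}\in\SI^n$ (not merely the line), whence $(a_5(g))^2=a_1(g)a_9(g)$ gives $a_1(g)>0$ and $\mu_g=a_5(g)/a_1(g)>0$.
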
 
\begin{proof} 
$\CN$ is isomorphic to $\bR^{i_{0}}$.
The subset $A''$ of $\bR^{i_{0}}$ corresponding to $A'$ is also Zariski dense in $\bR^{i_{0}}$. 
$g \CN(\vec{v}) = \CN(\vec{v}) g$ shows that 
$\vec{v} = \vec{v} O_5(g)$ for all $\vec{v} \in A''$. 
Hence $O_5(g) =\Idd$. 
\end{proof}

%%% 1:52pm
\subsubsection{Splitting the NPCC end} \label{subsub-split}
%For the following we do not assume $N_{K}$ acts discretely on $K^{o}$. 

\begin{proposition}[Splitting] \label{prop-decomposition}
Assume Hypothesis \ref{h-norm}. 
%Let $\Gamma$ be so that $N_K$ acts on $K$ so that $K^{o}/N_K$ is a compact set.
Suppose additionally the following{\rm :} 
\begin{itemize} 
%\item every $g \in \bGamma_{\tilde E} \ra M_g$ is  so that $M_g$ is in a fixed compact group $O(i_0)$.
\item Suppose that $a_{1}(g) \geq a_{5}(g), a_{9(g)}$ whenever $a_{1}(g)$ is the largest eigenvalue 
of the semisimple part  $\hat S(g)$ of $g$. 
%\item $\bGamma_{\tilde E}$ satisfies the weak middle-eigenvalue conditions
%and it normalizes and virtually centralizes $\CN$.
\item $K = \{k\} \ast K''$ a strict join, and $K^{o}/N_{K}$ is compact. 
\item A center of $\bGamma_{\tilde E}$ maps to $N_{K}$
going to a Zariski dense group of the virtual center of $\Aut(K)$.  %\marginpar{test this condition}
\end{itemize} 
Then $K''$ embeds projectively in the closure of $\Bd \torb$
invariant under $\bGamma_{\tilde E}$, and 
one can find a coordinate system so that for every $\CN(\vec v)$ and each element $g$ of 
$\bGamma_{\tilde E}$ is written so that
\begin{itemize}
\item $C_1(\vec v)=0, c_2({\vec v})=0$, and 
\item $C_1(g)=0$ and $c_2(g) = 0$.
\end{itemize}
\end{proposition}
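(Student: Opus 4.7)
The plan is to use the Zariski-density hypothesis on the center of $\bGamma_{\tilde E}$ to produce a distinguished $\bGamma_{\tilde E}$-invariant subspace $W\subset\bR^{n+1}$ of dimension $n-i_0-1$ that lifts $K''$ into $\Bd\torb$. Once coordinates are chosen so that $W$ is the span of the first $n-i_0-1$ basis vectors, the required vanishings $C_1(g)=c_2(g)=0$ for $g\in\bGamma_{\tilde E}$ and $c_2(\vec v)=0$ for $\CN(\vec v)$ fall out automatically, while $C_1(\vec v)=0$ is already supplied by Lemma \ref{lem-conedecomp1}. Concretely, I pick a central $z\in\bGamma_{\tilde E}$ whose image in $\Aut(K)$ is a diagonal scaling of $K=\{k\}\ast K''$, so that $S(z)=\beta\Idd_{n-i_0-1}$ and $a_1(z)=\alpha$; by Zariski density one may arrange $\alpha>\beta$ and $a_5(z)<\alpha$ (that is, $\mu_z<1$). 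Proposition \ref{prop-eigSI} applied to $z$ then pins down $\alpha\ge a_5(z),a_9(z)\ge\beta$, so $\beta$ is strictly the smallest eigenvalue modulus of $z$. Its $\beta$-eigenspace $W(z)\subset\bR^{n+1}$ has dimension $n-i_0-1$, is transverse to $V^{i_0+1}_\infty$, is $\bGamma_{\tilde E}$-invariant (because $\bGamma_{\tilde E}$ centralizes $z$), and is carried projective-isomorphically onto the $K''$-subspace of $\SI^{n-i_0-1}$ by $\Pi_K$.

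Next I study the limit set $D:=W(z)\cap\clo(\torb)$. Since $\beta^{-1}$ is strictly the largest eigenvalue modulus of $z^{-1}$, for each $p\in U$ the projective limit $[z^{-m}(p)]$ converges to the projection $v(p)$ of $p$ onto $W(z)$ along the other eigenspaces, so $v(p)\in D$. The map $p\mapsto v(p)$ is open onto $W(z)$, so $D$ has nonempty interior in $W(z)$; and $\Pi_K(D)\subset K''\subset\Bd K$ forces $D\subset\Bd\torb$, providing the embedded copy of $K''$ in $\Bd\torb$. The crucial dynamical step is then to show that $\CN$ fixes $D$ pointwise. For any $\CN(\vec v)\in\CN$, Lemma \ref{lem-similarity} iterated gives
\[
z^m\CN(\vec v)z^{-m}=\CN(\mu_z^m\vec v\,O_5(z)^{-m})\longrightarrow\Idd
\]
as $m\to\infty$, because $\mu_z<1$ and $O_5(z)^{-m}$ stays bounded. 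Rewriting $\CN(\vec v)(z^{-m}(p))=z^{-m}\bigl((z^m\CN(\vec v)z^{-m})(p)\bigr)$ and passing to the limit yields $\CN(\vec v)(v(p))=v(p)$, so $\CN$ fixes all of $v(U)$ and hence all of $D$.

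Finally, I choose the basis $f_j=e_j+w_j$ for $j\le n-i_0-1$, with $w_j$ supported in the last $i_0+1$ coordinates and determined by the $\beta$-eigenvector equation for $z$, and $f_j=e_j$ otherwise; in these coordinates $W(z)$ is the first $n-i_0-1$ coordinate subspace. The $\bGamma_{\tilde E}$-invariance of $W(z)$ directly forces $C_1(g)=c_2(g)=0$ for every $g\in\bGamma_{\tilde E}$, while the pointwise $\CN$-fixing of $D$ combined with $D$ having nonempty interior in $W(z)$ pins down $\hat w_j=-\vec\gamma_j$ (where $c_2(\vec v)_j=\vec\gamma_j\cdot\vec v$), which is precisely the condition giving $c_2(\vec v)=0$ in the new basis; the identity $C_1(\vec v)=0$ survives the change. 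The main obstacle is producing a central $z$ with $\mu_z<1$ and with $\beta$ strictly smallest, so that both the $\CN$-conjugation dynamics collapses to the identity and the $\beta$-eigenspace is a genuine attractor of $z^{-m}$: this is where Zariski-density of the image of the center is essential (to exclude a degenerate alternative like $a_5(z)\equiv\alpha$ on the center), working in tandem with the weak middle eigenvalue condition and the extra hypothesis $a_1(g)\ge a_5(g),a_9(g)$ that underwrite Proposition \ref{prop-eigSI}.
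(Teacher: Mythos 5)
Your overall skeleton -- use the Zariski-dense central image to extract a central element (or sequence) whose eigenvalues on the $K''$-direction are strictly dominated by those on the $(i_0+2)$-dimensional block, take the complementary invariant (generalized) eigenspace $W$ as the lift of $K''$, use commutation to make $W$ invariant under $\bGamma_{\tilde E}$, and read off $C_1(g)=c_2(g)=0$ from invariance of $W$ -- is essentially the paper's argument. But there is a genuine gap at the step you yourself flag as the crux: the existence of a central $z$ with $\mu_z<1$. Zariski density of the image of the center in the virtual center of $\Aut(K)$ constrains only $\hat S(z)$, i.e.\ the induced action on $\SI^{n-i_0-1}$ through $\Pi_K^*$; it gives no control whatsoever on $A_5(z)$ or $a_5(z)$, which live on $\SI^{i_0}_\infty$ and are killed by $\Pi_K^*$. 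The hypotheses only yield $a_1(z)\geq a_5(z),a_9(z)$, i.e.\ $\mu_z\leq 1$, and the ``degenerate alternative'' $a_5(z)\equiv a_1(z)$ that you hope to exclude is exactly what happens: Proposition \ref{prop-mug} (whose proof uses this very Splitting proposition) shows $\mu_g=1$ for all $g$, so no $z$ with $\mu_z<1$ exists. Consequently $z^m\CN(\vec v)z^{-m}=\CN(\mu_z^m\vec v\,O_5(z)^{-m})$ does not tend to $\Idd$, your argument that $\CN$ fixes $D$ pointwise collapses, and with it your derivation of $c_2(\vec v)=0$.

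The paper avoids this entirely. It takes a central sequence $\gamma_m$ whose restriction to $K''$ is bounded while the eigenvalue at $k$ (hence on the whole lower-right $(i_0+2)\times(i_0+2)$ block, since $a_1\geq a_5, a_9\geq$ the eigenvalues on $K''$ by the stated hypothesis and Proposition \ref{prop-eigSI}) blows up; for a fixed $\gamma_{m_0}$ the sum $S(K''_{m_0})$ of Jordan subspaces with small eigenvalue norms is the invariant lift of $K''$, and $C_1(g)=c_2(g)=0$ follows from invariance. The vanishing $c_2(\vec v)=0$ then comes from the identity \eqref{eqn-conedecomp1} of Lemma \ref{lem-conedecomp1} applied with $g=\gamma_{m_0}$ (where $a_9(\gamma_{m_0})$ is large and $S(\gamma_{m_0})$ small), which needs no contraction of $\CN$ under conjugation. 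Two smaller points: $\bGamma_{\tilde E}$ only \emph{virtually} centralizes your $z$, so invariance of $W(z)$ under the full group requires the additional finitely-many-translates argument the paper gives; and a central element need not act as a single scalar $\beta\Idd$ on $K''$ when $K''$ itself decomposes further, so you should work with a bounded sequence $\delta_m O_m$ rather than a genuine homothety.
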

\begin{proof}
Let $\bGamma'_{\tilde E}$ denote the finite index subgroup of $\bGamma_{\tilde E}$ centralizing $\CN$
and a product of cyclic and hyperbolic groups. 

%\marginpar{Interior of convex set... used... interior of manifold careful.} 

%By Lemma \ref{lem-conedecomp1}, $K$ is a strict join $\{k\} * K''$ for a point $k \in K$ and 
%an open convex domain $K''$ of dimension $n-i_0-2$
%in $\Bd K$. 
%Since $K^o/N_K$ is compact, $K^{o}$ has a compact set $F$ which every orbit meets.

The cone $K$ is foliated by open lines from a point $k \in K$ to points of $K''$.
%That is, $K$ is the interior of the join $k* K''$. 
Call these {\em $k$-radial lines}. 
Take such a line $l$ and a sequence of points 
$\{k_m\}$ in $K^{o}$ so that 
\[k_m \ra  k_\infty \in K^{\prime \prime o} \hbox{ as } m \ra \infty.\] 
By the last condition, 
$\bGamma'_{\tilde E}$ contains a sequence $\{\gamma_m\}$ in the virtual center
so that 
\begin{itemize}
\item $\gamma_m(k_m) \ra x_{0}\in K^{o}$, 
%\item $\gamma_m(l)$ is a line passing $F$
%and 
\item $\gamma_m(\partial_1 l) \ra k_\infty \in K^{\prime \prime o}$ for the endpoint $\partial_1 l$ of $l$ in $K''$. 
\end{itemize} 
%By the theory in \cite{Ben2}, we may choose $\gamma_{m}$ in the virtual center. 
%That is, $\gamma_{m}$ commutes with every element of $\bGamma'_{\tilde E}$. 

Since $K''$ is properly convex, $\{\gamma_m| K''\}$ is a bounded sequence of 
transformations and hence $\gamma_m$ is of form: 
\renewcommand{\arraystretch}{1.2}
\begin{equation}\label{eqn-gamman}
\newcommand*{\temp}{\multicolumn{1}{r|}{}}
\left( \begin{array}{ccccccc} 
\delta_m O_m & \temp & 0 & \temp & 0 & \temp & 0 \\ 
 \cline{1-7}
0 &\temp & a_{1}(g) &\temp & 0 &\temp & 0 \\ 
 \cline{1-7}
C_1(g) &\temp & a_{1}(g) \vec{v}^T_g &\temp & a_{5}(g) O_5(g) &\temp & 0 \\ 
 \cline{1-7}
c_2(g) &\temp & a_7(g) &\temp &  a_{5}(g) \vec{v}_g O_5(g) &\temp & a_{9}(g)
\end{array} 
\right)
\end{equation}
where $\{O_m\}$ is a bounded sequence of matrices in 
\[\Aut(K'') \subset \SL_{\pm}(n-i_0-1, \bR)\]
since the set of projective automorphisms of 
$K''$ moving interior points uniformly bounded distances is bounded. 
%Also, $ O_5(\gamma_m) $ form a bounded sequence. 
%(This can be verified by looking at the lower-right $(i_0+2)\times (i_0+2)$-matrices 
%in the equation $\gamma_m N(\vec{e_j}) = N(\vec{e_j}) \gamma_m$, $j=1, \dots, i$
%while $\lambda_{\gamma_m} = 1$. )

%We note that $\delta_m^{n-i_0-1} \lambda_m^{i_0+2} = 1$
%and $\delta_m/\lambda_m \ra 0$ as $\gamma_m|l$ pushes the points towards
%the vertex $k$ of $K$. 
%For the chosen single element $\gamma_m$, we can find the subspace 
%$\SI^{n-i_0-1}$ containing $K''$ and $\bv_{\tilde E}, \bv_{E -}$ 
%where $\gamma_m$ acts on by the $(n-i_0)\times (n-i_0)$-matrix of form 
%\[\frac{1}{(\lambda_m \delta_m^{n-i_0-1})^{\frac{1}{n-i_0}}}
%\left( \begin{array}{cc} \delta_m O_m & 0 \\ 0 & \lambda_m \end{array} \right)\in \SL_{\pm}(n-i_0, \bR).\]
%In this coordinate $C_{1,m} =0, c_{2,m} =0$. (Note that the situation maybe different if we take 
%an other value of $m$ for the same fixed coordinate apriori.)
%Since $N_K$ acts discretely on $K$, it follows that  
%$N_K$ acts discretely on $K''$ and $O_m \in \Aut(K'')$. 

% June 18 12:17pm 2015
%Again, by Benoist theory \cite{Ben2}, $O_{m}$ is semi-simple. 
We choose $m$ so that the norms of eigenvalues of $\delta_m O_m$ are strictly much smaller 
than the norm of $\lambda_m$, the unique norm of the eigenvalues of the lower-right $(i_0+2)\times(i_0+2)$-matrix. 
We fix one such $m_0$. 
Let $S(K''_{m_0})$ denote the $\gamma_{m_0}$-invariant subspace corresponding to subspaces 
associated with the real sum of the real Jordan-block subspaces with norms of eigenvalues $< \lambda_{m_0}$. 
We choose a coordinate system of $\SI^n$ so that $\gamma_{m_0}$ is of form 
so that $C_{1, m_0} =0, c_{2, m_0}=0$. % or $C''_{1, m} =0, c''_{2, m} = 0$. 
Then a compact proper convex domain $K''_{m_0}$ in $S(K''_{m_0})$ maps to $K''$ under 
under the projection $\Pi_K: \SI^n - \SI^{i_0}_\infty \ra \SI^{n-i_0-1}$. 

Since every element $g$ of $\bGamma'_{\tilde E}$ commutes with $\gamma_{m}$, 
$g(S(K''_{m_{0}})) = S(K''_{m_{0}})$ by considering the Jordan blocks associated with 
eigenvalues $< \lambda_{m}$. Since $K''_{m_{0}}$ is the unique space mapping 
to $K''$, we obtain that $\bGamma'_{\tilde E}$ acts on $K''_{m_{0}}$. 

Since $\bGamma_{\tilde E}/\bGamma'_{\tilde E}$ is finite, 
we obtain finitely many sets of form $g(K''_{m_0})$ for $g \in \bGamma_{\tilde E}$. 
If they are not identical, at least one $g'$ satisfies
$g'(K''_{m_0}) \ne K''_{m_0}$. Then $\gamma_{m_0}^i(g'(K''_{m_0}))$ then produces 
infinitely many distinct sets of form $g(K''_{m_0})$, which is a contradiction. 
Hence $g(K''_{m_0}) = K''_{m_0}$ for all $g \in \bGamma_{\tilde E}$. 
This implies that $C_1(g)=0$ and $c_2(g) = 0$.

\end{proof}

%%% April 25 2015 1:15am
%We remark that Propositions \ref{prop-decomposition} and \ref{prop-decomposition2}
%have very similar proofs. The first one is much simpler, and so we wrote both proofs. 
%It seems worth repeating the proof for convincing the readers. 

%%% March 9 2:43 pm
%% Nov 30 11:27pm 2013

\subsection{Joins and quasi-joined ends for $\mu\equiv 1$} \label{subsec-qjoin}

We will now discuss about joins and their generalizations in depth in this subsection. 
That is we will only consider when $\mu_{g}= 1$ for all $g\in \bGamma_{\tilde E}$.
We will use a hypothesis and later show that the hypothesis is true in our cases
to prove the main results. 

%Let $\Gamma$ be so that $N_K$ acts on $K$ so that $K/N_K$ is a compact set.

\begin{hypothesis}[$\mu_{g}\equiv1$]\label{h-qjoin} 
Let $G$ be a p-end fundamental group. 
We continue to assume as in Hypothesis \ref{h-norm} for $G$.  
\begin{itemize} 
\item Every $g \in \Gamma \ra M_g$ is 
so that $M_g$ is in a fixed compact group $O(i_0)$. Thus, $\mu_g = 1$ identically. 
\item $G$ acts on the subspace $\SI^{i_0}_\infty$ containing 
$\bv_{\tilde E}$ and the properly convex 
domain $K'''$ in the subspace $\SI^{n-i_0-2}$ 
disjoint from $\SI^{i_0}_\infty$ 
mapping homeomorphic to the factor $K'' = \{k\} \ast K $ under $\Pi_{K}$.
\item $\CN$ acts on these two subspaces fixing every points 
of $\SI^{n-i_0-2}$.
%\item $\CN$ acts on the space of $i_{0}$-dimensional leaves of $\tilde \Sigma_{\tilde E}$
%by an induced action. 
\end{itemize} 
\end{hypothesis} 

%% Dec 21 7:35pm
%Let us consider the subspaces $\SI^{n-i_0-1}$ containing $\bv_{\tilde E}$ and $\bv_{\tilde E-}$ and $K''$
%and the transversal geodesic subspace $\SI^{i_0+1}$ containing $\SI^{i_0}_\infty$ 
%meeting the first sphere at $\{\bv_{\tilde E}, \bv_{\tilde E-}\}$ and corresponding to the vertex $k$ above. 

%We assume that $G$ acts on these spaces. 

We assume $\bv_{\tilde E}$ to have coordinates $[0, \dots, 0, 1]$.
$\SI^{n-i_0-2}$ contains the standard points $[e_i]$ for $i=1, \dots, n-i_0 -1$ 
and $\SI^{i_0+1}$ contains $[e_i]$ for $i=n-i_0, \dots, n+1$. 
%$\SI^{i_0+1}$ contains $\SI^{i_0}_\infty$. 
Let $H$ be the open $n$-hemisphere defined by $x_{n-i_0} > 0$. Then by convexity of $U$, 
we can choose $H$ so that $K'' \subset H$ and $\SI^{i_0}_\infty \subset \clo(H)$. 

By Hypothesis \ref{h-qjoin}, 
elements of $\CN$ have the form of equation \eqref{eqn-nilmatstd} with 
\[C_1(\vec{v})=0, c_2(\vec{v})=0 \hbox{ for all } \vec{v} \in \bR^{i_0}\] 
and the group $G$ of form of equation \eqref{eqn-formgii} with 
\[s_1(g) =0, s_2(g) = 0, C_1(g) = 0, \hbox{ and } c_2(g) = 0.\] 
We assume further that $O_5(g) = \Idd_{i_0}$. 

%We will assume that 
%$G \cap \CN$ is a lattice in $\CN$. 
%Computations will show that $\CN$ is centralized by $G$. 

Again we recall the projection $\Pi_K: \SI^n - \SI^{i_0}_\infty \ra \SI^{n-i_0-1}$. 
$G$ has an induced action on $\SI^{n-i_0 -1}$ and
acts on a properly convex set $K''$ in $\SI^{n-i_0-1}$ so that 
$K$ equals a strict join $k* K''$ for $k$ corresponding to $\SI^{i_0+1}$. 
(Recall the projection $\SI^n - \SI^{i_0}_\infty$ to $\SI^{n-i_0 -1}$. )

 %where the normalized eigenvalue  as an automorphism of $\SI^{n-i_0-1}$. 
% have 
%\[\lambda_1(g)/\lambda_{\bv_{\tilde E}}^{\frac{i_0+2}{n-i_0-1}}\] as the largest norm eigenvalue 
%of $g$ with multiplicity one. 

\begin{figure}
\centering
\includegraphics[height=6cm]{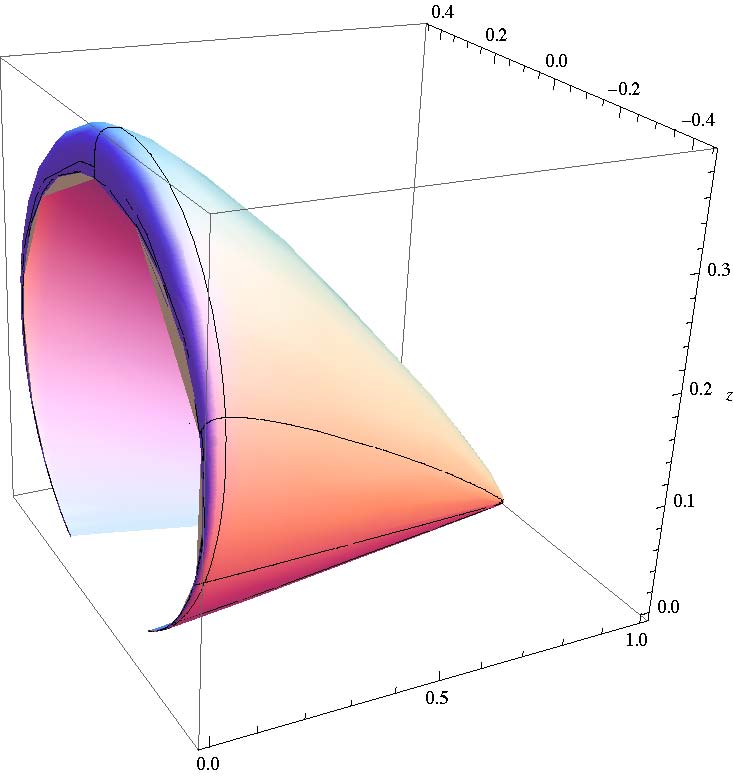}
\caption{A figure of a quasi-joined p-R-end-neighborhood}
\label{fig:quasi-j}
\end{figure}

We define invariants from the form of equation \eqref{eqn-formgii}
\[\alpha_7(g):= \frac{a_7(g)}{\lambda_{\bv_{\tilde E}}(g)} - \frac{||\vec{v}_g||^2}{2} \]
for every $g \in G$. 
\[\alpha_7(g^n) = n \alpha_7(g) \hbox{ and }
\alpha_7(gh) = \alpha_7(g) + \alpha_7(h), \hbox{ whenever }g, h, gh \in G.\] 

%Let $\lambda_2(g)$ denote the norm of the second largest eigenvalue of $g$. 
Here $\alpha_7(g)$ is determined by factoring  %the lower-right $(i_0+2)\times(i_0+2)$-matrix 
the matrix of $g$ into commuting matrices of form
\renewcommand{\arraystretch}{1.5}
\begin{multline}\label{eqn-kernel}
\newcommand*{\temp}{\multicolumn{1}{r|}{}}
\left( \begin{array}{ccccccc}
\Idd_{n-i_0-1} & \temp & 0 & \temp & 0 & \temp & 0 \\   
 \cline{1-7}
0                    &\temp & 1 & \temp & 0 & \temp & 0  \\ 
 \cline{1-7}
0                    & \temp &   0 &\temp & \Idd_{i_0} &\temp & 0 \\ 
 \cline{1-7}
0                    &\temp &  \alpha_7(g) &\temp & \vec{0} &\temp & 1 \\ 
\end{array} 
\right) \times \\
\newcommand*{\temp}{\multicolumn{1}{r|}{}}
\left( \begin{array}{ccccccc}
S_g & \temp & 0 & \temp & 0 &\temp & 0 \\ 
\cline{1-7}
0 & \temp & \lambda_{\bv_{\tilde E}}(g) & \temp & 0 & \temp & 0  \\ 
 \cline{1-7}
0& \temp & \lambda_{\bv_{\tilde E}}(g) \vec{v}_g &\temp & \lambda_{\bv_{\tilde E}}(g) O_5(g) &\temp & 0 \\ 
 \cline{1-7}
0& \temp & \lambda_{\bv_{\tilde E}}(g)  \frac{||\vec{v}||^2}{2}  
&\temp & \lambda_{\bv_{\tilde E}}(g) \vec{v}_g O_5(g) &\temp & \lambda_{\bv_{\tilde E}}(g) \\ 
\end{array} 
\right).
\end{multline}

%%% April 8th 11:52 2015

\begin{remark} \label{rem:alpha7}
We give a bit more explanations. 
Recall that the space of segments in a hemisphere $H^{i_0+1}$ with the vertices $\bv_{\tilde E}, \bv_{\tilde E-}$ 
forms an affine space $A^i$ one-dimension lower, and the group $\Aut(H^{i_0+1})_{\bv_{\tilde E}}$ of projective automorphism 
of the hemisphere fixing $\bv_{\tilde E}$ maps to $\Aff(A^{i_0})$ with kernel $K$ equal to 
transformations of an $(i_0+2)\times (i_0+2)$-matrix form
\renewcommand{\arraystretch}{1.5}
\begin{equation}\label{eqn-kernel2}
\newcommand*{\temp}{\multicolumn{1}{r|}{}}
\left( \begin{array}{ccccc} 
1 & \temp & 0 & \temp & 0  \\ 
 \cline{1-5}
0 &\temp & \Idd_{i_0} &\temp & 0 \\ 
 \cline{1-5}
b &\temp & \vec{0} &\temp & 1 \\ 
\end{array} 
\right)
\end{equation}
where $\bv_{\tilde E}$ is given coordinates $[0, 0, \dots, 1]$ and 
a center point of $H^{i_0+1}_l$ the coordinates $[1, 0, \dots, 0]$. 
In other words the transformations are of form 
\begin{align}\label{eqn-temp}
\left[
\begin{array}{c}
 1\\
 x_1  \\
 \vdots \\ 
 x_{i_0} \\ 
 x_{i_0+1}   
\end{array}
\right]
\mapsto 
\left[
\begin{array}{c}
 1 \\
 x_1\\
 \vdots \\ 
 x_{i_0} \\ 
 x_{i_0+1}+b
\end{array}
\right]
\end{align}
and hence $b$ determines the kernel element. 
Hence $\alpha_7(g)$ indicates the translation towards $\bv_{\tilde E}=[0,\dots, 1]$. 
\end{remark}

We assumed $\mu\equiv 1$. 
We define $\lambda_{k}(g):=\lambda_{\bv_{\tilde E}}(g)$ for $k$.
We define $\lambda_{K''}(g)$ to be the maximal norm of the eigenvalue occurring for $\hat S(g)$. 

We define $G_+$ to be a subset of $G$ consisting of elements $g$ so that 
the largest norm $\lambda_1(g)$ of the eigenvalues occurs at the vertex $k$, i.e., $\lambda_{1}(g) = \lambda_{k}(g)$. 
Then since $\mu_g=1$, we necessarily have $\lambda_1(g) = \lambda_{\bv_{\tilde E}}(g)$
with all other norms of the eigenvalues occurring at $K''$ is strictly less than $\lambda_{\bv_{\tilde E}}(g)$.  
The second largest norm $\lambda_2(g)$ equals $\lambda_{K''}(g)$. 
Thus, $G_+$ is a semigroup.     
The condition that $\alpha_7(g) \geq 0$ for $g \in G_+$ is said to be the 
{\em nonnegative translation condition}.

%We define the {\em size} of a subset $A$ of $\SI^n$ to be
%\[ \sup\{ \bdd(\bv_{\tilde E}, x)| x \in A\}.\]
%Notice that the first matrix also extends to automorphisms of $\SI^n$ acting on each hemisphere 
%with boundary $\SI^{i_0}_\infty$. 
%We explain this. Let $l$ correspond to a leaf in $K$.
%$\CN$ acts on an ellipsoid $E_l$ in any $i_0+1$-dimensional sphere $l$ passing $\bv_{\tilde E}$ containing $\SI^{i_0}_\infty$. 
%Then $g$ sends $E_l$ to an ellipsoid $g(E_l)$ in $g(l)$. It is easy computation to show that 
%the size of $g(E_l)$ depends on the number $\alpha_7(g)$. 
%If $\alpha_7(g) \ra -\infty$, then $\{g(E_i)\}$ converges to a hemisphere. 

Again, we define \[\mu_7(g) : = \frac{\alpha_7(g)}{\log\frac{\lambda_{\bv_{\tilde E}}(g)}{\lambda_2(g)}}\] where 
$\lambda_2(g)$ denote the second largest norm of the  eigenvalues of $g$ and 
we restrict $g \in G_+$. 
The condition 
\begin{equation}\label{eqn:uptc}
\mu_7(g) > C_0, g \in  G_+ \hbox{ for a uniform constant } C_0 
\end{equation}
is called the {\em uniform positive translation condition}. 
(Heuristically, the condition means that 
we don't translate in the negative direction by too much
for bounded $\frac{\lambda_{\bv_{\tilde E}}(g)}{\lambda_2(g)}$.)

Suppose that $G$ is a p-end fundamental group.
%Note that $\Bd U \cap (K''*H^{i_0+1}_l)^o$ is diffeomorphic to $\tilde \Sigma_{\tilde E}$
%as one can use radial segments from $\bv_{\tilde E}$ to show this. 
%A convex domain $U$ is {\em strictly convex transversally} if for each point of $x$ of $\clo(K)$ 
%we have $(x* H^{i_0+1}_l)^o \cap \clo(U)$ is strictly convex. 

%\marginpar{Change the notation $G$.}

%Note that the following proposition applicable in more general situation than here. 

%%% June 20 10:29pm 2015
For this proposition, we do not assume $N_K$ is discrete. 
The assumptions below are just Hypotheses \ref{h-norm} and \ref{h-qjoin}. 
We fully state for a change. 

\begin{proposition}[Quasi-joins] \label{prop-qjoin}
Let $\Sigma_{\tilde E}$ be the end orbifold of an NPCC R-end $\tilde E$ of a strongly tame 
properly convex $n$-orbifold $\orb$. % with radial or totally geodesic ends. 
Let $G$ be the p-end fundamental group. 
Let $\tilde E$ be an NPCC p-R-end
and $G$ and $\mathcal N$ acts on a p-end-neighborhood 
$U$ fixing $\bv_{\tilde E}$. Let $K, K'', \SI^{i_0}_\infty,$ and $\SI^{i_0+1}$ be as above.
We assume that $K^o/G$ is compact, $K= K''* k$ in $\SI^{n-i_0}$ with $k$ 
corresponding to $\SI^{i_0+1}$ under the projection $\Pi_K$. 
Assume that 
\begin{itemize} 
\item $G$ satisfies the weak middle-eigenvalue condition. 
\item $\mu_{g} = 1$ for all $g \in G$. 
\item Elements of $G$ and $\CN$ are of form of equations \eqref{eqn-form1} and \eqref{eqn-form2}.
with \[C_1(\vec{v}) = 0, c_2(\vec{v})=0, C_1(g) = 0, c_2(g) =0\]
for every $\vec{v} \in \bR^{i_0}$ and $g \in G$.
\item $G$ normalizes $\CN$, and $\CN$ acts on $U$ 
and each leaf of $\mathcal{F}_{\tilde E}$ of $\tilde \Sigma_{\tilde E}$. 
\end{itemize} 
Then
\begin{itemize} 
\item[(i)] The condition $\alpha_7 \geq 0$ is a necessary condition 
that $G$ acts on a properly convex domain in $H$.
\item[(ii)] The uniform positive translation condition is equivalent to the existence of
a properly convex p-end-neighborhood $U'$ whose closure meets $\SI^{i_0+1}_k$ at $\bv_{\tilde E}$ only.
%and $\Bd U \cap (K''*H^{i_0+1}_l)^o$ is strictly convex transversally. 
%\item[(iii)] If $G$ does not satisfy the uniform positive translation condition, and $U$ is properly convex, 
%then $\tilde E$ is a p-end that is a join type  and $\alpha_7$ is identically zero.
\item[(iii)] $\alpha_7$ is identically zero if and only if $U$ is a join and $U$ is properly convex. 
\end{itemize}
\end{proposition}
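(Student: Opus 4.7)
The plan is to handle the three parts via the explicit factorization \eqref{eqn-kernel} of each $g \in G$ as the commuting product $g = K_g S_g$, where $K_g$ is the ``kernel'' element carrying the translation parameter $\alpha_7(g)$ and $S_g$ is block diagonal with respect to the splitting $\bR^{n-i_0} \oplus \bR^{i_0+1}$ corresponding to $\SI^{n-i_0-2}$ (containing $K''$) and $\SI^{i_0+1}$ (containing $\bv_{\tilde E}$, $k$, and the $\CN$-invariant ellipsoid $E$). Commutativity of these two factors is a direct matrix computation under the standing assumptions $C_1(g)=0$, $c_2(g)=0$, $C_1(\vec v)=0$, $c_2(\vec v)=0$, and $O_5(g)=\Idd_{i_0}$.

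For (iii), if $\alpha_7 \equiv 0$ then $K_g = \Idd$, so each $g=S_g$ is block diagonal and $G$ preserves both $\SI^{n-i_0-2}$ and $\SI^{i_0+1}$; consequently $U$ coincides with the strict join $K''^o \ast E^o$, which is properly convex as a join of properly convex sets. Conversely, if $U$ is a properly convex join $U_1 \ast U_2$ with $U_i$ in the respective subspace, then $G$ preserves each factor, so $g$ is block diagonal; substituting this back into \eqref{eqn-formgii} forces $a_7(g) = \lambda_{\bv_{\tilde E}}(g)\|\vec v_g\|^2/2$ and hence $\alpha_7(g)=0$.

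For (i), suppose $G$ preserves a properly convex $\Omega \subset H$ with $\bv_{\tilde E}$ a boundary vertex, and assume for contradiction that $\alpha_7(g_0) < 0$ for some $g_0 \in G_+$. Restricted to $\SI^{i_0+1}$, $g_0$ acts as $\lambda_{\bv_{\tilde E}}(g_0)$ times a cusp-affine transformation whose vertical translation component is exactly $\alpha_7(g_0) < 0$. Since $g_0 \in G_+$, the eigenvalue $\lambda_{\bv_{\tilde E}}(g_0)$ strictly dominates the eigenvalues of $S(g_0)$, so for any $x \in \Omega^o$ the projective limits of $g_0^n(x)$ and $g_0^{-n}(x)$ both lie in $\clo(\Omega) \cap \SI^{i_0+1}$; the negative sign of $\alpha_7(g_0)$ then forces these two limits to be projectively antipodal, contradicting proper convexity. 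The detailed verification amounts to writing out the lower-right $(i_0+2)\times(i_0+2)$ block of $g_0^n$ after normalizing by $\lambda_{\bv_{\tilde E}}(g_0)^n$ and reading off the limits as $n \to \pm\infty$.

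For (ii)$(\Leftarrow)$, if a $G$-invariant properly convex $U'$ with $\clo(U')\cap \SI^{i_0+1} = \{\bv_{\tilde E}\}$ exists, then (i) gives $\alpha_7(g) \geq 0$ on $G_+$, strict positivity follows because $\alpha_7(g)=0$ would put the $\CN$-ellipsoid into $\clo(U')$, and uniformity $\mu_7(g) > C_0$ comes from cocompactness of $K^o/G$ via a compactness argument on a fundamental domain. For $(\Rightarrow)$, construct a $G$-equivariant cocycle $h : K''^o \to \bR$ satisfying $h(\bar g(x)) = h(x) + \alpha_7(g)$ for the induced action $\bar g$ on $K''^o$; the Lipschitz control required to solve this cocycle equation follows from $\alpha_7(g) > C_0 \log(\lambda_{\bv_{\tilde E}}(g)/\lambda_2(g))$ together with the standard relationship between logarithms of eigenvalue ratios and the Hilbert displacement on $K''^o$. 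Then define $U'$ fiberwise over $K''^o$ as the horoball at $\CN$-level $h(x)$ inside $H^{i_0+1}_x$, verify proper convexity from convexity of horoballs combined with convexity of $h$, and check $\clo(U')\cap \SI^{i_0+1} = \{\bv_{\tilde E}\}$ from $h \to +\infty$ near $\Bd K''$. The main obstacle is precisely this construction: the equivariant height function $h$ must be simultaneously convex (for proper convexity of $U'$) and Hilbert-Lipschitz (for the cocycle equation to be solvable), and these two requirements couple nontrivially through the semisimple-plus-cusp structure of $S_g$.
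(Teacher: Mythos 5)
Your treatments of (i) and (iii) are essentially the paper's: both rest on the factorization \eqref{eqn-kernel} and the reading of $\alpha_7(g)$ as a translation along the $x_{n+1}$-axis in each fiber hemisphere, and your antipodal-limit phrasing of (i) is an acceptable variant of the paper's observation that $\{g^n(E_{k'})\}$ degenerates to a full $(i_0+1)$-hemisphere when $\alpha_7(g)<0$. In the converse half of (ii) your sketch is thinner than the paper's (which rules out $\mu_7(g_i)\to 0$ by passing to powers $g_i^{n_i}$ with $\alpha_7(g_i^{n_i})\to 0$ and $\lambda_1/\lambda_2\to\infty$ and invoking Lemma \ref{lem-noball}), but the intended mechanism is the same.

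The genuine gap is in (ii)$(\Rightarrow)$. You propose to solve the exact cocycle equation $h(\bar g(x))=h(x)+\alpha_7(g)$ on $K''^o$ and then take $U'$ to be the union of fiberwise horoballs at level $h(x)$, and you yourself flag that you cannot reconcile exact equivariance with convexity of $h$. This is not a technical loose end but the wrong object to aim for: an exactly equivariant $h$ need not exist at all (any $g$ with $\alpha_7(g)\neq 0$ whose induced action on $K''^o$ has a fixed or recurrent point obstructs the equation), and even when a measurable or continuous solution exists there is no mechanism forcing it to be convex, which is precisely what proper convexity of $U'$ requires. The paper avoids the issue by building a \emph{set} rather than a function: fix $\eta\in G_+$ with $\lambda_1(\eta)>\lambda_2(\eta)$, let $F$ be a fundamental domain for $\langle\eta\rangle$ acting on $K^o$, and observe that the uniform positive translation condition gives a uniform lower bound $\alpha_7(g)>C$ for all $g\in G_F$; hence the union $\bigcup_{g\in G_F}g(H^o_{k'}\cap U)$ is uniformly bounded below in the $x_{n+1}$-coordinate, so its convex hull $D_F$ is properly convex, and the convex hull of $\bigcup_{i\in\bZ}\eta^i(D_F)$ remains properly convex for the same reason because $\alpha_7(\eta^i)=i\,\alpha_7(\eta)\to+\infty$ pushes the positive powers toward $\bv_{\tilde E}$ while the negative powers accumulate only on $K''\ast\bv_{\tilde E}$. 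Convexity is automatic for a convex hull, $G$-invariance is built in, and the closure condition $\clo(U')\cap\SI^{i_0+1}_k=\{\bv_{\tilde E}\}$ follows from each fiber $H_l\cap U'$ being a horoball. If you want to keep the flavor of your height function, the correct $h$ is the convex envelope of the fiberwise levels of this hull, which is only sub-equivariant ($h(\bar g(x))\le h(x)+\alpha_7(g)$), and that inequality is all the argument needs.
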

\begin{proof} 
%Let $U'$ be a p-end-neighborhood in $\torb$ where $\CN$ acts on. 
Let $H$ be a hemisphere containing $U$ where $\partial H$ contains $\SI^{i_{0}}_{\infty}$.
$A^{n} := H^{o}$ is an affine space. 
Let $H_{l}$ denote the hemisphere with boundary $\SI^{i_{0}}_{\infty}$ and corresponding to a leaf $l$
of the foliation on $\tilde \Sigma_{\tilde E}$.  
Let $\widetilde{\mathcal{F}}$ denote the leaf space. 
We first projectively identify 
\[\bigcup_{l \, \in \, \widetilde{\mathcal{F}}} H_{l}^{o} = K^{o}\times \bR^{i_{0}+1} \subset \bR^{n}\] 
for a product of a bounded convex set in an affine space 
equivalent to $K^o$ multiplied by a complete affine space of dimension $i_0+1$ 
in an affine space given by $H^o$. 
Each of $E_l := H_l \cap U$ is given by \[x_{n+1} > x_{n-i_0+1}^2 + \cdots + x_{n}^2 + C_l \] since $\CN$ acts on each
where $C_l$ is a constant depending on $l$ and $U$. (See Section \ref{subsub-quadric}.)

%Let $A^n$ be an affine space containing $U$ with $K'', \bv_{\tilde E}, \SI^{i_0}_\infty \subset \Bd A^n$.
%This gives global coordinate functions $x_{n-i_0+1}, \dots, x_{n+1}$ on $A^n$ where we set $x_{n-i_0} = 1$.

Let $\Pi_{i_0}: U \ra \bR^{i_0+1}$ be the projection to the last 
$i_0+1$ coordinates $x_{n-i_0+1}, \dots, x_{n+1}$. 
We obtain a commutative diagram and an induced $L_g$ 
\begin{alignat}{3} \label{eqn-affine}
H_l  \, & \, \stackrel{g}{\longrightarrow} & \, \, g(H_l) \nonumber \\
\, \Pi_{i_0}\downarrow \, &  & \, \Pi_{i_0} \downarrow\, \, \nonumber\\ 
\bR^{i_0} \, & \, \stackrel{L_g}{\longrightarrow}  & \, \, \bR^{i_0}. 
\end{alignat}
By Equation \eqref{eqn-kernel}, 
$L_g$ preserves the quadric above in
the form of the projection up to translations in the $x_{n+1}$-axis direction.
%We will be using these coordinates with the compatible geodesic structures as $\SI^n$ in this proof. 

Suppose that $G$ acts with a uniform positive translation condition.
Given a point $x = [\vec v] \in U'\subset \SI^n$ where 
$\vec v = \vec v_s + \vec v_h$ 
where $\vec v_s$ is in the direction of $K''$ and 
$\vec v_h$ is in one of $H^{i_0+1}$. 
If $g \in G_+$, then  we obtain
\begin{equation} \label{eqn-gplus}
g[\vec v] = [g \vec v_s + g \vec v_h] \hbox{ where } [g \vec v_s] \in K''
\hbox{ and } [g \vec v_h] \in H_k.
\end{equation} 
 by equation \eqref{eqn-kernel}.
%We can easily show that 
%The Euclidean length of $g \vec v_s$ is decreased compared to $v_s$. 
%and that of $g \vec v_h$ is increased as we can deduce from the form of $g$ in 
%equations \eqref{eqn-form1} and \eqref{eqn-form2} 
%with $C_1(g)= 0, c_2(g)=0$.

(i) Suppose that $\alpha_7(g) < 0$ for some $g \in G_+$.
Let $k'\in K^o$.  
Then the action by $g$ gives us that $\{g^n(E_{k'})\}$ 
converges geometrically to an $(i_0+1)$-dimensional hemisphere
since $\alpha_7(g^n) \ra -\infty$ as $n \ra \infty$ implies that 
$g$ translates the affine space $H^o_{k'}$ a component  to $H^o_{g^n(k')}$
toward $[-1,0,\dots, 0]$ in the above coordinate system by equation \eqref{eqn-kernel}.
Thus, $G$ cannot act on a properly convex domain. 
%This proves the first item. 

%Let $\lambda_{K''}(g)$ denote the largest norm of the eigenvalues of $g$ restricted to the subspace spanned by $K''$. 
%Note $\lambda_{1}(g) = \lambda_{\bv_{\tilde E}}(g)$ and 
%$\lambda_{K''}(g) = \lambda_{2}(g)$ for all $g \in G_{+}$. 

% April 9th 2015 4:03pm
(ii) %Let $g_i \in G_+$ be a sequence so that $\log \frac{\lambda_1(g_i)}{\lambda_2(g_i)} \ra \infty$.
Let $x \in U$. 
%We also can compute that $[g_i \vec v_h] \ra \bv_{\tilde E}$  since the uniform positive translation condition
%implies that $\alpha_7(g_i) \ra \infty$. 
%Hence, $g_i(x) \ra \bv_{\tilde E}$ if $\log \frac{\lambda_1(g_i)}{\lambda_2(g_i)} \ra \infty$.
%By similar arguments $g_i^{-1}(x) \ra K''$ if $\log \frac{\lambda_1(g_i)}{\lambda_2(g_i)} \ra \infty$.
By assumption, $\bGamma_{\tilde E}$ acts on $K = K'' \ast \{k\}$. 
Choose an element $\eta \in G_+$ so that $\lambda_1(\eta) > \lambda_2(\eta)$
where $\lambda_{1}(\eta)$ correspond to a vertex $k$ and $\lambda_{2}(\eta)$ is associated with $K''$, 
and let $F$ be the fundamental domain in $K^o$ with respect to $\langle \eta \rangle$. 
This corresponds to 
a radial subset $F$ from $\bv_{\tilde E}$ 
bounded away at a distance from $K''$ in $U$.
%The set $F$ has the property that 
%$\left|\log \frac{\lambda_1(g)}{\lambda_2(g)}\right| < C_F$ for a positive constant 
%whenever $g(x_0) \in F$ for a fixed $x_0 \in F$.

%%% April 17 11:11pm 2015
Choose $x_{0} \in F$. 
Let $G_F := \{g \in G| g(x_0) \in F\}$. For $g \in G_F$, 
\[ \left|\log \frac{\lambda_{\bv_{\tilde E}}(g)}{\lambda_{K''}(g)}\right| < C_{F}\] where $C_F > 0$ is a number depending of $F$ only.  
%and $g \in G - G_+ - G_+^{-1}$ satisfies $g = h g_1$ for $g_1 \in G_F$ and $h \in G_+ \cup G_+^{-1}$ 
%has a uniformly bounded word length
%depending on $C_0$. 
Given $g \in G_{F}$, 
we can find a number $i_{0}$ independent of $g$ such that $\eta^{i_{0}} g \in G_{+}$. 
%Since $\eta$ is central, it restricts to a scalar times identity on the subspace corresponding to $K''$. 
%Thus, $i_{0}$ can be chosen independent of $g \in G_{F}$.
Then $\alpha_{7}(\eta^{i_{0}} g)$ is bounded below by some negative number. 
%$g \in G_F$ by the uniform positive translation condition. 
Since $\alpha_{7}(\eta^{i_{0}} g) = i_{0}\alpha_{7}(\eta) + \alpha_{7}(g)$, 
we obtain 
\begin{equation}\label{eqn-a7}
\{\alpha_7(g)| g \in G_{F}\} > C >0
\end{equation} 
 for a constant $C$
 by the uniform positive translation condition. 
%There is an upper bound on the size of $g(H^o_{k'} \cap U)$. 
In the above affine coordinates for $k'\in F$ of equation \eqref{eqn-affine}, 
\[x_{n+1}(H^{o}_{k'}\cap U) > C \]
for a uniform constant $C \in \bR$
by equations \eqref{eqn-a7} and  \eqref{eqn-kernel}
since $F$ is covered by $\bigcup_{g\in G_{F}} g(J)$ for a compact fundamental domain $J$ of 
$K^{o}$ by $N_{K}$. 

Let $D_{F}$ be the convex hull of $\bigcup_{g' \in G_F} g(H^o_{k'} \cap U)$. 
Since by above \[\bigcup_{g' \in G_F} g(H^o_{k'} \cap U)\] 
is a lower-$x_{n+1}$-bounded set, 
$D_{F}$ as a lower-$x_{n+1}$-bounded subset 
of $K\times \bR^{i_{0}+1} \subset \bR^{n}$. 
Therefore, the convex hull $D_F$ in $\clo(\torb)$ is a properly convex set. 

%since
%this is a union of uniformly lower bounded horoballs meeting $\bv_{\tilde E}$
% where $K$ is considered 
%a compact subset of an affine space. 

Note that $K'' \ast\{ \bv_{\tilde E}\} - \{\bv_{\tilde E}\}$ identifies with
\[K'' \times [0, \infty) \subset K \times \bR\] 
in the above identification. 
Since \[\alpha_7(\eta^i) = i \alpha_7(\eta) \ra +\infty \hbox{ as } i \ra \infty,\] 
we obtain that
\[\{\eta^i(D_F)\} \ra \{ \bv_{\tilde E}\} \hbox{ for } i \ra \infty\]
geometrically, i.e., under the Hausdorff metric $\bdd_{H}$. 
Also, we can show by equation \eqref{eqn-gplus} that 
\[\{\eta^i(D_F)\} \ra K'' * \bv_{\tilde E}  \hbox{ for } i \ra -\infty\]
geometrically. 
Thus, using the above coordinates, 
the convex hull of 
\[\bigcup_{i\in \bZ} \eta^{i}(D_{F}) \subset K^{o}\times \bR^{i_{0}+1} \]
 is properly convex also since they are uniformly bounded from below in the $x_{n+1}$-coordinates.
(See Theorem \ref{II-prop-quasilens1}  in \cite{EDC2} also where we used a slightly different proof for
a similar result.)

%We projectively identify this set as product of a bounded convex set equivalent to $K$ 
%multiplied by a complete affine space of dimension $i_0+1$ 
%in an affine space given by $H^o$. 
%Each of $g(H_l \cap U)$ is given by \[x_{n+1} > x_{n-i_0}^2 + \cdots + x_{n+1}^2 + C_g \] since $\CN$ acts on each. 
%$C_g$ has a lower bound say $C_U$. Then we can find a finitely many supporting half-spaces whose intersection 
%is properly convex. (For example, sufficiently large translations of supporting hyperplanes of 
%form \[ a_{n -i_0} x_{n-i_0} + \cdots + a_{n+1} x_{n+1} = C' \]   for each  
%equation are enough.)
%Therefore, it is clear that the convex hull of $\{g(E_l)| g \in G\}$ is properly convex.
%since these sets are in an open hemisphere $H$.

%\marginpar{Again convex hull of end}
%Let $l$ be a leaft. 
%Suppose that $\clo(U')$ meets $\SI^{i_0+1}_l - \SI^{i_0}_\infty$. 
%Let $x'$ be the point. The orbit $\CN(x')$ is an ellipsoid in $\SI^{i_0+1}_l$ in $\clo(U')$. 
%Since $\alpha_7(g) > 0$, 
%as $i \ra \infty$, $g^i(\CN(x'))$ converges to $\bv_{\tilde E}$ by above discussion.
%This means $\alpha_7(g)=0$ for all $g$. 
Let $U'$ be a p-end-neighborhood of $\bv_{\tilde E}$ that is the interior of 
the convex hull of $\{g_i(D_F)\}$. By the boundedness from $\bv_{\tilde E}$ of at most distance $\pi - C$ for some $C> 0$, 
the convex hull is properly convex. 
The fact that each $H_{l} \cap U'$ is a horoball implies that 
$\clo(U') \cap \SI^{i_0+1}_k =\{ \bv_{\tilde E}\}$ holds. 

%$(x* H^{i_0+1}_l)^o \cap \clo(U')$ is a convex set. There cannot be any segment $\alpha$ in the boundary 
%ending at $K''$ or at $\bv_{\tilde E}$: Otherwise, we choose again the fundamental domain $F'$ of $\Bd U$ and 
%choose a sequence $x_i \in \alpha$. We choose $g'_i\in G$ so that $g_i(x_i) \in F'$. 
%Then $g'_i(\alpha)$ geodesically converges to a segment connecting $\bv_{\tilde E}$ with $K''$. 
%However, since $g'_i(\alpha)$ always passes $F'$ and in $\Bd U$, this cannot happen. 
%This implies that we can smooth the convex hull so that it is strictly convex transversally. 

%\marginpar{This is done?}

%% June 18 3:08pm 2015
Conversely, suppose that $G$ acts on a properly convex p-end-neighborhood $U'$.

Suppose that $\alpha_7(g) =0$ for some $g\in G_+$. %Then $\mu(g) =0$ for all $g \in G_+$. 
%Conversely, $\alpha_7(g) = 0$ for every $g \in G_+$. 
Then \[g^{i}(\clo(U) \cap H_l) \ra B \hbox{ as }  i \ra \infty \hbox{ under } \bdd_{H}\]
for a leaf $l$ and a compact domain $B$ at $H_k$ bounded by an ellipsoid.
This contradicts Lemma \ref{lem-noball}.
%Then 
%\begin{equation}\label{eqn-alpha7} 
%\alpha_7(h) =0 \hbox{ for all } h \in G 
%\end{equation}
%by (i) since otherwise
%\[h^i(B) \ra H_{k} \hbox{ as } i \ra \pm \infty \hbox{ for } h \hbox{ with } \alpha_7(h) \ne 0.\] 
%Since $\tilde \Sigma_{\tilde E}/\bGamma_{\tilde E}$ is compact,
%we have a sequence $h_i \in G_+$ where 
%\[\frac{\lambda_{\bv_{\tilde E}}(h_i)}{\lambda_2(h_i) } \ra \infty, \alpha_{7}(h_{i})=0,
% \hbox{ and $h_i| K''$ is uniformly bounded.} \]
%Since $\alpha_7(h_i)=0$, 
%We obtain a contradiction that we have a joined end 
%by Propositions \ref{II-prop-decjoin} and \ref{II-prop-joinred} of \cite{EDC2}. 
Therefore, $\mu_7(h) > 0$ for every $h \in G_+$ by (i). ---(*)

%Recall the above $g \in G_+$ with $\alpha_7(g) > 0$. % We construct $F$ as above. 
%Every element $h \in G_+$ can be written $g^i g'$ for $i > 1$ 
%and $g'\in G_{C_F}$ for a uniform $C_F >0$. 
%Now $\alpha_7(h) = i\alpha_7(g) + \alpha_7(g')$. 
%Here, $ \alpha_7(g') >  C$ for a constant $C$ 
%for $g \in G_{C_F}$ by above.  
%We find that 
%$\mu(h) = C' \alpha_7(g) + C'\alpha_7(g')/i$ for a fixed constant $C' > 0$.  
%Since the second term goes to zero as $i \ra \pm \infty$, 
%we obtain that $\mu(h) > C'$ for $h\in G_+$ with $i$ large enough
%and a uniform constant $C'> 0$. 
%
%% Dec 21 9:00pm 
%If $\alpha_7(g) > 0$ for any $g \in G_+$, then the same argument shows the uniform positive translation condition. 
%As above, the uniform positive translation condition and the proper convexity of 
%$U$ implies $\clo(U) \cap \SI^{i_0+1}_k = \{\bv_{\tilde E}\}$.

%Let $\lambda_{K''}(g)$ denote the largest norm of the eigenvalues of $g$ restricted to the subspace spanned by $K''$. 
Suppose that $\mu_7(g_i) \ra 0$ for a sequence $g_i \in G_+$. 
%Then by above discussion,
%we can obtain a sequence $g_i  \in \bigcup_{n \in J} g^nG_{C_F}$ for 
%a fixed finite set $J$ and $\mu_7(g_i) \ra 0$.
We can assume that $ \lambda_1(g_i)/\lambda_{2}(g_i) > 1+\eps$ for a positive constant $\eps > 0$ since we can take powers 
of $g_i$ not changing $\mu_7$. 
%Here, $\lambda_2(g_i) = \lambda_{K''}(g_i)$. 

Since $\mu_{7}(g_{i}) \ra 0$, 
we obtain a nondecreasing sequence $n_i$, $n_i > 0$, so that 
\[\alpha_7(g_i^{n_i})= n_i\alpha_7(g_i) \ra 0 \hbox{ and }
 \lambda_1(g_i^{n_i})/\lambda_{2}(g_i^{n_i}) \ra \infty.\]   
 However, from such a sequence, we use  
 equation \eqref{eqn-kernel} to shows that 
 \[\{g_i^{n_i}(\clo(U) \cap H_{l})\} \ra B\]
 to a ball $B$ with nonempty interior in $H_{k}$.
By Lemma \ref{lem-noball}, this is a contradiction. 
 Hence $\mu_7(g) > C$ for all $g \in G_+$ and a uniform constant 
 $C > 0$. 
 This proves the converse part of (ii). 

%Hence, if $h \in G_+$ with $i$ not sufficiently large, this shows that 
%$\mu_7(g)$ is positive and hence bounded below. 

(i) and (*) in the proof (ii) proves (iii). 

%(iv) The forward part is proved above in the proof of (iii). The converse follows from (ii) since quasi-joined end 
%must have the uniform positive translation condition as the proof above shows. 

\end{proof} 

%%% June 22 12:12 am 2015 I need to correct many things...
\begin{definition}\label{defn-qjoin}
\begin{itemize}
\item In case (iii) of Proposition \ref{prop-qjoin}, 
$\tilde E$ is said to be a {\em joined p-R-end} ( of {\em a totally geodesic R-end and a horospherical end})
and $G$ now is called a {\em joined end group}
\item In case (ii) of Proposition \ref{prop-qjoin}, 
$\tilde E$ is said to be a {\em quasi-joined p-R-end} ( of {\em a totally geodesic R-end and a horospherical end})
and $G$ now is called a {\em quasi-joined end group}.
An end with an end-neighborhood that is covered by a p-end-neighborhood of 
such a p-R-ends is also called a {\em quasi-joined p-R-end}. 
\end{itemize}
\end{definition}

%\marginpar{This section is a bit weak still}
%%% Jan 19, 2013 6:02pm

From the matrix equation \eqref{eqn-kernel}, 
we define $v_{g}$ for every $g \in \bGamma_{\tilde E}$. 
(We just need to do this under  a single coordinate system. )

\begin{lemma} \label{lem-vbound}
Given $G$ satisfying Hypotheses \ref{h-norm} and \ref{h-qjoin}, 
let $\gamma_{m}$ be any sequence of elements of $G_{+}$
so that $\lambda_{k}(\gamma_{m})/\lambda_{K''}(\gamma_{m}) \ra \infty$. 
Then we can replace it by another sequence $\gamma'_{m}$ so that 
\[||\vec{v}_{\gamma'_{m}}|| \hbox{ and } \Pi^{\ast}_{K}(\gamma'_{m} \gamma_{m}^{-1}) \in \Aut(K)\] 
are uniformly bounded. 
\end{lemma}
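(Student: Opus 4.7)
The plan is to exploit the action of the unipotent group $\CN$, which $G$ normalizes by Hypothesis \ref{h-norm}, to translate away the vector $\vec{v}_{\gamma_m}$ appearing in the matrix form \eqref{eqn-formgii} of $\gamma_m$. By the splitting results (Proposition \ref{prop-decomposition}) combined with Hypothesis \ref{h-qjoin}, I may work in a coordinate system where $C_1(g) = 0$, $c_2(g) = 0$, and $O_5(g) \in O(i_0)$, and where elements of $\CN$ have the shape \eqref{eqn-nilmatstd} with $C_1(\vec{v}) = 0$, $c_2(\vec{v}) = 0$.

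First, I would perform a direct block-matrix multiplication of $\CN(\vec{w})$ (as in \eqref{eqn-nilmatstd}) against $\gamma_m$ (as in \eqref{eqn-formgii}) to verify that the product $\CN(\vec{w})\,\gamma_m$ is again in the form \eqref{eqn-formgii}, with
\[
S(\CN(\vec{w})\gamma_m) = S(\gamma_m), \qquad O_5(\CN(\vec{w})\gamma_m) = O_5(\gamma_m),
\]
and with the translation vector changed by
\[
\vec{v}_{\CN(\vec{w})\gamma_m} \;=\; \vec{w} + \vec{v}_{\gamma_m}.
\]
The scalar $a_7$ is shifted by $\lambda_{\bv_{\tilde E}}(\gamma_m)\,\vec{w}\cdot\vec{v}_{\gamma_m} + \lambda_{\bv_{\tilde E}}(\gamma_m)\|\vec{w}\|^2/2$, but that does not enter the statement.

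Now set $\gamma'_m := \CN(-\vec{v}_{\gamma_m})\,\gamma_m$. The computation above gives $\vec{v}_{\gamma'_m} = 0$, so $\|\vec{v}_{\gamma'_m}\|$ is trivially uniformly bounded. Moreover
\[
\gamma'_m \gamma_m^{-1} \;=\; \CN(-\vec{v}_{\gamma_m}) \;\in\; \CN,
\]
and by construction of $\CN$ (see \eqref{eqn-nilmat}) every element of $\CN$ induces the identity on the quotient sphere $\SI^{n-i_0-1}$, so $\CN$ lies in the kernel $N$ of $\Pi^*_K$. Consequently $\Pi^*_K(\gamma'_m \gamma_m^{-1}) = \Idd$ in $\Aut(K)$, which is certainly uniformly bounded.

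The main subtlety is that $\gamma'_m$, as constructed, lies in the group generated by $G$ and $\CN$ rather than necessarily in $G$ itself. In the regimes treated in the rest of the paper $G \cdot \CN$ is the natural Lie group envelope, and the subsequent applications of the lemma only need a controlled sequence in this envelope. If one insists on $\gamma'_m \in G$, one can replace $\CN(-\vec{v}_{\gamma_m})$ by a nearest element of $G \cap \CN$ (cocompact in $\CN$ when $N_K$ is discrete, and in any case by using the closure of $G$ in the indiscrete case of Section \ref{sec-indiscrete}), which produces an error of uniformly bounded norm in the translation part and still trivial projection under $\Pi^*_K$. The expected main obstacle is therefore not in the core computation but in justifying this refinement within whichever group $G$ one is working; the eigenvalue hypothesis $\lambda_k(\gamma_m)/\lambda_{K''}(\gamma_m) \to \infty$ is not needed for the present bounds, and is retained because downstream the replaced sequence $\gamma'_m$ will be used to extract geometric limits that do require this divergence.
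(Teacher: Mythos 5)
Your core computation is the right one and, in the discrete case, it is essentially the paper's argument: there one takes $h_m\in\bGamma_{\tilde E}\cap\CN$ nearest to $\CN(\vec{v}_{\gamma_m})$ (a lattice in $\CN$), and $\gamma'_m:=h_m^{-1}\gamma_m$ has uniformly bounded $\vec{v}_{\gamma'_m}$ with $\Pi^*_K(\gamma'_m\gamma_m^{-1})=\Idd$. You are also right that the hypothesis $\lambda_k(\gamma_m)/\lambda_{K''}(\gamma_m)\ra\infty$ plays no role in the proof itself.

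The genuine gap is in your treatment of the indiscrete case. You propose to repair the fact that $\CN(-\vec{v}_{\gamma_m})\gamma_m\notin G$ by taking a ``nearest element of $G\cap\CN$ \dots\ in any case by using the closure of $G$.'' But when $N_K$ is indiscrete, $G\cap\CN=N\cap\CN$ need \emph{not} be cocompact in $\CN$ --- that cocompactness is exactly what fails outside the discrete case (it is only restored by passing to the syndetic hull $S_{l,0}=\CN$ of Section \ref{sec-indiscrete}, which is not contained in $G$). So there may be no element of $G$ inside $\CN$ anywhere near $\CN(\vec{v}_{\gamma_m})$, and the ``closure of $G$'' again leaves the group $G$. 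The paper's actual device is different: using cocompactness of $\bGamma_{\tilde E}$ on $\tilde\Sigma_{\tilde E}$, it chooses $g_m\in\bGamma_{\tilde E}$ (not in $\CN$) such that $g_m^{-1}\CN(\vec{v}_{\gamma_m})$ returns a fundamental-domain point to the fundamental domain; the Hilbert-metric estimate \eqref{eqn-dK} then bounds $\Pi^*_K(g_m)$ in $\Aut(K)$ and forces $g_m$ to lie a bounded distance from $\CN$, whence $\vec{v}_{g_m^{-1}\gamma_m}$ is uniformly bounded. This is precisely why the lemma's conclusion asserts only \emph{boundedness} of $\Pi^*_K(\gamma'_m\gamma_m^{-1})$ rather than triviality. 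The distinction matters downstream: in Lemma \ref{lem-noball} the modified sequence must act on $\clo(\torb)$ (to invoke Proposition \ref{II-prop-decjoin} of \cite{EDC2} and conclude that $\bGamma$ is virtually reducible), and elements of $\CN$ are only known to act on a p-end neighborhood $U$, not on $\torb$, so a sequence in $\CN\cdot G$ is not an acceptable substitute for one in $G$.
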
 
\begin{proof}
Given $g\in \bGamma_{\tilde E}$, let $\Pi_{K}^{\ast}(g): K^{o}\ra K^{o}$ denote the 
induced projective automorphism of $g$ on $K^{o}$. 

Suppose that $N_{K}$ is discrete. Then $\bGamma_{\tilde E}\cap \mathcal{N}$ is a lattice 
in $\mathcal{N}$. By cocompactness of $\bGamma_{\tilde E}\cap \mathcal{N}$
in $\mathcal{N}$, we can multiply $\gamma_{m}$ by $h_{m}^{-1}$ for 
an element $h_{m}$ of $\bGamma_{\tilde E}\cap \mathcal{N}$
nearest to $\mathcal{N}(\vec{v}_{m})$.  The result follows. %Then equation \eqref{eqn-vgammam} follows. 

We assume that $N_K$ is indiscrete. 
$\tilde \Sigma_{\tilde E}$ has a compact fundamental domain $F$ under $\bGamma_{\tilde E}$. 
Thus, given any $\vec{v}$,  for $x \in F$, 
\[\CN(\vec{v})(x) \in g(F) \hbox{ for some } g \in \bGamma_{\tilde E}.\] 
Then $g^{-1}\CN(\vec{v})(x) \in F$. Since 
\[g(y) = \CN({\vec{v}})(x) \in g(F)  \hbox{ for } y \in F \hbox{ and } x \in F,\] 
it follows that
%the map $\Pi^*_K(g) $ induced from $g$ has the value 
\begin{equation}\label{eqn-dK}
d_K\bigg(\Pi_K(y), \Pi^{\ast}_K(g)(\Pi_{K}(y))= \Pi_{K}(x)\bigg) < C_{F}
\end{equation}
for a constant $C_F$ depending on $F$. 
\begin{itemize}
\item [(i)] $g$ is of form of matrix of equation \eqref{eqn-gform}. 
\item[(ii)] $S_{g}$ is in a bounded neighbourhood of $\Idd$ by above equation \ref{eqn-dK}
since the bounded Hilbert $d_{K}$-length of $g$ implies the boundedness of the action on $K^{o}$. 
\item[(iii)] $g$ is in a bounded neighborhood of $\CN$ by (ii)
%Proposition \ref{prop-eigSI}. %
since $g$ is of form of matrix of equation \eqref{eqn-gform}. 
%\item[(iv)] $\vec{v}_{g^{-1}\CN(\vec{v})}$ is uniformly bounded. 
\end{itemize} 
From the linear block form of $g^{-1}\CN(\vec{v})$ and the fact that  $g^{-1}\CN(\vec{v})(x) \in F$, 
we obtain that the corresponding $\vec{v}_{g^{-1}\CN(\vec{v})}$ can be made uniformly bounded independent of $\vec{v}$. 

For element $\gamma_m$ above, we take its vector $\vec{v}_{\gamma_m}$ and find our $g_{m}$ for $\mathcal{N}(\vec{v}_{\gamma_m})$.
We obtain $\gamma'_{m}:= g_{m}^{-1}\gamma_m$. Then the corresponding 
$\vec{v}_{g_{m}^{-1}\gamma_m}$ is uniformly bounded as
we can see from the block multiplications. 

Since $g_{m}$ satisfies equation \eqref{eqn-dK}, norms of eigenvalues of $\Pi_{K}^{\ast}(g_{m})$ are uniformly bounded.  
\end{proof} 

\begin{lemma}\label{lem-noball} 
Suppose that the holonomy group of $\orb$ is strongly irreducible. 
Given $G$ satisfying Hypotheses \ref{h-norm} and \ref{h-qjoin}, 
let $U$ be the properly convex p-end neighborhood of $\bv_{\tilde E}$.
$\clo(U) \cap H_{k}$ cannot contain an open domain $B$ with $\Bd B \ni \bv_{\tilde E}$.
\end{lemma}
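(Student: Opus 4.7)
The plan is a proof by contradiction using strong irreducibility of $h(\pi_1(\orb))$. Suppose an open domain $B\subset\clo(U)\cap H_k$ in $H_k$ with $\bv_{\tilde E}\in\Bd B$ exists. I first replace $B$ by the canonical $\bGamma_{\tilde E}$-invariant open convex subset $C:=\mathrm{int}_{H_k}(\clo(U)\cap H_k)$ of $H_k$, which still contains $B$ and has $\bv_{\tilde E}\in\Bd C$. Since $C$ has full dimension $i_0+1$ inside $H_k$ and lies in $\clo(\torb)$, the interior $D:=(\clo(\torb)\cap\SI^{i_0+1})^{o}$ taken inside $\SI^{i_0+1}$ is a nonempty properly convex $\bGamma_{\tilde E}$-invariant open subset of $\SI^{i_0+1}$ with $\bv_{\tilde E}\in\Bd D$. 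Using that the cone vertex $k\in\Bd K$ while leaves of $\tilde\Sigma_{\tilde E}$ are indexed only by $K^{o}$, one sees $\torb\cap\SI^{i_0+1}=\emptyset$ near $\bv_{\tilde E}$, so $\clo(D)\subset\Bd\torb$ is an honest flat face of $\clo(\torb)$ of dimension $i_0+1$, and its linear span is the subspace $V^{i_0+2}\subset\bR^{n+1}$, which is proper because NPCC gives $0<i_0<n-1$ and hence $i_0+2\leq n$.

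Next, strong irreducibility of $h(\pi_1(\orb))$ forbids any proper subspace of $\bR^{n+1}$ from being preserved by any finite-index subgroup of $\pi_1(\orb)$. Applied to $V^{i_0+2}$, this forces the orbit $\{\gamma V^{i_0+2}:\gamma\in h(\pi_1(\orb))\}$ to be infinite, producing an infinite family of pairwise distinct $(i_0+1)$-dimensional flat faces $\{\gamma\clo(D)\}$ in $\Bd\torb$, each based at the conjugate p-end vertex $\gamma\bv_{\tilde E}$. The $h(\pi_1(\orb))$-invariant closed convex hull $\Omega$ of this union inside $\clo(\torb)$ has an $h(\pi_1(\orb))$-invariant affine span, which must equal all of $\bR^{n+1}$ by strong irreducibility, so the multiplicity of faces cannot be ruled out at the primal level alone.

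To extract the contradiction I would pass to the dual properly convex domain $\torb^{*}$ from Section~\ref{II-sub-dualend} of \cite{EDC2}. The flat face $\clo(D)$ of dimension $i_0+1$ corresponds dually to a face $F^{*}\subset\Bd\torb^{*}$ sitting in the annihilator of $V^{i_0+2}$, a subspace of codimension $i_0+2$ in $\bR^{(n+1)*}$, and $F^{*}$ inherits $\bGamma_{\tilde E}^{*}$-invariance. The infinite orbit of $\clo(D)$ yields an infinite orbit of such dual faces $\{\gamma^{*}F^{*}\}$; their closed convex hull $\Omega^{*}\subset\clo(\torb^{*})$ is $h(\pi_1(\orb))^{*}$-invariant. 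The heart of the argument is the claim that the affine span of $\Omega^{*}$ is a proper subspace of $\bR^{(n+1)*}$, which directly contradicts strong irreducibility of the dual representation (equivalent to that of the primal). The main obstacle is establishing this properness: I would combine the cusp action of $\CN$ on $D$ fixing $\bv_{\tilde E}$ from Lemma~\ref{lem-bdhoro} with the strict join $K=\{k\}\ast K''$ from Lemma~\ref{lem-conedecomp1} to pin down the precise linear location of each $\gamma^{*}F^{*}$ and to track its accumulation under the orbit; the codimension lower bound on each individual $F^{*}$ together with the rigidity forced by $\mu_g\equiv 1$ and the weak middle-eigenvalue condition should then keep the span of $\Omega^{*}$ within a proper subspace, closing the contradiction.
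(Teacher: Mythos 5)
Your proposal has a genuine gap at its crux. The entire contradiction is supposed to come from showing that the affine span of the $h(\pi_1(\orb))^{*}$-invariant convex hull $\Omega^{*}$ of the dual faces is a \emph{proper} subspace of $\bR^{(n+1)*}$, but you never actually prove this; you only say that the cusp action, the join structure of $K$, and the conditions $\mu_g\equiv 1$ and the weak middle-eigenvalue condition ``should'' keep the span proper. There is no reason offered why the dual orbit $\{\gamma^{*}F^{*}\}$ would stay in a proper subspace when, as you yourself observe, the primal orbit $\{\gamma\, \clo(D)\}$ does not: each individual $F^{*}$ having large codimension says nothing about the span of the whole orbit, and the invariant span of \emph{any} nonempty invariant set is forced to be everything by irreducibility, so the burden of producing an independent geometric bound on the span is the whole problem — and it is left undone. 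In addition, your reduction in the first paragraph quietly assumes that $\clo(U)\cap H_k$ having nonempty interior in $H_k$ makes $\clo(D)$ a genuine exposed flat face of $\clo(\torb)$ whose span is exactly $V^{i_0+2}$ and whose $\bGamma_{\tilde E}$-stabilizer behaves well; this is plausible but not established, and it is not where the real difficulty lies.

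The paper's proof takes a different and complete route that you may want to study. First it shows $\alpha_7(h)=0$ for all $h\in G$: if some $\alpha_7(h)\neq 0$, equation \eqref{eqn-kernel} translates $B$ inside $H_k$ so that $h^{i}(B)\ra H_k$ as $i\ra\pm\infty$, violating proper convexity. Then, using cocompactness of $\bGamma_{\tilde E}$ on $\tilde\Sigma_{\tilde E}$ and Lemma \ref{lem-vbound}, it produces a sequence $\gamma_m\in G_+$ with $\lambda_{\bv_{\tilde E}}(\gamma_m)/\lambda_2(\gamma_m)\ra\infty$, $\alpha_7(\gamma_m)=0$, bounded $\vec{v}_{\gamma_m}$, and uniformly bounded action on $K''$. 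Setting $H_{\mx}=S(H_k)\cap\clo(\torb)$ and $K''_{\mx}=S(K'')\cap\clo(\torb)$ for the complementary spans $S(H_k)$ and $S(K'')$, this sequence acts in a uniformly bounded manner on both pieces, and Proposition \ref{II-prop-decjoin} of \cite{EDC2} (the $l=2$ case) then forces $\clo(\torb)$ to be the strict join $H_{\mx}\ast K''_{\mx}$, making the holonomy virtually reducible — the contradiction with strong irreducibility. The essential ingredient your sketch is missing is precisely such a join-detection criterion converting bounded two-factor dynamics into a global decomposition of $\clo(\torb)$; without it (or a worked-out substitute on the dual side), the argument does not close.
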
 
\begin{proof} 
First of all, 
\begin{equation}\label{eqn-alpha7} 
\alpha_7(h) =0 \hbox{ for all } h \in G 
\end{equation}
by (i) since otherwise by equation \eqref{eqn-kernel}
\[h^i(B) \ra H_{k} \hbox{ as } i \ra \pm \infty \hbox{ for } h \hbox{ with } \alpha_7(h) \ne 0.\] 
Since $\tilde \Sigma_{\tilde E}/\bGamma_{\tilde E}$ is compact,
we have a sequence $h_i \in G_+$ where 
\[\frac{\lambda_{\bv_{\tilde E}}(h_i)}{\lambda_2(h_i) } \ra \infty, \alpha_{7}(h_{i})=0,
 \hbox{ and $h_i| K''$ is uniformly bounded.} \]
 Now modify $h_{i}$ by Lemma \ref{lem-vbound}.
 
 Recall that $K$ is a strict join $K'' \ast \{k\}$ for a properly convex domain $K''\subset \Bd \torb$ of dimension 
$n-i_{0}-2$ and a vertex $k$.
Denote by $S(K'')$ and $S(H)$ the subspaces spanned by $K'$ and $H_{k}$.
$S(K'')$ and $S(H_{k})$ form a pair of complementary subspaces in $\SI^n$. 

From the form of the lower-right $(i_0+2)\times (i_0+2)$-matrix of the above matrix, 
$h_{i}$ must act on the horosphere $H \subset S(H_{k})$. $\CN$ also act transitively on $H_{k}$. 
Hence, for any such matrix we can find an element of $\CN$ so that 
the product is in the orthogonal group acting on $H_{k}$. 

 Now, this is the final part of the proof: 
Let $H_{\mx}$ denote $S(H_{k}) \cap \clo(\torb)$ and $K^{\prime \prime}_{\mx}$ the set $S(K'') \cap \clo(\torb)$.
Since $\{\vec{v}_{\gamma_m}\}$ is bounded and 
$\alpha_7(\gamma_m)= 0$, 
we have the sequence $\{\gamma_m\}$
\begin{itemize}
\item acting on $K''_{\mx}$ is uniformly bounded and 
\item $\gamma_m$ acting on $H_{\mx}$ in a uniformly bounded manner 
as $m \ra \infty$. 
\end{itemize}
By Proposition \ref{II-prop-decjoin} of \cite{EDC2} for $l = 2$ case, 
$\clo(\torb)$ equals the join of $H_{\mx}$ and $K'_{\mx}$. 
This implies that $\bGamma$ is virtually reducible. Hence the joined ends cannot occur.

% 
%We obtain a contradiction that we have a joined end 
%by Propositions \ref{II-prop-decjoin} and \ref{II-prop-joinred} of \cite{EDC2}. 
\end{proof}  

%\marginpar{wrong... need to correct.. need results in the next theorems...} 

%% Jan 24, 3:00pm 2013
\subsection{The non-existence of split joined cases for $\mu\equiv 1$.} 
%The final part of 
%the proof of Theorem \ref{thm-NPCCcase}.}
\label{sub-quasijoin}

%Mark11
%%% Dec 27 2:29pm
\begin{theorem}\label{thm-NPCCcase2} 
Let $\Sigma_{\tilde E}$ be the end orbifold of an NPCC p-R-end $\tilde E$ of a strongly tame  
properly convex $n$-orbifold $\orb$ with radial or totally geodesic ends. 
Assume that the holonomy group of $\orb$ is strongly irreducible.
Let $\bGamma_{\tilde E}$ be the p-end fundamental group.
Assume Hypotheses \ref{h-norm} only and $\mu_{g} = 1$ for all $g \in \bGamma_{\tilde E}$. 
%Suppose that $N_{K}$ acts indiscretely on $K^{o}$. 
Then $\tilde E$ is not a joined end. 
% there exists a finite cover $\Sigma_{E'}$ of $\Sigma_{\tilde E}$ so that 
%$E'$ is a quasi-join of a totally geodesic R-ends and a cusp type R-end. 
\end{theorem}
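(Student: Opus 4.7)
\medskip
\noindent
\textbf{Proof plan.}
The plan is to argue by contradiction and reduce to the join-decomposition results of \cite{EDC2}, using essentially the same mechanism that powered Lemma~\ref{lem-noball}. Suppose $\tilde E$ is a joined p-R-end. By Proposition~\ref{prop-qjoin}(iii) this is equivalent to $\alpha_7(g) = 0$ for every $g \in \bGamma_{\tilde E}$. Before exploiting this, I would first normalize the situation: since only Hypothesis~\ref{h-norm} and $\mu_g \equiv 1$ are assumed, I invoke Lemmas~\ref{lem-similarity}, \ref{lem-conedecomp1}, and \ref{lem-matrix} together with Proposition~\ref{prop-decomposition} to put every $g \in \bGamma_{\tilde E}$ simultaneously into the form \eqref{eqn-formgii} with $C_1(g) = 0$, $c_2(g) = 0$, and $O_5(g) \in O(i_0)$, while writing $K = \{k\}\ast K''$ with $k$ corresponding to the invariant $\SI^{i_0+1}$. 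Under these coordinates, the two complementary subspaces $S(K'')$ (spanning $K''$) and $S(H_k) = \SI^{i_0+1}$ (containing $\bv_{\tilde E}$) are both $\bGamma_{\tilde E}$-invariant, and the joined condition $\alpha_7 \equiv 0$ means that $g$ acts on $S(H_k)$ purely by the lower-right $(i_0+2)\times(i_0+2)$ block with no ``extra'' translation pushing the horosphere towards $\bv_{\tilde E}$.

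Next I would produce a sequence $\{\gamma_m\} \subset \bGamma_{\tilde E,+}$ with $\lambda_{\bv_{\tilde E}}(\gamma_m)/\lambda_{K''}(\gamma_m) \to \infty$ and such that (i) $\gamma_m | K''_{\mx}$ is uniformly bounded, and (ii) $\gamma_m | H_{\mx}$ is uniformly bounded, where $K''_{\mx} := S(K'') \cap \clo(\torb)$ and $H_{\mx} := S(H_k) \cap \clo(\torb)$. For (i), I arrange the sequence so that a fixed basepoint of $K^o$ is mapped into a fixed compact fundamental domain for $N_K$. For (ii), I use that $\alpha_7(\gamma_m) = 0$, that $O_5(\gamma_m)$ already lies in the compact group $O(i_0)$, and that Lemma~\ref{lem-vbound} lets me modify $\gamma_m$ (multiplying by elements of $\bGamma_{\tilde E} \cap \CN$ in the discrete case or by the fundamental-domain argument on $\tilde\Sigma_{\tilde E}$ in the indiscrete case) so that $\vec v_{\gamma_m}$ is uniformly bounded while preserving the large spectral gap across the two invariant subspaces.

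With such a sequence in hand, I would apply Proposition~\ref{II-prop-decjoin} of \cite{EDC2} exactly as in the last paragraph of the proof of Lemma~\ref{lem-noball}: a sequence $\{\gamma_m\}$ whose spectral ratio between the two $\bGamma_{\tilde E}$-invariant blocks blows up while its restrictions to $K''_{\mx}$ and $H_{\mx}$ stay uniformly bounded forces $\clo(\torb) = H_{\mx} \ast K''_{\mx}$. But then the full holonomy group $h(\pi_1(\orb))$ preserves both $S(K'')$ and $S(H_k)$, hence is virtually reducible, contradicting the strong irreducibility assumption. This rules out the joined case.

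The main obstacle I anticipate is item (ii) of the sequence construction. In the indiscrete $N_K$-setting one cannot correct $\vec v_{\gamma_m}$ by an actual element of $\bGamma_{\tilde E} \cap \CN$, and one must instead rely on the fundamental-domain mechanism on $\tilde \Sigma_{\tilde E}$ that underlies Lemma~\ref{lem-vbound}; care is also needed to verify that this correction does not destroy property (i) or the spectral-ratio growth. Once the correctly adjusted sequence exists, the conclusion is essentially a direct quotation of the join-reduction machinery of \cite{EDC2}, and strong irreducibility yields the contradiction immediately.
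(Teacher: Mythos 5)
Your proposal is correct and follows essentially the same route as the paper: the paper's proof likewise assumes the end is joined, uses Proposition \ref{prop-qjoin}(iii) to get $\alpha_7\equiv 0$, normalizes via Lemma \ref{lem-conedecomp1} and Proposition \ref{prop-decomposition}, extracts the expanding sequence $\gamma_m$, and concludes by Lemma \ref{lem-noball} (whose internal argument — Lemma \ref{lem-vbound}, boundedness on $K''_{\mx}$ and $H_{\mx}$, Proposition \ref{II-prop-decjoin} of \cite{EDC2}, and strong irreducibility — is exactly what you have inlined). The only difference is presentational: you unwind Lemma \ref{lem-noball} rather than citing it as the final step.
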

\begin{proof}
%We can assume that the semi-simple quotient group $N_K$ is not discrete by Theorem \ref{thm-NPCCcase}. 
%Theorem \ref{thm-NPCCcase}  and  
%Proposition \ref{prop-decomposition} and \ref{prop-qjoin}
%show that we have a joined or quasi-joined end. 
Suppose that $\tilde E$ is a joined end.
By premise, $\mu_g = 1$ for all $g \in \bGamma_{\tilde E}$. 
By Lemma \ref{lem-conedecomp1} and Proposition \ref{prop-decomposition}, 
%Since our end is a joined type, 
every $g \in \bGamma_{\tilde E}$ is of form: %\marginpar{need justification} 
\renewcommand{\arraystretch}{1.5}
\begin{equation}\label{eqn-gform}
\newcommand*{\temp}{\multicolumn{1}{r|}{}}
\left( \begin{array}{ccccccc} 
S_{g} & \temp & 0 & \temp & 0 & \temp & 0 \\ 
 \cline{1-7}
0 &\temp & \lambda_g &\temp & 0 &\temp & 0 \\ 
 \cline{1-7}
 0 &\temp & \lambda_g \vec{v}^T_g &\temp & \lambda_g O_5(g) &\temp & 0 \\ 
 \cline{1-7}
0 &\temp & \lambda_m\left(\alpha_7(g_m) + \frac{||\vec{v}_g||^2}{2}\right) &\temp &
\lambda_g \vec{v}_g &\temp & \lambda_g 
\end{array} 
\right)
\end{equation}

As in the proof of Proposition \ref{prop-decomposition}, % or \ref{prop-decomposition2}, 
we obtain a sequence 
 $\gamma_m$ of form: 
\renewcommand{\arraystretch}{1.5}
\begin{equation}\label{eqn-gammao}
\newcommand*{\temp}{\multicolumn{1}{r|}{}}
\left( \begin{array}{ccccccc} 
\delta_m O_m & \temp & 0 & \temp & 0 & \temp & 0 \\ 
 \cline{1-7}
0 &\temp & \lambda_m &\temp & 0 &\temp & 0 \\ 
 \cline{1-7}
 0 &\temp & \lambda_m \vec{v}^T_m &\temp & \lambda_m O_5(\gamma_m) &\temp & 0 \\ 
 \cline{1-7}
0 &\temp & \lambda_m\left(\alpha_7(\gamma_m) + \frac{||\vec{v}_m||^2}{2}\right) &\temp &
\lambda_m \vec{v}_m &\temp & \lambda_m 
\end{array} 
\right)
\end{equation}
as  $C_{1, m} =0$ and $c_{2, m}=0$
where 
\begin{itemize}
\item $\lambda_m \ra \infty$, 
\item $\delta_m \ra 0$ and $O_m$ is in a set of bounded matrices in $\SL_{\pm}(n-i_0-1)$, 
\item $\mu_7(\gamma_m ) = 0$ by Proposition \ref{prop-qjoin} (iii).
\end{itemize}
This implies $\alpha_7(\gamma_m) = 0$ also by definition. 
Moreover, Hypothesis \ref{h-qjoin} now holds. 
By Lemma \ref{lem-noball}, we obtain a contradiction.

\end{proof}

\subsection{The proof for discrete $N_{k}$.} \label{sub-discretecase}

Now, we go to proving Theorem \ref{thm-thirdmain} when $N_{K}$ is discrete. 
By taking a finite index subgroup if necessary, we may assume that $N_{K}$ acts freely on $K^{o}$. 
We have a corresponding fibration 
\begin{eqnarray}
l/N & \ra & \tilde \Sigma_{\tilde E}/\bGamma_{\tilde E} \nonumber \\ 
     &       & \, \, \downarrow \nonumber \\ 
   &         & K^o/N_K 
\end{eqnarray}
where the fiber and the quotients are compact orbifolds
since $\Sigma_{\tilde E}$ is compact. 
Here the fiber equals $l/N$ for generic $l$. 

Since $N$ acts on each leaf $l$ of ${\mathcal F}_{\tilde E}$ in $\tilde \Sigma_{\tilde E}$, 
it also acts on a properly convex domain $\torb$ and $\bv_{\tilde E}$ in a subspace $\SI^{i_0+1}_l$ in $\SI^n$ 
corresponding to $l$. $l/N \times \bR$ is an open real projective orbifold diffeomorphic to 
$(H^{i_0+1}_l \cap \torb)/N$
for an open hemisphere $H^{i_0+1}_l$ corresponding to $l$. 
Since elements of $N$ restricts to $\Idd$ on $K$, $\lambda_{1}(g) = \lambda_{n+1}(g)$:
Otherwise, we see easily $g$ acts not trivially on $\SI^{n-i_{0}-1}$. 
By Proposition \ref{prop-eigSI}, the all norms of eigenvalues are $1$. 
Since $l$ is a complete affine space, 
Lemma \ref{I-lem-unithoro} of \cite{EDC1} shows that  
\begin{itemize} 
\item $l$ covers a horospherical end of $(\SI^{i_0+1}_l \cap \torb)/N$.
\item By Theorem \ref{I-thm-affinehoro}, $N$ is virtually unipotent and $N$ is virtually a cocompact subgroup of 
a unipotent group, conjugate to a parabolic subgroup of
$\SO(i_0+1, 1)$ in $\Aut(\SI^{i_0+1}_l)$ 
and acting on an ellipsoid of dimension $i_0$ in $H^{i_0+1}_l$. 
\end{itemize} 

%\marginpar{GC: h used sometimes and not sometimes... consistencey}

Recall these from \cite{EDC1}.
By the nilpotent Lie group theory of 
Malcev, the Zariski closure $Z(N)$ of $N$ is a virtually nilpotent Lie group with finitely many components 
and  $Z(N)/N$ is compact. 
Let $\CN$ denote the identity component of the Zariski closure of $N$
so that $\CN/(\CN \cap N)$ is compact.
$\CN \cap N$ acts on the great sphere $\SI^{i_0+1}_l$ containing $\bv_{\tilde E}$ and corresponding to $l$.
%Since $N$ acts on a horoball in $\SI^{i_0+1}_l$ and $\CN/\CN\cap N$ is compact, we can modify the horoball 
%to be invariant under $\CN$ by taking the convex hull of images of it under $\CN$.
Since $\CN/N$ is compact, 
we can modify $U$ so that $\CN$ acts on $U$: i.e., 
we take $\bigcap_{g\in \CN} g(U) = \bigcap_{g\in F} g(U)$
for the fundamental domain $F$ of $\CN$ by $N$. 

We remark that $N \cap \CN := \CN(L)$ for a lattice $L$ in $\bR^{i_0}$.
Since $\CN$ is the Zariski closure of $N$ and $N$ is normal in $\bGamma_{\tilde E}$, 
$\CN$ is normalized by $\bGamma_{\tilde E}$.  Thus, Hypothesis \ref{h-norm} holds. 

%Let $g$ be an element of $\Gamma$ mapping to a nontrivial element of $N_K$ 
%also acting on $\SI^{i_0}_\infty$. Let $g' = g| \SI^{i_0}_\infty$ and let $U_g$ denote the corresponding 
%matrix for $V^{i_0+1}$. Then $U_g N' U_g^{-1}$ is still an element of $N'$ restricted to $V^{i_0+1}$. 
%Thus, $U_g$ belongs to a normalizer of the restriction of $N'$ in $V^{i_0+1}$. 
%The matrix of $g$ can be written as 
%\begin{equation} \label{eqn-gamma}
%\left( \begin{array}{cc} S_g          &   0 \\
%                                    C_g        & U_g 
%                                \end{array} \right)
% \end{equation}
%where $U_g$ is a normalizing  $(i_0+1)\times (i_0+1)$-matrix and $S_g$ is a semisimple  $(n-i_0) \times (n-i_0)$-matrix 
%and $C_g$ is an $(n-i_0) \times (i_0+1)$-matrix. 
%We call the subgroup 
%\[\bigg\{\frac{1}{|\det(S_g)|^{\frac{1}{n-i_0}}} S_g\bigg| g \in h(\pi_1(\tilde E)) \bigg\} \subset \SL_\pm(n-i_0, \bR)\] the semisimple part of 
%$h(\pi_1(\tilde E))$ since it acts on a compact convex subset $K$ discretely and properly discontinuously on a properly convex domain $K^o$
%with a compact quotient $K^o/h(\pi_1(\tilde E))$ and has to be semisimple by the main results of \cite{Ben1}.

%\marginpar{I need some quasi-joined case here also}

%% Feb 8th 10:52pm 2015

%\marginpar{Change this so that umed eliminates joined cases}
%\marginpar{not virtually reducible? needed? Yes!}

%For the following we do not need $N_{K}$ to be discrete. 
\begin{theorem}\label{thm-NPCCcase} 
Let $\Sigma_{\tilde E}$ be the end orbifold of an NPCC p-R-end $\tilde E$ of a strongly tame  
properly convex $n$-orbifold $\orb$ with radial or totally geodesic ends. 
Assume that the holonomy group of $\pi_{1}(\orb)$ is strongly irreducible.
Let $\bGamma_{\tilde E}$ be the p-end fundamental group,
and it satisfies the weak middle-eigenvalue condition. 
 The virtual center of $\bGamma_{\tilde E}$ goes to the the Zariski dense subgroup of 
 the virtual center of $\Aut(K)$. 
Assume also that $N_{K}$ is discrete and  $K^{o}/N_{K}$ is compact.
Then
%\begin{itemize}
%\item 
%there exists a finite cover $\Sigma_{E'}$ of $\Sigma_{\tilde E}$ so that 
$\tilde E$ is a quasi-join of a totally geodesic R-end and a cusp type R-end. 
%\item The end fundamental group is virtually a product of those of a lens-type end and a cusp
%end group and an infinite cyclic group in the center
%or is a discrete subgroup of the group generated by a group of lens-type and a cusp group and 
%an infinite cyclic group. 
%\end{itemize} 
\end{theorem}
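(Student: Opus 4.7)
The plan is to verify the hypotheses of Theorem~\ref{thm-NPCCcase2} and Proposition~\ref{prop-qjoin} so that $\tilde E$ is forced into the quasi-joined case. First I would verify Hypothesis~\ref{h-norm} using the setup immediately preceding the theorem: the group $\CN$, defined as the identity component of the Zariski closure of $N$, is a connected nilpotent Lie group acting on each leaf of $\mathcal{F}_{\tilde E}$ as a cusp group conjugate to a parabolic subgroup of $\SO(i_0+1,1)$, and since $N$ is normal in $\bGamma_{\tilde E}$ so is $\CN$. Replacing $U$ by $\bigcap_{g\in F}g(U)$ for a fundamental domain $F$ of $\CN$ by $N$ makes $\CN$ preserve $U$. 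Then Lemma~\ref{lem-similarity} gives, in a common coordinate system, $g\CN(\vec v)g^{-1} = \CN(\vec v M_g)$ with $M_g = \mu_g O_5(g)^{-1}$ and $O_5(g) \in O(i_0)$.

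The crucial step is to show $\mu_g = 1$ for every $g \in \bGamma_{\tilde E}$, exploiting discreteness of $N_K$. Since $N\cap \CN = \CN(L)$ for a full-rank lattice $L\subset \bR^{i_0}$ (because $\CN/(N\cap \CN)$ is compact) and $N$ is normal in $\bGamma_{\tilde E}$, conjugation by $g$ preserves this lattice, so $M_g(L) = L$. Hence $M_g \in \GL(L) \cong \GL(i_0,\bZ)$ and $|\det M_g| = 1$. Since $O_5(g)$ is orthogonal, $|\det O_5(g)| = 1$, forcing $\mu_g^{i_0} = 1$, and hence $\mu_g = 1$ by positivity of the $a_j(g)$'s on fixed directions.

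Next I would apply Lemma~\ref{lem-conedecomp1} to obtain the strict join $K = \{k\}\ast K''$ in a coordinate system with $C_1(\vec v) = 0$ and $c_2(\vec v) = 0$. The weak middle eigenvalue condition together with Proposition~\ref{prop-eigSI}, combined with the identity $a_1(g) = a_5(g) = a_9(g)$ forced by $\mu_g = 1$, supplies the eigenvalue hypothesis $a_1(g) \geq a_5(g), a_9(g)$ of Proposition~\ref{prop-decomposition}; the Zariski-density hypothesis on the virtual center of $\bGamma_{\tilde E}$ in the virtual center of $\Aut(K)$ supplies the required centralizing sequence. Proposition~\ref{prop-decomposition} then yields $C_1(g) = 0$ and $c_2(g) = 0$ for all $g \in \bGamma_{\tilde E}$, so elements of $\bGamma_{\tilde E}$ have the block form of equation~\eqref{eqn-form1}, fulfilling Hypothesis~\ref{h-qjoin}.

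Finally I would invoke the trichotomy of Proposition~\ref{prop-qjoin}: proper convexity of $\torb$ forces $\alpha_7(g)\geq 0$ on $G_+$ by (i), so either $\alpha_7\equiv 0$ (joined end, case (iii)) or the uniform positive translation condition holds (quasi-joined end, case (ii)). The joined alternative is excluded by Theorem~\ref{thm-NPCCcase2}, whose hypotheses ($\mu_g=1$ and strong irreducibility of the holonomy of $\orb$) are now verified. Hence $\tilde E$ is quasi-joined from a totally geodesic R-end associated with $K''$ and a cusp-type R-end associated with the horospherical fiber $H_k$ on which $\CN$ acts. The main obstacle is the lattice-preservation step establishing $\mu_g = 1$: one has to check that the coordinate changes in Lemma~\ref{lem-similarity} produce a single common coordinate system in which the single lattice $L \subset \bR^{i_0}$ is preserved by conjugation for every $g$ simultaneously, rather than only up to coordinate changes varying with $g$.
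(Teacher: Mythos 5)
Your proposal is correct and follows essentially the same route as the paper's proof: the lattice-preservation/determinant argument forcing $\mu_g=1$, then Lemma \ref{lem-conedecomp1} and Proposition \ref{prop-decomposition} to reach the hypotheses of Proposition \ref{prop-qjoin}, with the joined case excluded by Theorem \ref{thm-NPCCcase2}. The coordinate-system concern you raise at the end is addressed in the paper by the "common coordinate system" conclusion of Lemma \ref{lem-conedecomp1}.
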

\begin{proof} 
%Let $N_{\tilde E}$ denote a component of the Zariski closure of $N$. 
%We know that $\hat N_{\tilde E}$ is isomorphic to $\bR^{i_0}$. 
%Then $\bGamma_{\tilde E}$ acts on $N_{\tilde E}$ by conjugation. 
%We will prove for when $N_K$ is discrete here and we will prove for the indiscrete $N_K$ in Section \ref{sub-proofNPCC}. 
We will continue to use the notation developed above in this proof. 
By Lemma \ref{lem-similarity}, 
$h(g) \CN(\vec{v}) h(g)^{-1} = \CN( \vec{v} M_g)$ where 
$M_g$ is a scalar multiplied by an element of a copy of an orthogonal group $O(i_0)$. 

%Mark7
The group $\CN$ is isomorphic to $\bR^{i_{0}}$ as a Lie group. 
Since $N \subset \CN$ is a discrete cocompact, $N$ is virtually isomorphic to $\bZ^{i_0}$.
%as we recall from the beginning of Section 
%\ref{sec-gendiscrete}.
Without loss of generality, we assume that $N$ is a cocompact subgroup of $\CN$. 
$h(g)N h(g)^{-1} = N$. Since $N$ corresponds to a lattice $L \subset \bR^{i_{0}}$ by the map $\CN$, 
and the conjugation by $h(g)$ is 
to a map given by right multiplication $M_g: L \ra L$
by Lemma \ref{lem-similarity}.
Thus, $M_g: L \ra L$ is conjugate to an element of 
$\SL_\pm(i_0, \bZ)$ and 
$\{M_g| g \in \bGamma_{\tilde E}\}$ 
is a compact group as their determinant is $\pm 1$. 
Hence, 
the image of the homomorphism given by 
$g \in h(\pi_1(\tilde E)) \mapsto M_g \in \SL_\pm(i_0, \bZ))$ is a finite order group. 
Moreover, $\mu_{g}= 1$ for every $g\in \bGamma_{\tilde E}$. 
Thus, $\bGamma_{\tilde E}$ has a finite index group $\bGamma'_{\tilde E}$ centralizing $\CN$. 

%Also, consider the virtual center of $N_{K}$. Then each element comes from the virtual center of $\bGamma_{\tilde E}$ by
%assumption.
We take $\Sigma_{E'}$ to be the corresponding cover of $\Sigma_{\tilde E}$. 
By Propositions  \ref{lem-conedecomp1} and \ref{prop-decomposition}, we have the result needed to apply Proposition \ref{prop-qjoin}. 
%Now $\gamma_m$ is in the virtual center since $M_g =\Idd$. 
%Since $\bGamma_{\tilde E}$ acts on a properly convex set with an appropriate matrix form for each element, 
%Proposition \ref{prop-qjoin} implies that $\bGamma_{\tilde E}$ satisfies the positive translation condition. 
Finally, Proposition \ref{prop-qjoin}(i) and (ii) imply that $\bGamma_{\tilde E}$ virtually is either a join or a quasi-joined group.
Theorem \ref{thm-NPCCcase2} shows that a joined end cannot occur.  
%We will continued in Section \ref{sub-proofNPCC}. 
\end{proof}
%This proof is an easier version of the proof of Theorem \ref{thm-NPCCcase2}. So we defer the details. 

% June 20 11:30pm 2015

\section{The indiscrete case} \label{sec-indiscrete}

%This is a joint work with Y. Carriere. 
Let $\Sigma_{\tilde E}$ be the end orbifold of an NPCC R-end $\tilde E$ of a strongly tame 
properly convex $n$-orbifold $\orb$ with radial or totally geodesic ends. 
Let $\bGamma_{\tilde E}$ be the p-end fundamental group. 
Let $U$ be a p-end-neighborhood in $\torb$ corresponding to a p-end vertex $\bv_{\tilde E}$. 

%We assume that $U$ is properly convex and $\partial U$ is strictly convex. 
%We can assume that $\partial U$ is smooth by smoothing if necessary. 
%\marginpar{(We cannot do other situation yet...I need to extend this... )}

%A properly convex domain $\Omega$ is {\em homogeneous} if $\Aut(\Omega)$ acts transitively on
%$\Omega$. 

%Recall that the tube ${\mathcal T}_{\bv_{\tilde E}}(\tilde \Sigma_{\tilde E})$ fibers over a properly convex domain $K^o \subset \SI^{n-i_0-1}$ 
%with fibers open hemispheres of dimension $i_0+1$. 
%Denote by $\Pi_K$ the projection to $K$.

%Let $\SL_\pm(n+1, \bR)_{\SI^{i_0}_\infty, \bv_{\tilde E}}$ 
%denote the subgroup of $\Aut(\SIön)$ acting on 
%$\SI^{i_0}_\infty$ and $\bv_{\infty}$.  
%The projection $\Pi_K$ induces a homomorphism 
%\[\Pi_K^*: \SL_\pm(n+1, \bR)_{\SI^{i_0}_\infty, \bv_{\tilde E}} 
%\ra \SL_\pm( n, \bR).\]
Recall the exact sequence 
\[ 1 \ra N \ra \pi_1(\tilde E) \stackrel{\Pi^*_K}{\longrightarrow} N_K \ra 1 \] 

%Denote by $\bGamma_{\tilde E}$ the holonomy group in $\SL(n+1, \bR)$ of group $\pi_1(\tilde E)$. 

%Let $\hat E$ denote the space of $(i_0+1)$-dimensional hemisphere determined 
%by the foliation ${\mathcal F}_{\tilde E}$ on $\tilde \Sigma_{\tilde E}$. That is the space of 
%hemisphere with boundary equal to $\SI^{i_0}_{\infty}$.
%Let $l$ be a leaf. It corresponds to a unique $(i_0+1)$-hemisphere $H_l$ in $\SI^n$ with boundary 
%$\SI^{i_0}_{\infty}$. 

%Being properly convex, $K$ has an invariant Hilbert metric. 
%Since $N'_K$ acts transitively on $K$ and the stabilizer of a point of $K$ is compact, 
%it follows that there exists an invariant Riemannian metric on $O_{N_1}$ on $K$. 
An element $g \in \bGamma_{\tilde E}$ is of form: 
\begin{equation} \label{eqn-g}
\newcommand*{\temp}{\multicolumn{1}{r|}{}}
g = \left( \begin{array}{ccc} 
K(g) & \temp &  0 \\ 
 \cline{1-3}
* &\temp & U(g)
\end{array} 
\right).
\end{equation}
Here $K(g)$ is an $(n-i_0)\times (n-i_0)$-matrix and 
$U(g)$ is  an $(i_0+1)\times (i_0+1)$-matrix acting on $\SI^{i_0}_\infty$. 
We note $\det K(g) \det U(g) = 1$. 

%% Dec 25, 6:46 pm

\subsection{Taking the leaf closure} 

\subsubsection{Estimations with $KA \bU$.}

Let $\bU$ denote a maximal nilpotent subgroup of $\SL_\pm(n+1, \bR)_{\SI^{i_0}_\infty, \bv_{\tilde E}}$  % $\Aut(\SI^n)_{\SI^{i_0}_\infty}$ 
given 
by lower triangular matrices with diagonal entries equal to $1$.
%Let $V^{i_0+1}$ is the subspace corresponding to $\SI^{i_0}$.

%\marginpar{Ref? I had it..} 
\begin{lemma}\label{lem-orth} 
The matrix of $g \in \Aut(\SI^n)$ can be written under a coordinate system orthogonal 
at $V^{i_0+1}_\infty$ as $k(g) a(g) n(g)$ where 
%$k(g), a(g), n(g)$ mutually commute with one another and 
$k(g)$ is an element of $O(n+1)$, $a(g)$ is a diagonal element, and $n(g)$ is in the group  $\bU$ of unipotent 
lower triangular matrices. 
Also, diagonal elements of $a(g)$ are the norms of eigenvalues of $g$ as elements of $\Aut(\SI^n)$. 
\end{lemma}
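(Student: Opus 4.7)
The statement is essentially a refined Iwasawa--type $KA\bU$ decomposition in which the diagonal $A$--part is required to record the \emph{norms of the eigenvalues} of $g$, rather than arbitrary singular values (which is what a naive $KAN$ would give). The key observation is that the coordinate system is allowed to depend on $g$, so one may first put $g$ into block triangular form using its real Jordan decomposition, and only then orthogonalize.

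Concretely, I would proceed as follows. First, choose a basis $(v_{1},\dots,v_{n+1})$ of $\bR^{n+1}$ refining a complete flag preserved by $g$, constructed from the real Jordan decomposition: each real eigenvalue $\lambda$ contributes a Jordan block with $\lambda$'s on the diagonal, and each complex conjugate pair $\lambda = re^{\pm i\theta}$ contributes the familiar real block built from the rotation--scaling matrix $r R_{\theta}$. In the setting where we actually apply this (equation \eqref{eqn-g}), the flag $\bv_{\tilde E}\subset V^{i_{0}+1}_{\infty}\subset \bR^{n+1}$ is $g$--invariant, so the basis can be chosen to refine this flag, placing the last $i_{0}+1$ vectors inside $V^{i_{0}+1}_{\infty}$ and the last vector along $\bv_{\tilde E}$; this is what ``orthogonal at $V^{i_{0}+1}_{\infty}$'' records. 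Next, apply Gram--Schmidt orthogonalization to this basis to produce an orthonormal basis, and let $k(g)\in O(n+1)$ be the resulting change--of--basis matrix. Since Gram--Schmidt preserves flags, $g$ is still (block) lower triangular in the new basis, with the same diagonal Jordan structure.

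The last step is to split off the rotational parts of the $2\times 2$ complex--conjugate blocks: each such block $rR_{\theta}$ factors as $R_{\theta}\cdot (rI)$, where $R_{\theta}\in O(2)$ is absorbed into $k(g)$ and the scalar $r=|\lambda|$ is left on the diagonal. What remains is a diagonal $a(g)$ whose entries are exactly the norms $|\lambda_{i}|$ of the eigenvalues of $g$, times a strictly lower triangular unipotent $n(g)\in \bU$, so that $g=k(g)a(g)n(g)$. The main technical care needed, and the only place I expect any real work, is to verify that the absorption of the rotational pieces of the complex Jordan blocks into $k(g)$ is compatible with the orthogonal adaptation to $V^{i_{0}+1}_{\infty}$--- i.e., that the rotations inside $2\times 2$ blocks and any ordering of Jordan blocks used to match the flag $\bv_{\tilde E}\subset V^{i_{0}+1}_{\infty}$ do not interfere with each other. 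Everything else is a standard application of the $QR$/Iwasawa decomposition.
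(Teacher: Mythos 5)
Your proposal is correct and follows essentially the same route as the paper: a basis adapted to the real Jordan blocks and refining the $g$-invariant flag through $V^{i_0+1}_\infty$, followed by Gram--Schmidt orthogonalization (the Iwasawa/$QR$ decomposition relative to that flag, cf.\ Kostant's Proposition 2.1, which the paper cites). The only difference is that you spell out the absorption of the $2\times 2$ rotation factors from complex-conjugate eigenvalue pairs into $k(g)$, a step the paper leaves implicit.
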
 
\begin{proof} 
%Recall the Jordan decomposition of the matrix of $g$. 
%Then $g = k'(g) a'(g) n'(g)$ where $k'(g)$ is conjugate to an orthogonal element
%$a'(g)$ is conjugate to a diagonal one and $n'(g)$ is unipotent
%and $k'(g), a'(g), $ and $n'(g)$ mutually commute
%and each acts on the cyclic decomposition subspaces of $g$. 
%\marginpar{Ref? I knew...} 
%
Let $\vec{v}_1, \dots, \vec{v}_{i_0+1}, \vec{v}_{i_0+2}, \dots, \vec{v}_{n+1}$ denote the basis vectors of $\bR^{n+1}$ 
that are chosen from the real Jordan-block subspaces of $g$ with the same norms of eigenvalues
where $\vec{v}_j \in V^{i_0+1}_\infty$ for $j =1, \dots, i_0+1$. 
We require $[\vec{v}_1] = \bv_{\tilde E}$. 
%Let $\lambda_j$ denote the norms of eigenvalues associated 
%with $\vec{v_j}$ in the decreasing order for $j = 1, \dots, i_0+1$. 

Now we fix a Euclidean metric on $\bR^{n+1}$. 
We obtain vectors 
\[\vec{v}_1', \dots, \vec{v}_{i_0+1}', \vec{v}_{i_0+2}', \dots, \vec{v}_{n+1}'\]
by the Gram-Schmidt orthogonalization process using the corresponding Euclidean metric on $\bR^{n+1}$.
Then the desired result follows by writing the matrix of $g$ in terms of coordinates
given by letting the basis vectors $\vec{v}'_i = \vec{u}_{n+1 -i}$. 
(See also Proposition 2.1 of Kostant \cite{BK}.)

\end{proof}

We define 
\[ \bU' := \bigcup_{ k \in O(n+1)} k\bU k^{-1}.  \]

\begin{corollary}\label{cor-bdunip} 
Suppose that we have for a positive constant $C_1$, and $g \in \bGamma_{\tilde E}$, 
\[ \frac{1}{C_1} \leq \lambda_{n+1}(g),\lambda_1(g) \leq C_1.\]
Then $g$ is in a bounded distance from $\bU'$ with the bound depending only on $C_1$.
\end{corollary}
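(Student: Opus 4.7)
The plan is to exploit the refined $KA\bU$-type decomposition from Lemma \ref{lem-orth} and observe that the eigenvalue-norm hypothesis forces the diagonal $A$-part to lie in a compact piece of the diagonal group, so that $g$ differs from a suitable element of $\bU'$ only by bounded left and right multiplications.

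First, by Proposition \ref{prop-eigSI}, every norm of an eigenvalue of $g$ is pinched between $\lambda_{n+1}(g) \ge 1/C_1$ and $\lambda_1(g) \le C_1$. Applying Lemma \ref{lem-orth}, I would write $g = k(g)\,a(g)\,n(g)$ in an orthonormal basis adapted to the real Jordan-block decomposition, with $k(g) \in O(n+1)$, $n(g) \in \bU$, and $a(g)$ diagonal whose entries are precisely the norms of the eigenvalues of $g$. Hence $a(g)$ and $a(g)^{-1}$ both lie in a compact subset of the positive diagonal group depending only on $C_1$.

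Next I would form the candidate element
\[
u := k(g)\, n(g)\, k(g)^{-1} \in \bU'
\]
and observe the identity
\[
g \;=\; k(g)\, a(g)\, n(g) \;=\; \bigl(k(g)\, a(g)\, k(g)^{-1}\bigr)\, u\, k(g).
\]
The left factor $k(g)\,a(g)\,k(g)^{-1}$ is a symmetric positive definite matrix whose operator norm and whose inverse's operator norm equal those of $a(g)$ (conjugation by $k(g) \in O(n+1)$ preserves operator norms) and hence are controlled by $C_1$; the right factor $k(g)$ lies in the compact group $O(n+1)$. Thus $g = h_1\, u\, h_2$ with $h_1, h_2$ in a fixed compact subset of $\SL_\pm(n+1,\bR)$ determined by $C_1$.

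Finally, with respect to any of the natural metrics on $\SL_\pm(n+1,\bR)$ used in the paper (for instance $d(g,h) = \log \max\{\|gh^{-1}\|,\|hg^{-1}\|\}$), two-sided multiplication by elements of a fixed compact set alters distances by at most a bounded amount, so $d(g,u)$ is bounded in terms of $C_1$ alone. The only delicate point, and in some sense the main ``obstacle,'' is confirming that the appropriate distance function on the group behaves well under bounded two-sided multiplicative perturbations; however, this is essentially formal for any of the standard choices, so the substantive content is the decomposition step together with the reduction of the $a(g)$-factor to a compact set via Proposition \ref{prop-eigSI}.
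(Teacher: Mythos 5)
Your proposal is correct and follows essentially the same route as the paper: both use Lemma \ref{lem-orth} to write $g = k(g)a(g)n(g)$, invoke Proposition \ref{prop-eigSI} together with the hypothesis to pinch all eigenvalue norms (hence the diagonal part $a(g)$) into a compact set depending only on $C_1$, conjugate the unipotent factor by an orthogonal element to land in $\bU'$, and conclude via boundedness of the remaining factors. The only cosmetic difference is the bookkeeping of the conjugation (the paper conjugates the whole decomposition by a single $k \in O(n+1)$, you conjugate by $k(g)$ and carry a right factor), which does not change the substance.
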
 
\begin{proof} 
By Lemma \ref{lem-orth}, we can find an element $k\in O(n+1)$ so that 
\[ g  = k k(g) k^{-1} k a(g) k^{-1} k n(g) k^{-1}\] as above. 
Then $k k(g) k^{-1} \in O(n+1)$ and $k a(g) k^{-1}$ is uniformly bounded from $\Idd$ by 
a constant depending only on $C_1$ by Proposition \ref{prop-eigSI}. 
Finally, we obtain $k n(g) k^{-1} \in \bU'$. 
%Thus, we are done. 
\end{proof} 

A subset of a Lie group is of {\em polynomial growth} if the volume of the ball $B_R(\Idd)$ radius $R$ is less than or
equal to a polynomial of $R$. 
As usual, the metric is given by the standard positive definite left-invariant bilinear form that is invariant 
under the conjugations by the compact group $O(n+1)$. 

\begin{lemma} \label{lem-bU} 
$\bU'$ is of polynomial growth in terms of the distance from $\Idd$. 
\end{lemma}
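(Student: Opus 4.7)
The plan is to exploit the parametrization of $\bU'$ by the compact group $O(n+1)$ acting by conjugation on the nilpotent group $\bU$. Consider the surjective smooth map
\[
\phi : O(n+1) \times \bU \longrightarrow \bU', \qquad \phi(k, u) = kuk^{-1}.
\]
Because the chosen left-invariant metric on the ambient Lie group is, by construction, invariant under conjugation by $O(n+1)$, each such conjugation is an isometry fixing $\Idd$; hence $d(\Idd, \phi(k,u)) = d(\Idd, u)$, and therefore
\[
\phi^{-1}(B_R(\Idd)) \; \subset \; O(n+1) \times (\bU \cap B_R(\Idd)).
\]

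The first ingredient is that $\bU$, being a simply connected nilpotent Lie group, has polynomial volume growth: the Haar volume of $\bU \cap B_R(\Idd)$ is bounded by a polynomial $P(R)$ in $R$. This is the classical fact underlying Gromov's theorem, and is easily verified directly for the explicit lower-triangular unipotent group via the exponential parametrization, where the matrix entries of $u$ grow polynomially in $d(\Idd, u)$. The second ingredient is simply that $O(n+1)$ is compact, so has finite Haar volume.

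Combining the two, $\phi^{-1}(B_R(\Idd))$ has measure bounded by a constant multiple of $P(R)$ in the product Haar measure on $O(n+1)\times \bU$. Since $O(n+1)$ is compact, the differential of the smooth map $\phi$ is uniformly bounded on $O(n+1)\times K$ for any compact $K\subset \bU$, so $\phi$ is Lipschitz on each preimage $\phi^{-1}(B_R(\Idd))$. Consequently the image $\bU'\cap B_R(\Idd)$ inherits the same polynomial bound, which is the statement to be proved.

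The main technical point is to fix a sensible notion of volume for the subset $\bU'$, which is a union of conjugate subgroups rather than a single subgroup of the ambient Lie group. The cleanest resolution is to measure $\bU'$ via the push-forward of the product Haar measure on $O(n+1)\times\bU$ under $\phi$, or equivalently to count $\epsilon$-covering numbers; both are preserved, up to uniform constants and at most polynomial distortion, by the Lipschitz map $\phi$, so either choice yields the desired polynomial bound.
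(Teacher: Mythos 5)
Your approach is the same as the paper's: parametrize $\bU'$ by the conjugation map $\phi\colon O(n+1)\times \bU \to \bU'$, $\phi(k,u)=kuk^{-1}$, use the polynomial growth of the nilpotent group $\bU$ (Gromov) together with the compactness of $O(n+1)$, note that conjugation by $O(n+1)$ preserves distance from $\Idd$ for the chosen bi-$O(n+1)$-invariant left-invariant metric, and transfer the bound through $\phi$. However, the one step you leave unjustified is precisely the step the paper spends most of its proof establishing. To pass from a polynomial bound on $O(n+1)\times(\bU\cap B_R(\Idd))$ to one on its image, you need the distortion of $\phi$ on that set to grow at most polynomially in $R$. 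Your remark that the differential of $\phi$ is ``uniformly bounded on $O(n+1)\times K$ for any compact $K$'' only yields a constant depending on $K$, and ``at most polynomial distortion'' is asserted rather than proved; since $B_R(\Idd)$ in the ambient group $\Aut(\SI^n)$ has exponential volume growth, an unquantified Lipschitz constant is not enough. Concretely, the derivative of $k\mapsto kuk^{-1}$ in the $O(n+1)$-direction at a point $u$ with $\bdd(u,\Idd)=s$ is of the order of $\left(\mathrm{Ad}_{u^{-1}}-\Idd\right)$ applied to unit vectors in the Lie algebra of $O(n+1)$, and one must show this is bounded by a polynomial in $s$. That is exactly the paper's computation $u(t,s)^{-1}\tfrac{d}{dt}u(t,s)=(\mathrm{Ad}_{u(t,s)^{-1}}-\Idd)(\vec{x})$, which is polynomial in $s$ and $t$ because $\vec{u}$ is nilpotent, so the matrix entries of the unipotent one-parameter subgroup $\exp(s\vec u)$ grow polynomially in $s$. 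Inserting this estimate (or an equivalent appeal to unipotency of $\bU$) closes the gap and makes your argument complete.
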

\begin{proof}
Let $\Aut(\SI^n)$ have a left-invariant Riemannian metric. 
Clearly $\bU$ is of polynomial growth by Gromov \cite{Gr} since $\bU$ is nilpotent.
Given $g \in O(n+1)$, the distance between $gug^{-1}$ and $u$ for $u \in \bU'$ 
is proportional to a constant multiplied by $\bdd(u, \Idd)$: 
Choose $u \in \bU'$ which is unipotent. We can write $u(s) = \exp(s \vec{u})$ where 
$\vec{u}$ is a nilpotent matrix of unit norm. $g(t) := \exp(t \vec{x})$ for $\vec{x}$ in the Lie 
algebra of $O(n+1)$ of unit norm. 
For a family of $g(t) \in O(n+1)$, we define 
\begin{equation}\label{eqn-conj}
u(t, s) = g(t) u(s) g(t)^{-1} =  \exp(s Ad_{g(t)} \vec{u}).
\end{equation}
We compute 
\[ u(t, s)^{-1} \frac{d u(t, s)}{d t} :=u(t, s)^{-1} (\vec{x} u(t, s) - u(t, s) \vec{x}) = (Ad_{u(t, s)^{-1}} - \Idd)( \vec{x} ).\]
Since $\vec{u}$ is nilpotent, $Ad_{u(t, s)^{-1}} - \Idd$ is a polynomial of variables $t, s$. 
The norm of $d u(t, s)/ dt$ is bounded above by a polynomial in $s$ and $t$. 
The conjugation orbits of $O(n+1)$ in $\Aut(\SI^n)$ are compact.
Also, the conjugation by $O(n+1)$ preserves the distances of elements from $\Idd$
since the left-invariant metric $\mu$ is preserved by conjugation at $\Idd$  %\marginpar{change to there exists...?}
and geodesics from $\Idd$ go to geodesics from $\Idd$ of same $\mu$-lengths under 
the conjugations by equation \eqref{eqn-conj}.
Hence, we obtain a parametrization of $\bU'$ by $\bU$ and $O(n+1)$ where 
the volume of each orbit of $O(n+1)$ grows polynomially. 
Since $\bU$ is of polynomial growth, $\bU'$ is of polynomial growth
in terms of the distance from $\Idd$. 
\end{proof}
%%% Feb. 14 10:41 am 2013

%\marginpar{Are these needed...? maybe take out?}

%We denote by $A'(g)$ the matrix obtained from $A(g)$ by adjoining the identity $(n-i_0)\times (n-i_0)$-matrix 
%as below 
%\begin{equation} \label{eqn-Apg} 
%\newcommand*{\temp}{\multicolumn{1}{r|}{}}
%A'(g) = \left( \begin{array}{ccc}
%\Idd_{n-i_0} & \temp & 0 \\
%\cline{1-3} 
%0 &\temp & A(g)
%\end{array} 
%\right).
%\end{equation}
%
%\begin{lemma} \label{lem-decg} 
%We can write $g$ as a product of 
%$K''(g) A'(g) U'(g)$ for a unipotent matrix $U'(g)$
%where $K''(g)$ is of form 
%\begin{equation} \label{eqn-Kpg} 
%\newcommand*{\temp}{\multicolumn{1}{r|}{}}
%K''(g) = \left( \begin{array}{ccc}
%K(g) & \temp & 0 \\
%\cline{1-3} 
%0 &\temp & K_2(g)
%\end{array} 
%\right).
%\end{equation}
%where $K_2(g)$ is an $(i_0+1)\times (i_0+1)$-orthogonal matrix. 
%{\rm (}$K(g)$ is not necessarily orthogonal.{\rm )}
%\end{lemma}
%\begin{proof} 
%Consider 
%\begin{equation} \label{eqn-K''pg} 
%\newcommand*{\temp}{\multicolumn{1}{r|}{}}
%g':=  \left( \begin{array}{ccc}
%K(g) & \temp & 0 \\
%\cline{1-3} 
%0 &\temp & \Idd_{i_0+1}
%\end{array} 
%\right)^{-1} g.
%\end{equation}
%Then $g'$ can be decomposed as 
%\begin{equation} \label{eqn-dec} 
%\newcommand*{\temp}{\multicolumn{1}{r|}{}}
%g' = \left( \begin{array}{ccc}
%\Idd_{n-i_0} & \temp & 0 \\
%\cline{1-3} 
%0 &\temp & K_2(g)
%\end{array} 
%\right)
% \left( \begin{array}{ccc}
%\Idd_{n-i_0} & \temp & 0 \\
%\cline{1-3} 
%0 &\temp & A(g)
%\end{array} 
%\right) 
% \left( \begin{array}{ccc}
%\Idd_{n-i_0} & \temp & 0 \\
%\cline{1-3} 
%* &\temp & U(g)
%\end{array} 
%\right). 
%\end{equation}

%\end{proof} 

%% Feb 16, 2013 4:10pm
\begin{lemma} \label{lem-polynomial}
Each leaf $l$ is of polynomial growth. That is, each ball $B_R(x)$ in $l$ of radius $R$ has an area 
less than equal to $f(R)$ for a polynomial $f$ where we are using an arbitrary Riemannian metric on 
$F\tilde \Sigma_{\tilde E}$ induced from one on $F\Sigma_{\tilde E}$. % so that the covering map $F\tilde \Sigma_{\tilde E} \ra F\Sigma_{\tilde E}$ is a local isometry. 
\end{lemma}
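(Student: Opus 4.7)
The plan is to bound $\mathrm{vol}(B_R(x_0)\cap l)$ by counting $\bGamma_{\tilde E}$-translates of a compact fundamental domain that meet the ball, and then to estimate this count via the polynomial growth of $\bU'$ from Lemma \ref{lem-bU}. Fix a compact fundamental domain $F\subset\tilde\Sigma_{\tilde E}$ for the $\bGamma_{\tilde E}$-action, pick $x_0\in l\cap F$, and set
\[
S_R:=\{g\in\bGamma_{\tilde E}: g(F)\cap B_R(x_0)\ne\emp\}.
\]
Since $\bGamma_{\tilde E}$ acts by isometries on $\tilde\Sigma_{\tilde E}$ and each $l$-plaque inside a translate $g(F)$ has uniformly bounded $i_0$-volume, a standard covering argument yields $\mathrm{vol}(B_R(x_0)\cap l)\le V_0\cdot|S_R|$ for a uniform constant $V_0$; the task therefore reduces to a polynomial bound on $|S_R|$.

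For each $g\in S_R$, I will produce a decomposition $g=n_g\cdot h_g$ with $n_g\in N$ and $h_g$ in a uniformly bounded subset of $\bGamma_{\tilde E}$. The condition $g(F)\cap B_R(x_0)\ne\emp$ forces some $y\in F$ with $g(y)\in l$, so $\Pi^*_K(g)(\Pi_K(y))=[l]\in K^o$. Using compactness of $K^o/N_K$ I choose $n_g\in N$ (a lift of a suitable element of $N_K$) so that $h_g:=n_g^{-1}g$ has $\Pi^*_K(h_g)$ with uniformly bounded Hilbert displacement on $\Pi_K(F)$, with a bound independent of $R$. This forces the norms of the eigenvalues of the $K(h_g)$-block to be uniformly bounded above and below; combined with the weak middle eigenvalue inequality of Proposition \ref{prop-eigSI} applied to $h_g$ and $h_g^{-1}$, together with $|\det h_g|=1$, we conclude
\[
\tfrac{1}{C_1}\le \lambda_{n+1}(h_g),\ \lambda_1(h_g)\le C_1
\]
for a uniform constant $C_1$. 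Corollary \ref{cor-bdunip} then places each $h_g$ within a uniformly bounded $\Aut(\SI^n)$-distance from $\bU'$, so the $h_g$-part of the decomposition ranges over a uniformly bounded subset of $\bGamma_{\tilde E}$ whose cardinality is controlled by a constant depending only on $F$ and $C_1$.

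It remains to bound the $n_g$-part. Every $n_g\in N$ acts trivially on $\SI^{n-i_0-1}$, so by Proposition \ref{prop-eigSI} applied to $n_g$ all its eigenvalue norms equal $1$; thus $N$ itself lies within a fixed bounded tube of $\bU'$ by Corollary \ref{cor-bdunip}. Moreover, $n_g$ translates $h_g(y)$ to within Riemannian distance $R$ of $x_0$, so by Svarc--Milnor applied to the cocompact action of $\bGamma_{\tilde E}$ on $\tilde\Sigma_{\tilde E}$ the element $n_g$ lies in a ball of radius $R+O(1)$ in the left-invariant metric on $\Aut(\SI^n)$. Lemma \ref{lem-bU} then yields $|\{n_g\in N : n_g\,h_g(y)\in B_R(x_0)\}|\le P(R)$ for a polynomial $P$, and multiplying by the uniformly bounded number of possible $h_g$ gives $|S_R|\le P'(R)$ for a polynomial $P'$. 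The main obstacle I expect is in the middle paragraph: converting the bounded Hilbert displacement of $\Pi^*_K(h_g)$ into the eigenvalue bound on $\lambda_1(h_g),\lambda_{n+1}(h_g)$, which requires both the weak middle eigenvalue condition and a careful choice of the lift $n_g$ so that the transverse eigenvalues of $h_g$ are genuinely controlled and not merely modulo $N_K$.
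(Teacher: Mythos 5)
Your overall strategy --- reduce the volume estimate to counting the translates $g(F)$ that meet $B_R(x_0)\cap l$, show the relevant group elements lie near $\bU'$, and invoke Lemma \ref{lem-bU} --- is the paper's strategy, and your first and last paragraphs contain the correct ingredients: the bounded transverse Hilbert displacement, Proposition \ref{prop-eigSI} combined with $\det=\pm 1$ to bound $\lambda_1$ and $\lambda_{n+1}$ above and below, Corollary \ref{cor-bdunip}, and the $O(R)$ bound on distance from $\Idd$. The gap is the decomposition $g=n_g h_g$ in the middle paragraph. First, the stated purpose of $n_g$ is vacuous: since $n_g\in N$ acts trivially on $K$, we have $\Pi^*_K(h_g)=\Pi^*_K(g)$ no matter how $n_g$ is chosen; the transverse displacement of $g$ is bounded for the reason you already gave (it carries a point of the compact set $\Pi_K(F)$ to the fixed point of $K^o$ corresponding to $l$), not because of any choice of $n_g$. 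Second, and fatally, the claim that the $h_g$ range over a set of cardinality bounded independently of $R$ does not follow from Corollary \ref{cor-bdunip}: being within a bounded distance of $\bU'$ does not make an element bounded, because $\bU'$ is unbounded. The assertion that every $g\in S_R$ lies in $N\cdot T$ for a fixed finite set $T$ amounts to saying the leaf is coarsely an $N$-orbit; when $N_K$ is indiscrete (the case for which this lemma is needed) this is unavailable --- it is essentially what the rest of Section \ref{sec-indiscrete} is trying to establish via Carri\`ere's theorem, which takes the present lemma as input. (If $N$ happened to be trivial, your count would bound $|S_R|$ by a constant, which is absurd for a noncompact leaf.)

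The repair is to drop the decomposition entirely: for every $g\in S_R$, not merely for elements of $N$, the bounded transverse displacement together with Proposition \ref{prop-eigSI} and $\det g=\pm 1$ bounds $\lambda_1(g)$ and $\lambda_{n+1}(g)$ above and below by constants depending only on $F$ and $l$, so Corollary \ref{cor-bdunip} places all of $S_R$ in a fixed tube $N_c(\bU')$; your Svarc--Milnor estimate places $S_R$ in $B_{O(R)}(\Idd)$; and the uniform discreteness (uniform separation) of $\bGamma_{\tilde E}$ together with the polynomial growth of $B_R(\Idd)\cap N_c(\bU')$ from Lemma \ref{lem-bU} give $|S_R|\le P(R)$ directly. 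This is exactly how the paper argues.
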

\begin{proof} 
Let us choose a fundamental domain $F$ of $F\Sigma_{\tilde E}$. %Let $\hat F$ denote the image of $F$ in $FK^o$. 
Then  for each leaf $l$ 
there exists an index set $I_{l}$ so that 
$l$ is a union of $g_i(D_i)$  $i \in I_l$ for the intersection $D_i$ of a leaf with $F$
and $g_i \in \bGamma_{\tilde E}$.  
We have that $D_i \subset D'_i$ where $D'_i$ is an $\eps$-neighborhood of $D_i$ in the leaf. 
Then \[\{g_i(D'_i)| i \in I_l \}\] cover $l$ in a locally finite manner. 
The subset $G(l):= \{g_i\in \Gamma| i \in I_l\}$ is a discrete subset. 

Choose an arbitrary point $d_i \in D_i$ for every $i \in I_l$. 
The set $\{g_i(d_i)| i \in I_l \}$ and $l$ is quasi-isometric: 
a map from $G(l)$ to $l$ is given by $f_1: g_i \mapsto g_i(d_i)$ 
and the multivalued map $f_2$ from $l$ to $G(l)$ given by sending each point $x \in l$ 
to one of finitely many $g_i$ such that $g_i(D'_i) \ni x$. 
Let $\bGamma_{\tilde E}$ be given the Cayley metric and $\tilde \Sigma_{\tilde E}$ a metric induced 
from $\Sigma_{\tilde E}$. 
%Let $\tilde \Sigma_{\tilde E}/\bGamma_{\tilde E}$ have a Riemannian metric and the induced on on $\tilde \Sigma_{\tilde E}$. 
Both maps are quasi-isometries since 
these maps are restrictions of quasi-isometries $\bGamma_{\tilde E} \ra \tilde \Sigma_{\tilde E}$
and $\tilde \Sigma_{\tilde E} \ra \bGamma_{\tilde E}$ defined in an analogous manner.  

%%% Jan 21, 2013, 5:53 pm
The action of $g_i$ in $K$ is bounded since it sends some points of $\Pi_K(F)$ to ones of $\Pi_K(F)$. 
Thus, $\Pi^*_K(g_i)$ goes to a bounded subset of $\Aut(K)$. 
%Let $K(g_i)$ denote the $(n-i_0)\times(n-i_0)$-matrix in $(n+1)\times(n+1)$-matrix representing $g$ in $\SLnp$. 
Hence in the form of equation \eqref{eqn-g},
\[K(g_i) = \det(K(g_i))^{1/(n-i_0)} \hat K(g_i) \hbox{ where } \hat K(g_i) \in \SL_{\pm}(n-i_0, \bR).\]
Let $\tilde \lambda_1(g_i)$ and $\tilde \lambda_n(g_i)$ denote the largest norm and the smallest norm of
eigenvalues of $\hat K(g_i)$. 
Since $\Pi^{\ast}_{K}(g_{i})$ are in a bounded set of $\Aut(K)$, 
these are bounded by two positive real numbers. 
The largest and the smallest eigenvalues of $g_i$ equal
\[\lambda_1(g) = \det(K(g_i))^{1/(n-i_0)}\tilde \lambda_1(g_i) \hbox{ and } 
\lambda_{n+1}(g) = \det(K(g_i))^{1/(n-i_0)}\tilde \lambda_n(g_i)\]
Denote by $a_j(g_i), j=1, \dots, i_0+1$, the norms of eigenvalues associated with $\SI^{i_0}_\infty$.  
Since 
\[\det(K(g_i)) a_1(g_i) \dots a_{i_0+1}(g_i) = 1,\] 
if $|\det(K(g_i))| \ra 0$ or $\infty$, then 
the equation in Proposition \ref{prop-eigSI} cannot hold. 
Therefore, we obtain 
\[1/C < |\det(K(g_i))| < C\] for a positive constant $C$. 
We deduce that  
the largest norm and the smallest norm of eigenvalues of $g_i$
\[\det(K(g_i))^{1/(n-i_0)}\tilde \lambda_1(g_i) \hbox{ and } 
\det(K(g_i))^{1/(n-i_0)}\tilde \lambda_n(g_i)\]
are bounded above and below by two positive numbers. 
Hence, $\lambda_1(g_i)$ and $\lambda_n(g_i)$ and  
the components of $a(g_i)$ are all bounded above and below by a fixed set of positive numbers. 

%According Proposition \ref{prop-eigSI},  we see that 
%$g_i = k_i a_i n_i$ where $k_i$ is as in Lemma \ref{lem-decg}
%the matrix decomposing into $K(g_i)$ and an orthogonal $(i_0+1)\times(i_0+1)$-matrix, 
%$a_i$ is a positive diagonal matrix, and 
%$n_i$ is nilpotent acting on the flag. 
By Corollary \ref{cor-bdunip}, $\{g_i\}$ is of bounded distance from $\bU'$.
Let $N_c(\bU')$ be a $c$-neighborhood of $\bU'$. 
Then \[G(l)  \subset N_c(\bU').\] 

Let $d$ denote the left-invariant metric on $\Aut(\SI^n)$. 
By the discreteness of $\bGamma_{\tilde E}$, the set $G(l)$ is discrete 
and there exists a lower  
bound to \[\{d(g_i, g_j)| g_i, g_j \in G(l), i \ne j\}.\]
Also given any $g_i \in G(l)$, there exists 
an element $g_j \in G(l)$ so that 
$d(g_i, g_j) < C$ for a uniform constant $C$.
(We need to choose $g_j$ so that $g_j(F)$ is 
adjacent to $g_i(F)$.)
Let $B_R(\Idd)$ denote the ball in $\SL(n+1, \bR)$ of radius $R$ with the center $\Idd$. 
Then $B_R(\Idd) \cap N_c(\bU')$ is of polynomial growth with respect to $R$, and so is $G(l) \cap B_R(\Idd)$. 
%This implies that $G(l)$ is of polynomial growth with respect to 
%the Cayley metric of $\bGamma_{\tilde E}$.
Since the $\{g_i(D'_i)| g_i \in G(l)\}$ 
of uniformly bounded balls  cover $l$ in a locally finite manner, 
$l$ is of polynomial grow as well. 
\end{proof}

\subsubsection{Closures of leaves} \label{subsec-leaves}

Given a subgroup $G$ of an algebraic Lie group, 
the {\em syndetic hull} $S(G)$ of $G$ is a connected Lie group so that 
$S(G)/G$ is compact. (See Fried and Goldman \cite{FG} and D. Witte \cite{DW}.)

The properly convex open set $K, K \subset \SI^{n-i_0}$ has a Hilbert metric. Also the group $\Aut(K)$ of 
projective automorphisms of $K$ in $\SL_\pm(n-i_0+1, \bR)$ is a closed group. 

%% April 10 12:15am

\begin{lemma}\label{lem-invmet} 
Let $D$ be a properly convex open 
domain with the closed locally compact group $\Aut(D)$ of smooth automorphisms of $D$.
Given a group $G$ acting isometrically on an open domain $D$ faithfully so that 
$G \rightarrow \Aut(D)$ is an embedding. 
Suppose that $D/G$ is compact. 
Then the closure $\bar G$ of $G$ is a Lie subgroup 
acting on $D$ properly, and there exists a smooth Riemannian metric on $D$ that is 
$\bar G$-invariant. 
\end{lemma}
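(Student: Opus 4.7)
The plan has three steps. First, I would identify $\bar G$ as a Lie subgroup of $\Aut(D)$. By hypothesis $\Aut(D)$ is a closed locally compact group acting smoothly and effectively on the finite-dimensional manifold $D$, so it is itself a Lie group by Bochner--Montgomery; in the setting relevant to this paper, $\Aut(D)$ is in fact a closed subgroup of $\SL_{\pm}(n-i_0+1,\bR)$, so Cartan's closed-subgroup theorem suffices. The topological closure $\bar G \subset \Aut(D)$ is closed in this Lie group, hence is a Lie subgroup.

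Second, for properness I would exploit the Hilbert metric $d_D$ on the properly convex domain $D$. This metric is proper (closed balls are compact), and $\Aut(D)$ acts on $(D,d_D)$ by isometries, so the $\Aut(D)$-action is proper. Its closed subgroup $\bar G$ therefore also acts properly on $D$. The quotient $D/\bar G$ is a Hausdorff image of the compact $D/G$, hence is itself compact.

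Third, I would produce the smooth $\bar G$-invariant Riemannian metric by Palais-type averaging. Fix any smooth Riemannian metric $\mu_0$ on $D$ and a right Haar measure $dg$ on $\bar G$. Properness of the action together with compactness of $D/\bar G$ produces, via a partition of unity subordinate to a relatively compact fundamental neighborhood (this is a standard application of Palais's slice theorem for proper actions), a non-negative smooth compactly supported function $\psi\colon D\to\bR$ satisfying $\int_{\bar G}\psi(gx)\,dg = 1$ for every $x\in D$. I would then set
\[
\bar\mu_x(v,w) \; := \; \int_{\bar G}\psi(gx)\,\mu_0\big(gx\big)\big(g_*v,\, g_*w\big)\,dg.
\]
For $x$ ranging over any compact subset, the integrand has compact support in $g$ by properness, so $\bar\mu$ is well defined, smooth, symmetric, and positive definite; a change of variable $g\mapsto gh$ using right-invariance of $dg$ verifies $h^*\bar\mu = \bar\mu$ for every $h\in \bar G$.

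The only nontrivial ingredient is the cut-off function $\psi$ used in the third step, whose existence is exactly what Palais's theorem on proper actions supplies; everything else is a standard assembly of closed-subgroup theorems and the Hilbert-metric geometry of properly convex domains.
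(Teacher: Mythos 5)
Your proof is correct and follows essentially the same route as the paper's: $\bar G$ is a Lie subgroup because it is closed in the linear automorphism group, properness comes from the isometric action on the (proper) Hilbert metric space, and the invariant metric is obtained by averaging a compactly-supported-weighted Riemannian metric over the group. The only cosmetic difference is that you normalize the cutoff so that $\int_{\bar G}\psi(gx)\,dg=1$ (via Palais), whereas the paper integrates an unnormalized bump function positive on a fundamental domain and checks smoothness by equicontinuity of derivatives; both are the standard averaging argument.
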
 
\begin{proof} 
Since $\bar G$ is in $\SL_\pm(n-i_0+1, \bR)$, the closure 
$\bar G$ is a Lie subgroup acting on $D$ properly. 
Suppose that $D \subset \SI^{n}$. 

One can construct a Riemannian metric $\mu$ with bounded entries. 
Let $\phi$ be a function supported on a compact set containing a fundamental domain $F$ of $D/G$ where $\phi|F > 0$.
Given a bounded subset of $\bar G$, the elements are in a bounded subset of the projective
automorphism group $\SL_{\pm}(n+1, \bR)$. A bounded subset of projective automorphisms 
have uniformly bounded set of derivatives on $\SI^{n}$ up to the $m$-th order for any $m$. 
We can assume  that the derivatives of the entries of $\phi \mu$ up to the $m$-th order are uniformly bounded above.
%using a partition of unity for any $m \geq 0$. 
Let $d\eta$ be the left-invariant measure on $\bar G$.

Then $\{g^*\phi\mu| g \in \bar G\}$ is an equicontinuous family on any compact 
subset of $D^{o}$ up to any order. 
Thus the integral 
\[ \int_{g \in \bar G} g^* \phi\mu d \eta \]
of $g^* \phi\mu$ for $g \in \bar G$ is a $C^\infty$-Riemannian metric and that is positive definite. 
This bestows us a $C^\infty$-Riemannian metric $\mu_D$ on $D$ invariant under $\bar G$-action.
\end{proof} 

The foliation on $\tilde \Sigma_{\tilde E}$ given by fibers of $\Pi_K$
has leaves that are $i_0$-dimensional complete affine spaces.
Then $K^o$ admits a smooth Riemannian metric $\mu_{0, 1}$ invariant under $N_K$ by Lemma \ref{lem-invmet}. 
Since $N_K$ is not discrete, a component $N_{K, 0}$ of the closure of $N_K$ in $\Aut(K)$
is a Lie group of dimension $\geq 1$. 
By taking a finite index subgroup of $\pi_{1}(\orb)$, we may assume that $N_{K}$ is connected. 
We consider the orthogonal frame bundle $FK^o$ over $K^o$. 
A metric on each fiber of $FK^o$ is induces from $\mu_K$.
Since the action of $N_{K}$ is isometric on $FK^o$ with trivial stabilizers, 
we find that $N_{K}$ acts on a smooth orbit submanifold %$\Sigma_{1, 0}$ 
of $FK^o$ transitively 
with trivial stabilizers. 
(See Lemma 3.4.11 in \cite{Thbook}.)

There exists a bundle $F\tilde \Sigma_{\tilde E}$ from pulling back $FK^o$ by the projection map. 
Here, $F\tilde \Sigma_{\tilde E}$ covers $F\Sigma_{\tilde E}$. 
Since $\bGamma_{\tilde E}$ acts isometrically on $FK^o$, 
the quotient space $F\tilde \Sigma_{\tilde E}/\bGamma_{\tilde E}$ is a bundle $F\Sigma_{\tilde E}$ over 
$\Sigma_{\tilde E}$ with a subbundle with compact fibers isomorphic to the orthogonal group of dimension $n-i_0$. 
Also, $F\tilde \Sigma_{\tilde E}$ is foliated by $i_0$-dimensional affine 
spaces pulled-back from the $i_0$-dimensional leaves on the foliation $\tilde \Sigma_{\tilde E}$. 
One can think of these leaves as being the inverse images of points of $FK^{o}$.

%Each leaf $l$ of $F\tilde \Sigma_{\tilde E}$ goes to a point $x$ of $FK^o$. 
%Here $N_{K, 0}(x)$ is as an orbit in $FK^o$, whose inverse image 
%becomes a smooth submanifold $\tilde V_l$ covering a compact submanifold $V_l$ in $F\Sigma_{\tilde E}$
%by the work of Molino \cite{Mo1}.
%Here $l$ maps to a dense leaf in $V_l$. 

%% Dec 2: 12:30 am
%% Dec 2 11:53pm... I need to work again for below... 

\subsubsection{$\pi_{1}(V_{l}) $ is virtually solvable.} \label{sub-uniN}  % Feb 17, 1:19 pm 2013

Recall the fibration \[\Pi_K:  \tilde \Sigma_{\tilde E} \ra K^o
\hbox{ which induces } \tilde \Pi_K: F \tilde \Sigma_{\tilde E} \ra F K^o.\]
Since $N_K$ acts as isometries of Riemannian metric on 
$K^o$, we can obtain a metric on $\Sigma_{\tilde E}$ 
so that the foliation is a Riemannian foliation. 
Let $p_{\Sigma_{\tilde E}}: F\tilde \Sigma_{\tilde E} \ra F \Sigma_{\tilde E}$ be the covering map
induced from $\tilde \Sigma_{\tilde E} \ra  \Sigma_{\tilde E}$.
The foliation on $\tilde \Sigma_{\tilde E}$ gives us a foliation of $F\tilde \Sigma_{\tilde E}$. 

%% Dec 22 11:29am

Let $l$ be a leaf of $F\tilde \Sigma_{\tilde E}$,
and  $p$ be the image of $l$ in $FK^o$. 
Since $l$ maps to a polynomial growth leaf in $F\Sigma_{\tilde E}$ by Lemma \ref{lem-polynomial},
Carri\`ere \cite{Car} shows that a connected nilpotent Lie group
$A_l$ in the closure of $N_K$ in $\Aut(K)$ acts on $FK^o$ freely. 
Moreover, we have a submanifold
\begin{alignat}{3} 
\tilde \Pi_K^{-1}(A_l(p))   =: \tilde V_{l} &\, \hookrightarrow \, & F\tilde \Sigma_{\tilde E} \nonumber \\
\downarrow \, &                     &\, p_{\Sigma_{\tilde E}} \downarrow \nonumber \\
V_l      & \hookrightarrow & F \Sigma_{\tilde E}
\end{alignat} 
for a compact submanifold $V_l := \overline{p_{\Sigma_{\tilde E}}(l)}$ in $F \Sigma_{\tilde E}$.
Here $A_l$ is the component of the closure of $N_K$ the image of $\bGamma_{\tilde E}$ in $\Aut(K)$. 
Clearly $A_l$ is an algebraic group. 
Hence, by taking a finite cover if necessary, 
 $\bGamma_{\tilde E}$ is in a Lie group 
\[\bR^l \times Z(\Gamma_1) \times \dots \times Z(\Gamma_k), l \geq k-1\]
for the Zariski closure $Z(\Gamma_i)$ of $\Gamma_i$ by Theorem 1.1 of Benoist \cite{Ben2}. 
By taking a finite index subgroup, we assume that $\bGamma_{\tilde E}$ is a subgroup. 

Note $V_{l}$ has a dimension independent of $l$ since $A_{l}$ acts freely. 

Since $A_l$ is in the product group, we can project to each 
$\Gamma_i$-factor or the central $\bR^{l_0-1}$.
Since the image of $A_l$ is $Z(\Gamma_j)$ is not discrete in $\Aut(K_j)$,  
we obtain that $\Aut(K_{j})$ equals a union of 
components of copies of $PO(n_j, 1)$ or $SO(n_j, 1)$ in $Z(\Gamma_j)$ 
by Theorem 1.1 of \cite{Ben2} since $K_{j}$ is strictly convex. %proved by Benz\'ecri \cite{Benz}. 
The nilpotency implies that the image is a cusp group fixing a unique point in $\Bd K_j$
or an abelian Lie group fixing a unique pair of points in $\Bd K_{j}$. 
Thus, the image is an abelian group since $A_l$ is connected. 
%In the case $\Gamma_j$ is infinite discrete, a nilpotency implies that the image group 
%fixes a unique pair of points in $\Bd K_j$ and hence is abelian also.  
%By connectedness, $A_l$ maps to a trivial group here. 
Thus, %the nilpotence implies that
 $A_l$ is an abelian Lie group. 
 
% $A_{l}$ is not compact.

Let $N_l$ be exactly the subgroup of $\pi_1(V_l)$ fixing a leaf $l$ in $FK^o$, 
for each closure $V_l$ of a leaf $l$, the manifold $V_l$ is compact and 
we have an exact sequence 
\[ 1 \ra N_l \ra h(\pi_1(V_l)) \stackrel{\Pi^*_K}{\longrightarrow} A'_l \ra 1.\]
Since the leaf $l$ is dense in $V_l$, it follows that $A'_l$ is dense in $A_l$. 
Each leaf $l'$ of $\tilde \Sigma_{\tilde E}$ has a realization a subset in $\torb$. 
%Since $N_l$ acts on each leaf $l$ of $F\tilde \Sigma_{\tilde E}$ isometrically, and each leaf $l$ is of polynomial growth
%in $\torb$, it follows that $N_l$ is a polynomial growth group. 
Since $N_l$ fixes every points of $K^o$ and $N$ is in $\pi_1(V_l)$, we obtain $N = N_l$. 
We have the norms of eigenvalues $\lambda_i(g) = 1$ for $g \in N_l$.  % \marginpar{notation?} 
By Proposition \ref{prop-eigSI}, we have that $N=N_l$ is orthopotent
since the norms of eigenvalues equal $1$ identically and $N_l$ is discrete. Then $N$ is easily seen to be virtually nilpotent
since it is of polynomial growth as we can deduce 
from the orthopotent flags.
(See the proof of Theorem \ref{I-thm-affinehoro} of \cite{EDC1} also. )
%Mark8

Hence, $h(\pi_{1}(V_{l}))$ is solvable being an extension of an abelian group by a nilpotent group.

%Moreover, since $N_l$ acts trivially on $K$, we have $N_l \subset N$. 
%Since $N$ acts on each leaf $l$, we have $N \subset \Gamma_l$ and 
%hence, we obtain that $N_l = N$. 

We summarize below: 
\begin{proposition}\label{prop-V_l} 
Let $l$ be a generic fiber of $F\tilde \Sigma_{\tilde E}$ and $p$ be the corresponding point $p$ of $FK^o$. 
Then there exists an algebraic abelian group $A_l$ acting on $FK^o$ so that 
$\tilde \Pi_K^{-1}(A_l(p)) =\tilde V_l$ covers a compact suborbifold $V_l$ in $F\Sigma_{\tilde E}$, 
a conjugate of the image of the holonomy group of $V_l$ is a dense subgroup of $A_l$, and 
the holonomy group of $V_{l}$ is solvable. Moreover, $\tilde V_{l}$ is homotopy equivalent to 
a point or a torus of dimension $\geq 1$. 
\end{proposition}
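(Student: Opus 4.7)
The plan is to assemble the pieces already developed in Section \ref{subsec-leaves} and Section \ref{sub-uniN} into the stated summary, and then extract the homotopy type of $\tilde V_l$ from the freeness of the action of $A_l$.

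First I would invoke Lemma \ref{lem-polynomial} to conclude that every leaf of the foliation on $F\tilde\Sigma_{\tilde E}$ has polynomial growth (once pushed down to $F\Sigma_{\tilde E}$). Since the Riemannian metric on $K^o$ constructed in Lemma \ref{lem-invmet} lifts to a bundle-like metric on $F\tilde\Sigma_{\tilde E}$ making the foliation by $i_0$-dimensional affine leaves a Riemannian foliation, Carri\`ere's structure theorem (together with Molino's theory) applies: the closure $V_l$ of the image of a generic leaf in $F\Sigma_{\tilde E}$ is a compact submanifold, and a connected nilpotent Lie subgroup $A_l$ of the closure of $N_K$ in $\Aut(K)$ acts freely and transitively on the corresponding orbit in $FK^o$ containing $p$. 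By construction $\tilde\Pi_K^{-1}(A_l(p))=\tilde V_l$ covers $V_l$.

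Next I would upgrade ``nilpotent'' to ``abelian''. Using Benoist's Theorem 1.1 of \cite{Ben2}, the closure of the image of $\bGamma_{\tilde E}$ in $\Aut(K)$ sits (after passing to a finite index subgroup) inside $\bR^{l_0-1}\times Z(\Gamma_1)\times\cdots\times Z(\Gamma_k)$ with each $K_j$ strictly convex; the image of $A_l$ in each non-central factor $Z(\Gamma_j)$ is nilpotent, hence either a cusp subgroup of a conjugate of $\mathsf{PO}(n_j,1)$ fixing a single boundary point or an abelian subgroup fixing a pair of boundary points. Connectedness of $A_l$ rules out anything bigger, so the projection into each factor is abelian, and therefore $A_l$ itself is abelian and algebraic.

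Then I would set up the exact sequence
\[
1\;\longrightarrow\; N_l \;\longrightarrow\; h(\pi_1(V_l))\;\stackrel{\Pi^*_K}{\longrightarrow}\; A'_l\;\longrightarrow\; 1,
\]
identify $N_l$ with $N$ since elements of $N_l$ act trivially on $K^o$, and note that $A'_l$ is dense in $A_l$ because the leaf is dense in $V_l$. By Proposition \ref{prop-eigSI} combined with the discreteness of $N$, all norms of eigenvalues of elements of $N$ equal $1$, so $N$ is orthopotent, hence virtually nilpotent (as in the proof of Theorem \ref{I-thm-affinehoro} of \cite{EDC1}). An extension of a dense-in-abelian group by a nilpotent group is solvable, giving the solvability statement.

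For the final clause, I would use the fibration structure
\[
\tilde V_l \;=\; \tilde\Pi_K^{-1}(A_l(p)) \;\longrightarrow\; A_l(p),
\]
whose fibers are the $i_0$-dimensional complete affine leaves, hence contractible. Since $A_l$ acts freely on $A_l(p)$, the base is homeomorphic to $A_l$ itself; as a connected abelian Lie group $A_l\cong \bR^a\times T^b$, and $\tilde V_l$ is homotopy equivalent to $T^b$. This yields either a point (when $b=0$) or a torus of positive dimension. The main obstacle in this argument is the passage from polynomial growth to the algebraic abelian group $A_l$ acting on $FK^o$ with the right compatibility with $\bGamma_{\tilde E}$; this is precisely where the Carri\`ere--Molino machinery and the Benoist decomposition must be combined carefully, and where the strict convexity of the factors $K_j$ is essential to force abelianness.
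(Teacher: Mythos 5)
Your proposal is correct and follows essentially the same route as the paper: the proposition is a summary of the preceding discussion (polynomial growth of leaves, Carri\`ere--Molino, Benoist's decomposition forcing $A_l$ abelian, and the exact sequence giving solvability), and the paper's own proof only supplies the final homotopy-type claim via the contractibility of the fibers of $\tilde\Pi_K$ over the orbit $A_l(p)\cong \bR^a\times T^b$, exactly as you do. No substantive differences to report.
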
 
\begin{proof} 
We just need to prove the last statement. 
Since $A_{l}$ is homotopy equivalent to a point or a torus of dimension $\geq 0$, 
and $\tilde \Pi_{K}$ has fibres that are $i_{0}$-dimensional open hemispheres, 
this last statement follows. 
\end{proof}

%\marginpar{This section is a bit heavy.. need to tighten argument.}
%% Feb 12 12:16pm

\subsubsection{The subgroup $\pi_{1}(V_{l})$ is normalized by $\bGamma_{\tilde E}$.}\label{subsub-syndetic}

%\marginpar{Maybe this section before the previous one?}

%Now, we will find a syndetic hull $S_l$ of $\bGamma_l$
%that is normalized by $\bGamma_{\tilde E}$. We show $S_l$ acts on $\tilde V_l$ and 
%a p-end-neighborhood. We will discuss these simultaneously as we go along. 

The leaf holonomy acts on $F\tilde \Sigma_{\tilde E}/\mathcal{F}_{\tilde E}$ as an abelian killing field group
without any fixed points. 
Hence, each leaf $l$ is in $\tilde V_l$ with a constant dimension. 
Thus, $\mathcal{F}_{\tilde E}$ is a foliation with leaf closures of the identical dimensions. 

The leaf closures form another foliation $\overline{\mathcal{F}}_{\tilde E}$ with compact leaves by Lemma 5.2 of Molino \cite{Molbook}. 
We let $F\Sigma_{\tilde E}/\overline{\mathcal{F}}_{\tilde E}$ denote the space of closures of leaves has an orbifold structure
where the projection $F\Sigma_{\tilde E} \ra F\Sigma_{\tilde E}/\overline{\mathcal{F}}_{\tilde E}$ is an orbifold morphism
by Proposition 5.2 of \cite{Molbook}.
Since $\Sigma_{\tilde E}$ has a geometric structure induced from the transverse real projective structure, 
$\Sigma_{\tilde E}$ is a very good orbifold. We may assume that $\Sigma_{\tilde E}$ is an $n-1$-manifold
and hence $F\Sigma_{\tilde E}$ is a manifold since we need our results for finite index subgroups only. 
By Lemma 5.2 of \cite{Molbook}, each open neighbourhood of $F\Sigma_{\tilde E}/\overline{\mathcal{F}}_{\tilde E}$ 
is the quotient space of $A_{l}$-invariant open set in $FK^{o}=F\tilde \Sigma_{\tilde E}/\mathcal{F}_{\tilde E}$ by the connected 
abelian group $A_l$ acting properly with trivial stabilizers.
%\marginpar{Check this fact of Molino's paper: probably the book is better.}
\begin{itemize}
\item Let $X = (FK^{o})/A_{l}$ be a quotient manifold, and 
\item let $G$ be the group of projective automorphisms of $K^{o}$ acting on 
$FK^{o}/A_{l}$ induced from $\bGamma_{\tilde E}$. 
\end{itemize}
Thus, $F\Sigma_{\tilde E}/\overline{\mathcal{F}}_{\tilde E}$ admits a $(G, X)$-geometric structure  induced from the real projective structure 
of $F\tilde \Sigma_{\tilde E}/\mathcal{F}_{\tilde E}$. 
There exists a finite regular manifold-cover $M$ of 
$F\Sigma_{\tilde E}/\overline{\mathcal{F}}_{\tilde E}$
as in Chapter 13 of Thurston \cite{Thnote} (see Theorem 2 (due to Thurston) of \cite{Choi2004} also.)

By pulling back the fiber bundle over orbifolds, we consider the fundamental groups.
We obtain  a regular finite cover $F\Sigma_{\tilde E}^f$ of $F\Sigma_{\tilde E}$ and 
a regular fibration 
\begin{alignat}{3}
V_l  \quad &\longrightarrow &\quad  F\Sigma_{\tilde E}^f & \, \, \longrightarrow & M  \nonumber \\
\downarrow \quad &  & \downarrow\quad & & \downarrow \nonumber \\ 
V_l \quad &\longrightarrow &\quad F\Sigma_{\tilde E} & \ra & F\Sigma_{\tilde E}/\bar F 
\end{alignat}
where $V_l$ is a generic fiber of $F\Sigma_{\tilde E}^f$ for the induced foliation $\bar F^f$ isomorphic to 
a generic fiber of $F\Sigma_{\tilde E}$. 

%\marginpar{It is not efficient here..}
We obtain an exact sequence 
\[  \pi_1(V_l) \ra \pi_1(F\Sigma_{\tilde E}^f) \stackrel{\pi'_K}{\longrightarrow} \pi_1(M) \ra 1\]
and the image $\pi_1(V_l)$ is a normal subgroup of $\pi_1(F\Sigma_{\tilde E}^f)$.  
Since $F\Sigma_{\tilde E}^f$ is fibered by fibers diffeomorphic to ${\SO}(n-i_0)$ or its cover, 
we have a fibration 
\[\widetilde{\SO}(n-i_0) \ra F\Sigma_{\tilde E}^f \ra \Sigma_{\tilde E}^f\]
where $\Sigma_{\tilde E}^f$ is a finite cover of $\Sigma_{\tilde E}$
and $\widetilde{\SO}(n-i_0)$ is a finite cover of $\SO(n-i_0)$. Thus, 
we also have an exact sequence 
\[ \pi_1(\widetilde{\SO}(n-i_0)) \ra \pi_1(F\Sigma_{\tilde E}^f) \ra \pi_1(\Sigma_{\tilde E}^f) \ra 1.\]
Since $\pi_1(\Sigma_{\tilde E}^f)$ is a quotient group of $\pi_1(F\Sigma_{\tilde E}^f)$, 
the image of $\pi_1(V_l)$ is a normal subgroup of $\pi_1(\Sigma_{\tilde E}^f)$
for the generic $l$. %so that $V_l$ in $F\Sigma_{\tilde E}^f$ to its image in $\Sigma_{\tilde E}^F$ is homeomorphic. 
%\marginpar{This is a kind of vague not defined.... $\pi_K$ projection...} 
We define $\bGamma_l$ as the image $h(\pi_1(V_l))$.
The above sequence tells us that 
$\bGamma_l$ is a normal subgroup of a finite index subgroup of $\bGamma_{\tilde E}$. 

From now on, we will assume that $\bGamma_l$ is a normal subgroup of $\bGamma_{\tilde E}$ by 
taking a finite cover of the end-neighborhood if necessary.

Recall that $\bGamma_{l}$ is virtually solvable, as we showed above. 
We let $Z(\bGamma_{\tilde E})$ and $Z(\bGamma_l)$ denote the Zariski closures in $\Aut(\SI^n)$ of 
$\bGamma_{\tilde E}$ and $\bGamma_l$ respectively. 

By Theorem 1.6 of Fried-Goldman \cite{FG}, 
there exists a closed virtually solvable Lie group $S_l$ containing $\bGamma_l$ 
with the following four properties: 
\begin{itemize}
\item $S_l$ has finitely many components.
\item $\bGamma_l\backslash S_l$ is compact. 
\item The Zariski closure $Z(S_l)$ is the same as 
$Z(\bGamma_l)$. 
\item Finally, we have solvable ranks 
\begin{equation}\label{eqn-rankSl}
{\mathrm{rank}}(S_l) \leq {\mathrm{rank}} (\bGamma_l). 
\end{equation}
\end{itemize}

Since $\bGamma_{\tilde E}$ normalizes $\bGamma_l$ by above, 
$\bGamma_{\tilde E}$ also normalizes 
$Z(\bGamma_l)=Z(S_l)$; However, it maybe not normalize $S_l$ itself. 

%Dec 22, 1:14pm: 2013

%We also have
%\begin{equation} \label{eqn-gammaAE} 
%\bGamma_{l, l} = \bGamma_{\tilde E} \cap N \ra \bGamma_{\tilde E} \ra A_{\tilde E}.
%\end{equation} 
%The abelian property of $A'_l$, 
%$[A'_l, A'_l] =\Idd \in \Aut(K)$ implies
%\begin{equation} \label{eqn-gammac}
%[\bGamma_l, \bGamma_l] \subset \bGamma_{l, l}. 
%\end{equation}
%Also, by the centralization conditions, 
% for each element $g \in \bGamma_{\tilde E}$ and $h \in \bGamma_l$,  
%we have $g h g^{-1} = h k'$ for $k' \in \bGamma_{l, l}$. 

%%% Fev. 14 12:24am
%\marginpar{The quotient of an algebraic group by an algebraic subgroup is algebraic?}

Since $\bGamma_l$ acts on an algebraic set $\tilde V_l \subset F\tilde \Sigma_{\tilde E}$, a component of 
the inverse image of an algebraic set  the algebraic orbit $A_l(p)$ in $FK^{o}$.
Thus, $Z(\bGamma_l) = Z(S_l)$ also acts on $\tilde V_l$
and hence so does $S_l$. 
Also, since $\bGamma_l \ra A'_l$ has a dense image, 
$S_l \ra A_l$ is an onto map.

We summarize: 
\begin{lemma}\label{lem-V} 
$h(\pi_1(V_l))$ is virtually solvable and  is contained in a virtually solvable Lie group $S_l := S(h(\pi_1(V_l))$ 
with finitely many components, and $S_l/h(\pi_1(V_l))$ is compact. 
$S_l$ acts on $\tilde V_l$. 
Furthermore, one can modify a p-end-neighborhood $U$ so that 
$S_l$ acts on it. Also the Zariski closure of $h(\pi_1(V_l))$ is the same as that of $S_l$. 
\end{lemma}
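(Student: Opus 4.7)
The plan is to assemble the pieces already established in Sections \ref{sub-uniN} and \ref{subsub-syndetic} and then perform a standard averaging/intersection trick to enlarge the invariance from $h(\pi_1(V_l))$ to the full syndetic hull $S_l$. First I would record the virtual solvability: the exact sequence $1 \to N_l \to h(\pi_1(V_l)) \to A'_l \to 1$ built in Section \ref{sub-uniN} expresses $h(\pi_1(V_l))$ as an extension of an abelian dense subgroup $A'_l$ of the abelian Lie group $A_l$ by the kernel $N_l = N$, which is virtually nilpotent by the orthopotence argument following Proposition \ref{prop-eigSI}. Hence $h(\pi_1(V_l))$ is virtually solvable.

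Next I would invoke Theorem 1.6 of Fried--Goldman \cite{FG} applied to the virtually solvable group $h(\pi_1(V_l)) \subset \Aut(\SI^n)$ to produce a closed Lie subgroup $S_l \supset h(\pi_1(V_l))$ with finitely many components, such that $h(\pi_1(V_l)) \backslash S_l$ is compact, and whose Zariski closure coincides with that of $h(\pi_1(V_l))$. The inequality on solvable ranks in \eqref{eqn-rankSl} is automatic from the construction. The action of $S_l$ on $\tilde V_l$ is then forced: $\tilde V_l$ is a component of $\tilde \Pi_K^{-1}(A_l(p))$, hence is an algebraic subset on which the Zariski closure $Z(h(\pi_1(V_l))) = Z(S_l)$ acts, so $S_l$ acts as well.

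The remaining point is the modification of the p-end-neighborhood. I would choose a compact fundamental set $F \subset S_l$ for $h(\pi_1(V_l)) \backslash S_l$, which exists by cocompactness, and set
\[
U' := \mathrm{int}\Bigl( \bigcap_{s \in F} s(U) \Bigr).
\]
Since $h(\pi_1(V_l))$ already acts on $U$ (as a subgroup of $\bGamma_{\tilde E}$), the intersection is well-defined as an $S_l$-invariant set: for any $s_0 \in S_l$ and $s \in F$, we can write $s_0 s = \gamma \cdot s'$ with $\gamma \in h(\pi_1(V_l))$ and $s' \in F$, so $s_0 s(U) = \gamma s'(U) = s'(U)$, giving $s_0(U') = U'$. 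Compactness of $F$ and the fact that $S_l$ (hence $F$) moves $\bv_{\tilde E}$ only in a controlled way (every element of $S_l$ lies in the Zariski closure of $h(\pi_1(V_l))$, which fixes $\bv_{\tilde E}$, so each $s \in F$ in fact fixes $\bv_{\tilde E}$) ensures that $U'$ is still an open p-end-neighborhood of $\bv_{\tilde E}$.

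The main (minor) obstacle will be to verify carefully that $U'$ is nonempty and still a p-end-neighborhood, i.e., that the intersection over $F$ does not collapse the p-end vertex's neighborhood basis. The key observation is that every element of $S_l$ fixes $\bv_{\tilde E}$ (since $h(\pi_1(V_l))$ fixes it and fixing $\bv_{\tilde E}$ is a Zariski-closed condition in $\Aut(\SI^n)$), so the family $\{s|_{T_{\bv_{\tilde E}}}\SI^n : s \in F\}$ is a compact family of linear maps, and the intersection of the images under $F$ of any sufficiently small $h(\pi_1(V_l))$-invariant neighborhood system of $\bv_{\tilde E}$ remains a neighborhood of $\bv_{\tilde E}$. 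Having verified this, the last assertion that $Z(h(\pi_1(V_l))) = Z(S_l)$ is exactly part of the Fried--Goldman output, completing the proof.
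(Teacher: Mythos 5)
Your proposal follows essentially the same route as the paper: virtual solvability from the exact sequence and the orthopotence of $N_l$, the Fried--Goldman syndetic hull for the structural claims and the Zariski-closure argument for the action on $\tilde V_l$, and intersecting $U$ over a compact fundamental domain $F$ to obtain $S_l$-invariance. One small correction: to conclude $s_0 s(U)=s'(U)$ you need the decomposition $s_0 s = s'\gamma$ with $\gamma\in h(\pi_1(V_l))$ acting on $U$ \emph{first} (i.e.\ choose $F$ so that $S_l=F\, h(\pi_1(V_l))$), since writing $s_0 s=\gamma s'$ only gives $\gamma(s'(U))$, and $\gamma$ need not preserve $s'(U)$.
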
 
\begin{proof} 
By above, $Z(S_l) = Z(\bGamma_l)$ acts on $\tilde V_l$. 
%Recall that $\Gamma_l$ denotes $h(\pi_1(V_l))$. 
%There exists a syndetic hull $S_l$ so that $S_l/\Gamma_l$ is compact by Section \ref{subsub-syndetic}.
%By construction, $\Gamma_l$ has the same Zariski closure as $S_l$. 
We need to prove about the p-end-neighborhood only. 
Let $F$ be a compact fundamental domain of $S_l$ under the $\Gamma_l$. 
Then we have 
\[ \bigcap_{g\in S_l} g(U) = \bigcap_{g \in F} g(U).\]
Since $F$ is compact, the latter set is still a p-end-neighborhood. 
\end{proof}

Since $S_l$ acts on $U$ and hence on $\tilde \Sigma_{\tilde E}$ as shown in Lemma \ref{lem-V}, 
we have a homomorphism $S_l \ra \Aut(K)$.  
We define by $S_{l, 0}$ the kernel of this map. 
Then $S_{l, 0}$ acts on each leaf of $\tilde \Sigma_{\tilde E}$.

\subsubsection{The form of $US_{l, 0}$.}

%%% Dec. 2nd 4:56pm

%\vfill 

%\break

%%% Feb 12, 10:55 pm 2013

%\marginpar{Careful about finite index}

From now on, we will let $S_l$ to denote the only the identity component of itself for simplicity
as $S_l$ has a finitely many components to begin with. 
This will be sufficient for our purpose of getting a cusp group normalized by $\bGamma_{\tilde E}$. 

Let $US_l$ denote the unipotent radical of the Zariski closure
$Z(S_l)$ of $S_l$ in $\Aut(\SI^n)$, which is a solvable Lie group. 
Also, $US_{l, 0}$ denote the unipotent radical of the Zariski closure of $S_{l, 0}$. 
Since $S_{l, 0}$ is normalized by $\bGamma_{\tilde E}$, 
so is $Z(S_{l, 0})$. 
%Hence, 
%$\bGamma_{\tilde E}$ normalizes the unipotent radical 
%$US_{l, 0}$. 
%$\bGamma_l$ is the intersection of $\bGamma_{\tilde E}$ with $S_l$ so that it is solvable now. 

Let $\SI^{i_{0}+1}_{l}$ denote the $i_{0}+1$-dimensional great sphere containing $\SI^{i_{0}}_{\infty}$ 
corresponding to each $i_{0}$-dimensional leaf $l$ of $\mathcal{F}_{\tilde E}$. 

\begin{proposition}\label{prop-ZN} 
Let $l$ be a generic fiber so that $A_l$ acts with trivial stabilizers. 
\begin{itemize} 
\item $S_l$ acts on $\tilde V_l$ and on $\tilde \Sigma_{\tilde E}$ and $\partial U$ freely and properly
and acts as isometries on these spaces with respect to Riemannian metrics. 
\item $S_{l,0}$ acts transitively on each leaf $l$ with a compact stabilizer
and acts on an $i_0$-dimensional ellipsoid $\partial U \cap \SI^{i_{0}+1}_{l}$ 
passing $\bv_{\tilde E}$ with an invariant Euclidean metric. 
\item $S_{l, 0}$ is an $i_0$-dimensional cusp group
and the unipotent radical $US_{l, 0}$ equals $S_{l, 0}$. % equal to $S_{l, 0}$. 
\item $US_{l, 0}$  is normalized by $\bGamma_{\tilde E}$ also. 
\end{itemize}
\end{proposition}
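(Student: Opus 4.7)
The plan is to establish the four bullets in order, using as main tools the syndetic-hull description of $S_l$ from Section \ref{subsub-syndetic} (with $S_l/\bGamma_l$ compact), the weak middle eigenvalue condition through Proposition \ref{prop-eigSI}, and the affine horospherical classification of Theorem \ref{I-thm-affinehoro}. For the first bullet, I would first exhibit an $S_l$-invariant Riemannian metric on $\tilde V_l$ (and on $\tilde\Sigma_{\tilde E}$, $\partial U$) by the averaging trick of Lemma \ref{lem-invmet}: starting from any $\bGamma_l$-invariant Riemannian metric on $\tilde V_l$ (which exists because $\bGamma_l$ acts as discrete isometries on a pullback frame-bundle metric), I integrate its translates under $S_l$ against a compactly supported bump function times a Haar measure, which converges since $S_l/\bGamma_l$ is compact. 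Properness of the $S_l$-action then follows because $S_l$ sits inside the isometry group of a smooth Riemannian manifold as a closed subgroup. Freeness will be deduced at the end from the analysis of $S_{l,0}$, since the surjection $S_l \to A_l$ with free $A_l$-action on $A_l(p)$ forces every stabilizer in $S_l$ to lie in $S_{l,0}$.

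For the second and third bullets, I will analyze $S_{l,0} = \ker(S_l \to \Aut(K))$. Every $g \in S_{l,0}$ fixes $K^o$ pointwise, so its eigenvalues transverse to $\SI^{i_0}_\infty$ all have norm $1$. Applying Proposition \ref{prop-eigSI} to both $g$ and $g^{-1}$ then forces \emph{all} norms of eigenvalues of $g$ to equal $1$, so $S_{l,0}$ acts on the hemisphere $\SI^{i_0+1}_l$ by affine isometries of the corresponding complete affine leaf $l$. Next, because $N$ is contained in $\pi_1(V_l)$ (it acts trivially on $K$) and $S_l/\bGamma_l$ is compact, we get $N \subset S_{l,0}$ cocompactly; but from Section \ref{sub-uniN} we already know $N$ is virtually a cocompact lattice in the $i_0$-dimensional cusp group $\CN$ that is the identity component of the Zariski closure $Z(N)$. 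A dimension count together with Theorem \ref{I-thm-affinehoro} (applied to the connected component of $S_{l,0}$ acting on the complete affine leaf with all eigenvalues of norm $1$) identifies the connected $S_{l,0}$ with the $i_0$-dimensional cusp group $\CN$; in particular $S_{l,0}$ is unipotent, so $US_{l,0} = S_{l,0}$, it acts simply transitively on each leaf, and its action on the boundary ellipsoid $\partial U \cap \SI^{i_0+1}_l$ preserves the standard Euclidean horospherical metric exhibited in Section \ref{subsub-quadric}.

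For the fourth bullet, observe that $\bGamma_{\tilde E}$ normalizes $N$ (since $N$ is intrinsically defined as the normal subgroup of $\pi_1(\tilde E)$ acting trivially on $K$), hence normalizes its Zariski closure $Z(N)$ and therefore the identity component $\CN = US_{l,0}$ of $Z(N)$. The main obstacle I anticipate is the precise identification $S_{l,0} = \CN$, in particular ruling out a nontrivial toral factor in $S_{l,0}$: although Proposition \ref{prop-eigSI} only constrains the norms of eigenvalues and permits unitary rotational blocks a priori, the key point is that $\CN$ is already the full Zariski closure of the cocompact lattice $N$ in $S_{l,0}$, and any compact factor would be incompatible with $N$ being Zariski-dense in $S_{l,0}$ (a discrete unipotent subgroup cannot be Zariski-dense in a group with a nontrivial torus). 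This step is delicate because, as emphasized in Section \ref{subsub-syndetic}, $\bGamma_{\tilde E}$ need not normalize $S_l$ itself, so the conclusion about normalization must be phrased in terms of the canonical subgroup $US_{l,0}$, and getting the clean identification with $\CN$ is what makes the fourth bullet follow without extra argument.
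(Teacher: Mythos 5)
Your first bullet is handled essentially as in the paper (averaging a metric over $S_l$ using compactness of $S_l/\bGamma_l$, properness from compact fundamental domains), and your observation that every stabilizer of $S_l$ must lie in $S_{l,0}$ because $A_l$ acts freely on $A_l(p)$ is correct. The eigenvalue-norm-one argument for $S_{l,0}$ is also the right idea, modulo the point that Proposition \ref{prop-eigSI} is stated for elements of $\bGamma_{\tilde E}$, so for elements of the connected group $S_{l,0}$ one must first pass to the uniform version via $S_l = F\bGamma_l$ for compact $F$ and then take powers, and the conclusion feeds into Fried's distality theorem to get nilpotency of the connected group.

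The genuine gap is your identification of $S_{l,0}$ with the identity component of the Zariski closure of $N$, resting on the claim that ``$N\subset S_{l,0}$ cocompactly.'' That claim fails precisely in the indiscrete case, which is the case this section exists to treat: only the leaf \emph{closure} $V_l$ is compact, not $l/N$, so $N$ can be a lattice of rank strictly less than $i_0$ in $S_{l,0}$ (it can even be trivial), and then $Z(N)_0$ is a proper subgroup of $S_{l,0}$. (Concretely: from $S_l/\bGamma_l$ compact and $N=\bGamma_l\cap S_{l,0}$ one cannot conclude $S_{l,0}/N$ compact when the image $A'_l$ of $\bGamma_l$ is merely dense in $A_l$.) With that identification gone, your ``dimension count'' giving $\dim S_{l,0}=i_0$ and the transitivity of $S_{l,0}$ on each leaf have no source, and your plan to deduce freeness of $S_l$ at the end becomes circular: the cusp-group structure of $S_{l,0}$ is a \emph{consequence} of $S_l$ acting freely and transitively on $\tilde V_l$, not an independent input. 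The paper supplies the missing step by the Fried--Goldman rank computation ($\ranK\bGamma_l = cd\,\bGamma_l \le \dim US_l \le \dim S_l \le \ranK\bGamma_l$, hence $S_l$ is simply connected and contractible, so stabilizers are finite and then trivial) followed by the homotopy-equivalence/closed-image argument for transitivity of $S_l$ on $\tilde V_l$; only then does $S_{l,0}$ act simply transitively on each leaf, is $i_0$-dimensional, and is identified with the cusp group via Lemma \ref{I-lem-unithoro}. For the last bullet the paper likewise does not use $Z(N)$: it conjugates the syndetic hull by $g\in\bGamma_{\tilde E}$ and uses the uniqueness of the cusp group acting simply transitively on the given ellipsoid to get $gS_{l,0}g^{-1}=S_{l,0}$. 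You would need to replace your $Z(N)$ argument by something of this kind.
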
  
\begin{proof} 
%We assume that $S_l$ is connected for simplicity. 
Since $Z(S_l) = Z(\bGamma_l)$ acts on $\tilde V_l$ as stated above, it follows 
that $S_l$ and $US_l$ both in the group act on $\tilde V_l$. 

(i) A stabilizer $S_{l, x}$ of each point $x \in \tilde V_l$ for $S_l$ is compact: 
let $F$ be the fundamental domain of $S_l$ with $\Gamma_l$ action. 
Let $F'$ be the image $F(x):= \{g (x)| g \in F\}$ in $\tilde V_l$. This is a compact set. 
Define
\[\Gamma_{l, F'} := \{g \in \Gamma_l|  g(F(x)) \cap F(x) \ne \emp\}.\] 
Then $\Gamma_{l, F'}$ is finite by the properness of the 
action of $\Gamma_l$. 
Since an element of $S_{l, x}$ is a product of an element $g'$ of $\Gamma_l$ and $f \in F$, and
$g' f(x) = x$, it follows that $g'F(x) \cap F(x) \ne \emp$ and $g' \in \Gamma_{l, F}$. 
Hence $S_{l, x} \subset \Gamma_{l, F'}F$ and $S_{l, x}$ is compact.
Similarly, $S_l$ acts properly on $\tilde \Sigma_{\tilde E}$. 
Since $\partial U$ is in one-to-one correspondence with $\tilde \Sigma_{\tilde E}$, 
$S_l$ acts on $\partial U$ properly. Hence, these spaces have 
compact stabilizers with respect to $S_l$. 
The invariant metric follows by Lemma \ref{lem-invmet}. 
%Since the Lie group $S_l$ acts with compact stabilizer, it follows that it preserves a Riemannian metric on 
%$\tilde \Sigma_{\tilde E}$ as in Lemma  \ref{lem-invmet}.  
Hence, the action is proper and the orbit is closed. 
(Since $\tilde V_l/\Gamma_l$ is compact, $\tilde V_l /S_l$ is compact also. )

%Since $l$ is a generic leaf,  
%$\tilde V_l$ is a fiber bundle over $A_l$ with fibers homeomorphic to cells. 
%Thus $\tilde V_l$ is homotopy equivalent to a real abelian Lie group $A_l$, 
%and to a torus $T^{j_0}$ for some $j_0, j_0 \geq 0$. 
%Also, there exists an exact sequence 
%\[ 1 \ra S_{l, 0} \ra S_l \ra A_l \ra 1\]
%giving us a fiber-bundle also. 
%Since $S_{l, 0}$ is a finite extension of a connected solvable group, 
%it follows 
%that $S_l$ is homotopy equivalent to a finite cover of $A_l$. 
%There is a surjective map $\tilde A \ra A_l$ since $\bGamma_l$ maps %onto $A'_l$ a dense subset. 
%$S_l$ is homotopy equivalent to $T^{j_1}$ for 
%\begin{equation}\label{eqn-j1j0} 
%j_1 \geq j_0 \geq 0.
%\end{equation}

(ii) We assume that $\bGamma_{\tilde E}$ is torsion-free by taking a finite index subgroup 
since $\Sigma_{\tilde E}$ is a very good orbifold, admitting a geometric structure. 
Now, we show that $S_l$ acts freely on 
$\tilde \Sigma_{\tilde E}$: 

The strategy is as follows. 
We use the last part of 
Section 1.8 of \cite{FG} where we can replace $H$ there with $S_l$ and $\bR^n$ with $\tilde V_l$ and $\Gamma$ with 
the solvable subgroup $\bGamma_l$, we obtain the results for $\bGamma_l$: 

First, $\bGamma_l$ is solvable and discrete, and hence is virtually polycyclic by Mostow 
(see Proposition 3.7 of \cite{Rag})
and $S_l$ has the same Zariski closure as $\bGamma_l$. 
Take a finite index subgroup of $\bGamma_{\tilde E}$ so that $\bGamma_l$ is now polycyclic. 
We work on the projection of $\tilde V_l$ on 
$\tilde \Sigma_{\tilde E}$, a convex but not properly convex open domain in an affine space $A^{n-1}$.  

Lemma 1.9 of \cite{FG}
shows that the unipotent radical $US_l$ of $Z(S_l)$ acts freely on $\tilde \Sigma_{\tilde E}$:
%ösince $\tilde \Sigma_{\tilde E}$ has a Riemannian metric invariant under $S_l$.
%and the unipotent group action on $\tilde \Sigma_{\tilde E}$ 
%acts with only trivial orthogonal subgroup and hence acts so on $\tilde V_l \subset F\tilde \Sigma_{\tilde E}$. 
Being unipotent, $US_l$ is simply connected. The orbit $US_l(x)$ for $x \in \tilde \Sigma_{\tilde E}$ 
is simply connected and invariant under $Z(\bGamma_l)= Z(S_l)$. 
$\bGamma_l\backslash US_l(x)$ is a $K(\bGamma_l, 1)$-space. 
Thus, $\ranK \bGamma_l = cd \bGamma_l \leq \dim US_l$. 
By Lemma 4.36 of \cite{Rag}, $\dim US_l \leq \dim S_l$ and 
by Lemma 1.6 (iv) of \cite{FG}, we have $\dim S_l \leq \ranK \bGamma_l$.
Thus, $\ranK \bGamma_l = \dim S_l$. 

We now show $S_l$ acts freely on $\tilde \Sigma_{\tilde E}$.
We have a fibration sequence 
\[ \bGamma_l \ra S_l \ra \bGamma_l\backslash S_l \] and 
an exact sequence 
\[\pi_1(S_l) \ra \pi_1(\bGamma_l\backslash S_l) \ra \bGamma_l,\]
and hence $\ranK \pi_1(S_l) + \ranK \bGamma_l = \ranK \pi_1(\bGamma_l\backslash S_l) = \dim S_l$
since $S_l$ is solvable and $\bGamma_l\backslash S_l$ is a compact manifold following the argument in Section 
1.8 of \cite{FG}. (See Proposition 3.7 of \cite{Rag} also where we need to take the universal cover of $S_l$.)
%(The quotient $S_l/B$ is abelian where the analogous statement clearly holds by construction for the quotient by
%$\bGamma_l/(\bGamma_l \cap B)$ and 
%since $B$ is simply connected, this holds for $B/(B \cap \bGamma_l)$ )
Since $\ranK \bGamma_l = \dim S_l$, we have $\ranK \pi_1(S_l) = 0$. 
This means that $\pi_1(S_l)$ is finite. Being solvable, it is trivial. 
Thus, $S_l$ is simply connected. Since $S_l$ is homotopy equivalent to $T^{j_1}$, 
$S_l$ is contractible. (We followed Section 1.8 of \cite{FG} faithfully here.)

%Since $S_l$ acts on a contractible domain $\tilde \Sigma_{\tilde E}$ transitively, $S_l$ is 
%homotopy equivalent to its stabilizer. Thus, 
Since $S_l$ acts transitively on any of its orbits, 
$S_l$ is homotopy equivalent to a bundle over the orbit with 
fiber homeomorphic to a stabilizer. Since $S_l$ is contractible, the stabilizer is finite. 
Since $S_l$ acts with finite stabilizers on $\tilde \Sigma_{\tilde E}$, it acts so on $\tilde V_l$. 
That is $S_{l}$ finitely covers $\tilde V_{l}$ as a universal cover. That is, $\pi_{1}(\tilde V_{l})$ is finite. 
Since $\tilde V_{l}$ is homotopy equivalent to a point or a torus, $\pi_{1}(\tilde V_{l})$ and the stabilizers are trivial. 
%Since the finite group action fixing a point of $\tilde \Sigma_{\tilde E}$ acts nontrivially on 
%
We showed that $S_l$ acts freely on $\tilde V_l$.

(iii) Now, we show that $S_l$ acts transitively on
$\tilde V_{l}$:  Choose $x \in \tilde V_l$. 
There is a map $f: \bGamma_l\backslash S_l \ra \tilde V_l/\bGamma_{l}$ given by
sending each $g \in S_l$ to $g(x) \in \tilde V_l$. 
The image of the map is also closed since $\bGamma_l\backslash S_l$ is compact.
Since the map is a homotopy equivalence, 
the map is onto and $S_l$ acts transitively on $\tilde V_l$. 

%\marginpar{I think we are working up to finite index here... Make it explicit?}
%We have a fibration 
%\[S_l \ra \tilde V_l \ra \tilde V_l/S_l .\]
%Since $\tilde V_l/\Gamma_l$ is compact, so is $\tilde V_l/S_l$. 
%Since $S_l$ acts simply and properly, 
%$\tilde V_l/S_l$ is also a manifold. 
%Let $j_2 := \dim \tilde V_l/S_l $. Then by the Leray-Hirsch theorem, 
%the real homology of $\tilde V_l$ is isomorphic to the tensor product of 
%those of $S_l$ and $\tilde V_l/S_l$. 
%Recall $A_l$ is homotopy equivalent to a torus $T^{j_0}$. 
%Since $\tilde V_l$ fiber over $A_l(p)$ with fiber homeomorphic to $l$, 
%top homology dimension of $\tilde V_l$ is $j_0$,
%and that of $S_l$ is $j_1$ and that of $\dim V_l/S_l$ is $j_2$. 
%Since $j_0 \leq j_1$ by equation \eqref{eqn-j1j0} and $j_0 = j_1 + j_2$, it follows that $j_2 = 0$
%and $V_l/S_l$ is a singleton. 
%This means that $S_l$ acts simply transitively on $\tilde V_l$. 

(iv) Hence, $S_{l, 0}$ acts simply transitively on each $l$;
$S_{l, 0}$ is diffeomorphic to a leaf $l$ and hence is connected
and is a solvable Lie group. 

Since the subset $U_{l}:= U \cap H^{i_0+1}_l$ of $U$ corresponding to $l$ is a strictly convex set containing $v_{\tilde E}$, 
we have $S_{l, 0}$ acting simply transitively on $\partial U_{ l}$. 
Proposition \ref{prop-eigSI} implies that  for $g\in \Gamma_{l}$ 
\[\lambda_1(g) \geq \lambda(g) \geq  \lambda'(g) \geq \lambda_{n+1}(g).\]
Since $S_{l} = F \Gamma_{l}$ for a compact set $F$, 
this inequality 
\begin{equation}\label{eqn-eigen2} 
C_{1}\lambda_1(g) \geq \lambda(g) \geq  C_{2}\lambda'(g) \geq C_{3}\lambda_{n+1}(g), g \in S_{l}
\end{equation} 
holds for constants $C_{1}>1, 1> C_{2} > C_{3}> 0$. 
Since $S_{l, 0}$ acts trivially on $K^{o}$, we have 
$\lambda_{1}(g) = \lambda_{n+1}(g)$ for $g \in S_{l, 0}$.  
Since the maximal norm $\bar \lambda(g)$ of the eigenvalue equals 
$\max\{\lambda_{1}(g), \lambda(g) \}$ 
and the minimal norm $\hat \lambda(g)$ of the eigenvalue equals $\min \{\lambda'(g), \lambda_{n+1}(g)\}$, 
equation \eqref{eqn-eigen2} implies that 
$|\log\bar \lambda(g)|, |\log\hat \lambda(g)|, g\in S_{l, 0}$ are both uniformly bounded above.  
Of course we have
\[|\log\bar \lambda(g^{n})|= |nlog\bar \lambda(g)|,  |\log\hat \lambda(g^{n})| = |n\log\hat \lambda(g)|, g \in S_{l, 0}.\]
Since $S_{l, 0}$ is not compact,
all the eigenvalues of elements are $1$. 
Since $S_{l, 0}$ is a connected Lie group, Fried \cite{Fried86} shows that 
$S_{l, 0}$ is a nilpotent Lie group. 
%As before in the proof of Theorem \ref{I-thm-comphoro} of \cite{EDC1} using the
%results of \cite{CM2}, 
By Lemma \ref{I-lem-unithoro} of \cite{EDC1}, 
$S_{l, 0}$ acts on an $i_0$-dimensional ellipsoid that has to equal $\partial U_{l}$. 
Since one can identify each leaf with an affine space $S_{l, 0}$ is isomorphic to an affine isometry group 
acting simply transitively on an affine space $\bR^i$.
Let ${\mathcal H}_{\bv_{\tilde E}}$ denote the cusp group acting on the ellipsoid. 
An elementary argument using the cocompact subgroup simultaneously in both groups
shows that $S_{l,0}$ and ${\mathcal H}_{\bv_{\tilde E}}$ are identical.

This shows also that $S_{l, 0}$ is nilpotent and we have $US_{l, 0} = S_{l, 0}$ also. 
Finally, this implies that $US_{l, 0}$ is an $i_0$-dimensional abelian Lie group. 
%Since $S_{l, 0}$ is normalized by $\bGamma_{\tilde E}$ by Proposition \ref{prop-normal}, so is its unipotent radical 
%$US_{l, 0}$.

For $g \in \bGamma_{\tilde E}$, 
$S'_l := g S_l g^{-1}$ is a syndetic hull of $\bGamma_{\tilde E}$. 
Then we define $S'_{l, 0}$ as the subgroup acting trivially on the space of leaves. 
Since $S'_{l, 0}$ has to be the cusp group as above by the same proof, it 
follows that $S'_{l, 0} = S_{l, 0} = gS_{l, 0}g^{-1}$. Thus, $S_{l, 0}$ is a normal subgroup. 

\end{proof}

%%% Feb. 17 8:45 pm
%%% Feb 18 11:50am

%%% Jan 1, 10:12 pm 2013... I am bit lost below....
\subsection{The proof for indiscrete $N_{K}$.}

We can parametrize $US_{l, 0}$ by $\CN(\vec{v})$ for $\vec{v} \in \bR^{i_0}$ by Proposition \ref{prop-ZN}. 
Hypothesis \ref{h-norm} holds now. 
As above by Lemmas  \ref{lem-similarity} and \ref{lem-conedecomp1}, we have that the matrices are of form: 
\renewcommand{\arraystretch}{1.5}
\begin{equation} %\label{eqn-secondm2}
\newcommand*{\temp}{\multicolumn{1}{r|}{}}
\CN(\vec{v}) = \left( \begin{array}{ccccccc} 
\Idd_{n-i_0-1} & \temp & 0 & \temp & 0 & \temp & 0 \\ 
 \cline{1-7}
0 &\temp & 1 &\temp & 0 &\temp & 0 \\ 
 \cline{1-7}
0 &\temp & \vec{v}^T &\temp & \Idd_{i_0} &\temp & 0 \\ 
 \cline{1-7}
c_2({\vec{v}}) &\temp & ||\vec{v}||^2 /2&\temp & \vec{v} &\temp & 1 
\end{array} 
\right), 
\end{equation} 
\begin{equation} \label{eqn:stdg} 
\newcommand*{\temp}{\multicolumn{1}{r|}{}}
g = \left( \begin{array}{ccccccc} 
S(g) & \temp & 0 & \temp & 0 & \temp & 0 \\ 
 \cline{1-7}
0 &\temp & a_1(g) &\temp & 0 &\temp & 0 \\ 
 \cline{1-7}
C_1(g) &\temp & a_4(g) &\temp & a_5(g) O_5(g) &\temp & 0 \\ 
 \cline{1-7}
c_2(g) &\temp & a_7(g) &\temp & a_8(g) &\temp & a_9(g) 
\end{array} 
\right)
\end{equation}
where $g \in \bGamma_{\tilde E}$. Recall 
$\mu_g = a_5(g)/a_1(g) = a_9(g)/a_5(g)$. 
Since $S_l$ is in $Z(\bGamma_l)$ and the orthogonality of normalized $A_5(g)$
is an algebraic condition, the above form also holds for $g \in S_l$.

However, we don't assume Hypothesis \ref{h-qjoin}. 
We continue to assume as in Hypothesis \ref{h-norm} for $G$.  

\begin{proposition} \label{prop-mug}
A center of $\bGamma_{\tilde E}$ maps to $N_{K}$
going to a Zariski dense group of the virtual center of $\Aut(K)$.  
We assume Hypothesis \ref{h-norm} and $N_{K}$ is indiscrete.  Then
we have $\mu_g= 1$ for every $g \in \bGamma_{\tilde E}$. 
\end{proposition}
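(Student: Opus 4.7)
The plan is to show that $g \mapsto \mu_g$ defines a character $\mu:\bGamma_{\tilde E}\to\bR_+$ which vanishes on the orthopotent kernel $N$ and on the center $Z(\bGamma_{\tilde E})$, and then use the Benoist-type decomposition of $\overline{N_K}$ together with the Zariski-density hypothesis to deduce $\mu\equiv 1$.

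\emph{Step 1 (Character property and vanishing on $N$).} From Lemma \ref{lem-similarity}, $g\CN(\vec v)g^{-1} = \CN(\vec v M_g)$ with $M_g = \mu_g O_5(g)^{-1}$. Applying this to $gh$ and to $h$ then $g$ in turn, the scalar and orthogonal parts separate and give $\mu_{gh}=\mu_g\mu_h$ and $O_5(gh)=O_5(g)O_5(h)$, so $\mu$ is a group homomorphism. The subgroup $N=N_l$ is virtually unipotent by Section \ref{sub-uniN}, so every element has all norms of eigenvalues equal to $1$; in the standard form \eqref{eqn:stdg} this forces $a_1=a_5=a_9=1$, hence $\mu|_N\equiv 1$. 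Consequently $\mu$ descends to a homomorphism $\bar\mu : N_K \to \bR_+$.

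\emph{Step 2 (Vanishing on the center).} Let $c\in Z(\bGamma_{\tilde E})$. Then $c$ centralizes $\bGamma_l\subset \bGamma_{\tilde E}$, and since the centralizer of $c$ in $\Aut(\SI^n)$ is a Zariski-closed subgroup, it contains the Zariski closure $Z(\bGamma_l)=Z(S_l)$ (Lemma \ref{lem-V}) and in particular the cusp group $S_{l,0}=\CN$ (Proposition \ref{prop-ZN}). Then $c\CN(\vec v)c^{-1}=\CN(\vec v)$ for every $\vec v$, while the similarity identity gives $c\CN(\vec v)c^{-1}=\CN(\vec v M_c)$; comparing, $M_c=\Idd_{i_0}$, whence $\mu_c=1$.

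\emph{Step 3 (Propagation).} Because $\mu_g^{2i_0}=(\det A_5(g))^2/a_1(g)^{2i_0}$ is a rational function of the matrix entries of $g$, the character $\bar\mu^{2i_0}$ extends to a rational character of the Zariski closure $\overline{N_K}^{\mathrm{Zar}}$ of $N_K$ in $\Aut(K)$. Under the strict join $K=K_1\ast\cdots\ast K_{l_0}$, the structural description recalled in Section \ref{subsec-conv} shows that $\overline{N_K}^{\mathrm{Zar}}$ is virtually a product $\bR^{l_0-1}\times\prod_i \overline{\Gamma_i}^{\mathrm{Zar}}$ in which each $\overline{\Gamma_i}^{\mathrm{Zar}}$ is semisimple. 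Semisimple algebraic groups admit no nontrivial rational character into $\bR_+$, so $\bar\mu^{2i_0}$ factors through the $\bR^{l_0-1}$ virtual center. By hypothesis the image of $Z(\bGamma_{\tilde E})$ is Zariski-dense in this $\bR^{l_0-1}$, and by Step 2 the rational character $\bar\mu^{2i_0}$ vanishes on this Zariski-dense subset; hence $\bar\mu^{2i_0}\equiv 1$, and positivity of $\mu$ gives $\mu\equiv 1$. The main obstacle will be Step 3: formulating $\mu^{2i_0}$ as a rational character, applying Benoist's structural theorem to the closure of $N_K$ (which is where indiscreteness is used), and verifying that no nontrivial character survives on the semisimple factors.
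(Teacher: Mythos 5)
Your Steps 1 and 2 are sound and are a nice reorganization: multiplicativity of $\mu$ follows from $M_{gh}=M_hM_g$, triviality on $N$ follows from the eigenvalue computation, and the observation that a (virtually) central element must centralize the Zariski closure of $\bGamma_l$, hence the connected cusp group $\CN=S_{l,0}$, so that $M_c=\Idd$ by Corollary \ref{cor-formg2}, is correct in the indiscrete case. The proof collapses, however, at exactly the point you flag as the main obstacle, Step 3. The quantity $\mu_g^{2i_0}=\det(A_5(g))^2/a_1(g)^{2i_0}$ is a rational function of the entries of $g$ as a matrix in $\SL_\pm(n+1,\bR)$, but $\det A_5(g)$ is \emph{not} a function of $\Pi_K^*(g)\in\Aut(K)$: the algebraic kernel of $Z(\bGamma_{\tilde E})\to\Aut(K)$ can contain diagonalizable elements (e.g.\ of the shape $\mathrm{diag}(1,\dots,1;\lambda,\dots,\lambda;\lambda^2)$) on which $\mu\ne 1$. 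So $\bar\mu$ descends to $N_K$ only as an \emph{abstract} homomorphism (via triviality on $N$), not as a rational character of $\overline{N_K}^{\mathrm{Zar}}$. Without rationality, the vanishing on the factors $\Gamma_i$ fails in general: a strongly irreducible $\Gamma_i$ (a surface group, a free group, many lattices) typically has infinite abelianization and admits plenty of nontrivial homomorphisms to $\bR_+$. The Zariski-density hypothesis only controls $\bar\mu$ on the $\bR^{l_0-1}$ factor and says nothing about its restriction to the $\Gamma_i$. Note also that your argument never invokes the weak middle eigenvalue condition or the proper discontinuity of the $\bGamma_{\tilde E}$-action, which are the actual engines of the result.

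For comparison, the paper argues geometrically. It takes an element $\eta$ of the virtual center (produced in Proposition \ref{prop-decomposition}) whose largest eigenvalue for $\hat S(\eta)$ occurs at the cone point $k$, so that $a_1(\eta)$ is the top eigenvalue with multiplicity one and the corresponding fixed point $k_\eta$ lies in $\Bd\torb$. If $\mu_\eta<1$ then $a_9(\eta)<a_1(\eta)$ forces $k_\eta\ne\bv_{\tilde E}$, so $k_\eta\in H_k^o$; the join $K''\ast k_\eta\subset\clo(\torb)$ is then invariant under all of $\bGamma_{\tilde E}$ (by commutation with $\eta$), its interior projects to a cross-section of $\tilde\Sigma_{\tilde E}$ transversal to the leaves, and this contradicts proper discontinuity of the action on $\tilde\Sigma_{\tilde E}$. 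The case of a general $g$ with $\mu_g\ne 1$ is reduced to this by commutativity with $\eta$ (and your multiplicativity remark handles $\mu_g>1$ versus $\mu_g<1$). If you want to keep your character-theoretic framing, you would still need a substitute for this geometric step to kill $\bar\mu$ on the semisimple factors; rationality alone cannot be manufactured from what is given.
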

\begin{proof} 
We write $G = \bGamma_{\tilde E}$. 
%Let $G$ be a p-end fundamental group. 
%\begin{hypothesis}\label{h-join}
%Let $G$ be a p-end fundamental group. 
%We continue to assume as in Hypothesis \ref{h-norm} for $G$.  
%\begin{itemize} 
%\item every $g \in \bGamma_{\tilde E} \ra M_g$ is  so that $M_g$ is in a fixed compact group $O(i_0)$.
%\item Suppose that $a_{1}(g) \geq a_{5}(g), a_{9(g)}$ whenever $a_{1}(g)$ is the largest eigenvalue 
%of the semisimple part  $\hat S(g)$ of $g\in G$. 
%\item $\bGamma_{\tilde E}$ satisfies the weak middle-eigenvalue conditions
%and it normalizes and virtually centralizes $\CN$.
%\item $K = \{k\} \ast K''$ a strict join, and $K^{o}/N_{K}$ is compact. 
%\item A center of $\bGamma_{\tilde E}$ maps to $N_{K}$
%going to a Zariski dense group of the virtual center of $\Aut(K)$.  %\marginpar{test this condition}
%\end{itemize} 
%\end{hypothesis}
We factorize the matrix of $g$, $g \in G$,  
\begin{equation} \label{eqn-eta}
%\begin{gather}
\left(
\begin{array}{c|c|c}
a_{1}(g) & 0 & 0 \\ 
\hline
a_{1}(g) \vec{v}^T_g & a_{5}(g) O_5(g) & 0 \\ 
 \hline
a_7(g) &  a_{5}(g) \vec{v}_g O_5(g) & a_{9}(g)
\end{array} 
\right)  = 
\end{equation} 
\begin{equation}
\left(
\begin{array}{c|c|c}
1 & 0 & 0 \\ 
\hline
0 & 1& 0 \\ 
 \hline
\frac{a_7(g)}{a_{1}(g)} -  \frac{||\vec{v}_{g}||^{2}}{2} &  0 & 1
\end{array} 
\right) 
\left(
\begin{array}{c|c|c}
1 & 0 & 0 \\ 
\hline
\vec{v}^T_g & \Idd & 0 \\ 
 \hline
\frac{||\vec{v}_{g}||^{2}}{2} &  \vec{v}_g & 1
\end{array} 
\right) 
a_{1}(g)
\left(
\begin{array}{c|c|c}
1 & 0 & 0 \\ 
\hline
0 & \mu_{g} & 0 \\ 
 \hline
0 &  0 & \mu_{g}^{2}
\end{array} 
\right) 
\end{equation}

The weak uniform middle eigenvalue condition here means
$a_{1}(g) \geq a_{9}(g)$ or $\mu_{g}  \leq 1$ for $g \in G_{+}$ for $G = \bGamma_{\tilde E}$. 
%and $\lambda_{1}(g) \geq a_{9}(g)$ for $g \in $. 

By Hypothesis \ref{h-norm}, the conclusion of Proposition \ref{prop-decomposition} holds. 
From the proof of Proposition \ref{prop-decomposition}, 
let $K$, $K''$, and $k$ be as in the proof.
We obtain a sequence $\gamma_{m}$ in the virtual center with the same properties. 
We take one as $\eta$ where the largest norm $\hat \lambda_{1}(\eta)$ of the eigenvalues 
for $\hat S(\eta)$ occurs at $k$. 

By Proposition \ref{prop-decomposition}, $\eta$ acts on $K'' \subset \Bd \torb$.
By the weak middle eigenvalue condition and the matrix form \eqref{eqn-eta}, 
$k$ corresponds to $a_{1}(\eta) = \lambda_{1}(g)$, the largest norm eigenvalue. 
Since $\hat \lambda_{1}(\eta)$ is of multiplicity one, 
$k$ corresponds to a unique fixed point $k_{\eta}$. 
Being a fixed point,  $k_{\eta} \in \Bd \torb$. 

Suppose first that $\mu_{\eta} < 1$. 
Since $\bv_{\tilde E}$ has a different eigenvalue $a_{9}(\eta)$ from that $a_{1}(\eta)$ of $k_{\eta}$
as $\mu_{\eta} < 1$, we obtain $k_{\eta} \ne \bv_{\tilde E}$, and $k_{\eta} \in H_{k}^{o}$. 

The convex hull $\hat K$ of $K''$ and $k_{\eta}$ is the join $K'' \ast k_{\eta} \subset \clo(\torb)$. 
Hence, there exists a subspace $\SI^{n-i_{0}} \subset K'' \ast k_{\eta}$ complementary to $\SI^{i_{0}}_{\infty}$. 
We use the coordinates now where $K'' \ast k_{\eta}$ is given 
by $x_{n-i_{0}+1}=0, \dots, x_{n+1} = 0$. 
Points of $K''$ has coordinates 
\begin{gather}
[ \underbrace{\ast, \dots, \ast,}_{n-i_{0}-1} 0, \underbrace{0, \dots, 0}_{i_{0}+1}] \\
k_{\eta} = [\underbrace{0, \dots, 0,}_{n-i_{0}-1} 1, \underbrace{0, \dots, 0}_{i_{0}+1}]
\end{gather}
Since $G$ commute with $\eta$ and $\eta$ have eigenvalues at $K''$ and $k_{\eta}$ distinct from fixed points outside 
$K''$ and $k_{\eta}$ as we can see from equation \eqref{eqn:stdg}, $G$ acts on $K''\ast k_{\eta}$. 
Since $G$ acts on $K'' \ast k_{\eta}$ by Proposition \ref{prop-decomposition}, 
it follows that in this coordinate system
\[\vec{v}_{g} = 0, \frac{a_7(g)}{a_{1}(g)} -  \frac{||\vec{v}_{g}||^{2}}{2} = 0 \hbox{ for all } g \in G. \]
Since $\bv_{\tilde E}$ is not contained in $K''\ast k_{\eta}$, and each leaf hemisphere $H^{i_{0}+1}_{l}$ contains at least one point of 
$K^{o}= (K''\ast k_{\eta})^{o}$, it follows that 
$(K''\ast k_{\eta})^{o}$ projects to a submanifold of $\tilde \Sigma_{\tilde E}$ transversal to each fiber.
Sine $\bGamma_{\tilde E}$ acts on this image, 
$\bGamma_{\tilde E}$ cannot act properly discontinuously on $\tilde \Sigma_{\tilde E}$. 

Now suppose that $\mu_{\eta}=1$ but there exists some $g$ with $\mu_{g}< 1$. 
Then again there exists a fixed point $k_{g}\in H^{o}_{k}$ with an eigenvalue of multiplicity one. 
By commutativity, $\eta$ acts on $k_{g}$ and the arguments are now similar. 

\end{proof}

\begin{proof}[{The proof of Theorem \ref{thm-thirdmain}.}]
Suppose that $\tilde E$ is an NPCC R-end. 
When $N_{K}$ is discrete, Theorem  \ref{thm-NPCCcase}  gives us the result.

When $N_{K}$ is indiscrete, 
Hypothesis \ref{h-norm} holds by Propositions \ref{prop-ZN}. 

By Proposition \ref{prop-mug}, $\mu \equiv 1$ holds.
Lemmas \ref{lem-similarity} and \ref{lem-conedecomp1},
Propositions \ref{prop-decomposition} and \ref{prop-qjoin}
show that we have a joined or quasi-joined end. 
Theorem \ref{thm-NPCCcase2} implies the result. 
\end{proof}

%%% June 3, 2014 8:50pm

%\subsection{A corollary for later purpose} 

\begin{corollary} \label{cor-NPCChol} 
Let $\orb$ be a properly convex strongly tame real projective orbifold.
Assume that holonomy group is strongly irreducible. 
Let $\tilde E$ be an NPCC p-end of the universal cover $\torb$ or $\orb$. 
Then the holonomy group 
$h(\bGamma_{\tilde E})$ is a group whose element under a coordinate system is of form{\,\rm :} 
\begin{equation}
\newcommand*{\temp}{\multicolumn{1}{r|}{}}
g = \left( \begin{array}{ccccccc} 
S(g) & \temp & 0 & \temp & 0 & \temp & 0 \\ 
 \cline{1-7}
0 &\temp & \lambda(g) &\temp & 0 &\temp & 0 \\ 
 \cline{1-7}
0 &\temp & \lambda(g) v(g)^{T}&\temp & \lambda(g) \Idd &\temp & 0\\ 
 \cline{1-7}
0 &\temp & \lambda(g)\left(\alpha_{7}(g)+ \frac{||v(g)||^{2}}{2}\right) 
&\temp & \lambda(g) v(g) &\temp & \lambda(g) 
\end{array} 
\right)
\end{equation}
%where $O_5(g) \in O(i_{0}+1)$. 
%where $g \in \bGamma_{\tilde E}$.
%Recall  $\mu_g = a_5(g)/a_1(g) = a_9(g)/a_5(g)$. 
%Since $S_l$ is in $Z(\bGamma_l)$ and the orthogonality of normalized $A_5(g)$
%is an algebraic condition, the above form also holds for $g \in S_l$. 
where $\{S(g)|g\in \bGamma_{\tilde E}\}$ acts cocompactly on a properly convex domain in $\Bd \torb$
of dimension $n-i_{0}-1$,  
and $\alpha_{7}(g)$ satisfies the uniform positive translation condition given by equation \eqref{eqn:uptc}. 

And $\bGamma_{\tilde E}$ virtually normalizes the group 
\renewcommand{\arraystretch}{1.5}
\begin{equation} %\label{eqn-secondm2}
\newcommand*{\temp}{\multicolumn{1}{r|}{}}
\Bigg \{\CN(\vec{v}) 
= \left( \begin{array}{ccccccc} 
\Idd_{n-i_0-1} & \temp & 0 & \temp & 0 & \temp & 0 \\ 
 \cline{1-7}
0 &\temp & 1 &\temp & 0 &\temp & 0 \\ 
 \cline{1-7}
0 &\temp & \vec{v}^T &\temp & \Idd_{i_0} &\temp & 0 \\ 
 \cline{1-7}
0 &\temp & ||\vec{v}||^2 /2&\temp & \vec{v} &\temp & 1 
\end{array} 
\right)\Bigg | \vec{v} \in \bR^{i_{0}} \Bigg\}. 
\end{equation}

\end{corollary}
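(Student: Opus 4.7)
My plan is to derive this corollary directly from Theorem \ref{thm-thirdmain} together with the explicit matrix normalizations developed through Sections \ref{sec-gendiscrete} and \ref{sec-indiscrete}; essentially no new idea is needed, only a careful bookkeeping of which coordinate changes have already been made.

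First I would invoke Theorem \ref{thm-thirdmain} to conclude that $\tilde E$ is of quasi-joined type. Whether $N_K$ is discrete or indiscrete, one gets $\mu_g = 1$ for every $g \in \bGamma_{\tilde E}$: in the discrete case this is part of the proof of Theorem \ref{thm-NPCCcase}, and in the indiscrete case it is exactly Proposition \ref{prop-mug}. Thus Hypotheses \ref{h-norm} and \ref{h-qjoin} are in force, after replacing $\bGamma_{\tilde E}$ by a finite index subgroup, which is legitimate since we are only asked to \emph{virtually} normalize $\CN$.

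Next I would combine Lemma \ref{lem-similarity}, Lemma \ref{lem-conedecomp1}, and Proposition \ref{prop-decomposition} to select a common coordinate system in which every $g$ has the standard form \eqref{eqn-formgii}, with $C_1(g)=0$, $c_2(g)=0$, and $s_1(g)=s_2(g)=0$. Passing to a further finite index subgroup that centralizes $\CN$, Corollary \ref{cor-formg2} gives $O_5(g) = \Idd_{i_0}$, which converts \eqref{eqn-formgii} into precisely the matrix in the statement with $\lambda(g) := \lambda_{\bv_{\tilde E}}(g)$. The $(n-i_0+1,2)$-entry comes out to $\lambda(g)(\alpha_7(g) + ||v(g)||^2/2)$ by the very definition of $\alpha_7$ given in \eqref{eqn-kernel}, and $\CN(\vec v)$ takes the claimed form once $C_1(\vec v) = 0$ and $c_2(\vec v) = 0$, which is part of the conclusion of Proposition \ref{prop-decomposition}.

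To identify $\{S(g)\}$ as a cocompact projective automorphism group of a properly convex domain in $\Bd\torb$, I would use the invariant subspace produced in the proof of Proposition \ref{prop-decomposition}: a copy of $K''$ sits in $\Bd\torb$ and is $\bGamma_{\tilde E}$-invariant, and cocompactness of $\{S(g)\}$ on it follows from cocompactness of $N_K$ on the cone $K = \{k\}\ast K''$ together with the fact that the quotient map $\Pi_K$ collapses only the hemisphere fibers. The uniform positive translation condition on $\alpha_7$ is equivalent, by Proposition \ref{prop-qjoin}(ii), to the existence of a properly convex p-end neighborhood, which is provided by quasi-joinedness. The virtual normalization $g \CN(\vec v) g^{-1} \in \CN$ is then a direct block-matrix computation once $O_5(g) = \Idd$ and $C_1(\vec v) = c_2(\vec v) = 0$. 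The only real obstacle is bookkeeping: one must verify that all the coordinate changes made in Lemmas \ref{lem-similarity}--\ref{lem-conedecomp1} and in Proposition \ref{prop-decomposition} are compatible, so that a single coordinate system simultaneously realizes every normalization above.
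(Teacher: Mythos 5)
Your proposal is correct and follows essentially the same route as the paper: the paper's own proof of Corollary \ref{cor-NPCChol} is a one-line reference back to the proof of Theorem \ref{thm-thirdmain}, whose chain of ingredients (discrete case via Theorem \ref{thm-NPCCcase}, indiscrete case via Proposition \ref{prop-ZN} and Proposition \ref{prop-mug} to get $\mu\equiv 1$, then Lemmas \ref{lem-similarity}--\ref{lem-conedecomp1}, Propositions \ref{prop-decomposition} and \ref{prop-qjoin}) is exactly what you spell out. Your version is in fact more explicit than the paper's about where each block of the matrix and the condition on $\alpha_7$ come from, and you correctly flag the one point the paper glosses over, namely that reducing $O_5(g)$ to $\Idd_{i_0}$ and the compatibility of the successive coordinate changes are only guaranteed after passing to a finite-index subgroup, consistent with the ``virtually'' in the statement.
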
 
\begin{proof} 
The proof is contained in the proof of Theorem \ref{thm-thirdmain}.
\end{proof}

%June 18 9:52pm 2015

\section{The dual of NPCC-ends} \label{sec-dualNPCC}

%\marginpar{Repeats \cite{EDC1}}
\subsection{The duality}
We repeat some background material from \cite{EDC1} for convenience. 
We recall the projective duality from linear duality. 
Let $\Gamma$ be a group of linear transformations $\GL(n+1, \bR)$. 
Let $\Gamma^*$ be the {\em affine dual group} defined by $\{g^{\ast -1}| g \in \Gamma \}$. 
Suppose that $\Gamma$ acts on a properly convex cone $C$ in $\bR^{n+1}$ with the vertex $O$.

An open convex cone  $C^*$ in $\bR^{n+1 *}$  is {\em dual} to an open convex cone $C $ in $\bR^{n+1}$  if 
$C^* \subset \bR^{n+1 \ast}$ is the set of linear transformations taking positive values on $\clo(C)-\{O\}$.
$C^*$ is a cone with vertex as the origin again. Note $(C^*)^* = C$. 

Now $\Gamma^*$ will acts on $C^*$.
A {\em central dilatational extension} $\Gamma'$ of $\Gamma$ by $\bZ$ is given by adding a dilatation by a scalar 
$s \in \bR_+ -\{1\}$ for the set $\bR_+$ of positive real numbers. 
The dual $\Gamma^{\prime \ast}$ of $\Gamma'$ is a central dilatation extension of $\Gamma^*$. 
 Also, $\Gamma'$ acts cocompactly on $C$ if and only if $\Gamma^{\prime *}$ acts so on $C^*$. 
 (See \cite{wmgnote} for details.)

%Given an automorphism $g: \bR^{n+1} \ra \bR^{n+1}$, 
%Given a subgroup $\Gamma$ of $\SL_\pm(n+1, \bR)$, the {\em dual group} $\Gamma^*$ is 
%the group given by $\{ g^{-1 \ast}| g \in \Gamma\}$. 
 %$\Gamma$ and $\Gamma^*$ are isomorphic by the map $g \mapsto g^{* -1}$.
 Given a subgroup $\Gamma$ in $\PGL(n+1, \bR)$, a {\em lift} in $\GL(n+1, \bR)$ is 
 any subgroup that maps to $\Gamma$ injectively.
 Given a subgroup $\Gamma$ in $\Pgl$, the dual group $\Gamma^*$ is the image in $\Pgl$ of the dual of 
 any linear lift of $\Gamma$. 

A properly convex open domain $\Omega$ in $P(\bR^{n+1})$ is {\em dual} to a properly convex open domain
$\Omega^*$ in $P(\bR^{n+1, \ast})$ if $\Omega$ corresponds to an open convex cone $C$ 
and $\Omega^*$ to its dual $C^*$. We say that $\Omega^*$ is dual to $\Omega$. 
We also have $(\Omega^*)^* = \Omega$ and $\Omega$ is properly convex if and only if so is $\Omega^*$. 

We call $\Gamma$ a {\em divisible group} if a central dilatational extension acts cocompactly on $C$.
$\Gamma$ is divisible if and only if so is $\Gamma^*$. 

Recall $\SI^n := {\mathcal{S}}(\bR^{n+1})$. We define $\SI^{n\ast} := {\mathcal{S}}(\bR^{n+1 \ast})$.

%For a closed cone $C$, the closure of the set $f\in \bR^{n+1,\ast}$ taking nonnegative values on $C$
%is defined as the dual $C^* \subset \bR^{n+1\ast}$ of $C$. 
For an open properly convex subset $\Omega$ in $\SI^{n}$, the dual domain is defined as the quotient 
of the dual cone of the cone corresponding to $C_\Omega$ in $\SI^{n\ast}$. The dual set is also open and properly convex
but the dimension may not change.  
Again, we have $(\Omega^*)^* =\Omega$. 

Given a properly convex domain $\Omega$ in $\SI^n$ (resp. $\bR P^n$), 
we define the {\em augmented boundary} of $\Omega$
\[\Bd^{\Ag} \Omega  := \{ (x, h)| x \in \Bd \Omega, h \hbox{ is a supporting hyperplane of } \Omega, 
h \ni x \} .\] 
Each $x \in \Bd \Omega$ has at least one supporting hyperspace, 
a hyperspace is an element of $\bR P^{n \ast}$ since it is represented as a linear functional,   
and an element of $\bR P^n$ represents a hyperspace in $\bR P^{n \ast}$.

The homeomorphism below will be known as the {\em duality map}. 
\begin{proposition}[\cite{EDC1}] \label{prop-duality}
Let $\Omega$ and $\Omega^*$ be dual domains in $\SI^{n \ast}$ {\rm (}resp. $\bR P^{n \ast}${\rm ).} 
\begin{itemize}  
\item[(i)] There is a proper quotient map $\Pi_{\Ag}: \Bd^{\Ag} \Omega \ra \Bd \Omega$
given by sending $(x, h)$ to $x$. 
\item[(ii)] Let a projective automorphism group 
$\Gamma$ acts on a properly convex open domain $\Omega$ if and only 
$\Gamma^*$ acts on $\Omega^*$.
\item[(iii)] There exists a duality homeomorphism 
\[ {\mathcal{D}}: \Bd^{\Ag} \Omega \leftrightarrow \Bd^{\Ag} \Omega^* \] 
given by sending $(x, h)$ to $(h, x)$ for each $(x, h) \in \Bd^{\Ag} \Omega$. 
\item[(iv)] Let $A \subset \Bd^{\Ag} \Omega$ be a subspace and $A^*\subset \Bd^{\Ag} \Omega^*$
be the corresponding dual subspace $\mathcal{D}(A)$. If a group $\Gamma$ acts on $A$ so that $A/\Gamma$ is compact 
if and only if $\Gamma^*$ acts on $A^*$ and $A^*/\Gamma^*$ is compact. 
\end{itemize} 
\end{proposition}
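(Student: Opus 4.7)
The idea is to lift everything to cones in $\bR^{n+1}$, where duality is most transparent, and then descend to $\SI^n$ or $\bR P^n$. Fix a lift of $\Omega$ to an open properly convex cone $C \subset \bR^{n+1}$ with vertex $O$, and let $C^{*} \subset \bR^{n+1 \ast}$ be its dual cone. The single observation I would establish at the outset, using proper convexity of $C$, is the standard bijection between faces of $\clo(C)$ and faces of $\clo(C^{*})$ given by $F \mapsto F^{\perp} := \{f \in \clo(C^{*}) \mid f(F) = 0\}$. In projective terms, this says: $[f] \in \Bd\Omega^{*}$ if and only if $\{f = 0\}$ is a supporting hyperplane of $\Omega$ at some $x \in \Bd\Omega$. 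This observation will carry all four parts.

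For (i), the map $\Pi_{\Ag}$ is the restriction to $\Bd^{\Ag}\Omega$ of the projection $\Bd\Omega \times \SI^{n\ast} \ra \Bd\Omega$, so it is continuous and surjective. Each fiber is the set of supporting hyperplanes at $x$, which corresponds to a compact face of $\clo(C^{*})$, hence is compact. Since $\Bd^{\Ag}\Omega$ is closed in the compact space $\Bd\Omega \times \{[f] \in \SI^{n\ast} \mid f|_{\clo(C)} \geq 0\}$, properness follows and then ``continuous surjection from compact Hausdorff to Hausdorff'' gives a quotient map. For (ii), take any linear lift: $\Gamma$ preserves $C$ iff the transpose-inverse lift preserves $\{f \mid f|_{\clo(C)} \geq 0\} = \clo(C^{*})$, which is precisely the statement that $\Gamma^{*}$ acts on $\Omega^{*}$. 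For (iii), I would define $\mathcal{D}(x,h) := (h,x)$ and verify well-definedness using the opening observation in both directions; involutivity $\mathcal{D} \circ \mathcal{D} = \mathrm{id}$ is built in once one identifies $(\bR^{n+1})^{**} = \bR^{n+1}$, and continuity is clear from the definition as a swap of coordinates on a subset of $\SI^n \times \SI^{n\ast}$.

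For (iv), the key is that $\mathcal{D}$ is equivariant in the appropriate sense: if $\gamma \in \Gamma$ acts on $\Bd^{\Ag}\Omega$ by $\gamma(x,h) = (\gamma(x), \gamma^{*-1}(h))$, then $\mathcal{D}(\gamma(x,h)) = (\gamma^{*-1}(h), \gamma(x)) = \gamma^{*-1}(\mathcal{D}(x,h))$ under the dual action on $\Bd^{\Ag}\Omega^{*}$. Hence $\mathcal{D}$ descends to a homeomorphism $A/\Gamma \ra A^{*}/\Gamma^{*}$, and compactness is preserved. The main obstacle I anticipate is being careful in (i) about properness in the non-closed case where $\Bd\Omega$ is not all of a sphere, and in (iii) about checking that the assignment $h \mapsto [f_h]$ for a supporting hyperplane lands in $\Bd\Omega^{*}$ rather than the exterior; both reduce to the face-duality observation, so the bulk of the work is to set that up cleanly at the cone level before any projective bookkeeping.
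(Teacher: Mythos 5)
Your proposal is correct, and it follows the same route as the source: the paper gives no proof here (the proposition is quoted from \cite{EDC1}), and the argument there is precisely this cone-level face duality, with $\mathcal{D}$ defined as the coordinate swap, properness of $\Pi_{\Ag}$ from compactness of $\Bd^{\Ag}\Omega$ as a closed subset of $\Bd\Omega\times\SI^{n\ast}$, and equivariance $\mathcal{D}\circ\gamma=\gamma^{\ast}\circ\mathcal{D}$ giving (iv). Your anticipated worry about properness when ``$\Bd\Omega$ is not all of a sphere'' is moot, since $\Bd\Omega$ is always compact for a properly convex $\Omega$.
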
 

We have $\orb = \Omega/\Gamma$ for a properly convex domain $\Omega$, 
the dual orbifold $\orb^* = \Omega^*/\Gamma^*$ is a properly convex real projective orbifold 
homotopy equivalent to $\orb$. The dual orbifold is well-defined up to projective diffeomorphisms. 
We call $\orb^*$ a projectively dual orbifold to $\orb$.
Clearly, $\orb$ is projectively dual to $\orb^*$. 

\begin{theorem}[Vinberg]  \label{thm-dualdiff} 
The dual orbifold $\orb^*$ is diffeomorphic to $\orb$.
\end{theorem}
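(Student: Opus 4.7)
The plan is to construct an explicit $\Gamma$-equivariant diffeomorphism between $\Omega$ and $\Omega^*$ using Vinberg's characteristic function, and then descend to the quotient. Let $C \subset \bR^{n+1}$ be the properly convex cone whose projectivization is $\Omega$, and let $\Gamma' \subset \GL(n+1,\bR)$ be a central dilatational extension of (a lift of) $\Gamma$ acting on $C$. Define Vinberg's characteristic function
\[
\phi_C(x) \;=\; \int_{C^*} e^{-\langle x, y\rangle}\, dy, \qquad x \in C,
\]
which is smooth and positive on $C$ because $C$ is properly convex, so the integral converges. A change of variables shows $\phi_C(gx) = |\det g|^{-1}\phi_C(x)$ for every $g \in \GL(n+1,\bR)$ with $gC=C$, hence $\phi_C$ is $\Gamma'$-semi-invariant.

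The next step is to show that $F := \log \phi_C$ is strictly convex on $C$, so that its differential
\[
\mathcal{V} := -dF : C \longrightarrow \bR^{n+1\,*}
\]
is an immersion. A direct computation with the integral representation shows that in fact $\mathcal{V}(x) \in C^*$ for every $x \in C$, and that $\mathcal{V}$ is a diffeomorphism $C \to C^*$; this is Vinberg's $*$-map, and its inverse is constructed symmetrically from $\phi_{C^*}$. Because $\phi_C$ transforms by a scalar under $\Gamma'$, the map $\mathcal{V}$ intertwines the $\Gamma'$-action on $C$ with the dual $\Gamma^{\prime *}$-action on $C^*$; moreover $\mathcal{V}(tx) = t^{-1}\mathcal{V}(x)$, so $\mathcal{V}$ is equivariant with respect to the radial scalings and therefore descends to a well-defined map
\[
\bar{\mathcal{V}} : \Omega \longrightarrow \Omega^*
\]
which is a $\Gamma$-equivariant projective diffeomorphism in the smooth category (not projective). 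Passing to the quotient yields a diffeomorphism $\orb = \Omega/\Gamma \to \Omega^*/\Gamma^* = \orb^*$, as required.

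The main technical obstacle is that $\orb$ is only strongly tame, not closed, so one must verify that the construction works without a cocompactness assumption on $\Gamma$ acting on $\Omega$. This is not an issue for the pointwise construction of $\mathcal{V}$, since the characteristic function and its gradient are intrinsic to the cone $C$ and make sense regardless of how $\Gamma$ acts; proper convexity of $\Omega$ (hence of $C$) is the only ingredient needed for convergence of the defining integral and for the strict convexity of $\log \phi_C$. One should then check that the orbifold structures match under $\bar{\mathcal{V}}$ — that is, isotropy groups on the two sides correspond — which follows automatically from $\Gamma$-equivariance and the identification $\Gamma^* \cong \Gamma$ given by $g \mapsto g^{*-1}$. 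Everything else is formal once Vinberg's $*$-map is in hand; the hard work is entirely in establishing the strict convexity of $F$ and the fact that $-dF$ lands in $C^*$, both of which are standard from the theory of convex cones.
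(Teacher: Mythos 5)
The paper gives no proof of this statement; it is quoted as a classical theorem of Vinberg (the ``Vinberg duality diffeomorphism,'' for which the paper points to \cite{wmgnote}), and your reconstruction via the characteristic function $\phi_C(x)=\int_{C^*}e^{-\langle x,y\rangle}dy$, its $\Gamma'$-semi-invariance, the strict convexity of $\log\phi_C$, and the descent of $-d\log\phi_C$ to a $\Gamma$-equivariant diffeomorphism $\Omega\to\Omega^*$ is exactly the standard argument being invoked, and it is correct. One minor caveat: you should not assert that the inverse of the $*$-map of $C$ is the $*$-map of $C^*$ — that composition need not be the identity for a general properly convex cone — but this is not needed, since injectivity of $-d\log\phi_C$ follows from strict convexity and surjectivity onto $C^*$ from its proper boundary behavior.
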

%\begin{proof} 
%Use here the duality diffeomorphism $\torb \ra \torb^*$ of Vinberg (See  \cite{vin63} and Theorem 6.8 in \cite{wmgnote}.) 
%\end{proof} 
We call the map the {\em Vinberg duality diffeomorphism}.

\subsection{The proof of Corollary \ref{cor-dualNPCC} } \label{sub-dualNPCC}

%1. describe homonym of dual 
%2. exists vertex. 
%3. can't be prop convex. can't be complete NPCC 
%4. NPCC 

% Jan 17 2015 10:29pm

By Corollary \ref{cor-NPCChol}, we obtain that the dual holonomy group $g^{-1 T} \in \bGamma_{\tilde E}^{\ast}$ 
has form under a coordinate system:  
\begin{equation} \label{eqn-dualg}
\newcommand*{\temp}{\multicolumn{1}{r|}{}}
\Scale[0.85]{g^{-1 T} = \left( \begin{array}{ccccccc} 
S(g)^{-1 T} & \temp & 0 & \temp & 0 & \temp & 0 \\ 
 \cline{1-7}
0 &\temp & \lambda(g)^{-1} &\temp & -\lambda(g)^{-1} v(g) &\temp & \lambda(g)^{-1}\left(-\alpha_{7}(g)+ \frac{||v(g)||^{2}}{2}\right) \\ 
 \cline{1-7}
0 &\temp & 0 &\temp & \lambda(g)^{-1} \Idd &\temp & -\lambda(g)^{-1} v(g)^{T}\\ 
 \cline{1-7}
0 &\temp & 0 &\temp & 0 &\temp & \lambda(g)^{-1} 
\end{array} 
\right).}
\end{equation}
%where $O_5(g) \in O(i_{0}+1)$. 
%Recall the projection from Section \ref{sec-notprop}: 
%\[\Pi_K: \SI^{n} - \SI^{i_0}_\infty \ra \SI^{n-i_0-1} \] 
Recall that $\langle S(g), g \in \bGamma_{\tilde E}\rangle $  acts on properly convex set
$K\ast \{k\}$ in $\SI^{n-i_{0}-1}$, a strict join,  for a properly convex set $K \subset \SI^{n-i_{0}-2} \subset \SI^{n-i_{0}-1}$ 
and $k$ from the proof of Theorems \ref{thm-NPCCcase} and \ref{thm-NPCCcase2}. 
$\mathcal{N}$ acts on $\SI^{i_{0}+1}$ containing $\SI^{i_{0}}_{\infty}$ and corresponding to $k$ 
under the projection $\Pi_{K}:\SI^{n} - \SI^{i_{0}}_{\infty} \ra \SI^{n-i_{0}-1}$. 

We have $\bR^{n+1} = V \oplus W$ for subspaces $V$ and $W$ corresponding to $\SI^{n-i_{0}-2}$ and $\SI^{i_{0}+1}$
respectively. 
We let $\SI^{n-i_{0}-2\ast}$ and $\SI^{i_{0}+1\ast}$ denote the dual subspaces in $\SI^{n\ast}$: 
%$\bR^{n+1} = V \oplus W$ for subspaces $V$ and $W$ corresponding to $\SI^{n-i_{0}-2}$ and $\SI^{i_{0}+1}$
%respectively. 
Then $\bR^{n+1\ast} = V^{\ast}\oplus W^{\ast}$ for subspaces $V^{\ast}$ of $1$-forms on $V$ and 
$W^{\ast}$ of $1$-forms of $W$. Then $V^{\ast}$ corresponds to the subspace $\SI^{n-i_{0}-2\ast}$ 
and $W^{\ast}$ corresponds to $\SI^{i_{0}+1\ast}$. 

%Also, let $\SI^{i_{0}\ast}$ denote 

%% April 14th 11:59pm 2015
%Let $\SI^{n-i_{0}-1\ast}$ be the dual sphere of $\SI^{n-i_{0}-1}$. 
%Here $K \subset \SI^{n-i_{0}-2}$ be a domain, 
Let $K^{\ast}\subset \SI^{n-i_{0}-2\ast} \subset \SI^{n-i_{0}-1\ast}$ 
be the dual domain of $K$.  
The subspace $\SI^{n-i_{0}-2}$ is dual to a point $k^{\ast}$ of $\SI^{n-i_{0}-1 \ast}$.
Now, $K\ast \{k\}$ is dual to $K^{\ast} \ast \{k^{\ast}\}$ in $\SI^{n-i_{0}-1\ast}$. 
Then 
$\langle S(g)^{-1T}, g \in \bGamma_{\tilde E}^{\ast}\rangle $  acts on the properly convex set
$K^{\ast} \ast \{k^{\ast}\}$. 

%In the proof of Theorems \ref{thm-NPCCcase} and \ref{thm-NPCCcase2}, 
%$K$ is imbedded in a great sphere $\SI^{n-i_{0}-2} \subset \SI^{n}$, and 
%a great sphere $\SI^{i_{0}+1}$ containing $\SI^{i_{0}}_{\infty}$ corresponds to $k$ 
%under $\Pi_{K}$. 

Recall that $\bGamma_{\tilde E}$ and the unipotent group $\CN$ act on 
a p-end neighborhood $U$ of $\tilde E$ and on great spheres 
$\SI^{n-i_{0}-2}$ and $\SI^{i_{0}+1}$.
Then $\bGamma_{\tilde E}^{\ast}$ and the unipotent group $\CN$ act on 
the dual great spheres $\SI^{n-i_{0}-2\ast}$ and $\SI^{i_{0}+1\ast}$
by the matrix forms of the elements. 

%We can also show $K^{\ast} \ast p^{\ast}\subset \clo(\torb^{\ast})$: 
%A supporting oriented hyperspace $P$ at $v_{\tilde E}$ of $\torb$ do not meet 
%the orbit of $\CN$ in $U$. 
%Let $\SI^{i_{0}+1}_{l}$ be the great sphere corresponding to $l \in K^{o}$ under $\Pi_{K}$. 
%Thus, $P \cap \SI^{i_{0}+1}_{l} = \SI^{i_{0}}_{\infty}$. 

%Under the decomposition of $\SI^{n} = \SI^{n-i_{0}-2} \ast \SI^{i_{0}+1}$, 
%$P \cap \SI^{n-i_{0}-2}$ supports $K$ and is of dimension $n-i_{0} - 3$. 
%Since $P \supset \SI^{i_{0}}_{\infty}$. 

%meet $\SI^{i_{0}+1}$ in $\SI^{i_{0}-1}_{\infty}$. 
Let $P \subset \SI^{n}$  be an oriented hyperplane supporting $\torb$ at $\bv_{\tilde E}$. 
Under $\Pi_{K}$,  $P$ 
goes to a hyperplane in $\SI^{n-i_{0}-1}$ disjoint from $(K\ast \{k\})^{o}$. 
$(K \ast \{k\})^{o}$ is in the orientation direction of the image of $P$. 
Hence, the set of supporting oriented hyperplanes is projectively isomorphic to $K^{\ast} \ast k^{\ast}$. 
Using the map $\mathcal{D}$, we obtain that there exists a totally geodesic 
$n-i_{0}-1$-dimensional domain in $\Bd \torb^{\ast}$ projectively isomorphic to $K^{\ast} \ast k^{\ast}$.
%exists in $\Bd \torb^{\ast}$. 
We denote the domain by $K_{1}^{\ast} \ast k_{1}^{\ast}$. 
Here, $k_{1}^{\ast}$ is the dual of the supporting hyperplane containing $K$ and $\SI^{i_{0}}_{\infty}$. 
%Since $K\ast\SI^{i_{0}}_{\infty}$ is $\bGamma_{\tilde E}$-invariant, $k_{1}^{\ast}$ is $\bGamma_{\tilde E}^{\ast}$-invariant. 

% from the form of the sequences of matrices of elements 
%denoted by $\gamma_{m}$. 

%there exists a $\CN(\bR^{i_{0}})$-invariant ellipsoid. 

And $\bGamma_{\tilde E}^{\ast}$ virtually normalizes: 
\renewcommand{\arraystretch}{1.5}
\begin{equation} \label{eqn-seconddualm}
\newcommand*{\temp}{\multicolumn{1}{r|}{}}
\CN(\vec{v})^{-1T} = \left( \begin{array}{ccccccc} 
\Idd_{n-i_0-1} & \temp & 0 & \temp & 0 & \temp & 0 \\ 
 \cline{1-7}
0 &\temp & 1 &\temp &  -\vec{v} &\temp & ||\vec{v}||^2 /2 \\ 
 \cline{1-7}
0 &\temp & 0 &\temp & \Idd_{i_0} &\temp & -\vec{v}^{T} \\ 
 \cline{1-7}
0 &\temp & 0 &\temp & 0 &\temp & 1 
\end{array} 
\right), 
\end{equation} 
By using coordinate change of $n-i_{0}+1$-th coordinate to $n+1$-th coordinate, we can make the lower right matrix 
of $\bGamma_{\tilde E}$ and $\CN$
into a lower triangular form.

%By using coordinate change from 
%the $n-i_{0}+1$-th coordinate to the $n+1$-th coordinate, we can make the lower right matrix 
%into a lower triangular form. (That is, we reverse the order of the coordinates.)

Now, $\bGamma_{\tilde E}^{\ast}$ fixes $k_{1}^{\ast}$. 
The eigenvalues show that the dual p-end $\tilde E^{\ast}$ is not complete
by Theorem \ref{I-thm-affinehoro} in \cite{EDC1}.  
%Since $\CN$ is in the Zariski closure of $\bGamma_{\tilde E}$ as we showed in 
%the proofs of Theorems \ref{thm-NPCCcase} an \ref{thm-NPCCcase2}, 
%$\tilde E^{\ast}$ cannot be properly convex end and it cannot be complete affine also. 
%If $\bGamma_{\tilde E}$ is hyperbolic, then $\tilde E$ is properly convex. 
Since elements of $\bGamma_{\tilde E}$ is of form \eqref{eqn-seconddualm}, 
$\tilde \Sigma_{\tilde E}$ is not properly convex considering 
the matrices expression of their action on $\SI^{n-1}_{\bv_{\tilde E}}$.
One can check that the uniform positive translation condition holds.

Recall $\CN$ acts on a quasi-joined end neighborhood $U \supset \torb$
with $i_{0}$-dimensional orbits in $\Bd U$. 
We can find a properly convex open set $U_{1}\supset \torb$ 
by expanding along radial lines and taking a convex hull
and Proposition \ref{prop-qjoin}. (This step is similar to ones  in Lemma \ref{II-lem-expand} 
in \cite{EDC2} and we skip details.)
For the dual properly convex open set $U_{1}^{\ast}$ we have
$U_{1}^{\ast} \subset \torb^{\ast}$ 
by the reversal of inclusion relations under duality. 
%$\CN^{\ast}$ acts on with $i_{0}$-dimensional orbits on $i_{0}+1$-dimensional hemispheres with boundary $\SI^{i_{0}\ast}$. 
%by the matrix forms we obtained above. 
%The map $\mathcal{D}$ sends p-ends to p-ends. 
%$U_{1}^{\ast}$ is a p-end neighborhood of $\tilde E^{\ast}$ since $\bGamma_{\tilde E}^{\ast}$-acts on it. 
$\bGamma_{\tilde E}^{\ast}$ also fixes $k_{1}^{\ast}$ by the new form of the matrices.
The space of radial lines from $k_{1}^{\ast}$ to $\torb^{\ast}$ is same as that of $U_{1}^{\ast}$ 
Since $\CN^{\ast}$ acts on $U_{1}^{\ast}$, 
$\tilde E^{\ast}$ is an NPCC-end with complete affine leaves of dimension $i_{0}$. 
%The eigenvalue of $g$ associated with $k_{1}^{\ast}$ is $\lambda^{-1}_{\bv_{\tilde E}}(g)$
%by equation \eqref{eqn-dualg}. 

By Theorem \ref{thm-dualdiff}, 
each end neighborhood of $\orb$ goes to an end neighbourhood of $\orb^{*}$. 
Hence, the weak uniform middle eigenvalue condition is satisfied by the form of the matrices. 
Also the uniform positive translation condition holds by the matrix forms again. 
% by equation \eqref{eqn-dualg}. 
Proposition \ref{prop-qjoin} completes the proof.

%Hence, $\bGamma_{\tilde E}$ has a nontrivial center. 
%Then each central element is diagonalizable. This means off diagonal elements in the lower-left part of the matrix of each central 
%element is zero.  This holds for central elements of $\bGamma_{\tilde E}$. 
%We can make sure that $\lambda(g)$ is the largest eigenvalue for some central $g$
%since $\bGamma_{\tilde E}$ acts cocompactly on $K\ast p$. 
%Then by Proposition \ref{II-prop-decjoin} of \cite{EDC2}, we obtain that $\orb$ is virtually reducible. 
%This is a contradiction. Hence, the dual end $\tilde E^{\ast}$ is NPCC. 
%Since it also satisfies the weak uniform middle-eigenvalue  condition, $\tilde E^{\ast}$ is quasi-joined type end 
%by Theorem \ref{thm-thirdmain}. 

\bibliographystyle{plain}

\begin{thebibliography}{99}

\bibitem{Ballas2014} {S. Ballas}, 
\newblock{`Finite volume properly convex deformations of the figure-eight knot'}, 
 arXiv:1403.3314. 
 
 \bibitem{Ballas2012} {S. Ballas}, 
\newblock{`Deformations of non-compact, projective manifolds'}, arXiv:1210.8419.


%\bibitem{Baues} { O. Baues}, 
%\newblock{`Deformation spaces for affine crystallographic groups'}, In {\em Cohomology of groups and algebraic K-theory}, 
%\newblock{ 55--129, Adv. Lect. Math. (ALM), 12, Int. Press, Sommerville, MA, 2010.} 

\bibitem{Ben1} { Y. Benoist},
 \newblock{`Convexes divisibles. I',}
 \newblock{In {\em Algebraic groups and arithmetic}},
  {339--374}, Tata Inst. Fund. Res., Mumbai, 2004.
 
\bibitem{Ben2} { Y. Benoist},
\newblock{`Convexes divisibles. II'},
 \newblock{\em Duke Math. J.}, {120} (2003), 97--120.

\bibitem{Ben3} { Y. Benoist},
\newblock{`Convexes divisibles. III'},
\newblock{{\em Ann. Sci. Ecole Norm. Sup.} (4) 38 (2005), no. 5, 793--832. }

\bibitem{Ben4} { Y. Benoist},
\newblock{ `Convexes divisibles IV : Structure du bord en dimension 3'}, 
\newblock{{\em Invent. math.} 164 (2006), 249--278.}

%\bibitem{Ben5} { Y. Benoist},
% \newblock{`Automorphismes des c\^ones convexes'},
%\newblock{\em Invent. Math.}, {141} (2000), 149--193.

%%\bibitem{Benasym} { Y. Benoist}, 
%\newblock{`Propri\'et\'es asymptotiques des groupes lin\'eaires',} 
%\newblock{ {\em Geom. Funct. Anal.} 7 (1997), no. 1, 1--47.}

%\bibitem{BenNil} { Y. Benoist}, 
%\newblock{`Nilvari\'et\'es projectives',}
%\newblock{ {\em Comm. Math. Helv.} 69 (1994), 447--473.}

\bibitem{Benz} { J.-P., Benz\'ecri}, 
\newblock{`Sur les vari\'et\'es localement affines et localement projectives',}
\newblock{{\em Bull. Soc. Math. France} 88 (1960) 229--332.} 

%\bibitem{Borel} { A. Borel}, 
%\newblock{{\em Linear algebraic group},} 
%\newblock{Springer Verlag, 2nd edition p.288 + xi, 1991.}

\bibitem{Car} { Y. Carri\`ere}, 
\newblock{`Feuilletages riemanniens \`a croissance polyn\^omiale'}, 
\newblock{{\em Comment. Math. Helv.} 63 (1988), 1--20.}

\bibitem{ChCh}  { Y. Chae,  S. Choi,  and C. Park},  
\newblock{`Real projective manifolds developing into an affine space'}, 
\newblock{{\em Internat. J. Math.} 4 (1993), no. 2, 179--191.}

  \bibitem{Choi2004}
   { S. Choi},
    \newblock{`Geometric structures on orbifolds and holonomy representations'},
    \newblock{{\em Geom. Dedicata} 104 (2004), 161--199.}

 %\bibitem{psconv} S.~Choi, 
%\newblock{`The convex and concave decomposition of manifolds with real projective structures'}, 
%\newblock{{\em M\'emoires SMF}, No. 78, 1999, 102 pp.}

 
  %\bibitem{Choi2006}
  % { S. Choi},
  %  \newblock{`The deformation spaces of projective structures on 3-dimensional Coxeter orbifolds',}
  %  \newblock{{\em Geom. Dedicata} 119 (2006), 69--90.}



%\bibitem{rdsv} { S. Choi}, 
%\newblock{`The decomposition and classification of radiant affine 3-manifolds',}
%\newblock{{\em Mem. Amer. Math. Soc.} 154 (2001), no. 730, viii+122 pp.}

%\bibitem{cdcr1}
%{ S.~Choi}, 
%\newblock{`Convex decompositions of real projective surfaces {{\rm {I}:}}
%  $\pi$-annuli and convexity'},
%\newblock {\em J. Differential Geom.} 40 (1994), 165--208.

%\bibitem{cdcr2}
%{ S.~Choi},
%\newblock{`Convex decompositions of real projective surfaces {{\rm {II}:}}
%  {A}dmissible decompositions',}
%\newblock {\em J. Differential Geom.}, 40 (1994), 239--283.

%\bibitem{cdcr3}
%{ S.~Choi}, 
%\newblock{`Convex decompositions of real projective surfaces {{\rm {III}:}}
%  {F}or closed and nonorientable surfaces'}, 
%\newblock {\em J. Korean Math. Soc.}, 33 (1996), 1138--1171.


\bibitem{Cbook} 
{ S.~Choi}, 
\newblock{{\em Geometric structures on 2-orbifolds\,{\rm :} exploration of discrete symmetry}}, 
\newblock{MSJ Memoirs, Vol. 27. 171pp + xii, 2012}

\bibitem{endclass} 
S.~Choi,
\newblock{ The classification of radial ends of convex real projective orbifolds,}
\newblock{ arXiv:1304.1605} 


\bibitem{EDC1} 
{S.~Choi}, 
\newblock{The classification of ends of properly convex real projective orbifolds I: survey},
\newblock{arXiv:1501.00348 }  

\bibitem{EDC2} 
{S.~Choi}, 
\newblock{The classification of ends of properly convex real projective orbifolds II: 
Properly convex radial ends and totally geodesic ends,} 
\newblock{arXiv:1501.00352} 

\bibitem{conv} { S. Choi}, 
\newblock{`The convex real projective manifolds and orbifolds with radial or totally geodesic ends: the closedness and openness of deformations',} 
\newblock{arXiv:1011.1060}

\bibitem{conv1} 
{S. ~Choi}, 
\newblock{{`The deformation spaces of convex real projective orbifolds with
radial or totally geodesic ends I{\rm :} general openness',}}
\newblock{in preparation } 

\bibitem{conv2} 
{S. ~Choi}, 
\newblock{{`The deformation spaces of convex real projective orbifolds with
radial or totally geodesic ends II{\rm :} relative hyperbolicity' ,}}
\newblock{in preparation} 

\bibitem{conv3} 
{S. ~Choi}, 
\newblock{{`The deformation spaces of convex real projective orbifolds with
radial or totally geodesic ends III{\rm :} openness and closedness',}}
\newblock{in preparation } 



% \bibitem{CG}
%   { S. Choi and W.M. Goldman}, 
%    \newblock{`The deformation spaces of convex $\mathbb{RP}^2$-structures on 2-orbifolds',}
%    \newblock{{\em Amer. J. Math.} 127 (2005), 1019--1102.}


%\bibitem{afftame} { S.~Choi and W. M. Goldman}, 
%\newblock{`Topological tameness of Margulis spacetimes'}, 
%\newblock{arXiv 1204.5308} 

%\bibitem{CHL} 
%{ S.~Choi, C.D.~Hodgson, and G.S.~Lee}, 
%\newblock{`Projective deformations of hyperbolic Coxeter 3-orbifolds',} 
%\newblock{{\em Geom. Dedicata} 159 (2012), 125--167.}

% \bibitem{Cooper2006}
%{    D. Cooper, D. Long, and M. Thistlethwaite},
%    \newblock{`Computing varieties of representations of hyperbolic 3-manifolds into ${\rm SL}(4,\mathbb R)$',}
%    \newblock{{\em Experiment. Math.} 15 (2006), 291--305.}

%  \bibitem{CLT}
%{   D. Cooper, D. Long, and M. Thistlethwaite},
%    \newblock{` Flexing closed hyperbolic manifolds',}
%    \newblock{{\em Geom. Topol.} 11 (2007), 2413--2440.}
    
%    \bibitem{CLT2} 
%{    D. Cooper, D. Long, and S. Tillmann,} 
%    \newblock{`On convex projective manifolds and cusps',} 
%    \newblock Preprint, arXiv:1104.0585.

\bibitem{CG2} 
{ J. P. Conze and Y. Guivarch}, 
\newblock{`Remarques sur la distalit\'e dans les espaces vectoriels',} 
\newblock{C. R. Acad. Sci. Paris 278 (1974), 1083--1086.}

%\bibitem{CM} M. Crampon and L. Marquis, 
%\newblock{Quotients g\'eom\'etriquement finis de g\'eom\'etries de Hilbert}, 
%Preprint, This was changed to the following paper. 
%"Finitude g\'eom\'etrique en g\'eom\'etrie de Hilbert."

\bibitem{CM2} { M. Crampon and L. Marquis}, 
\newblock{`Finitude g\'eom\'etrique en g\'eom\'etrie de Hilbert'}, 
Preprint  arXiv:1202.5442.

\bibitem{GV} 
J. de Groot and H. de Vries, 
\newblock{`Convex sets in projective space',} 
\newblock{{\em Compositio Math}., 13 (1958), 113--118.}

\bibitem{Fried86} { D. Fried}, 
\newblock{`Distality, completeness, and affine structure'},
\newblock{ {\em J. Differential Geometry} 24 (1986), 265--273.}

\bibitem{FG} { D. Fried and W. Goldman}, 
\newblock{`Three-dimensional affine crystallographic groups'}, 
\newblock{{\em Adv. Math.} 47 (1983), 1--49.}

%\bibitem{FGH} { D. Fried, W. Goldman, and M. Hirsch}, 
%\newblock{`Affine manifolds with nilpotent holonomy'}, 
%\newblock{{\em Comment. Math. Helv.} 56 (1981), 487--523.}

%\bibitem{Gconv} { W. Goldman}, 
%\newblock {`Convex real projective structures
%    on compact surfaces',} 
%\newblock{\em J. Differential Geometry}, 31 (1990), 791--845.

\bibitem{wmgnote} { W. Goldman}, 
\newblock{ `Projective geometry on manifolds',} 
\newblock{Lecture notes available from the author.}

%\bibitem{GL} { W. Goldman and F. Labourie},  
%\newblock{`Geodesics in Margulis space times'},
%\newblock{{\em Ergod. Th. \& Dynamic. Sys.} 32 (2012), 643--651.}

%\bibitem{GLM} { W, Goldman, F. Labourie, and G. Margulis},  
%\newblock{`Proper affine actions and geodesic flows of
%hyperbolic surfaces'},
%\newblock{{\em Annals of Mathematics} 170 (2009), 1051--1083. }

\bibitem{Gr} { M. Gromov},
\newblock{ `Groups of polynomial growth and expanding maps',}
\newblock{ {\em Inst. Hautes \'Etudes Sci. Publ. Math.} No. 53 (1981), 53--73. }



%\bibitem{Guichard} { O. Guichard}, 
%\newblock{`Sur la r\'egularit\'e H\"older des convexes divisibles'}, 
%\newblock{{\em Erg. Th. \& Dynam. Sys.} 25 (2005), 1857--1880.}


%\bibitem{GW} { O. Guichard and A. Wienhard}, 
%\newblock{`Anosov representations: domains of discontinuity and applications'}, 
%\newblock{{\em Invent. Math.}} 190 (2012), no. 2, 357--438. 

%\bibitem{HT} Huajun Huang and Tin-Yau Tam, 
%\newblock{An asymptotic behavior of QR decomposition,}
%\newblock{Linear Algebra and its Applications (2007) \textbf{424}, 96--107. } 

%\bibitem{JM}
%{ D. Johnson and J. Millson}, 
%\newblock{`Deformation spaces associated to compact hyperbolic manifolds'},
%\newblock In {\em Discrete groups in geometry and analysis} (New Haven, Conn., 1984), pp. 48--106,
%\newblock Progr. Math., 67, Birkh\"auser Boston, Boston, MA, 1987.



%\bibitem{Kac1967}
%{  V.G. Kac and  \`{E}.B. Vinberg}, 
%    \newblock `Quasi-homogeneous cones',
%    \newblock{\em Math. Zamnetki} 1 (1967), 347--354.
    
 %   \bibitem{Katok} 
 %{   A. Katok and B. Hasselblatt}, 
 %   \newblock{\em Introduction to modern theory of dynamical systems}, 
 %   \newblock{ Cambridge University Press} 1995.
    
 %   \bibitem{ink} 
 %   { Inkang Kim}, 
 %   \newblock `Compactification of strictly convex real projective structures',
%\newblock Geom. Dedicata 113 (2005), 185--195. 

%\bibitem{Kobpaper} { S. Kobayashi},
%\newblock `Projectively invariant distances for affine and projective structures', 
%\newblock In {\em Differential geometry} (Warsaw, 1979), 127--152, 
%Banach Center Publ., 12, PWN, Warsaw, 1984. 


\bibitem{BK} { B. Kostant}, 
\newblock{ `On convexity, the Weyl group and the Iwasawa decomposition'}, 
\newblock{{\em Ann. ENS} 4em s\'eree tome 6 no. 4 (1973), 413--455}. 

%\bibitem{KS} { B. Kostant and D. Sullivan},
%\newblock{ `The Euler characteristic of an affine space form is zero',} 
%\newblock{{\em Bull. Amer. Math. Soc.} 81 (1975), no. 5, 937--938. }


%\bibitem{Kos} { J. Koszul}, 
%\newblock{`Deformations de connexions localement plates', }
%\newblock{{\em Ann. Inst. Fourier (Grenoble)} 18 fasc. 1 (1968), 103--114. }

%\bibitem{Lab} { F. Labourie}, 
%\newblock `Flat Projective Structures on Surfaces and Cubic Holomorphic Differentials', 
%\newblock{\em Pure and Applied Mathematics Quaterly} 3 no. 4 (2007), 1057--1099, 
%Special Issue: In the Honor of Grisha Margulis, Part 1 of 2. 

%\bibitem{Lee} { Jaejeong Lee}, 
%\newblock `A convexity theorem for real projective structures',
%\newblock arXiv:math.GT/0705.3920.

\bibitem{Leitner1} { A. Leitner}, 
\newblock{`Limits under conjugacy of the diagonal subgroup in $\SL_{3}(\bR)$',}
\newblock{ arXiv.math.GT/2014.4534.} 


\bibitem{Leitner2} {A. Leitner}, 
\newblock{`Limits under conjugacy of the diagonal subgroup in $\SL_{n}(\bR)$',}
\newblock{ arXiv.math.GT/2014.5523.} 


  %  \bibitem{Marquis}
%{   L. Marquis},
 %   \newblock `Espace des modules de certains poly\`edres projectifs miroirs',
 %   \newblock{\em Geom. Dedicata} 147 (2010), 47--86.
    
 %   \bibitem{Mess}
 %{   G. Mess}, 
%\newblock{`Lorentz spacetimes of  curvature',} 
%\newblock{{\em Geom. Dedicata} 126 (2007), 3--45. }

\bibitem{Mo1} 
{ P. Molino},
\newblock `G\'eom\'etrie global des feuilletages riemanniens', %Proc. Kon. Nederl. Acad., Ser. A, (1985), 145--161.
\newblock {\em Nederl. Akad. Wetensch. Indag. Math.} 44 (1982), no. 1, 45--76.

\bibitem{Molbook} 
P. Molino, 
\newblock {\em Riemannian foliations}, Progress of Mathematics, Vol. 73, Birkh\"auser (1988).

\bibitem{Moore} 
{ C. Moore}, 
\newblock Distal affine transformation groups, 
\newblock{\em Amer. J. Math.} 90 (1968) 733--751.

% \bibitem{Mol} { P. Molino}, 
%\newblock {\em Riemannian foliations},
%\newblock Progress in Mathematics, vol 73, Birkh\"auser, Boston, Basel, 1988.

\bibitem{Rag} { M. S. Raghunathan}, 
\newblock{{\em Discrete subgroups of Lie groups}}, 
\newblock{Ergebnisse der Mathematik und ihrer Grenzgebiete, Band 68}, Springer Verlag, Berlin, 1972.


%\bibitem{Shbook} { H. Shima},
%\newblock{{\em The geometry of Hessian structures},}
%\newblock World Scientific Publishing Co. Pte. Ltd., Hackensack, NJ, xiv+246 pp, 2007. 

\bibitem{Thnote} { W. Thurston}, 
\newblock{\em Geometry and topology of $3$-manifolds,} 
\newblock{available at \url{http://library.msri.org/books/gt3m/}.}

\bibitem{Thbook} { W. Thurston}, 
\newblock{\em Three-dimensional geometry and topology,} 
\newblock{Princeton University Press, Princeton NJ, 1997.}


%\bibitem{Var} { V.S. Varadarajan}, 
%\newblock{{\em Lie groups, Lie algebras, and their representations}, }
%\newblock{GTM Vol 102, Springer, Berlin, 1972.}

 %\bibitem{Vey68} { J. Vey},  
 % \newblock `Une notion d'hyperbolicit\'e sur les vari\'et\'es localement plates', 
 % \newblock {\em C.R. Acad. Sc. Paris}, 266(1968), 622--624.

 %\bibitem{Vey} { J. Vey}, 
 % \newblock `Sur les automorphismes affines des ouverts convexes saillants',
 % \newblock {\em Ann. Scuola Norm. Sup. Pisa} (3) 24(1970), 641--665.



% \bibitem{Vinberg1971}
%  { \`{E}.B. Vinberg}, 
%    \newblock `Discrete linear groups that are generated by reflections',
%    \newblock {\em Izv. Akad. Nauk SSSR} Ser. Mat. 35 (1971), 1072--1112.

%  \bibitem{Vinberg1985}
% {  \`{E}.B. Vinberg}, 
%    \newblock `Hyperbolic reflection groups',
%    \newblock {\em Uspekhi Mat. Nauk} 40 (1985), 29--66.
       
%\bibitem{vin63}
%{  \`{E}.B. Vinberg},
%\newblock `Homogeneous convex cones',
%{\em Trans. Moscow Math. Soc}. 12 (1963), 340--363.

% \bibitem{Weil1962}
%  {  A. Weil}, 
%    \newblock `On discrete subgroups of Lie groups II',
%    \newblock{\em Ann. of Math.} 75 (1962), 578--602.

%   \bibitem{Weil1964}
%   { A. Weil}, 
%    \newblock `Remarks on the cohomology of groups',
%    \newblock{\em Ann. of Math.} 80 (1964), 149--157.


\bibitem{DW} 
{ D. Witte}, 
\newblock{`Superrigidity of lattices in solvable Lie groups'},
\newblock{{\em Inv. Math}. 122 (1995), 147--193.}

\end{thebibliography}

%\affiliationone{Department of Mathematical Science \\ KAIST \\
%Daejeon 305-701, South Korea 
%\email{schoi@math.kaist.ac.kr}}

%\end{spacing}

\end{document}